\tikzset{ closed/.style = {decoration = {markings, mark = at position 0.5 with { \node[transform shape, xscale = .8, yscale=.4] {/}; } }, postaction = {decorate} },
open/.style = {decoration = {markings, mark = at position .5 with { \node[transform shape, scale =1.2] {$\circ$}; } }, postaction = {decorate} }
}
\numberwithin{equation}{subsection}
 \theoremstyle{plain}
\newtheorem{thm}[equation]{Theorem}
 \theoremstyle{plain}
\theoremstyle{plain}
\newtheorem{conj''}[equation]{Conjecture}
\theoremstyle{plain}
\theoremstyle{plain}
\theoremstyle{plain}
\theoremstyle{plain}
\theoremstyle{plain}
  \newtheorem{prop}[equation]{Proposition}
\theoremstyle{plain}
 \newtheorem{lemma}[equation]{Lemma}
\theoremstyle{plain}
\theoremstyle{plain}
\newtheorem{cor}[equation]{Corollary}
\theoremstyle{plain}
\theoremstyle{definition}
  \newtheorem{defn}[equation]{Definition}
\theoremstyle{definition}
 \theoremstyle{definition}
\theoremstyle{remark}
\newtheorem{rmk}[equation]{Remark}
 \theoremstyle{plain}
\newtheorem{conv}[equation]{Convention}
\newcommand{\Z}{\mathbb{Z}}
\newcommand{\Q}{\mathbb{Q}}
\newcommand{\Qp}{\mathbb{Q}_p}
\newcommand{\C}{\mathbb{C}}
\newcommand{\bG}{\mathbb{G}}
\newcommand{\F}{\mathbb{F}}
\newcommand{\N}{\mathbb{N}}
\newcommand{\fgl}{\mathfrak{gl}}
\newcommand{\fF}{\mathfrak{f}}
\newcommand{\fM}{\mathfrak{M}}
\newcommand{\fS}{\mathfrak{S}}
\newcommand{\fT}{\mathfrak{t}}
\newcommand{\fm}{\mathfrak{m}}
\newcommand{\fs}{\mathfrak{s}}
\newcommand{\bA}{\mathbb{A}}
\newcommand{\bL}{\mathbb{L}}
\newcommand{\bP}{\mathbb{P}}
\newcommand{\bV}{\mathbb{V}}
\newcommand{\bX}{\mathbb{X}}
\newcommand{\bZ}{\mathbb{Z}}
\newcommand{\cA}{\mathcal{A}}
\newcommand{\cE}{\mathcal{E}}
\newcommand{\cF}{\mathcal{F}}
\newcommand{\cG}{\mathcal{G}}
\newcommand{\cI}{\mathcal{I}}
\newcommand{\cJ}{\mathcal{J}}
\newcommand{\cK}{\mathcal{K}}
\newcommand{\cL}{\mathcal{L}}
\newcommand{\cM}{\mathcal{M}}
\newcommand{\cO}{\mathcal{O}}
\newcommand{\cZ}{\mathcal{Z}}
\newcommand{\eps}{\varepsilon}
\newcommand{\phz}{\varphi}
\newcommand{\Zp}{\mathbb{Z}_p}
\newcommand{\Id}{\mathrm{id}}
\newcommand{\Gal}{\mathrm{Gal}}
\newcommand{\Hom}{\mathrm{Hom}}
\newcommand{\Res}{\mathrm{Res}}
\newcommand{\GL}{\mathrm{GL}}
\newcommand{\B}{\mathrm{B}}
\newcommand{\Spec}{\mathrm{Spec}\ }
\newcommand{\Fp}{\F_p}
\newcommand{\un}[1]{\underline{#1}}
\renewcommand{\bf}[1]{\mathbf{#1}}
\newcommand{\Rep}{\mathrm{Rep}}
\newcommand{\tld}[1]{\widetilde{#1}}
\newcommand{\rhobar}{\overline{\rho}}
\newcommand{\taubar}{\overline{\tau}}
\newcommand{\Spf}{\mathrm{Spf}}
\newcommand{\Adm}{\mathrm{Adm}}
\newcommand{\orient}{\mathrm{or}}
\newcommand{\nv}{\mathrm{nv}}
\newcommand{\rG}{\mathrm{G}}
\newcommand{\defeq}{\stackrel{\textrm{\tiny{def}}}{=}}
\newcommand{\ovl}[1]{\overline{#1}}
\DeclareMathOperator{\Mod}{Mod}
\DeclareMathOperator{\Lie}{Lie}
\DeclareMathOperator{\Ad}{Ad}
\DeclareMathOperator{\Gl}{GL}
\DeclareMathOperator{\Mat}{Mat}
\DeclareMathOperator{\coker}{coker}
\DeclareMathOperator{\Gr}{Gr}
\newcommand{\ra}{\rightarrow}
\newcommand{\iarrow}{\hookrightarrow}
\newcommand{\into}{\hookrightarrow}
\newcommand{\onto}{\twoheadrightarrow}
\newcommand{\tmod}{\textnormal{mod}}
\newcommand{\pr}{\textnormal{pr}}
\newcommand{\ev}{\textnormal{ev}}
\newcommand{\G}{\textnormal{G}}
\newcommand{\curlAdm}{\mathcal{A}}
\newcommand{\gA}{\text{\tt A}}
\newcommand{\gB}{\text{\tt B}}
\newcommand{\gAB}{\text{\tt AB}}
\newcommand{\gO}{\text{\tt O}}
\title[Barsotti--Tate local model theory]{Local model theory for non-generic tame potentially Barsotti--Tate deformation rings}
\author{Bao V.~Le Hung}
\address{Department of Mathematics,
Northwestern University,
2033 Sheridan Road,
Evanston, Illinois 60208, USA}
\email{lhvietbao@googlemail.com}
\author{Ariane M\'ezard}
\address{Department of Mathematics and Applications,
ENS PSL,
45 rue d'Ulm,
75005 Paris, France}
\email{ariane.mezard@ens.fr}
\author{Stefano Morra}
\address{Universit\'e Paris 8, Laboratoire d'Analyse, G\'eom\'etrie et Applications,  LAGA, Universit\'e Sorbonne Paris Nord, CNRS, UMR 7539,  F-93430, Villetaneuse, France}
\email{morra@math.univ-paris13.fr}
\begin{document}

\begin{abstract}
We develop a local model theory for moduli stacks of $2$-dimensional non-scalar tame potentially Barsotti--Tate Galois representations of the Galois group of an unramified extension of $\Q_p$.
We derive from this explicit presentations of potentially Barsotti--Tate deformation rings, allowing us to prove structural results about them, and prove various conjectures formulated by Caruso--David--M\'ezard.
\end{abstract}
\maketitle
\tableofcontents

\section{Introduction}
\let\thefootnote\relax\footnotetext{On behalf of all authors, the corresponding author states that there is no conflict of interest and that there is no data associated to this work.}

\subsection{Main results}
Let $p$ be a prime number, $K$ a $p$-adic field. We work with coefficient ring $\cO=W(\F)$ where $\F/\Fp$ is a sufficiently large finite extension.
Let $\rhobar:G_K\longrightarrow \Gl_2(\F)$ be a continuous Galois representation and $\tau=\chi\oplus \chi'$ be an inertial type where $\chi,\chi'$ are \emph{tame} characters of $I_K$. This gives rise to the universal framed deformation ring $R_{\rhobar}^{\eta,\tau}$ classifying lifts of $\rhobar$ which are potentially Barsotti--Tate (i.e.~potentially crystalline with Hodge--Tate weights $0,1$) and of type $\tau$. 
Despite their prominent role in modularity questions via the Taylor--Wiles method (e.g.~\cite{kisin-annals}) in the last decades, their internal structure is still poorly understood. 
The basic reason for this, as suggested by works of Caruso--David--M\'ezard \cite{CDM1},\cite{CDM2},\cite{CDM3}, is that even when $K$ is unramified, $R_{\rhobar}^{\eta,\tau}$ exhibits a wide range of complicated behavior (in particular, it can be highly singular), especially as the inertial weights of $\chi/\chi'$ become more degenerate (that is, when $\tau$ becomes more \emph{non-generic}).

More recently, Caraiani--Emerton--Gee--Savitt \cite{CEGSA} constructed a $p$-adic formal algebraic stack $\cZ^{\tau}$ which interpolates the deformation rings $R^{\eta,\tau}_{\rhobar}$ as $\rhobar$ varies, in the sense that the latter recovers versal rings to finite type points of the former.
The stacks $\cZ^{\tau}$ (and its analogues for other $p$-adic Hodge theory conditions) are expected to be key geometric objects in the categorical $p$-adic Langlands conjectures formulated by Emerton--Gee--Hellmann \cite{EGH}, similar to the role played by various moduli spaces of local systems in the geometric Langlands program.
Thus it is of interest to understand their geometry.

From now on, we assume $K=\Q_{p^f}$ is absolutely unramified and $\tau$ is non-scalar (the scalar case $\tau=\chi\oplus\chi$ being easily handled by Fontaine--Laffaille theory). When $\tau$ is sufficiently generic, the structure of $\cZ^{\tau}$ is well-understood (see e.g. \cite{kisin-annals,MLM}), since it can be modeled using Iwahori level local models of Shimura varieties for $\GL_2$, in particular its singularities are products of the singularity $XY=p$. In this paper, we introduce a method to probe the structure of $\cZ^{\tau}$ which is powerful enough to handle non-generic $\tau$. One concrete consequence of our study is the following general control on singularities:
\begin{thm}[Theorem \ref{thm:rational smoothness}] \label{thm:intro:Gorenstein} Assume either $p\geq 7$ or $K=\mathbb{Q}_5$. Then the normalization of $\cZ^{\tau}$ has rational singularities and is Gorenstein.
\end{thm}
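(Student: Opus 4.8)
The plan is to transfer the question to the local models constructed earlier in the paper and then to the geometry of affine Schubert varieties together with the explicit local equations recorded there. Recall that both having rational singularities and being Gorenstein are smooth--local properties, that they are stable under finite products, smooth base change and disjoint unions, and that the formation of the normalization commutes with smooth morphisms and with completion in the excellent setting. Thus, via the local model diagram relating $\cZ^{\tau}$ through smooth morphisms to an explicit finite type flat $\cO$--scheme $M^{\tau}$, one reduces to proving that the normalization $N$ of $M^{\tau}$ has rational singularities and is Gorenstein. Now $N$ is normal and $\cO$--flat with regular generic fibre, and (by the structural results) it has reduced special fibre $N_{\F}$; hence, by Elkik's theorem that rational singularities deform and by the fibrewise criterion for Gorensteinness over the regular base $\Spec\cO$, it suffices to prove that $N_{\F}$ has rational singularities and is Gorenstein.

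The decisive point is the description of $N_{\F}$. By the structural results, $M^{\tau}_{\F}$ is, étale locally, a closed subscheme of a product $\prod_{j}\widetilde{\Fl}_{\GL_2,\F}$ over the embeddings $j\colon K\hookrightarrow\overline{\Q}_p$, cut out by the monodromy/descent conditions, and the components of its normalization are, up to factors of affine space, products $\prod_{j}\overline{\Fl}_{\tld{w}_j}$ of affine Schubert varieties in the affine flag variety of $\GL_2$ over $\F$, where $\tld{w}_j$ ranges over the finite set of shapes attached to $(\rhobar,\tau)$ (contained in $\Adm(\mu)$ in the generic range, suitably enlarged in the non-generic one). Hence $N_{\F}$ is a disjoint union of such pieces. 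Establishing this, uniformly in $\rhobar$, and in particular controlling which shapes can occur for a non-scalar $\tau$, is the step carrying the genuine content.

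Granting it, the rational singularities assertion follows: affine Schubert varieties over the perfect field $\F$ are Cohen--Macaulay and have rational singularities (e.g. via Frobenius splitting and Bott--Samelson resolutions), and these properties are inherited by products, by multiplication with affine space, and by disjoint unions, so $N_{\F}$ has rational singularities. For Gorensteinness one invokes the hypothesis $p\geq 7$ or $K=\mathbb{Q}_5$: under it the non-genericity of a non-scalar $\tau$ is bounded, so that only shapes of bounded length occur and the relevant Schubert varieties, as well as the monodromy loci built from them, have completed local rings drawn from a short explicit list of complete intersections, generalizing the $XY=p$ singularity of the generic case, all of which are Gorenstein. A product of Gorenstein $\F$--schemes is Gorenstein, as are affine space bundles over them and disjoint unions, so $N_{\F}$ is Gorenstein; transporting this back through the fibrewise criteria and the local model diagram yields the theorem.

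The main obstacle is the combinatorial input of the second paragraph: identifying, uniformly in $\rhobar$ and $\tau$, exactly which Schubert strata appear in the monodromy locus, and verifying that the assumption on $p$ keeps their local structure within the complete-intersection (hence Gorenstein) range. For small primes longer shapes occur, producing local models that remain Cohen--Macaulay but can fail to be Gorenstein (e.g. of cone-over-a-rational-normal-curve type), which is precisely what forces the hypothesis in the statement; the residual mixed--characteristic bookkeeping (reducedness of the special fibre of $N$, passage between the formal stack and its versal rings, compatibility of all identifications) is technical but routine.
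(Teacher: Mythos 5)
The central step of your argument --- that the special fibre of the normalization is, \'etale locally and up to affine-space factors, a disjoint union of products of affine Schubert varieties indexed by shapes --- is precisely the statement that fails outside the generic range, and it is the reason this paper exists. In the generic case $\cZ^{\tau}$ coincides with the Breuil--Kisin stack $Y^{\eta,\tau}$ and does admit a Schubert-type local model; in the non-generic case $\cZ^{\tau}$ is only the \emph{scheme-theoretic image} of $Y^{\eta,\tau}\to\Phi\text{-}\Mod^{\text{\'et},2}_K$, obtained by contracting the Kisin varieties, and the resulting singularities (see Tables \ref{TableCDM1}, \ref{TableCDM2}: e.g.\ $XY=p^k$ for $k$ up to $f$, and ideals with four or five non-toric generators in six variables) are not products of Schubert cells in any evident way. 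You flag this identification as ``the step carrying the genuine content'' and then grant it, but no mechanism is offered for producing it, and I do not believe it is true as stated. Relatedly, your explanation of the hypothesis on $p$ is off: the bound $p\geq 7$ (or $K=\Q_5$) is not there to exclude long shapes or non-Gorenstein Schubert strata --- for $\GL_2$ the $\eta$-admissible set has three elements regardless of $p$ --- but to guarantee $p>2+\max_j\langle\mu_j,\alpha^\vee\rangle$ for a small presentation, which is what makes the mod-$p$ straightening (Lemma \ref{lem:strightening}) identify $\cZ^{\tau}/\varpi$ with the group-theoretic model.

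The paper's actual route avoids any structural description of the normalization's special fibre. It uses the proper map $\pi^{\tmod}:\tld{Y}^{\tmod,\eta,\tau}(\tld{z})\to\tld{U}(\tld{z})$, whose source is normal with very mild singularities (locally a product of $XY=p$'s) and, crucially, has \emph{trivial} relative dualizing sheaf (Proposition \ref{prop:dualizing complex model}); the explicit \v{C}ech computations of Lemma \ref{lem:coh:comp} and Corollary \ref{cor:vanish:0} give $R\pi^{\tmod}_*\cO=\cO_{\tld{\cZ}^{\tmod,\mathrm{nm}}}$, whence Grothendieck duality yields $\omega_{\tld{\cZ}^{\tmod,\mathrm{nm}}/\cO}=R\pi^{\tmod}_*\omega_{\tld{Y}^{\tmod,\eta,\tau}/\cO}=\cO$, i.e.\ Gorensteinness, and resolution-rationality comes for free from the same vanishing. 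The transfer to $\cZ^{\tau}$ itself is then a derived base-change mod $\varpi$ plus properness, not a deformation argument from the special fibre. If you want to salvage your outline, the missing input is exactly this cohomological control of $R\pi_*\cO$ for the contraction map, which cannot be bypassed by quoting Frobenius splitting of Schubert varieties.
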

Furthermore, it turns out that $\cZ^{\tau}$ is almost always normal:
\begin{thm}[Theorem \ref{thm:non-normal locus}] \label{thm:intro:normal} Assume either $p\geq 7$ or $K=\mathbb{Q}_5$. Then $\cZ^{\tau}$ is normal, unless after twisting, $\tau=\chi\oplus \chi'$ is the sum of restrictions to $I_K$ of characters of $G_K$, and the inertial weights of $\tau$ belong to $\{0,1\}$. When $\tau$ is of this form, the non-normal locus consists of exactly the $\rhobar$ which are Fontaine--Laffaille with specific (irregular) inertial weights determined $\tau$.
\end{thm}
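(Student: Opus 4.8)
The plan is to transport the question to the explicit local models produced earlier in the paper, decide normality there by Serre's criterion, and then translate the answer into Galois-theoretic language. Since normality is a smooth-local property and, by the local model diagram, $\cZ^\tau$ is smooth-locally isomorphic to the local model $M^\tau$ (equivalently, the versal ring $R_{\rhobar}^{\eta,\tau}$ of \cite{CEGSA} is, up to formal power series variables, the completed local ring of $M^\tau$ at the point attached to $\rhobar$), it suffices to analyze $M^\tau$, and the non-normal locus of $\cZ^\tau$ is the preimage of the non-normal locus of $M^\tau$. Twisting $\rhobar$ and $\tau$ simultaneously by a character induces an isomorphism of the corresponding stacks, so we may normalize $\tau$ as in the statement. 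The explicit presentations already obtained exhibit $\cZ^\tau$ (equivalently, its versal rings) as a flat complete intersection over $\cO$, hence Cohen--Macaulay, and reduced; so by Serre's criterion $\cZ^\tau$ is normal if and only if it is regular in codimension $1$, and in general the non-normal locus coincides with the non-regular locus.

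Next I would compute the non-regular locus of $M^\tau$ from its explicit presentation. The local model carries a product structure indexed by the embeddings $\sigma\colon K \hookrightarrow E$, with each local factor $M^\tau_\sigma$ a two-dimensional scheme whose singularity type is dictated by the ``shape'' of $\tau$ at $\sigma$ --- that is, by how degenerate the corresponding digit of $\chi'\chi^{-1}$ is. For every shape except the maximally degenerate one, $M^\tau_\sigma$ is regular or has at worst an ordinary double point $xy=p$, whose singular locus has codimension $2$; such a factor therefore contributes only codimension $\ge 2$ to the non-regular locus of the product, and $R_1$ holds. The maximally degenerate shape at $\sigma$ occurs precisely when the $\sigma$-digit of $\chi'\chi^{-1}$ lies in $\{0,1\}$ and the relevant Frobenius-fixed (``level $1$'') condition holds; imposing this at every $\sigma$ is exactly the condition that, up to twist, $\tau=\chi\oplus\chi'$ is the restriction to $I_K$ of a sum of characters of $G_K$ with inertial weights in $\{0,1\}$. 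In that case the offending factor is a non-normal surface whose non-regular locus is a divisor (for instance, two smooth branches crossing along a divisor, whose normalization is smooth, consistent with Theorem \ref{thm:intro:Gorenstein}), so $R_1$ fails, and $\cZ^\tau$ is non-normal exactly along the image of $\bigcup_\sigma\big(M^{\tau,\mathrm{sing}}_\sigma \times \prod_{\sigma'\ne\sigma} M^\tau_{\sigma'}\big)$.

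It then remains to describe this locus on the Galois side. Under the identification of points of $M^\tau$ with Breuil--Kisin modules (hence with $\rhobar$) set up earlier, requiring the maximally degenerate shape at every embedding forces the associated module to be of ``level $1$'' with Hodge--Tate-type data in $\{0,1\}$, i.e.\ the corresponding $\rhobar$ is Fontaine--Laffaille. Within this Fontaine--Laffaille locus the singular points are cut out by the vanishing of the explicit equations of $M^\tau$, which unwinds to the statement that the Fontaine--Laffaille inertial weights of $\rhobar$ are the ``irregular'' ones --- coinciding, or differing by the forbidden amount --- at the embeddings prescribed by the digits of $\tau$; comparing this list of weights with $\tau$ yields the asserted description, and this is a closed condition of the expected (codimension one) type.

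The main obstacle is the computation of the second step: extracting from the explicit presentation of $M^\tau$ --- which in the non-generic regime is genuinely complicated --- the precise non-regular locus, proving it has codimension $\ge 2$ for all shapes but the maximally degenerate one and determining it exactly in the remaining case, and checking that the product structure transports these codimension statements faithfully across the local model diagram. By comparison, the first step uses only formal properties (smooth-locality of normality, Cohen--Macaulayness of complete intersections, Serre's criterion), and the Galois-theoretic translation of the third step, though delicate, only unwinds the dictionary between shapes, Fontaine--Laffaille data, and $\rhobar$ that is already in place.
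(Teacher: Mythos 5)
Your opening step — that $\cZ^\tau$ is a flat complete intersection, hence Cohen--Macaulay, so that normality can be decided by Serre's $R_1$ criterion — is incorrect, and in fact contradicts one of the paper's own conclusions. The complete intersection property holds for $\tld Y^{\tmod,\eta,\tau}$ (Lemma \ref{lem:LCI}) and for the naive model $\tld{\cZ}^{\nv,\tau}(\tld z)$, but $\cZ^\tau$ is modeled by the $p$-\emph{saturation} of the latter (Proposition \ref{prop:naive vs sat model}), and passing to the $\cO$-flat quotient generally destroys the complete-intersection and Cohen--Macaulay properties. Indeed, the remark following Theorem \ref{thm:intro:normal} explicitly states that $R_{\rhobar}^{\eta,\tau}$ is Cohen--Macaulay \emph{if and only if} it is normal. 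So the situations you are trying to detect are exactly those in which your premise fails: when $\cZ^\tau$ is non-normal it is also non-CM, and Serre's criterion cannot be applied to $\cZ^\tau$ directly.

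Your second step assumes that the local model factors as a product $\prod_\sigma M^\tau_\sigma$ of two-dimensional pieces over the embeddings $\sigma$, so that the non-regular locus can be read off factor by factor. This also fails: the paper stresses in the introduction that outside the generic range the equations ``involve mutual interactions between $R_\iota$'s for arbitrary large sets of different $\iota$,'' and that $R^{\eta,\tau}_{\rhobar}$ ``does not generally admit an obvious tensor product decomposition along embeddings.'' Concretely, only the auxiliary space $\tld{Ba}(\tld z)=\prod_j \tld{Ba}_j(\tld z_j)$ is a product; $\tld Y^{\tmod,\eta,\tau}$ sits inside it as a complete intersection imposing the gluing conditions $r_j=l_{j-1}$, and the image $\tld{\cZ}^{\tmod,\tau}$ does not inherit a product structure. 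Your codimension estimate therefore has no foundation.

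The paper's proof takes a different, cohomological route that sidesteps both difficulties: since the proper birational morphism $\pi:\tld Y^{\eta,\tau}\to\cZ^\tau$ has normal source and $\pi_*\cO_{\tld Y}$ computes the normalization, the non-normal locus is the support of $\coker(\cO_{\cZ^\tau}\to\pi_*\cO_{\tld Y^{\eta,\tau}})$. Via the straightening isomorphism mod $\varpi$, the Koszul resolution of $\tld Y^{\tmod,\eta,\tau}$ inside $\tld{Ba}(\tld z)$ (Lemma \ref{lem:koszul:res}), and the K\"unneth computation of Lemma \ref{lem:coh:comp} and Corollary \ref{cor:vanish}, this support is non-empty exactly when $(s,\mu)$ has the shape in the statement, and then the proof of Lemma \ref{lem:coh:comp} pins it down inside $\tld U(\tld z)(\F)$, after which it is matched with a Fontaine--Laffaille moduli space. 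Your final translation step is in the right spirit, but without the cohomological identification of the non-normal locus (and with the first two steps broken) it does not reach a proof.
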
 
\begin{rmk}
\begin{enumerate}
\item Explicitly, for $K=\Qp$ Theorem \ref{thm:intro:normal} shows that $\cZ^{\tau}$ is normal unless, up to twists, $\ovl{\tau}=1\oplus \ovl{\varepsilon}$ (where $\ovl{\tau}$ and $\ovl{\varepsilon}$ denote the mod $p$-reduction of $\tau$ and the cyclotomic character respectively), in which case the non-normal locus consists of $\rhobar$ is a twist of an unramified representation by $\ovl{\varepsilon}$.

\item It is proven in \cite{CEGSC} that the special fiber of $R^{\eta,\tau}_{\rhobar}$ is generically reduced. Together with Theorem \ref{thm:rational smoothness}, this implies that $R^{\eta,\tau}_{\rhobar}$ is Cohen--Macaulay if and only if it is normal, in which case it is furthermore Gorenstein. Thus Theorem \ref{thm:intro:normal} completely classifies when $R^{\eta,\tau}_{\rhobar}$ is Cohen-Macaulay. We invite the reader to compare this to the result of Hu--Pa{\v{s}}k{\=u}nas \cite{PH} about Cohen--Macaulayness of crystabelline deformation rings: whereas \cite{PH} covers the situation where $K=\Qp$, $\tau$ is restricted to (possibly wildly ramified) principal series types (i.e. $\chi,\chi'$ can be extended to characters of $G_{\Qp}$) but allows arbitrary Hodge--Tate weights, our result allows general unramified $K$ but restricts to tame (possibly non-principal series) types $\tau$ and Hodge--Tate weights $0,1$. We also point out that the method of \cite{PH} is unlikely to establish neither the Gorenstein nor the rational singularity property. 

Finally, as is well-known (cf.~\cite{PH}), Cohen--Macaulayness for (normalizations) of deformation rings allows one to upgrade $R[\frac{1}{p}]=T[\frac{1}{p}]$ theorems to integral $R=T$ theorems, hence our results give new instances of such.

\item The fact $\cZ^{\tau}$ has rather mild singularities is expected to be useful for the categorical $p$-adic Langlands program for $\GL_2(K)$, namely it suggests the conjectural functor $\mathfrak{A}$ of \cite{EGH} to have simple effect on a certain generating set of smooth representations of $\GL_2(K)$, thus giving hope that one can construct $\mathfrak{A}$ by ``generators and relations''. We point out that any generating set must necessarily involve representations of $\GL_2(K)$ with non-generic parameters, thus it is essential that we allow arbitrarily non-generic $\tau$ for this purpose.
\end{enumerate}
\end{rmk}

Our method to probe $\cZ^{\tau}$ is to construct group-theoretic local models for it. The main local model theorem has the following form: 
\begin{thm} [Proposition \ref{prop:innomable}, Theorem \ref{thm:main:model}]
\label{thm-main-intro}
Let $\tau=\chi\oplus \chi'$ be a tame inertial type with $\chi\neq \chi'$. There exists a $p$-adic formal scheme $\tld{\cZ}^{\textnormal{mod},\tau}$ such that
\begin{itemize}
\item If either $p\geq 7$ or $K=\Q_5$, $\cZ^{\tau}/p$ is smooth locally isomorphic to $\tld{\cZ}^{\textnormal{mod},\tau}/p$.
\item If either $p>16f+7$ or $p>7$ and $K=\Q_p$, then $\cZ^{\tau}$ is smooth locally isomorphic to $\tld{\cZ}^{\textnormal{mod},\tau}$.
\end{itemize}
\end{thm}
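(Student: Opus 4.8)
The strategy is to exhibit an explicit ``local model'' scheme together with a common smooth cover $\tld{\cZ}^{\tau}$ sitting in a correspondence $\cZ^{\tau}\longleftarrow\tld{\cZ}^{\tau}\longrightarrow\tld{\cZ}^{\textnormal{mod},\tau}$ whose two legs are smooth. I would start from the presentation of $\cZ^{\tau}$ coming from \cite{CEGSA}: it is the scheme-theoretic image, inside the moduli stack of \'etale $\varphi$-modules (equivalently, of $2$-dimensional mod-$p$ representations of $G_{K_{\infty}}$), of the moduli stack $Y^{[0,1],\tau}$ of rank-$2$ Breuil--Kisin modules over $\fS_L$ --- with $L/K$ the tame extension trivializing $\tau$ --- of height in $[0,1]$ and equipped with descent data of type $\tau$. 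It is $Y^{[0,1],\tau}$ that linearizes: a trivialization of the underlying $\fS_L$-module turns the partial Frobenii into a tuple of matrices $A=(A^{(j)})_j$, and the height and descent-data conditions confine each $A^{(j)}$ to an explicit affine scheme of $2\times 2$ matrices, with polynomial entries in $u$ of bounded degree and prescribed determinant (a unit times the appropriate Eisenstein polynomial) --- i.e.\ a bounded piece of the affine Grassmannian for $\GL_2$, the tuple lying in a product of such over the embeddings of $K$ and further constrained by the shape.

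The core construction is the common smooth cover $\tld{\cZ}^{\tau}$, obtained by rigidifying a Breuil--Kisin model with a \emph{gauge basis}: roughly a basis compatible with the internal grading and on which the tame descent data acts through the prescribed characters $\chi,\chi'$. Forgetting the gauge basis presents $\tld{\cZ}^{\tau}\to\cZ^{\tau}$ as a torsor under a smooth affine group (a truncated positive loop group of the twisted diagonal torus), hence smooth; on the other hand the partial Frobenii of a gauged model, read off in the gauge basis, form a tuple lying in an explicit affine scheme $\tld{M}$ of matrices cut out by the height and descent-data equations, and the residual freedom in the gauge that leaves this tuple fixed is again a smooth group, so $\tld{\cZ}^{\tau}\to\tld{M}$ is smooth as well. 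One then takes $\tld{\cZ}^{\textnormal{mod},\tau}$ to be the appropriate $p$-adic completion of the scheme-theoretic image of $\tld{M}$ inside the corresponding local picture of the \'etale $\varphi$-module stack. \emph{This is where essentially all of the work lies}: when $\tau$ is non-generic the naive gauge basis need not exist, and the stratification of $Y^{[0,1],\tau}$ by the finitely many \emph{shapes} $\tld{w}$ degenerates, so one must build the gauges and the charts shape-by-shape and then prove that they glue to a scheme $\tld{M}$ which is flat over $\cO$ with the expected, explicitly described special fiber. This is precisely where the clean ``$XY=p$'' local picture of the generic case is replaced by the more intricate singularities analyzed later in the paper, and I expect it to be the main obstacle.

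With the charts in hand, I would descend from $Y^{[0,1],\tau}$ to $\cZ^{\tau}$: since $\tld{\cZ}^{\tau}\to Y^{[0,1],\tau}$ is smooth, hence flat, the formation of the scheme-theoretic image commutes with it, and the scheme-theoretic image of $\tld{M}$ computed above pulls back to $\cZ^{\tau}$. Combined with the previous paragraph this produces the diagram $\cZ^{\tau}\leftarrow\tld{\cZ}^{\tau}\to\tld{\cZ}^{\textnormal{mod},\tau}$ with both legs smooth of the same relative dimension, which is the asserted smooth local isomorphism. For the first bullet (the statement modulo $p$) one reruns the whole argument with Breuil modules over $\F$ in place of Breuil--Kisin modules over $\fS_L$: the genericity needed to produce the gauge bases and to control the special fiber of $\tld{M}$ is then much weaker, and the comparison of $\cZ^{\tau}/p$ with $\tld{\cZ}^{\textnormal{mod},\tau}/p$ uses in addition the generic reducedness of $\cZ^{\tau}/p$ established in \cite{CEGSC}.

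Finally, the $p$-adic bounds are bookkeeping of the genericity invoked above. Any non-scalar tame $\tau$ is automatically ``$n$-generic'' once $p$ exceeds a bound linear in $f$ --- its inertial weights lie in a window of length $p-1$, whereas the genericity inequalities exclude only $O(n)$ of the possible values --- and tracking the value of $n$ actually required to run the integral gauge construction and the flatness of $\tld{M}$ yields $p>16f+7$; over $\F$ only a small amount of genericity is needed, giving $p\ge 7$ in general and leaving just the small case $p=5$, $K=\Q_5$, which is settled by direct computation. For $K=\Q_p$ a sharper, case-by-case analysis of the (few) shapes removes the cruder estimate and already works for $p>7$.
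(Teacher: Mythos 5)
Your proposal captures the broad outline --- $\cZ^{\tau}$ as a scheme-theoretic image, a $\GL_2^{\cJ}$-torsor cover, shape-by-shape charts --- but it misses the paper's central new idea and, as a consequence, cannot actually close the argument. The crux is not the construction of gauge/eigen bases (that part is standard and already handles the source stack $Y^{\eta,\tau}$); it is the problem that $\cZ^{\tau}$ is the scheme-theoretic image of a \emph{proper, non-finite} map $Y^{\eta,\tau}\to\Phi\text{-}\Mod^{\text{\'et},2}_K$ whose positive-dimensional fibers (the Kisin varieties) are exactly what make the non-generic case hard. Your $\tld{M}$ is a linearization of the Breuil--Kisin side, but the statement you need is about the \emph{target} of the contraction. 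You write that $\tld{\cZ}^{\textnormal{mod},\tau}$ is ``the scheme-theoretic image of $\tld{M}$ inside the corresponding local picture of the \'etale $\varphi$-module stack,'' but the \'etale $\varphi$-module stack is a quotient of a loop group by a \emph{full} loop-group $\varphi$-conjugation, and there is no affine chart of it on which scheme-theoretic images can be computed naively. The missing ingredient is the paper's ``straightening'' lemma (Lemma \ref{lem:strightening}): modulo $p^{N}$ with $N\approx p-2-\max_j\langle\mu_j,\alpha^\vee\rangle$, the $\varphi$-twisted conjugation by the congruence subgroup $L^+_1\rG$ becomes ordinary left translation. This is what replaces the $\varphi$-twisted picture by the honest (mixed-characteristic) Grassmannian $\Gr_1^{\textnormal{bd}}$, and the local model $\tld{\cZ}^{\textnormal{mod},\tau}$ is defined as a scheme-theoretic image \emph{inside $\Gr_1^{\textnormal{bd}}$}, which is a finite-type scheme with explicit affine charts. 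Without this step your ``image of $\tld{M}$'' is undefined, or at best uncontrolled.

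A second gap: you attribute the $p$-bounds to genericity, claiming any tame $\tau$ becomes $n$-generic for $p$ large. That is true but beside the point --- the paper's whole purpose is to work with $\tau$ of \emph{fixed, arbitrarily non-generic} inertial weights, and it does. The bounds instead arise from (i) the straightening working only modulo $p^{N}$, and (ii) an Elkik-type obstruction estimate (Proposition \ref{prop:elkik bound}, giving $p^{4f}$ in the Jacobian ideal) needed to lift the modulo-$p^{N-1}$ isomorphism $\tld{\cZ}^{\tau}\cong\tld{\cZ}^{\textnormal{mod},\tau}$ (Proposition \ref{prop:heart:inverse}) to an honest isomorphism over $\cO$. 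This two-step ``approximate then deform'' strategy, together with the cohomological computation (Propositions \ref{prop:coker}, \ref{prop:R1:null}) that the pushforward of the structure sheaf along the contraction $\pi^{\textnormal{mod}}$ has $p$-torsion cokernel killed by a single power of $p$, is what the argument actually hinges on; your proposal does not touch any of it. Finally, the modulo-$p$ statement comes out of the same modulo-$p^{N-1}$ isomorphism (since $N\geq 2$ once $p\geq 7$), not from a separate rerun with Breuil modules over $\F$ nor from the generic reducedness result of \cite{CEGSC}.
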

\begin{rmk}(Features of $\tld{\cZ}^{\textnormal{mod},\tau}$)
We defer the somewhat involved definition of $\tld{\cZ}^{\textnormal{mod},\tau}$ to section \ref{subsect:methods} below, and instead note for now that:
\begin{enumerate}
\item Its construction involves the geometry of (mixed characteristic) loop groups.
\item Its geometric structure is independent of $p$, in the sense that it essentially arises as the $p$-adic completion of a natural $\Z$-scheme. In particular, this exhibits a kind of ``independence-of-$p$'' property of tame potentially Barsotti--Tate deformation rings, as suggested in \cite{CDM3} and \cite{CDM4}. 
\item It admits an explicit affine cover where each affine open can be presented as (the $p$-saturation of) explicit equations constructed using the inertial weights of $\tau$ (see Table \ref{Table:Geometric_genes} for a sense of the presentations that show up).  
\end{enumerate}
\end{rmk}
\begin{rmk} (Bounds on $p$)\label{rmk:bounds on p}
Theorem \ref{thm-main-intro} is obtained via deformation theory: we construct $\tld{\cZ}^{\textnormal{mod},\tau}$ which captures the structure of $\cZ^{\tau}$ modulo some power of $p$, and then show this property persists when deforming to mixed characteristics.
The requirement that $p$ needs to be at least some linear bound on $f$ arises for two related reasons:
\begin{itemize}
\item There are non-isomorphic charts of $\cZ^{\tau}$ that are indistinguishable modulo linear powers of $p$. For instance, there are charts equisingular to $XY=p^k$ for any $k\leq f$.
This requires us to start with a model that approximates $\cZ^{\tau}$ modulo at least $p^{O(f)}$ to distinguish these charts.
\item To show the approximation deforms, we need to overcome obstruction groups for certain lifting problems, whose $p$-torsion can have exponents as large as linear in $f$.
\end{itemize}
\end{rmk}

The explicit nature of our models yields, for the first time, an efficient algorithm to compute \emph{any} given potentially Barsotti--Tate deformation ring with tame inertial type.
The basic form of the algorithm is as follows (the details occur in section \ref{subsect:naive model}). To the pair $(\rhobar,\tau)$ we assign
\begin{itemize}
\item an $f$-tuple $\tld{w}(\rhobar,\tau)$ of elements in the extended affine Weyl group $\tld{W}$ of $\GL_2$, which measures the relative position of the $\iota$-inertial weights of $\rhobar$ and $\tau$ for each $\iota:K\into \ovl{\Q}_p$; and
\item an $f$-tuple of ``degeneracy types'' for each $\iota:K\into \ovl{\Q}_p$ which roughly measures how degenerate the $\iota$-th inertial weight of $\chi/\chi'$ is.
\end{itemize}
These datas give rise, for each $\iota$, to a basic ring $R_\iota$, as well as a collection of structure matrices with entries in $R_{\iota}$ recorded in Tables \ref{Table:Traductions}, \ref{Table:matrices}. Both these datas are independent of $f$. Then $R^{\eta,\tau}_{\rhobar}$ is given as a suitable completion of (the $p$-saturation of) the quotient of $\bigotimes_{\iota} R_{\iota}$ by the relations that certain products of the structure matrices are zero. We stress that the equations we impose will generally involve mutual interactions between $R_{\iota}$'s for arbitrary large sets of different $\iota$. For this reason, outside of the generic case, $R^{\eta,\tau}_{\rhobar}$ does not generally admit an obvious tensor product decomposition along embeddings $\iota: K\into \ovl{\Q}_p$.

In the series of work \cite{CDM1,CDM2,CDM3}, Caruso--David--M\'ezard also investigated the problem of algorithmically computing $R^{\eta,\tau}_{\rhobar}$, in the special case where $\tau=\chi\oplus \chi'$ is a principal series type (so $\chi,\chi'$ extend to characters of $G_K$) and $\rhobar$ is irreducible. In a few cases when $f\leq 3$ (\cite[Th\'eor\`eme 4.3.1]{CDM1} and \cite[5.3.3]{CDM2}), they managed to determine $R_{\rhobar}^{\eta,\tau}$ based on the fact that one can guess ``a priori" what it is, cf.~\cite[Remark 3.2.10]{CDM1}. However, their investigations also suggested that the answer becomes intrinsically complicated for large $f$, which made their strategy hopeless in general. We demonstrate the power of our algorithm by confirming various of their conjectural examples of \cite[\S 5.3.2, 5.3.3]{CDM2}, as well as computing all examples for $K=\Qp$, some of which are new as alluded to in \cite[\S 7.5.13]{EGH}. For instance, for $K=\Qp$, $p\geq 7$, $R^{\eta,\tau}_{\rhobar}$ is a power series ring over either $\cO$, $\cO[\![X,Y]\!]/(XY-p)$ or $\cO[\![X,Y]\!]/(XY-p^2)$, except when up to twist, $\ovl{\tau}=\omega_2\oplus \omega_2^p$ and $\rhobar\otimes \ovl{\varepsilon}^{-1}$ is unramified and has scalar semisimplification, or $\ovl{\tau}=1\oplus \omega_1$ and $\rhobar\otimes \ovl{\varepsilon}^{-1}$ is unramified (here $\omega_n$ is Serre's niveau $n$ character). In these exceptional cases, assuming $p>7$, we also give the presentation of $R^{\eta,\tau}_{\rhobar}$ which turns out somewhat complicated, cf.~section \ref{sec:examples f=1}.

Besides algorithmic aspects, our theory also unifies and conceptualizes Caruso--David--M\'ezard's work. More specificially, they introduced the notion of \emph{gene} $\bX(\tau,\rhobar)$ associated to $\rhobar$ and $\tau$, which is a purely combinatorial gadget, closely related to the combinatorial data inputted in our algorithm, that keeps track of the difference between the inertial weights of $\rhobar$ and $\tau$. While the motivation for $\bX(\tau,\rhobar)$ was to encode geometric features of a resolution of $R^{\eta,\tau}_{\rhobar}$ arising from integral $p$-adic Hodge theory, Caruso--David--M\'ezard conjectured that, surprisingly, $\bX(\tau,\rhobar)$ is in fact a complete invariant, cf.~\cite[Conjecture 5.1.6]{CDM2} and \cite[Conjecture 2]{CDM4}. Our model gives a geometric interpretation of $\bX(\tau,\rhobar)$, allowing us to confirm this:


%

%
%
 %

%
%

%

%

%

%
%
%
\begin{thm}[Theorem \ref{thm:indepen}]
\label{thm-intro-integral_domain} Assume $p>16f+7$.
Let $\rhobar$ be irreducible and $\tau$ be a non-scalar principal series tame inertial type.
The deformation ring $R_{\rhobar}^{\eta,\tau}$ depends only on $\bX(\tau,\rhobar)$, in an explicit way, and furthermore, is an integral domain.
\end{thm}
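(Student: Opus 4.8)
The plan is to read $R_{\rhobar}^{\eta,\tau}$ off the local model of Theorem~\ref{thm-main-intro} and then analyze the explicit ring it produces. By \cite{CEGSA}, $R_{\rhobar}^{\eta,\tau}$ is a versal ring to the finite type point $x_{\rhobar}\in\cZ^{\tau}$ determined by $\rhobar$, so up to formally adjoining variables it is the completed local ring of $\cZ^{\tau}$ at $x_{\rhobar}$. Since $p>16f+7$, Theorem~\ref{thm-main-intro} provides a smooth local isomorphism between $\cZ^{\tau}$ and $\tld{\cZ}^{\textnormal{mod},\tau}$ near $x_{\rhobar}$, so the same completed local ring is obtained from $\tld{\cZ}^{\textnormal{mod},\tau}$ at the image point. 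Using the explicit affine cover of $\tld{\cZ}^{\textnormal{mod},\tau}$ from \S\ref{subsect:methods}, it then suffices to identify which chart contains that point and to compute the completed local ring there, which is exactly what the algorithm of \S\ref{subsect:naive model} does: the datum $\bigl(\tld{w}(\rhobar,\tau),\,\text{degeneracy types}\bigr)$ pins down the chart, and Tables~\ref{Table:Traductions},~\ref{Table:matrices} then exhibit $R_{\rhobar}^{\eta,\tau}$ as a power series ring over the $p$-adic completion of the $p$-saturation of $\bigl(\bigotimes_{\iota}R_{\iota}\bigr)/\cI$, where $\cI$ is generated by the entries of prescribed products of structure matrices.

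It remains to see that this presentation depends only on $\bX(\tau,\rhobar)$. For $\rhobar$ irreducible and $\tau$ a non-scalar principal series type, both $\tld{w}(\rhobar,\tau)\in\tld{W}^{f}$ and the degeneracy types are determined, embedding by embedding, by the relative position of the $\iota$-inertial weights of $\rhobar$ and those of $\tau$ --- which is precisely the combinatorial content of $\bX(\tau,\rhobar)$. I would make this dictionary precise (Table~\ref{Table:Geometric_genes}): each letter of the gene reads off a shape $\tld{w}_{\iota}\in\tld{W}$ together with a degeneracy type, hence a factor $R_{\iota}$ and its structure matrices, while the linking/adjacency data of the gene reads off which products of structure matrices are forced to vanish. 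Two pairs with equal genes then give literally the same presentation, hence isomorphic deformation rings, settling \cite[Conjecture~5.1.6]{CDM2} and \cite[Conjecture~2]{CDM4}. This dictionary --- the ``geometric meaning of the gene'' --- is the conceptual crux, since the presentation genuinely couples many embeddings $\iota$ at once and so cannot be reconstructed one embedding at a time.

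For the integral domain property, let $A$ denote the ring produced above. Being a quotient of a power series ring over $\cO$, $A$ is a complete local ring, so $\Spec A$ is connected; hence it suffices to prove that $A$ is normal, as a connected normal Noetherian ring is a domain. Normality is insensitive to the formal variables and to smooth local isomorphism, so $A$ is normal iff $\cZ^{\tau}$ is normal at $x_{\rhobar}$. By Theorem~\ref{thm:intro:Gorenstein} the normalization of $\cZ^{\tau}$ has rational singularities and is Gorenstein, by \cite{CEGSC} the special fiber of $R_{\rhobar}^{\eta,\tau}$ is generically reduced, and by Theorem~\ref{thm:intro:normal} the non-normal locus of $\cZ^{\tau}$ consists exactly of certain Fontaine--Laffaille $\rhobar$ with irregular inertial weights. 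I would verify --- from the proof of Theorem~\ref{thm:intro:normal}, equivalently from the shape of the gene --- that this exceptional locus never meets the locus of irreducible $\rhobar$ once $\tau$ is principal series (the exceptional genes being the ``decomposable'' ones, which force $\rhobar$ reducible), so that $\cZ^{\tau}$, and hence $A$, is normal at the relevant points. Alternatively, for the restricted family of genes that actually occur one can check the domain property directly from the explicit presentation, the relevant $A$ being cut out by linked determinantal equations.

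The main obstacle is thus twofold: establishing the exact correspondence in Step~2 between $\bX(\tau,\rhobar)$ and the combinatorial input of the algorithm (including the ``wiring'' that dictates which matrix products vanish), and, within Step~3, showing that irreducible $\rhobar$ together with principal series $\tau$ always lands in the normal locus of $\cZ^{\tau}$ --- equivalently, that the indecomposable genes coming from irreducible $\rhobar$ are never among the exceptional genes of Theorem~\ref{thm:intro:normal}. Everything else is bookkeeping with the tables and standard deformation theory.
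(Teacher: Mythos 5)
Your Step 1 is the right reduction, matching the paper: use the local model theorem and Corollary~\ref{cor:model} to identify $R_{\rhobar}^{\eta,\tau}$ with the completion of the $p$-saturation of an explicit $\cZ^{\nv,\tau}(\tld{z})$ at the point $\tld{z}$, via Theorem~\ref{thm:main:Gal:def}.

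The gap is in Step~2. You assert that ``each letter of the gene reads off a shape $\tld{w}_{\iota}\in\tld{W}$ together with a degeneracy type, hence a factor $R_\iota$ and its structure matrices,'' so that ``two pairs with equal genes then give literally the same presentation.'' This is not so, and it is precisely where all the work of \S\ref{subsec:fibers} and \S\ref{sub:Naive:eq} lives. The assignment $(\tld{w},s,\mu)\mapsto\bX(v')$ is \emph{not} injective: Proposition~\ref{prop:constraints} spells out fibers of this map with up to four distinct elements for a single pair $\binom{\bX_{j+f}}{\bX_j}$, and already the columns of Table~\ref{TableGen13} and the cases of Proposition~\ref{prop:constraints}\eqref{fibre:BAB} make clear that the gene pins down $\tld{w}_j$ only up to the ambiguity $\{t_\eta,w_0t_\eta\}$ versus $t_{w_0(\eta)}$. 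Different points of the fiber give genuinely different presentations drawn from Table~\ref{Table:Geometric_genes}; the content of Proposition~\ref{prop:ring:naive} (via Lemmas~\ref{lem:v'0:gO}, \ref{lem:trans:AB}, \ref{lem:seq:AB}) is that after $p$-saturation and a careful, cluster-by-cluster change of variables, all those presentations define the \emph{same} ring. Skipping this reduces the theorem to a tautology it does not have.

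Step~3 also needs care. First, the paper does not deduce the domain property from normality of $\cZ^\tau$; the introduction says it is ``read off from our tables.'' Your plan of invoking Theorem~\ref{thm:non-normal locus} and then arguing that the exceptional locus misses all absolutely irreducible $\rhobar$ when $\tau$ is principal series is an \emph{unverified} claim: the non-normal locus is identified with a Fontaine--Laffaille stratum $\mathrm{FL}_\lambda$ (see the end of the proof of Theorem~\ref{thm:non-normal locus}), and once $f\geq 2$ and the weight $\lambda$ mixes $(1,0)$ and $(1,1)$ it is not at all automatic that $\mathrm{FL}_\lambda$ contains no absolutely irreducible points. The implication works for $K=\Qp$ (where the exceptional locus forces $\rhobar$ to be a twist of an unramified representation), but that special feature does not persist. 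Moreover, even if one could show $\cZ^\tau$ is normal at $x_{\rhobar}$, normality is sufficient but not necessary for the completed local ring to be a domain, so this route would give a strictly stronger statement than what is claimed, and one that would still need to be reconciled with the structure of the non-normal locus. A safer path, consistent with the paper, is to verify integrality directly from the explicit cluster presentations of Table~\ref{Table:Geometric_genes} and Proposition~\ref{prop:ring:naive}\eqref{prop:caseO}, using that in the irreducible, principal series case the clusters are constrained so that each factor $R_i$ (and hence the completed tensor product over $\cO$ with residue field $\F$ large) is a domain.
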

In fact, our algorithm can be interpreted as giving the right generalization of Caruso--David--M\'ezard's conjecture for general (i.e.~not necessarily irreducible) $\rhobar$ and general (i.e.~not necessarily principal series) $\tau$. It should be noted that in this more general setting, $R_{\rhobar}^{\eta,\tau}$ is no longer always a domain, and one can read off when this is so from our tables. 

Finally, we expect the explicit computations of the deformation ring to be useful for global applications, particularly for mod $p$ multiplicity one questions and Breuil's lattice conjecture in non-generic cases (see \cite[Theorem 10.1.1 and Theorem 8.2.1]{EGS}).%

\subsection{Methods}\label{subsect:methods}
When $\tau$ is sufficiently generic, a local model for $\cZ^{\tau}$ can be extracted from \cite{CEGSA} 
(see also \cite{MLM} for a perspective closer to the present work), owing to the fact that in the generic cases $\cZ^{\tau}$ agrees with the moduli stack $Y^{\eta,\tau}$ of Breuil--Kisin modules of type $\tau$ and there are standard local models for the latter. 
In the non-generic cases, the essential difficulty is that $\cZ^{\tau}$ is only a scheme theoretic image of a map $Y^{\eta,\tau}\to \Phi\text{-}\Mod^{\text{\'et}, 2}_K$ to the stack of rank $2$ \'{e}tale $\varphi$-modules, so that $\cZ^{\tau}$ is obtained from $Y^{\eta,\tau}$ by contracting the fibers of this map (the \emph{Kisin varieties}). This is the source of all complexities in the non-generic situation, and the main innovation of this work is to find a good group theoretic model for this contraction procedure.

The main idea in Theorem \ref{thm-main-intro} is to use deformation theory to find good approximations of the map $Y^{\eta,\tau}\rightarrow  \Phi\text{-}\Mod^{\text{\'et}, 2}_K$. 
Let $L\rG$ (resp.~$L^+\rG$) be the loop group (resp.~positive loop group) for $\GL_2$ with respect to $v(v+p)$, that is the functor  $R\mapsto L\mathrm{G}(R)\defeq  \GL_2\left(R[v]^{\wedge_{(v(v+p))}}\Big[\frac{1}{v(v+p)}\Big]\right)$ (resp. $L^+\mathrm{G}(R)\defeq  \GL_2\left(R[v]^{\wedge_{(v(v+p))}}\right)$) where $R$ is an $\cO$-algebra. We also have the ``Iwahori'' $L^+\cG$ and the first principal congruence subgroup $L_1^+\rG$ which are the inverse image in $L^+\rG$ of the upper triangular Borel, resp.~the trivial subgroup under the mod $v$ reduction. We set $\Gr_1\defeq[L^+_1\rG\backslash L\rG]$, which is a $\GL_2$-torsor over a (mixed characteristic, Beilinson--Drinfeld) affine Grassmannian.

Up to a $p$-adic completion which we suppress for the remainder of the introduction, we have $Y^{\eta,\tau}=\big[L\rG^{\tau}/_{\phz} (L^+\cG)^f\big]$ as a quotient by a (shifted) $\phz$-conjugation action (here $\phz$ sends $v$ to $v^p$) and 
$L\rG^{\tau}$ is a particular closed subset of $L\rG^f$ encoding certain elementary divisor bounds and the combinatorial data of $\tau$ (cf.~section \ref{subsec:loop groups} for the precise definitions). 
The basic idea, as in \cite{MLM}, is to ``straighten'' the $\phz$-action in the above as much as possible: due to the contraction effect of $\phz$, the $\phz$-conjugation action is equivalent to the left translation action, provided one works modulo fixed powers of $p$ and on a small enough subgroup. Unlike the generic case in \emph{loc.~cit}., it is not always possible to do this in characteristic $p$ at the $L^+\cG$ level, but if $p$ is large enough, it is possible to do so at the $L_1^+\rG$-level (Lemma \ref{lem:strightening}).
This shows that $Y^{\eta,\tau}$ is congruent to $[\Gr_1^{\tau}/ B^f\text{-sh.cnj}]$ modulo $p^{cp}$ for some absolute constant $c$ (here the $B^f$-action is via shifted conjugation, and $\Gr_1^{\tau}$ is $[(L_1^+\rG)^f\backslash L\rG^{\tau}]$).

On the other hand, $\Phi\text{-}\Mod^{\text{\'et}, 2}_K=[L\rG^{f}/_{\phz} L\rG^{f}]$, and it is never possible to straighten the action of the larger group $L\rG^f$. However, since we are only interested in the scheme theoretic image of $Y^{\eta,\tau}$ in $\Phi\text{-}\Mod^{\text{\'et}, 2}_K$, it suffices to work instead with $\big[L\rG^{\textnormal{bd}}/_{\phz} (L^+\rG)^f\big]$ where $L\rG^{\textnormal{bd}}$ is a suitable bounded region in $L\rG^f$ containing the orbit of $L\rG^{\tau}$ (denoted by $L\rG^{\textnormal{bd},(v+p)v^{  \mu}}$ in the main text), and once again the $\phz$-action can be straightened on $(L_1^+\rG)^f$. We remark that  $L\rG^{\textnormal{bd}}$ descends to $\Gr_1^f$, inducing a subvariety $\Gr_1^{\textnormal{bd}}$. 

The upshot so far is that $Y^{\eta,\tau}\rightarrow  \Phi\text{-}\Mod^{\text{\'et}, 2}_K$ is well-approximated by the natural map 
\[[\Gr_1^{\tau}/ B^f\text{-sh.cnj}]\to [\Gr_1^{\textnormal{bd}}/ \GL_2^f\text{-sh.cnj}]\]
modulo $p^{cp}$. It further turns out that the scheme theoretic image $\cZ^{\textnormal{mod},\tau}$ of this is congruent to $\cZ^{\tau}$ modulo $p^{cp-1}$, and the model $\tld{\cZ}^{\textnormal{mod},\tau}$ in Theorem \ref{thm-main-intro} is the pullback of $\cZ^{\textnormal{mod},\tau}$ to the natural $(\GL_2)^f$-torsor of the target. 
Finally, to prove Theorem \ref{thm-main-intro}, we need to show that the above congruences can be lifted to characteristic $0$, at least locally. This is achieved by a detailed study the geometry of $\tld{\cZ}^{\textnormal{mod},\tau}$ to bound the $p^\infty$-torsion of the obstruction groups for such lifting problems (whose exponent can be as large as linear in $f$, as alluded to in Remark \ref{rmk:bounds on p}), and imposing bounds on $p$ required to overcome the obstructions. A by-product of this geometric study is a control on the singularities of $\tld{\cZ}^{\textnormal{mod},\tau}$, which is robust enough that it can be transferred to $\cZ^{\tau}$ through a mod $p$ congruence, thus yielding Theorems \ref{thm:intro:Gorenstein}, \ref{thm:intro:normal}.

\subsection{Acknowledgements}
This work was initiated while B.LH. was visiting Paris as Chaire d'excellence of the Fondation de Sciences Math\'ematiques de Paris, and we would like to thank the FSMP for making this visit possible.
A.M. thanks Agn\`es David and Xavier Caruso for having imagined, tested and developed together the conjectures proved in this article.
B.LH, A.M. and S.M.~thank C.~Breuil for useful feedback on a first draft of this article, and S.M. and A.M. thank Olivier Wittenberg for his patient explanations around complete intersection.
The authors express their gratitude to the special trimester ``The Arithmetic of the Langlands Program'', held at the Hausdorff Institute of Mathematics, where a substantial portion of this paper has been written.
S.M. and A.M. are member of the Institut Universitaire de France and acknowledge its support.
B.LH. acknowledges support from the National Science Foundation under grant Nos.~DMS-1128155, DMS-1802037, DMS-2302619 and the Alfred P. Sloan Foundation.

\subsection{Notation}
\label{subsec:notations}

We fix once and for all a separable closure $\ovl{K}$ and let $G_K \defeq \Gal(\ovl{K}/K)$. 
If $K$ is a nonarchimedean local field, we let $I_K \subset G_K$ denote the inertial subgroup.
We fix a prime $p$.
Let $E$ be a finite extension $\Q_p$ with ring of integers $\cO$, uniformizer $\varpi\in \cO$ and residue field $\F$ (which we assume is large enough).

We consider the group $G\defeq\GL_2$ (defined over $\Z$).
We write $B$ for the subgroup of upper triangular matrices, $T \subset B$ for the split torus of diagonal matrices and $Z \subset T$ for the center of $G$.   
Let $X^*(T)$ be the group of characters of $T$ which we identify with $\Z^2$ in the standard way and $\eta\in X^*(T)$ the element corresponding to $(1,0)\in \Z^2$.

We write $W$ (resp.~$\tld{W}$) for the Weyl group (resp. the extended affine Weyl group) of $G$, which act naturally on $X^*(T)$.
Thus $W=\{1,w_0\}$ is the set of permutation on $2$ elements and $\tld{W} = X^*(T) \rtimes W$.
We use the notation $t_{\nu} \in \tld{W}$ to denote the image of $\nu \in X^*(T)$ in $\tld{W}$. 

Let $\alpha$ denote the positive root of $G$ and $\langle \ ,\,\rangle$ the duality pairing on $X^*(T)\times X_*(T)$, so a weight $\lambda\in X^*(T)$ is \emph{dominant} if $0\leq \langle\lambda,\alpha^\vee\rangle$.
We set $X^0(T)$ to be the subgroup consisting of characters $\lambda\in X^*(T)$ such that $\langle\lambda,\alpha^\vee\rangle=0$. 

Let now $K$ be a finite unramified extension of $\Qp$ of degree $f$, with ring of integers $\cO_K$ and residue field $k$.
Thus $\cO_K=W(k)$, and denote by $\varphi$ the arithmetic Frobenius acting on $W(k)$ (i.e.~acting by rising to the $p$-power on the residue field). 
Let $G_0 \defeq \Res_{\cO_K/\Z_p} G_{/\cO_K}$, $T_0 \defeq \Res_{\cO_K/\Z_p} T_{/\cO_K}$, and $Z_0 \defeq \Res_{\cO_K/\Z_p} Z_{/\cO_K}$. 
We assume that $\cO$ contains the image of every ring homomorphism $\cO_K \ra \ovl{\Z}_p$ and write $\cJ\defeq \Hom_{\Zp}(\cO_K,\cO)$.
We define $\un{G} \defeq (G_0)_{/\cO}$ and fix an identification of $\un{G}$ with the split reductive group $G_{/\cO}^{\cJ}$. 
We similarly define and identify $\un{T},$ and $\un{Z}$.
The notations $\un{W}$, $\tld{\un{W}}$ are clear as should be the natural isomorphisms $X^*(\un{T}) = X^*(T)^{\cJ}$.  
Given an element $j\in\cJ$, we use a subscript notation to denote $j$-components obtained from the isomorphism $\un{G}\cong G_{/\cO}^{\cJ}$ (so that, for instance, given an element $\tld{w}\in \tld{\un{W}}$ we write $\tld{w}_j$ to denote its $j$-th component via the induced identification $\tld{\un{W}}\cong \tld{W}^{\cJ}$).
For sake of readability, we abuse notation and still write $w_0$ to denote the longest element in $\un{W}$, and $\eta\in X^*(\un{T})$ for the element corresponding to $(1,0)\in\Z^2$ in all embeddings.

The Frobenius automorphism $\varphi$ of $\cO_K$ induces an automorphism $\pi$ on $X^*(\un{T})\cong X_*(\un{T}^\vee)$ by the formula $\pi(\lambda)_\sigma = \lambda_{\sigma \circ \varphi^{-1}}$ for all $\lambda\in X^*(\un{T})$ and $\sigma: \cO_K \ra \cO$.
We similarly define an automorphism $\pi$ of $\un{W}$ and $\tld{\un{W}}$.

Recall that we fixed a separable closure $\ovl{K}$ of $K$.
We choose $\pi \in \ovl{K}$ such that $\pi^{p^f-1} = -p$ and let $\omega_K : G_K \ra \cO_K^\times$ be the character defined by $g(\pi) = \omega_K(g) \pi$, which is independent of the choice of $\pi$.
We fix an embedding $\sigma_0: K \into E$ and define $\sigma_j \defeq  \sigma_0 \circ \phz^{-j}$, which identifies $\cJ = \Hom(k, \F) = \Hom_{\Qp}(K, E)$ with $\Z/f \Z$. 
In particular, the automorphism $\pi$ on $X^*(\un{T})$ satisfies $(\pi(\lambda))_j=\lambda_{j+1}$.
We write $\omega_f:G_K \ra \cO^\times$ for the character $\sigma_0 \circ \omega_K$.

Let $\eps$ denote the $p$-adic cyclotomic character.
We fix normalization so that the $p$-adic cyclotomic character $\eps$ has Hodge--Tate weight $\{1\}$ for every $\kappa:K\into E$.
%
%
%
%

%
%
%
%
%

%

%
%

%
%

\section{Tame inertial types and Breuil--Kisin modules}

\subsection{Tame inertial types and Galois representations}

\subsubsection{Tame inertial types}
\label{subsub:TIT}
An {\it inertial type} for $K$ over $\cO$ (resp.~over $\F$) is an homomorphism  $\tau:I_{K}\ra\GL_2(\cO)$ (resp.~$\tau:I_{K}\ra\GL_2(\F)$) with open kernel and which extends to the Weil group of $G_K$.
An inertial type is \emph{tame} if it factors through the tame quotient of $I_{K}$.
Given $s\in \un{W} $ and $\mu\in X^*(\un{T})$, we have a tame inertial type $\tau(s,\mu):I_{K}\ra\GL_2(\cO)$ defined as follows: let $r$ be the order of $s_0s_1\dots s_{f-1}\in W$, and define $\bm{\alpha}_{k'}\defeq (\prod_{m'=0}^{k'-1}s_{f-1-m'}^{-1})(\mu_{f-k'})$.
Then 
\[
\tau(s,\mu)\defeq \bigg(\sum_{i'=0}^{rf-1}\bm{\alpha}_{i'}p^{i'}\bigg) (\omega_{fr}).
\]
In particular if $\nu=(\nu_j)_{j\in\cJ}\in X^*(\un{Z})\cong \Z^{\cJ}$ then
\begin{equation}
\label{eq:twist}
\tau(s,\mu+\nu)\cong \tau(s,\mu)\otimes_{\cO}\omega_f^{\sum_{j\in\cJ}\nu_jp^j}.
\end{equation}

Any tame inertial type is isomorphic to some $\tau(s,\mu)$.

More explicitly, if $s=(s_j)_{\cJ}\in \un{W}$ is such that $\prod_{j=0}^{f-1}s_{j}=\Id$ then
$\tau(s,\mu)\cong\omega_{f}^{\gamma}\oplus \omega_{f}^{\gamma'}$ where 
\begin{align}
\gamma&\defeq \sum_{j=0}^{f-1}p^j\Big(\mu_{f-j,(\prod_{i=0}^{f-1-j}s_{i})(1)}\Big)\nonumber\\
\label{eq:gamma}
\gamma'&\defeq \sum_{j=0}^{f-1}p^j\Big(\mu_{f-j,(\prod_{i=0}^{f-1-j} s_{i})(2)}\Big),
\end{align}
noting that $\prod_{i=0}^{f-1-j}s_{i}=\prod_{i=0}^{j-1}s_{f-1-i}^{-1}$.

Similarly, if $s=(s_j)_{\cJ}\in \un{W}$ is such that $\prod_{j=0}^{f-1}s_{j}=(12)$ then $\tau(s,\mu)\cong\omega_{2f}^{h}\oplus \omega_{2f}^{p^fh}$ where 
\begin{equation}
\label{eq:h}
h\defeq \sum_{j=0}^{f-1}p^j\Big(\mu_{f-j,(\prod_{i=0}^{f-1-j} s^{-1}_{i})(2)}\Big)+p^f\Big(\sum_{j=0}^{f-1}p^j\mu_{f-j,(\prod_{i=0}^{f-1-j} s^{-1}_{i})(1)}\Big).
\end{equation}

\begin{rmk}
\label{rmk:congruence}
Let $\lambda,\lambda'\in X^*(\un{T})\stackrel{\sim}{\ra}(\Z^2)^f$.
Then $\lambda\equiv\lambda' \mod (p-\pi^{-1})X^0(\un{T})$ if and only if 
\[
\sum_{j=0}^{f-1}p^j\lambda_{j,1}+p^f\Big(\sum_{j=0}^{f-1}p^j\lambda_{j,2}\Big)\equiv
\sum_{j=0}^{f-1}p^j\lambda'_{j,1}+p^f\Big(\sum_{j=0}^{f-1}p^j\lambda'_{j,2}\Big)\mod p^{2f}-1.
\]
\end{rmk}

We say that $(s,\mu)$ is \emph{a presentation} for the tame inertial type $\tau(s,\mu)$.
Note that any tame inertial type will have infinitely many presentations since
\begin{equation}
\label{ciao}
\tau(s,\mu)\cong\tau(\sigma s \pi(\sigma)^{-1}, \sigma(\mu)+p\nu-\sigma s \pi(\sigma)^{-1}\pi(\nu))
\end{equation}
for any $(\sigma,\nu)\in \un{W}\times X^*(\un{T})$.
We will also record a presentation $(s,\mu)$ by the element $\tld{w}^*(\tau)=s^{-1}t_{\mu}\in\tld{\un{W}}$.

\begin{lemma}\label{lem:write:tau}
Let $\tau: I_K\rightarrow \GL_2(\cO)$ be a tame inertial type.

Then, there exists $n\in \Z$, $(k_j)_{j\in\cJ}\in\big\{0,\dots,\frac{p+1}{2}\big\}^{\cJ}$ and $s\in W^{\cJ}$ such that
\[
\tau\cong \tau\big(s,\big((k_j,0)\big)_{j\in\cJ}\big)\otimes_{\cO} \omega_f^n
\]
and moreover $s_j=\Id$ if $k_j=0$.
\end{lemma}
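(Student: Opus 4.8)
The plan is to transport an arbitrary presentation $\tau\cong\tau(s,\mu)$ to a normalized one using only the twisting relation \eqref{eq:twist} and the presentation‑change relation \eqref{ciao}. It is convenient to encode a presentation $(s,\mu)$, with $s=(s_j)_j\in\un{W}$ and $\mu\in X^*(\un{T})$, by the data $(\epsilon,c)$ where $\epsilon_j:=+1$ if $s_j=\Id$ and $\epsilon_j:=-1$ if $s_j=w_0$, and $c_j:=\langle\mu_j,\alpha^\vee\rangle\in\Z$. By \eqref{eq:twist}, tensoring by a power of $\omega_f$ alters $\mu$ by an element of $X^*(\un{Z})$ and leaves $(\epsilon,c)$ unchanged, while conversely $(\epsilon,c)$ determines $\mu$ modulo $X^*(\un{Z})$; so the lemma reduces to showing that by admissible moves one can bring $(\epsilon,c)$ into the form $0\le c_j\le\frac{p+1}{2}$ for all $j$, with $\epsilon_j=+1$ whenever $c_j=0$. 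Then one takes $k_j:=c_j$, reads $s$ off from $\epsilon$, and lets $n\in\Z$ absorb the central translations of $\mu$ accumulated along the way. The moves I would use are the two supplied by \eqref{ciao}: with $\sigma=\Id$ and arbitrary $\nu\in X^*(\un{T})$, writing $d_j:=\langle\nu_j,\alpha^\vee\rangle$, one gets the \emph{carry move} $c_j\mapsto c_j+p\,d_j-\epsilon_j d_{j+1}$ (all $j$ at once) with $\epsilon$ unchanged; and with $\nu=0$ and $\sigma$ equal to $w_0$ in one component $j_0$ and $\Id$ elsewhere one gets the \emph{sign‑flip at $j_0$}, $c_{j_0}\mapsto-c_{j_0}$ with $\epsilon_{j_0-1},\epsilon_{j_0}$ both changing sign (for $f=1$ only $c_0\mapsto-c_0$). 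Both are isomorphisms of types and both preserve $\prod_j\epsilon_j$ (equal to $+1$ exactly when $\prod_j s_j=\Id$, i.e.\ in niveau $1$).

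The heart of the argument is a base‑$p$ reduction of the cyclically indexed tuple $(c_j)_{j\in\Z/f}$. Using carry moves to balance each digit, I would first reach $|c_j|\le\frac{p-1}{2}$ for all $j$: processing indices around the circle $\Z/f$, each step feeds a carry of bounded size to the neighbour $c_{j-1}$, and the process stabilizes after boundedly many passes, as in ordinary base‑$p$ reduction modulo $p^f-1$ (resp.\ $p^{2f}-1$ when $\prod_j s_j=w_0$); termination is made precise with a monovariant such as $\sum_j|c_j|$ together with a bound on the circulating carry. A sign‑flip at each $j$ with $c_j<0$ then gives $0\le c_j\le\frac{p-1}{2}\le\frac{p+1}{2}$. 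The slack to $\frac{p+1}{2}$ rather than $\frac{p-1}{2}$ is genuinely needed: already for $f=1$ with $\prod_j s_j=w_0$ one meets the residue $\frac{p+1}{2}\bmod(p+1)$, which is its own negative.

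It remains to arrange $\epsilon_j=+1$ on $Z:=\{j:c_j=0\}$. A sign‑flip at $j\in Z$ leaves every $c_j$ unchanged and toggles only the pair $(\epsilon_{j-1},\epsilon_j)$; on each maximal cyclic run of consecutive indices contained in $Z$, the effect on the $\epsilon$'s of that run of flipping a subset of the run is a unitriangular $\F_2$‑linear map, hence invertible, so all $\epsilon_j$ with $j\in Z$ can be set to $+1$ while only disturbing $\epsilon$'s at indices outside $Z$, where nothing is required. The one degenerate situation, $Z=\Z/f$ together with $\prod_j\epsilon_j=-1$, forces $\tau$ to be scalar and $\mu$ central, and there one invokes the identity $\tau(s,\mu)\cong\tau(\Id,\mu)$ for central $\mu$ (immediate from \eqref{eq:gamma}--\eqref{eq:h}) to replace $s$ by $\Id$. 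Collecting the steps yields the required presentation $\tau\cong\tau\big(s,((k_j,0))_j\big)\otimes_{\cO}\omega_f^{\,n}$ with $k_j\in\{0,\dots,\frac{p+1}{2}\}$ and $s_j=\Id$ whenever $k_j=0$.

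The step I expect to be the main obstacle is the cyclic base‑$p$ reduction: because reducing one digit feeds a carry to its neighbour around $\Z/f$, the processing order and the monovariant must be chosen carefully so that a carry cannot circulate indefinitely, and one must verify that the final digit range comes out exactly $\{0,\dots,\frac{p+1}{2}\}$ — transparent in niveau $1$ (balanced base $p$ modulo $p^f-1$), but more delicate in niveau $2$, where the relevant modulus is $p^{2f}-1$ and the digits occupy ``paired'' positions. The remaining bookkeeping — the $\F_2$‑linear system for the Weyl components and the scalar edge case — is routine once the reduction is in hand.
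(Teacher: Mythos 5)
Your proposal is correct and follows essentially the same route as the paper's proof: both use the presentation-change relation \eqref{ciao} twice — once with $\sigma=\Id$ to perform a cyclic balanced base-$p$ reduction on the $\alpha^\vee$-pairings, then once with $\nu=0$ to flip signs and normalize the Weyl components — together with the central twist \eqref{eq:twist}. The paper is much terser: for the reduction step it simply cites \cite[Lemma~2.3.3]{MLM} (so the ``cyclic carry'' subtlety you flag is genuine but already handled there), and for the final step it only says that $\sigma$ can be chosen so that $\sigma_j s_j\sigma_{j+1}^{-1}=\Id$ when $k'_j=0$. Your explicit treatment via the unitriangular $\bF_2$-system (or, equivalently, forward propagation of $\sigma_{j+1}=\sigma_j s_j$ along each maximal run of indices in $Z$) correctly fills this in, and you are right that the only obstruction is the degenerate case $Z=\cJ$ with $\prod_j s_j=w_0$; in that case $\mu$ is central and $\tau$ is scalar, and your observation that $\tau(s,\mu)\cong\tau(\Id,\mu)$ then handles it — this corner case is indeed silently elided in the paper. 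One small wrinkle: you initially state the reduction reaches $|c_j|\le\frac{p-1}{2}$, but the correct target in the niveau-2 situation is $|c_j|\le\frac{p+1}{2}$ (exactly matching the paper's interval), as you yourself note two sentences later; stating the target as $\frac{p+1}{2}$ from the start would remove the inconsistency.
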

\begin{proof}
In this proof, given two tame inertial types $\tau$ and $\tau'$ we write $\tau\sim \tau'$ if $\tau\cong \tau'\otimes_{\cO}\omega_f^{n}$ for some $n\in\Z$.
Let $(s',\mu')$ be a presentation of $\tau$.
Using \eqref{eq:twist} and applying repeatedly \eqref{ciao} with $\sigma=\Id$, we see that $\tau(s',\mu')\sim \tau(s',\mu'')$ where $\mu''\in X^*(\un{T})$ is such that $\langle \mu''_j,\alpha^\vee\rangle\in [-\frac{p+1}{2},\dots,\frac{p+1}{2}]$ for all $j\in\cJ$ (see also \cite[Lemma 2.3.3]{MLM}).
Hence, using again \eqref{eq:twist}, we have $\tau(s',\mu'')\sim \tau\big(s',(k'_j,0)_{j\in\cJ}\big)$ where $|k'_j|\leq \frac{p+1}{2}$.
Finally, using \eqref{eq:twist} and applying repeatedly \eqref{ciao} with $\nu=0$, we obtain $\tau\big(s',(k'_j,0)_{j\in\cJ}\big)\sim \tau\big(s,(k_j,0)_{j\in\cJ}\big)$, where at each step $\sigma$ can be chosen so that $\sigma_js_j\sigma_{j+1}^{-1}=\Id$ when $k'_j=0$.
\end{proof}
\begin{defn}
\label{defn:pres}
A presentation $(s,\mu)$ of a tame inertial type $\tau$ is \emph{small} if $0\leq \langle \mu_j,\alpha^\vee \rangle\leq \frac{p+1}{2}$ for all $j\in \cJ$ and moreover $s_j=\Id$ whenever $\langle \mu_j,\alpha^\vee\rangle=0$.
\end{defn}

If $\tau$ is a tame inertial type over $\cO$ we let $\ovl{\tau}\defeq\tau\otimes_{\cO}\F$.
This construction gives a bijection between isomorphism classes of tame inertial types over $\cO$ and tame inertial types over $\F$ so that the whole discussion above holds for the latter.

\begin{lemma}
\label{lem:s_or}
Let $\tau=\tau(s,\mu)$ be a tame inertial type with small presentation $(s,\mu)$.
For $j'\in\cJ'$ define $\bf{a}^{\prime(j')}\defeq\sum_{i'=0}^{rf-1}\bm{\alpha}_{-j'+i'}p^{i'}$.
There exists a unique element $(s'_{\orient,j'})_{j'\in\cJ'}\in W^{\cJ'}$ such that $(s'_{\orient,j'})^{-1}(\bf{a}^{\prime(j')})$ is strictly dominant for all $j'\in\cJ'$.
Moreover the embedding $\sigma_0 : k \into \F$ and $s$ can be chosen so that $(s'_{\orient,rf-1})=\Id$.
\end{lemma}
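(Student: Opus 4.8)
The plan is to reduce the whole statement to two elementary facts about signed radix-$p$ sums. By construction $\bf{a}^{\prime(j')}$ is an element of $X^*(T)\cong\Z^2$ of the form $\sum_{i'=0}^{rf-1}\bm{\alpha}_{-j'+i'}p^{i'}$; since $W$ acts on $\Z^2$ by permuting coordinates one has $\langle\bm{\alpha}_{k'},\alpha^\vee\rangle=\pm\langle\mu_{f-k'},\alpha^\vee\rangle$, and these have absolute value $\le\frac{p+1}{2}$ because $(s,\mu)$ is small. Hence
\[
\langle\bf{a}^{\prime(j')},\alpha^\vee\rangle=\sum_{i'=0}^{rf-1}d^{(j')}_{i'}p^{i'},\qquad d^{(j')}_{i'}\in\Z,\quad |d^{(j')}_{i'}|\le\frac{p+1}{2}<p.
\]
I would first record: \textbf{(i)} such a sum is nonzero as soon as some digit $d^{(j')}_{i'}$ is nonzero (the lowest nonzero digit would have to be divisible by $p$, contradicting $|d|<p$); and \textbf{(ii)} if the top digit $d^{(j')}_{rf-1}$ is nonzero, then $\mathrm{sign}\,\langle\bf{a}^{\prime(j')},\alpha^\vee\rangle=\mathrm{sign}\,d^{(j')}_{rf-1}$, since the lower terms have absolute value at most $\frac{p+1}{2}\cdot\frac{p^{rf-1}-1}{p-1}<p^{rf-1}\le|d^{(j')}_{rf-1}|\,p^{rf-1}$.

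For existence and uniqueness I would argue as follows. As $W$ is abelian and $\prod_j s_j$ has order $r$, the sequence $\bm{\alpha}_\bullet$ is $rf$-periodic, so for each $j'$ the coefficients $\bm{\alpha}_{-j'+i'}$, $i'=0,\dots,rf-1$, run over the full set $\{\bm{\alpha}_0,\dots,\bm{\alpha}_{rf-1}\}$. Since $\tau=\chi\oplus\chi'$ is non-scalar, at least one $\mu_j$ satisfies $\langle\mu_j,\alpha^\vee\rangle\neq 0$: otherwise smallness forces $s=\Id$ and then \eqref{eq:gamma} yields $\gamma=\gamma'$, i.e.\ $\tau$ is scalar. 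Consequently not all the digits $d^{(j')}_{i'}$ vanish, so \textbf{(i)} gives $\langle\bf{a}^{\prime(j')},\alpha^\vee\rangle\neq 0$: each $\bf{a}^{\prime(j')}$ is regular. For $\GL_2$, a regular weight $\lambda$ has a unique $w\in W$ with $w^{-1}(\lambda)$ strictly dominant — namely $w=\Id$ if $\langle\lambda,\alpha^\vee\rangle>0$ and $w=w_0$ if $\langle\lambda,\alpha^\vee\rangle<0$ — and these assemble into the unique tuple $(s'_{\orient,j'})_{j'\in\cJ'}$.

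For the normalization $s'_{\orient,rf-1}=\Id$, I would exploit the freedom to re-choose the data. Replacing $\sigma_0$ by $\sigma_a$ amounts to a cyclic re-indexing of $\cJ\cong\Z/f\Z$, hence replaces the small presentation $(s,\mu)$ by its $a$-shift $(s_{\bullet+a},\mu_{\bullet+a})$ — still small, still a presentation of $\tau$ with respect to the new $\sigma_0$, and with $s$ (as well as $\sigma_0$) genuinely changed. By non-scalarity I may choose $a$ with $\langle\mu_a,\alpha^\vee\rangle\neq 0$, so that after the change $\langle\mu_0,\alpha^\vee\rangle>0$ (strict, by smallness). Then the coefficient of $p^{rf-1}$ in $\bf{a}^{\prime(rf-1)}=\sum_{i'=0}^{rf-1}\bm{\alpha}_{i'+1}p^{i'}$ is $\bm{\alpha}_{rf}=\bm{\alpha}_0=\mu_0$, whose pairing with $\alpha^\vee$ is positive; by \textbf{(ii)}, $\langle\bf{a}^{\prime(rf-1)},\alpha^\vee\rangle>0$, so $s'_{\orient,rf-1}=\Id$.

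The mathematical input — the two radix-$p$ observations \textbf{(i)}, \textbf{(ii)} — is immediate; the part demanding care is the index book-keeping: checking the $rf$-periodicity of $\bm{\alpha}_\bullet$, pinning the leading coefficient of $\bf{a}^{\prime(rf-1)}$ down to $\bm{\alpha}_0=\mu_0$, and verifying that replacing $\sigma_0$ by $\sigma_a$ transforms $(s,\mu)$, $\omega_{fr}$ and the formula $\tau(s,\mu)=(\sum_{i'}\bm{\alpha}_{i'}p^{i'})(\omega_{fr})$ compatibly, so that smallness is preserved and the type is unchanged.
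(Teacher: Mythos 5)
Your proof is correct, and it reorganizes the argument in a way that is genuinely different from the paper's. Both proofs share the essential arithmetic input --- a signed radix-$p$ expansion with digit bound $\le\tfrac{p+1}{2}$, whose sign (or nonvanishing) is governed by its top nonzero digit --- but the logical structure differs. The paper first establishes the ``moreover'' normalization (that $\bf{a}^{\prime(rf-1)}$ is strictly dominant after arranging $\langle\mu_0,\alpha^\vee\rangle>0$) and then runs a decreasing induction on $j'$ using the recurrence
\[
\bf{a}^{\prime(rf-j')}=\frac{\bf{a}^{\prime(rf-j'+1)}-\bm{\alpha}_{-(rf-j'+1)}}{p}+p^{rf-1}\bm{\alpha}_{j'-1},
\]
with a case split on whether $\bm{\alpha}_{j'-1}\in X^0(T)$, to propagate regularity from $j'=rf-1$ downward. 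You instead observe that $\bm{\alpha}_\bullet$ is $rf$-periodic (which you verify from $W$ being abelian and $\prod_j s_j$ having order $r$), so that for every $j'$ the digits of $\langle\bf{a}^{\prime(j')},\alpha^\vee\rangle$ are a cyclic permutation of the same tuple $(\langle\bm{\alpha}_{0},\alpha^\vee\rangle,\dots,\langle\bm{\alpha}_{rf-1},\alpha^\vee\rangle)$; hence once a single digit is nonzero, every $\bf{a}^{\prime(j')}$ is regular at once, by your observation (i). This eliminates the recurrence and case analysis entirely and treats all $j'$ symmetrically, which is cleaner. Your explicit invocation of non-scalarity to produce a regular digit is welcome, since the lemma as stated is silent about it and fails for scalar types (where all $\bf{a}^{\prime(j')}\in X^0(T)$); the paper's proof hides the same use of non-scalarity behind ``we may assume $\langle\mu_0,\alpha^\vee\rangle>0$.'' Your treatment of the ``moreover'' clause --- shift $\sigma_0$ to $\sigma_a$ with $\langle\mu_a,\alpha^\vee\rangle\neq0$, then read off the sign from the leading digit $\bm{\alpha}_0=\mu_0$ via (ii) --- matches the paper's conclusion, with the bookkeeping on why a cyclic shift preserves smallness and the presentation left implicit as you acknowledge. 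What the paper's version buys is a more constructive, recursive description of $s'_{\orient,j'}$ in terms of the previous one when the intervening digit is $X^0$-valued; what yours buys is brevity and a uniform argument for all $j'$.
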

\begin{proof} 
We can assume without loss of generality that $\langle \mu_0,\alpha^\vee\rangle>0$.
As $\bm{\alpha}_{0}=\mu_0$ we thus have $\bf{a}^{\prime(0)}=p^{rf-1}\mu_0+\sum_{i'=0}^{rf-2}\bm{\alpha}_{-j'+i'}p^{i'}$ which is dominant since $\langle \mu_0,\alpha^\vee\rangle>0$ and $p-1>\langle \bm{\alpha}_{-j'+i'},\alpha^\vee\rangle$ for $i'=0,\dots, rf-2$.

We can now conclude by decreasing induction: using the relation
\[
\bf{a}^{\prime(rf-j')}=\frac{\bf{a}^{\prime(rf-j'+1)}-\bm{\alpha}_{-(rf-j'+1)}}{p}+p^{rf-1}\bm{\alpha}_{j'-1}
\]
we see that either $\bm{\alpha}_{j'-1}\notin X^0(T)$ and hence $\bf{a}^{\prime(rf-j')}$ is strictly dominant if and only if 
$\bm{\alpha}_{j'-1}$ is strictly dominant (in which case $s'_{\orient,rf-j'}$ is uniquely determined), or $\bm{\alpha}_{j'}\in X^0(T)$ and hence $\bf{a}^{\prime(rf-j')}$ is strictly dominant if and only if $\bf{a}^{\prime(rf-j'+1)}$ is strictly dominant (in which case $s'_{\orient,rf-j'}=s'_{\orient,rf-j'+1}$ and $s'_{\orient,rf-j'+1}$ is uniquely determined the inductive hypothesis).
\end{proof}

\subsubsection{Galois deformation rings}

We let $\rhobar:G_{K}\ra\GL_2(\F)$ be a continuous Galois representation.
Let $\mathcal{C}_\cO$ be the category of Noetherian complete local $\cO$-algebras with residue field $\F$ and local $\cO$-algebra homomorphisms. The functor that assigns to $(A,\fm_A) \in \mathcal{C}_\cO$ the set of lifts $\rho_A: G_{K} \ra \GL_2(A)$ of $\rhobar$ is representable by $R_{\rhobar}^\square$, the (unrestricted) lifting ring of $\rhobar$.


Given a tame inertial type $\tau$ over $\cO$ we let $R_{\rhobar}^{\eta,\tau}$ be the reduced $\cO$-flat quotient of $R_{\rhobar}^\square$ such that the 
$\ovl{\Q}_p$-points of
$\Spec R_{\rhobar}^{\eta,\tau}[1/p]$ correspond to the subset of $\rho: G_K \ra \GL_2(\ovl{\Q}_p)$ (inside $\Spec(R_{\rhobar}[1/p])$) which are potentially Barsotti--Tate and such that the \emph{covariant} Weil--Deligne inertial type is isomorphic to $\tau\otimes_{\cO} \ovl{\Q}_p$.
The rings $R_{\rhobar}^{\eta,\tau}$ are known in ``generic'' cases (cf. \cite{EGS,MLM}):
\begin{thm}
\label{thm:generic_cases}
Assume that $\tau$ has a presentation $(s,\mu)$ where $2\leq \langle \mu_j,\alpha^\vee\rangle\leq p-2$.
Then either $R_{\rhobar}^{\eta,\tau}=0$ or
\[
R_{\rhobar}^{\eta,\tau}\cong \cO[\![Z_{1},\ldots,Z_{f+4-n},X_1,Y_1,\ldots,X_n,Y_n]\!]/(X_iY_i-p,\, i=1,\ldots,n)
\]
for some $n\in\{0,\dots,f\}$.
\end{thm}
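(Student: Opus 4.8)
The plan is to reduce the determination of $R_{\rhobar}^{\eta,\tau}$ to the geometry of a local model, in the spirit of \cite{EGS,MLM}. First I would note that, by \cite{CEGSA}, the ring $R_{\rhobar}^{\eta,\tau}$ is recovered from a versal ring of the moduli stack $\cZ^{\tau}$ at the $\F$-point attached to $\rhobar$, after adjoining a suitable number of formal variables to pass from $\cZ^{\tau}$ to its framed deformations; in particular $R_{\rhobar}^{\eta,\tau}\neq 0$ precisely when $\rhobar$ admits a potentially Barsotti--Tate lift of type $\tau$, equivalently a Breuil--Kisin module $\mathfrak{M}$ of height $\leq 1$ with descent datum of type $\tau$, and otherwise $R_{\rhobar}^{\eta,\tau}=0$. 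In the generic range $2\leq\langle\mu_j,\alpha^{\vee}\rangle\leq p-2$ the Kisin variety attached to $\rhobar$ is either empty or a single reduced point, so that the versal ring of $\cZ^{\tau}$ at $\rhobar$ agrees with that of the stack $Y^{\eta,\tau}$ of Breuil--Kisin modules at $\mathfrak{M}$; it therefore suffices to compute the latter.

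The heart of the argument is the local model for $Y^{\eta,\tau}$. Writing $Y^{\eta,\tau}=\big[L\rG^{\tau}/_{\phz}(L^+\cG)^f\big]$ as in Section~\ref{subsect:methods}, the genericity hypothesis is exactly what allows one to straighten the $\phz$-conjugation action to the left-translation action, already at the $L^+\cG$-level and over $\cO$ (this is the input of \cite{MLM}). Consequently $Y^{\eta,\tau}$ is, smooth-locally around $\mathfrak{M}$, isomorphic to a naive local model $M(\leq\eta)$, which decomposes as a product $\prod_{j\in\cJ}M_j$ over the embeddings $j$, each factor $M_j$ being the Iwahori-level local model for $\GL_2$ attached to the minuscule cocharacter $\eta=(1,0)$ in the mixed characteristic affine flag variety for $v(v+p)$.

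Next I would invoke the classical description of $M_j$: it is a flat projective $\cO$-scheme of relative dimension $1$, smooth over $\cO$ away from a single closed point of its special fibre, at which its completed local ring is $\cO[\![X,Y]\!]/(XY-p)$ (the deformation parameter $p$ being the one entering through the polynomial $v(v+p)$), all other completed local rings being $\cO[\![t]\!]$. Hence the completed local ring of $\prod_j M_j$ at any point is, for some $n\in\{0,\dots,f\}$ — namely the number of embeddings $j$ at which one lands at the singular point of $M_j$, controlled by the $j$-th component of the shape $\tld{w}(\rhobar,\tau)$ — of the form $\cO[\![Z_1,\dots,Z_m,X_1,Y_1,\dots,X_n,Y_n]\!]/(X_iY_i-p)$. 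Since a formal power series ring over such a ring is again of the same shape (absorbing the extra variables into the $Z_\bullet$), we obtain
\[
R_{\rhobar}^{\eta,\tau}\cong\cO[\![Z_1,\dots,Z_m,X_1,Y_1,\dots,X_n,Y_n]\!]/(X_iY_i-p,\ i=1,\dots,n)
\]
for some $m$; comparing with the known relative dimension $\dim_{\cO}R_{\rhobar}^{\eta,\tau}[1/p]=f+4$ forces $m+n=f+4$, i.e.\ $m=f+4-n$, which is the asserted presentation.

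The main obstacle is establishing the local model diagram itself, i.e.\ the straightening of the $\phz$-action and the identification of the resulting object with the product of $\GL_2$ Iwahori local models; this is precisely the step that genericity renders routine and that the body of the present paper must replace by a far more delicate analysis in the non-generic case. A secondary point requiring care is the bookkeeping of the framing variables relating the stacky versal ring to $R_{\rhobar}^{\eta,\tau}$, together with the verification that the smooth-local isomorphisms are compatible with the integral ($\cO$-flat) structures, so that the relation that appears is genuinely $XY=p$ rather than $XY$ equal to a unit multiple of $p$.
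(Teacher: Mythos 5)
The paper does not prove this statement; it is quoted from \cite{EGS,MLM}, and your sketch is essentially the argument of those references (and is exactly the generic-case degeneration of the strategy of the present paper: straighten the $\phz$-conjugation, identify the result with a product of $\GL_2$ Iwahori local models with their $XY=p$ singularities, and match dimensions to pin down the number of smooth variables). One step you state too quickly: knowing that the Kisin variety is a single point does not by itself give that the versal rings of $Y^{\eta,\tau}$ and $\cZ^{\tau}$ at $\rhobar$ agree — the morphism could a priori be a nontrivial infinitesimal thickening onto its image. What is actually needed is that $Y^{\eta,\tau}\to\Phi\text{-}\Mod^{\text{\'et},2}_K$ is a monomorphism (fully faithful on $R$-points) in the generic range, which is proved by the same pole-order estimate as in Proposition \ref{prop:innomable} of this paper; with that in hand the rest of your outline, including the dimension count $m+n=f+4$, is correct.
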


One of the main goals of this paper, accomplished in \S \ref{subsub:Gal:def}, is to provide the analog of Theorem \ref{thm:generic_cases} in highly non-generic situations.%

\subsection{Breuil--Kisin modules and Emerton--Gee stack}

\subsubsection{}
We introduce the necessary background on Breuil--Kisin modules with tame descent data.
%

%

Let $\tau=\tau(s,\mu)$ be a tame inertial type with presentation $(s, \mu)$ which we fix throughout this section.
Recall that $r\in\{1,2\}$ is the order of $s_0 s_{1} s_{2} \cdots s_{f-1} \in W$.
Let $K'/K$ be the unramified extension of degree $r$ contained in $\ovl{K}$, set  $f'\defeq fr$, $e' \defeq p^{f'}-1$, and identify $\Hom_{\Qp}(K',E)$ with $\Z/f' \Z$ via $\sigma_{j'} \defeq \sigma'_0\circ \phz^{-j'} \mapsto j'$ where $\sigma'_0:K' \iarrow E$ is a fixed choice of an embedding extending $\sigma_0:K \iarrow E$. %
(In particular, restriction of embeddings corresponds to reduction modulo $f$ in the above identifications.)

We let $\pi'\in \ovl{K}$ be an $e'$-th root of $-p$,  $L' \defeq K'(\pi')$ and $\Delta' \defeq \Gal(L'/K') \subset \Delta \defeq \Gal(L'/K)$. 
We have the character $\omega_{K'}(g) \defeq  \frac{g(\pi')}{\pi'}$ for $g \in \Delta'$ (which does not depend on the choice of $\pi'$) and given an $\cO$-algebra $R$, we set $\fS_{L', R} \defeq (W(k') \otimes_{\Zp} R)[\![u']\!]$. 
The latter ring is endowed with the endomorphism $\varphi:\fS_{L', R} \ra \fS_{L', R}$ acting as the Frobenius on $W(k')$ and sending $u'$ to $(u')^{p}$, and is endowed moreover with an action of $\Delta$ by $g'(u') = \frac{g'(\pi')}{\pi'} u' = \omega_{K'}(g') u'$ if $g'\in\Delta'$ and, letting $\sigma^f \in\Delta$ be the lift of the $p^f$-Frobenius on $W(k')$ which fixes $\pi'$, then $\sigma^f$  acts in natural way on $W(k')$ and trivially on $u'$ (all the endomorphism above act trivially on $R$ by default).
Finally, $v\defeq  (u')^{e'}$, 
\[
\fS_R \defeq (\fS_{L', R})^{\Delta = 1} = (W(k) \otimes_{\Zp} R)[\![v]\!]
\]
and $E(v) \defeq v + p = (u')^{e'} + p$.

\begin{defn}
A Breuil--Kisin module $\fM$ of rank $2$ over $\fS_{L', R}$ with descent data of type $\tau$ and height $\leq 1$ is the datum of:
\begin{enumerate} 
\item  a rank $2$ projective $\fS_{L', R}$-module $\fM$;
\item an injective $\fS_{L', R}$-linear map $\phi_\fM:\phz^*(\fM)\ra\fM$ whose cokernel is annihilated by $E(v)$; and
\item a semilinear action of $\Delta$ on $\fM$ which commutes with $\phi_{\fM}$, and such that, for each $j' \in \Hom_{\Qp}(K',E)$, 
\[
(\fM\otimes_{W(k'),\sigma_{j'}}R) \mod u' \cong \tau^{\vee} \otimes_{\cO} R 
\]  
as $\Delta'$-representations.
\end{enumerate}
\end{defn}
We write $Y^{[0,1],\tau}$ to be the groupoid of such objects (cf.~ \cite[Definition 5.1.3]{MLM}).
We also define $Y^{\eta,\tau}(R)\subset Y^{[0,1],\tau}(R)$ to be the subgroupoid  satisfying the additional \emph{determinant condition}
\[
\det (\phi_{\fM})\in (R[\![v]\!])^{\times} (v+p).
\]

We consider $\fM^{(j')}\defeq \fM\otimes_{W(k'),\sigma_{j'}}R$ as a $R[\![u']\!]$-submodule of $\fM$ in the standard way, so it is endowed with a semilinear action of $\Delta'$. The Frobenius $\phi_{\fM}$ induces $\Delta'$-equivariant morphisms $\phi_\fM^{(j')}:\phz^*(\fM^{(j'-1)})=(\phz^*(\fM))^{(j')}\ra \fM^{(j')}$   
(here the pull back on the first object is with respect to the $R$-algebra map $\phz: R[\![u']\!]\ra R[\![u']\!] $ such that $u'\mapsto {u'}^p$).
We remark that, by letting $\tau'$ denote the tame inertial type for $K'$ obtained from $\tau$ via the identification $I_{K'}=I_K$ induced by the inclusion $K'\subseteq \ovl{K}$, the semilinear action of $\Delta$ induces an isomorphism $\iota_{\fM}:(\sigma^f)^*(\fM) \cong \fM$ (see \cite[\S 6.1]{LLLM}) as elements of $Y^{[0,1], \tau'}(R)$.

Let $\fM \in Y^{[0, 1], \tau}(R)$.
Recall that an \emph{eigenbasis} of $\fM$ is a collection of bases ${\beta}^{(j')}=(f_1^{(j')},f_2^{(j')})$ for each $\fM^{(j')}$ such that $\Delta'$ acts on $f_i^{(j')}$ via the character $\omega_{f'}^{-\mathbf{a}^{\prime\,(0)}_{i}}$ and such that $\iota_{\fM} ((\sigma^f)^*(\beta^{(j')})) = \beta^{(j' + f)}$ for all $j'\in \Hom_{\Qp}(K',E)$.
Given an eigenbasis $\beta$ for $\fM$, we let $C^{(j')}_{\fM, \beta}$ be the matrix of $\phi_\fM^{(j')}:\phz^*(\fM^{(j' - 1)})\ra \fM^{(j')}$ with respect to the bases $\phz^*(\beta^{(j'-1)})$ and $\beta^{(j')}$ and set
\[
A^{(j')}_{\fM,\beta}\defeq 
\Ad\left(
(s'_{\orient,j'})^{-1}  (u^{\prime})^{-\bf{a}^{\prime\,(j')}}
\right)(C^{(j')}_{\fM,\beta})
\]
for $j'\in \Hom_{\Qp}(K',E)$.
\begin{lemma}
Let $\fM \in Y^{[0, 1], \tau}(R)$ with eigenbasis ${\beta}$.
The element $A^{(j')}_{\fM,\beta}$ has coefficients in $R[\![v]\!]$ and is upper triangular modulo $v$.
Finally, it only depends on the restriction of $j'$ to $K$.
\end{lemma}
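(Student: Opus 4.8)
The plan is to read off the shape of the matrix $C^{(j')}_{\fM,\beta}$ from the descent datum, and then deduce all three assertions by inspecting the conjugation defining $A^{(j')}_{\fM,\beta}$ together with the strict dominance of $(s'_{\orient,j'})^{-1}(\bf{a}^{\prime(j')})$ supplied by Lemma~\ref{lem:s_or}.

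First I would record the $\Delta'$-module structure of $R[\![u']\!]$: since $\Delta'$ is cyclic of order $e'$ (tame), acting by $g(u')=\omega_{K'}(g)u'$, and $(u')^{e'}=v$ is $\Delta'$-invariant, the decomposition $R[\![u']\!]=\bigoplus_{0\le m<e'}(u')^m R[\![v]\!]$ is exactly the decomposition into $\Delta'$-isotypic pieces. As $\phi_\fM^{(j')}$ is $\Delta'$-equivariant and both $\beta^{(j')}$ and $\phz^*(\beta^{(j'-1)})$ consist of $\Delta'$-eigenvectors — the characters of $\phz^*(f_i^{(j'-1)})$ being the $p$-th powers of those of $f_i^{(j'-1)}$, since $\phz$ is the $p$-power map on $u'$ — every entry of $C^{(j')}_{\fM,\beta}$ lies in a single isotypic piece, hence is a monomial $(u')^c$ with $0\le c<e'$ times a power series in $v$, the exponent $c$ being forced by the difference of the two relevant descent characters. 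Unwinding those characters through the embeddings $\sigma_{j'}$ and using the periodicity $\bm{\alpha}_{k'+f'}=\bm{\alpha}_{k'}$ together with $p^{f'}\equiv 1\pmod{e'}$ (which give the congruence $p\,\bf{a}^{\prime(j'-1)}\equiv\bf{a}^{\prime(j')}\pmod{e'}$), one finds that, up to the permutation $s'_{\orient,j'}$, the exponent of the $(a,b)$-entry is the representative in $\{0,\dots,e'-1\}$ of $\bf{a}^{\prime(j')}_a-\bf{a}^{\prime(j')}_b$ modulo $e'$ (coordinates via $X^*(\un{T})\cong(\Z^2)^{\cJ}$) — in other words the cocharacter $\bf{a}^{\prime(j')}$ and the element $s'_{\orient,j'}$ entering the definition of $A^{(j')}_{\fM,\beta}$ are precisely the data designed to absorb it.

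Next I would plug this into $A^{(j')}_{\fM,\beta}=\Ad\big((s'_{\orient,j'})^{-1}\big)\circ\Ad\big((u')^{-\bf{a}^{\prime(j')}}\big)(C^{(j')}_{\fM,\beta})$: conjugation by the diagonal element $(u')^{-\bf{a}^{\prime(j')}}$ multiplies the $(a,b)$-entry by $(u')^{-\bf{a}^{\prime(j')}_a+\bf{a}^{\prime(j')}_b}$, so every entry of $A^{(j')}_{\fM,\beta}$ then has $u'$-valuation a multiple of $e'$; nonnegativity of that valuation — hence membership in $R[\![v]\!]$ — holds because for a small presentation $|\langle\bf{a}^{\prime(j')},\alpha^\vee\rangle|<e'$. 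For upper-triangularity modulo $v$ I would inspect the $(2,1)$-entry: whichever of its two possible values $s'_{\orient,j'}$ takes, that entry has $u'$-valuation $e'$ rather than $0$ precisely when $\langle(s'_{\orient,j'})^{-1}(\bf{a}^{\prime(j')}),\alpha^\vee\rangle>0$, i.e.~when $(s'_{\orient,j'})^{-1}(\bf{a}^{\prime(j')})$ is strictly dominant — which is exactly Lemma~\ref{lem:s_or} — so $(A^{(j')}_{\fM,\beta})_{21}\equiv 0\pmod v$. Finally, for the dependence only on $j'|_K$: two embeddings of $K'$ have the same restriction to $K$ iff they are congruent modulo $f$, and the eigenbasis axiom $\iota_\fM((\sigma^f)^*\beta^{(j')})=\beta^{(j'+f)}$, together with $\iota_\fM$ intertwining the Frobenii of $(\sigma^f)^*\fM$ and $\fM$, shows $C^{(j'+f)}_{\fM,\beta}$ is obtained from $C^{(j')}_{\fM,\beta}$ by applying $\sigma^f$ to its entries; but $\sigma^f$ fixes $u'$ and acts only on $W(k')$, which the base change along $\sigma_{j'}$ has already absorbed, so it is the identity on $R[\![u']\!]$ and $C^{(j'+f)}_{\fM,\beta}=C^{(j')}_{\fM,\beta}$, while $\bf{a}^{\prime(j'+f)}$, $s'_{\orient,j'+f}$ differ from $\bf{a}^{\prime(j')}$, $s'_{\orient,j'}$ by one and the same Weyl element (the analogue of the periodicity when $r=2$), a difference that cancels inside the conjugation, so $A^{(j'+f)}_{\fM,\beta}=A^{(j')}_{\fM,\beta}$.

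The hard part will be the character bookkeeping in the second paragraph: one must match, embedding by embedding, the $\Delta'$-characters attached to the eigenbasis (normalized via $\sigma'_0$) with the characters actually seen on the $j'$-component through $\sigma_{j'}=\sigma'_0\circ\phz^{-j'}$, and verify that $\bf{a}^{\prime(j')}$ and $s'_{\orient,j'}$ are exactly what is needed to turn the resulting monomial degrees into nonnegative powers of $v$ with the lower-left corner divisible by $v$. Everything else is short manipulation with monomials and with the conjugation action, and uses nothing beyond Lemma~\ref{lem:s_or} and the smallness of the presentation — note that height $\le 1$ is not actually needed here, since $\phi_\fM$ already maps $\phz^*\fM$ into $\fM$.
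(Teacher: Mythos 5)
Your argument for the first two assertions (membership in $R[\![v]\!]$ and upper-triangularity mod $v$) matches the paper's proof in all essentials: you read off the $u'$-valuation of each entry of $C^{(j')}_{\fM,\beta}$ from the $\Delta'$-eigenbasis condition via the isotypic decomposition $R[\![u']\!]=\bigoplus_{0\le m<e'}(u')^m R[\![v]\!]$, conjugate by $(u')^{-\bf{a}^{\prime(j')}}$ so that the $u'$-valuations become multiples of $e'$, invoke smallness so that $|\langle \bf{a}^{\prime(j')},\alpha^\vee\rangle|<e'$ forces the right nonnegativity, and use the strict dominance of $(s'_{\orient,j'})^{-1}(\bf{a}^{\prime(j')})$ supplied by Lemma~\ref{lem:s_or} to see the $(2,1)$-entry is divisible by $v$. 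You spell out the character bookkeeping a little more explicitly than the paper, but the route and the key inputs are identical.

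For the third assertion, however, the paper simply cites \cite[Lemma 6.2, Proposition 6.9]{LLLM}, while your direct argument contains an internal inconsistency. You simultaneously assert $C^{(j'+f)}_{\fM,\beta}=C^{(j')}_{\fM,\beta}$ and that the common Weyl element $w$ by which $\bf{a}^{\prime(j'+f)}$ and $s'_{\orient,j'+f}$ differ from $\bf{a}^{\prime(j')}$ and $s'_{\orient,j'}$ ``cancels inside the conjugation.'' It does not: a short computation gives
\[
(s'_{\orient,j'+f})^{-1}(u')^{-\bf{a}^{\prime(j'+f)}}
=(s'_{\orient,j'})^{-1}\,w^{-1}(u')^{-w\cdot\bf{a}^{\prime(j')}}
=(s'_{\orient,j'})^{-1}(u')^{-\bf{a}^{\prime(j')}}\,w^{-1},
\]
so the extra $w^{-1}$ lands on the right of the conjugating element and $A^{(j'+f)}_{\fM,\beta}=\Ad\big((s'_{\orient,j'})^{-1}(u')^{-\bf{a}^{\prime(j')}}\big)\big(\Ad(w^{-1})(C^{(j'+f)}_{\fM,\beta})\big)$. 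For this to equal $A^{(j')}_{\fM,\beta}$ one needs $C^{(j'+f)}_{\fM,\beta}=\Ad(w)\big(C^{(j')}_{\fM,\beta}\big)$, \emph{not} $C^{(j'+f)}_{\fM,\beta}=C^{(j')}_{\fM,\beta}$. The missing point is exactly the $r=2$ subtlety: under $(\sigma^f)^*$ the $\Delta'$-characters of the two eigenbasis vectors are interchanged, so the eigenbasis axiom $\iota_\fM((\sigma^f)^*\beta^{(j')})=\beta^{(j'+f)}$ entails a swap, which is what produces the needed $\Ad(w)$. Tracking this correctly is precisely what the cited \cite{LLLM} references do; as written, your last paragraph does not close that gap.
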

\begin{proof}
By the definition of eigenbases and of the action of $\Delta'$ on $\fS_R\otimes_{W(k'),\sigma_{j'}}R$, we see that $(C^{(j')}_{\fM,\beta})_{\alpha}\in \big((u')^{\langle \bf{a}^{\prime\,(j')},\alpha^\vee\rangle}R[\![v]\!]\big)\cap R[\![u']\!]$ for $\alpha\in \Phi$.
Explicitly, letting $\delta_{j'}\in\{0,1\}$ be such that $\delta_{j'}=0$ if and only if $\bf{a}^{\prime\,(j')}$ is dominant, we have for $\alpha\in\Phi^+$ that $(C^{(j')}_{\fM,\beta})_{\alpha}\in (u')^{e'\delta_{j'}+\langle \bf{a}^{\prime\,(j')},\alpha^\vee\rangle}R[\![v]\!]$ and $(C^{(j')}_{\fM,\beta})_{-\alpha}\in (u')^{e'(1-\delta_{j'})+\langle \bf{a}^{\prime\,(j')},-\alpha^\vee\rangle}R[\![v]\!]$.
Thus for $\alpha\in \Phi^+$, the $\alpha$-entry of $\Ad\left(
(u^{\prime})^{-\bf{a}^{\prime\,(j')}}
\right)(C^{(j')}_{\fM,\beta})$ is in $v^{\delta_{j'}}R[\![v]\!]$ and the $-\alpha$-entry is in $v^{1-\delta_{j'}}R[\![v]\!]$.
By the definition of $\delta_{j'}$ and the fact that $s'_{\orient,j'}=\Id$ if and only if $\bf{a}^{\prime\,(j')}$ is dominant, we conclude that $A^{(j')}_{\fM,\beta}$ is upper triangular modulo $v$.
The fact that $A^{(j')}_{\fM,\beta}$ depends only on $j'$ modulo $f$ follows from \cite[Lemma 6.2, Proposition 6.9]{LLLM}. 
\end{proof}

\subsubsection{\'Etale $\Phi$-modules.}
\label{subsub:etale:Phi}
We recall the notion of \'etale $\Phi$-modules.

Recall that $\cO_{\cE,K}$ denotes the $p$-adic completion of $(W(k)[\![v]\!])[1/v]$. 
It is endowed with a continuous Frobenius morphism $\phz$ extending the Frobenius on $W(k)$ and such that $\phz(v)=v^p$.
Given a $p$-adically complete Noetherian $\cO$-algebra $R$ we let $\Phi\text{-}\Mod^{\text{\'et}, 2}_K(R)$ be the groupoid consisting of projective modules $\cM$ of rank $2$ over $\cO_{\cE,K}\widehat{\otimes}_{\Zp}R$ endowed with a Frobenius semilinear endomorphism $\phi_{\cM}:\cM\ra\cM$  inducing an isomorphism on the pull-back: $\Id\otimes_{\phz}\phi_{\cM}:\phz^*(\cM)\stackrel{\sim}{\longrightarrow}\cM$.

We similarly define the ring $\cO_{\cE,L'}$, with Frobenius $\phz$, and the groupoid $\Phi\text{-}\Mod^{\text{\'et},2}_{dd,L'}(R)$ of rank $2$ \'etale $(\phz,\cO_{\cE,L'}\widehat{\otimes}_{\Zp}R)$-modules with descent data from $L'$ to $K$.

The groupoids $\Phi\text{-}\Mod^{\text{\'et}, 2}_K$,  $\Phi\text{-}\Mod^{\text{\'et}, 2}_{dd,L'}$ form fppf stacks over $\Spf\,\cO$
(see \cite[\S 3.1]{CEGSA}).

Given $\fM\in Y^{\eta,\tau}(R)$, $\fM \otimes_{\fS_{L',R}} (\cO_{\cE,L'}\widehat{\otimes}_{\Zp}R)$ is an object $\Phi\text{-}\Mod^{\text{\'et},2}_{dd,L'}(R)$, which we can descend to an \'etale $\phz$-module $\cM \in \Phi\text{-}\Mod^{\text{\'et},2}_{K}(R)$ by 
\[
\cM \defeq (\fM \otimes_{\fS_{L',R}} (\cO_{\cE,L'}\widehat{\otimes}_{\Zp}R))^{\Delta=1}.
\]
This defines a morphism of stacks $\eps_\tau: Y^{\eta,\tau}\ra\Phi\text{-}\Mod^{\text{\'et},2}_{K}$ which is representable by algebraic spaces, proper, and of finite presentation by \cite[Corollary 3.1.8(3), Proposition 3.3.5]{CEGSA} (and the fact that taking $\Delta$-invariants is an isomorphism of groupoids). 
Moreover, $\eps_\tau$ is independent of any $\tld{\un{W}}$-presentation of $\tau$.

Given $(\cM,\phi_\cM)\in \Phi\text{-}\Mod^{\text{\'et},2}_{K}(R)$, we decompose $\cM=\oplus_{j \in \cJ} \cM^{(j)}$ over the embeddings $\sigma_j: W(k)\ra\cO$, with induced maps $\phi_\cM^{(j)}:\cM^{(j-1)}\ra\cM^{(j)}$. 
The following proposition is a direct computation on the definition of the $A^{(j)}_{\fM,\beta}$:
\begin{prop}$($\cite[Proposition 5.4.2]{MLM}$)$ 
\label{prop:expeps}  
Let $\fM \in Y^{[0, 1], \tau}(R)$ and $\beta$ an eigenbasis of $\fM$.
Let $(s, \mu)$ be a small presentation of $\tau$.

Then there exists a basis $\fF$ for $\eps_\tau(\fM)$ such that the matrix of $\phi_{\eps_\tau(\fM)}^{(j)}$ with respect to $\fF$ is given by 
\[
A^{(j)}_{\fM, \beta} s^{-1}_j v^{\mu_j} = A^{(j)}_{\fM, \beta} \tld{w}^*(\tau)_j .
\]
\end{prop}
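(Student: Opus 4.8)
The plan is to unwind the definitions relating $\fM$, its descent to an \'etale $\phz$-module $\cM = \eps_\tau(\fM)$, and the matrices $C^{(j')}_{\fM,\beta}$ and $A^{(j')}_{\fM,\beta}$. First I would pass from $\fM\in Y^{[0,1],\tau}(R)$ to $\fM\otimes_{\fS_{L',R}}(\cO_{\cE,L'}\widehat{\otimes}_{\Zp}R)$, which is an object of $\Phi\text{-}\Mod^{\text{\'et},2}_{dd,L'}(R)$, and record the Frobenius matrices $C^{(j')}_{\fM,\beta}$ over all $j'\in\Hom_{\Qp}(K',E)$ in the eigenbasis $\beta=({\beta}^{(j')})_{j'}$. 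The key point is that the eigenbasis trivializes the $\Delta'$-action: each $f_i^{(j')}$ transforms by $\omega_{f'}^{-\mathbf{a}^{\prime(0)}_i}$, so taking $\Delta=1$-invariants amounts to an explicit change of basis by a diagonal matrix whose entries are powers of $u'$, together with the further descent from $K'$ to $K$ using $\iota_{\fM}$ and the compatibility $\iota_{\fM}((\sigma^f)^*(\beta^{(j')})) = \beta^{(j'+f)}$.

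Next I would carry out this descent explicitly. The diagonal twist by $(u')^{-\bf{a}^{\prime(j')}}$ is exactly what enters the definition of $A^{(j')}_{\fM,\beta}$, while the extra factor $(s'_{\orient,j'})^{-1}$ is there to make $A^{(j')}_{\fM,\beta}$ upper triangular modulo $v$ (by the Lemma just proved). So after this change of basis, the Frobenius of the descended module $\cM$ over $\cO_{\cE,K}\widehat{\otimes}_{\Zp}R$, in the resulting basis $\fF$, should be $A^{(j)}_{\fM,\beta}$ times the ``monomial part'' coming from the diagonal twist that was stripped off: conjugating $C^{(j')}$ by $(s'_{\orient,j'})^{-1}(u')^{-\bf{a}^{\prime(j')}}$ leaves a residual factor, and tracking how $v=(u')^{e'}$ and the Weyl/translation data interact produces exactly $s_j^{-1}v^{\mu_j}$. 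Here one uses $\bf{a}^{\prime(j)}$ is built from $\bm{\alpha}_{k'}\defeq (\prod_{m'}s_{f-1-m'}^{-1})(\mu_{f-k'})$, and the identity $s_j^{-1}v^{\mu_j}=\tld{w}^*(\tau)_j$ is just the definition $\tld{w}^*(\tau)=s^{-1}t_\mu$. Since Proposition \ref{prop:expeps} is quoted directly as \cite[Proposition 5.4.2]{MLM}, the cleanest route is to verify that the conventions for $\eps_\tau$, eigenbases, orientation elements $s'_{\orient,j'}$, and small presentations in the present paper match those of \emph{loc.~cit.}, and then cite it; alternatively one reproves it by the bookkeeping above.

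The main obstacle I expect is purely notational bookkeeping: correctly matching the indexing over $\cJ'=\Z/f'\Z$ versus $\cJ=\Z/f\Z$ (the reduction-mod-$f$ identifications, and the fact, established in the preceding Lemma, that $A^{(j')}_{\fM,\beta}$ depends only on $j'\bmod f$), and keeping straight the difference between the covariant normalization used to define $Y^{\eta,\tau}$ (the condition on $\fM^{(j')}\bmod u'$ involves $\tau^\vee$) and the form in which $s$ and $\mu$ appear in $\tld{w}^*(\tau)_j$. The substantive input — that $A^{(j)}_{\fM,\beta}$ has coefficients in $R[\![v]\!]$ and is upper triangular modulo $v$ — is already the content of the Lemma just stated, and the $\Delta$-invariance computation is the one quoted from \cite[\S 6.1]{LLLM} and \cite[Lemma 6.2, Proposition 6.9]{LLLM}; so no new difficulty arises beyond organizing these ingredients, and the proof should be a short ``direct computation'' as the statement advertises.
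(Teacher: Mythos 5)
Your proposal is correct and matches the paper's approach: the paper explicitly describes the proposition as "a direct computation on the definition of the $A^{(j)}_{\fM,\beta}$" and cites \cite[Proposition 5.4.2]{MLM} rather than reproving it, which is precisely the route you identify (verify conventions and cite, or reproduce the bookkeeping). Your sketch of the underlying computation — changing basis by $(s'_{\orient,j'})^{-1}(u')^{-\bf{a}^{\prime(j')}}$, tracking the residual monomial factor through descent and the $\phz$-twist, and invoking the preceding Lemma for integrality and upper-triangularity — correctly identifies the substantive ingredients.
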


Finally, when $R$ is a complete local Noetherian $\cO$-algebra with finite residue field we have an exact functor
\begin{align*}
\bV^*_K:\Phi\text{-}\Mod^{\text{\'et},2}_{K}(R)&\ra\Rep^2_R(G_{K_\infty})
\end{align*}
establishing an anti-equivalence of categories (by the theory of fields of norms, cf.~\cite[\S 2.3 and \S 6.1]{LLLM}) and therefore a functor $T^*_{dd}: Y^{\eta,\tau}(R)\rightarrow \Rep^2_R(G_{K_\infty})$ defined as the composite of $\eps_{\tau}$ followed by $\bV^*_K$.
(Note that the formula of \emph{loc.~cit}.~is inaccurate and should be modified as follows: $\bV^*_K(\cM)=\Hom_R\big((\cM\otimes_{\cO_{\cE}\widehat{\otimes}R} \cO_{\cE^{\textnormal{un}}}\widehat{\otimes}R)^{\phz=1},R\big)$.)
Finally, we recall that given $(\cM,\phi_\cM)\in \Phi\text{-}\Mod^{\text{\'et}}_{K}(R)$, we can define an \'etale $(\phz^f,\cO_{\cE,K}\widehat{\otimes}_{W(k),\sigma_0}R)$-module obtained as the $f$-fold composite of the partial Frobenii acting on $\cM^{(0)}$ (see for instance \cite[\S 2.3]{LLLM}).

Let $\cZ^\tau$ be the scheme theoretic image of  $\eps_\tau$.
This is the moduli stack of tame potentially Barsotti--Tate representations of type $\tau$ constructed and studied in \cite[\S 5.1]{CEGSA}.

A point $\rhobar\in \cZ^\tau(\F)$ gives rise to a mod $p$-representation of $G_K$.
Then $R^{\eta,\tau}_{\rhobar}$, the tame potentially Barsotti--Tate deformation ring of type $\tau$, is a  versal ring to $\cZ^\tau$ at $\rhobar$ (\cite[Corollary 5.2.19]{CEGSA}).

\section{Models}
\label{sec:models}
Recall from \S~\ref{subsub:etale:Phi} the proper, birational morphism $Y^{\eta,\tau}\ra \cZ^\tau$ which is an isomorphism on generic fibers.

\subsection{Loop groups and open charts}
\label{subsec:loop groups}
Given a Noetherian $\cO$-algebra $R$ let $R[v]^{\wedge_{(v(v+p))}}$ denote the $(v(v+p))$-adic completion of $R[v]$.
We denote by $R[v]\left[\frac{1}{v(v+p)}\right]_{\leq0}\subset R[v]^{\wedge_{(v(v+p))}}\left[\frac{1}{v(v+p)}\right]$ the subring of elements the form $\frac{P}{(v(v+p))^m}$ with $P\in R[v]$ such that $\deg P\leq2m$.

We define:
\begin{align*}
L\mathrm{G}(R)&\defeq  \GL_2\left(R[v]^{\wedge_{(v(v+p))}}\Big[\frac{1}{v(v+p)}\Big]\right);
\\
L^+ \mathrm{G}(R)&\defeq \GL_2\left(R[v]^{\wedge_{(v(v+p))}}\right);\\
L^{-} \mathrm{G}(R)&\defeq \left\{A \in L \G(R) \text{ and $A$ has coefficients in $ R[v]\left[\frac{1}{v(v+p)}\right]_{\leq0}$} \right\}
\end{align*}
Now we have surjections
\begin{align*}
\ev^+:L^+\rG&\onto \GL_2
\\
\ev^-:L^-\rG&\onto \GL_2
\end{align*}
obtained by evaluation modulo $v$ and $1/v$ respectively.

We define
\begin{align*}
L^+_1 \mathrm{G}(R)&\defeq \ker \ev^+\\
L^{-}_1 \mathrm{G}(R)&\defeq \ker \ev^-\\
L^+\cG(R)&
\defeq (\ev^+)^{-1}(B)
\\
L^{--}\cG(R)&\defeq
(\ev^+)^{-1}(\ovl{N})
\end{align*}
Note that the functors $L^+\cG$ and $L^{--}\cG$ have a slightly different meaning than the corresponding functors in \cite{MLM}, however they coincide after $p$-adic completion.
We define a closed subfunctor $\cA(\eta)\subset L\mathrm{G}$ whose $R$-valued points consist of $A\in L\mathrm{G}(R)$ satisfying 
\begin{enumerate}
\item 
\label{it:Adm:1}
$
\det A\in \big(R[v]^{\wedge_{(v(v+p))}}\big)^\times(v+p)$;
\item 
\label{it:Adm:2}
$A\in \Mat_2(R[v]^{\wedge_{(v(v+p))}})
$ and upper triangular mod $v$;
\item 
\label{it:Adm:3}
$(v+p)A^{-1}$ is upper triangular mod $v$.
\end{enumerate}
Note that $\cA(\eta)(\ovl{\F})$ identifies with 
\[
\bigcup_{\tld{z}_j\in \Adm(\eta)}L^+\cG(\ovl{\F})\tld{z}_jL^+\cG(\ovl{\F})
\]
where $\Adm(\eta)=\{\begin{pmatrix}v&0\\0&1\end{pmatrix},\begin{pmatrix}1&0\\0&v\end{pmatrix}, \begin{pmatrix}0&1\\v&0\end{pmatrix}\}$ is the $\eta$-admissible set.

Given a type $\tau$ with small presentation $(s,\mu)\in \un{W}\times X^*(\un{T})$ we define
\begin{equation}
\label{eq:LGtau}
L\mathrm{G}^{\tau}(R)\defeq \prod_{j\in\cJ}\cA(\eta)(R)s_jv^{\mu_j}\subset L\mathrm{G}^{\cJ}(R)
\end{equation}
We also define
\[
L\mathrm{G}^{\textnormal{bd},(v+p)v^\mu}(R)
\defeq \left\{
(A_j) \in (L\mathrm{G}(R))^{\cJ},\text{such that :}
\begin{array}{l}
\det A_j\in \big(R[v]^{\wedge_{(v(v+p))}}\big)^\times  (v+p)v^{\langle \mu_j,\alpha^\vee\rangle}\\
\,\,\,\quad A_j\in \Mat_2(R[v]^{\wedge_{(v(v+p))}})\end{array}
\right\}.
\]
\begin{lemma}
\label{lem:basic}
\begin{enumerate}
\item
\label{eq:basic:1}
$L_1^+\G$ is a normal subgroup of both of $L^+\G$ and $L^+\cG$, and
\begin{align*}
L^+\G&=L_1^+\G\ltimes \GL_2\\
L^+\cG&=L_1^+\G\ltimes B
\end{align*}
\item
\label{eq:basic:2.5}
The multiplication maps $L^+_1\rG\times L^-\rG\ra L\rG$, $L^+\cG\times L^{--}\cG\ra L\rG$ are formally \'etale monomorphisms after $p$-adic completion.
\item 
\label{eq:basic:2}
$\cA(\eta)^{\wedge_p}$ has an affine open cover $\cA(\tld{z})^{\wedge_p}$ where $\tld{z}$ runs over $\Adm(\eta)$ (cf.~Table \ref{Table1FV}).
\end{enumerate}
\end{lemma}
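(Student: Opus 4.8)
I would treat the three items in increasing order of difficulty.

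\emph{Item \ref{eq:basic:1}.} Since $v(v+p)\in(v)$ as ideals of $R[v]$, reduction modulo $v$ extends to a ring homomorphism $R[v]^{\wedge_{(v(v+p))}}\to R$, so $\ev^+$ is a homomorphism of group functors. Hence $L_1^+\rG=\Ker\ev^+$ is normal in $L^+\rG$, and, being contained in the subgroup $L^+\cG$, also normal in $L^+\cG$. The inclusions of constant matrices $\GL_2\hookrightarrow L^+\rG$ and $B\hookrightarrow L^+\cG$ are sections of $\ev^+$ and of its restriction to $L^+\cG$; combined with normality of $L_1^+\rG$ and surjectivity of multiplication, these give the two semidirect product decompositions.

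\emph{Item \ref{eq:basic:2.5}.} I would argue uniformly for both maps. First, the two factors intersect trivially: an element of $L^-\rG$ (resp.~$L^{--}\cG$) has entries of bounded pole order and bounded degree at infinity, so if it also lies in $L_1^+\rG$ (resp.~$L^+\cG$) -- i.e.~has entries in $R[v]^{\wedge_{(v(v+p))}}$ and is trivial (resp.~upper triangular) modulo $v$ -- then a Liouville-type argument, using the coordinate decomposition $R[v]^{\wedge_{(v(v+p))}}=R[\![t]\!]\oplus R[\![t]\!]v$ with $t\defeq v(v+p)$, forces it to be the identity. For subgroup functors meeting trivially the multiplication map is automatically a monomorphism, hence formally unramified, so it remains to prove formal smoothness. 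For this I would use the $R$-module decompositions $\Lie L\rG=\Lie L_1^+\rG\oplus\Lie L^-\rG$ and $\Lie L\rG=\Lie L^+\cG\oplus\Lie L^{--}\cG$, functorial in $R$, which one reads off from the same decomposition $R[v]^{\wedge_{(v(v+p))}}=R[\![t]\!]\oplus R[\![t]\!]v$, together with formal smoothness of the factors over $\cO$: the multiplication map sits in a Cartesian square over the $L^-\rG$-torsor $L\rG\to L\rG/L^-\rG$ (resp.~the $L^{--}\cG$-torsor $L\rG\to L\rG/L^{--}\cG$) together with the map induced by $L_1^+\rG\hookrightarrow L\rG$ (resp.~$L^+\cG\hookrightarrow L\rG$), and the latter is formally \'etale by the tangent-space computation, hence so is the multiplication map. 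All of this is compatible with $p$-adic completion.

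\emph{Item \ref{eq:basic:2}.} This is the substantive point. For each $\tld z\in\Adm(\eta)$ I would define $\cA(\tld z)\subset\cA(\eta)$ to be the open subfunctor on which the matrix entry (or $2\times 2$ minor) that is a unit precisely on the Schubert cell through $\tld z$ is a unit in $R[v]^{\wedge_{(v(v+p))}}$; openness is then immediate. Using the big-cell factorization of item \ref{eq:basic:2.5} one writes a general point of $\cA(\tld z)$ as a product of an explicit ``opposite'' matrix, the element $\tld z$, and an explicit upper-triangular-type matrix, obtaining the normal form recorded in Table \ref{Table1FV}; conditions \ref{it:Adm:1}--\ref{it:Adm:3} in the definition of $\cA(\eta)$ then become explicit polynomial equations among the finitely many coordinates of this normal form, each lying in $R[\![t]\!]\oplus R[\![t]\!]v$. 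This presents $\cA(\tld z)^{\wedge_p}$ as the $p$-adic completion of an explicit affine scheme. Finally, the $\cA(\tld z)^{\wedge_p}$ cover $\cA(\eta)^{\wedge_p}$: since $\cA(\eta)^{\wedge_p}$ is $p$-adically complete it suffices to check the cover on the special fiber, and on $\ovl{\F}$-points this is precisely the recalled identity $\cA(\eta)(\ovl{\F})=\bigcup_{\tld z\in\Adm(\eta)}L^+\cG(\ovl{\F})\,\tld z\,L^+\cG(\ovl{\F})$, as $\cA(\tld z)$ contains the cell through $\tld z$ by construction.

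The hard part is the explicit bookkeeping in item \ref{eq:basic:2}: one must check that the coordinates proposed in Table \ref{Table1FV} genuinely parametrize $\cA(\tld z)$ (bijectivity of the normal form, with all denominators controlled once the distinguished entry is inverted) and that conditions \ref{it:Adm:1}--\ref{it:Adm:3} translate into exactly the asserted equations. The remaining ingredients -- the decomposition $R[v]^{\wedge_{(v(v+p))}}=R[\![t]\!]\oplus R[\![t]\!]v$, triviality of the intersections, and the Lie algebra decompositions -- are short formal computations.
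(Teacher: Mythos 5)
Your overall architecture matches the paper's: item \eqref{eq:basic:1} is immediate, item \eqref{eq:basic:2.5} is a big-cell decomposition, and item \eqref{eq:basic:2} is proved by exhibiting big cells around each $\tld{z}\in\Adm(\eta)$, checking the covering on $\ovl{\F}$-points, and computing the normal form against conditions \eqref{it:Adm:1}--\eqref{it:Adm:3}. Two differences are worth recording. For \eqref{eq:basic:2.5} the paper simply cites \cite[Lemmas 3.2.2, 3.2.6]{MLM} (working modulo powers of $p$, where $R[v]^{\wedge_{(v(v+p))}}=R[\![v]\!]$); your direct argument via the free decomposition $R[v]^{\wedge_{(v(v+p))}}=R[\![t]\!]\oplus R[\![t]\!]v$, trivial intersection of the factors, and the resulting splitting of $\Lie L\rG$ is a correct and more self-contained substitute for that citation, and the deduction monomorphism $+$ infinitesimal lifting $\Rightarrow$ formally \'etale goes through as you sketch. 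For \eqref{eq:basic:2} the paper does \emph{not} define the charts by a unit condition on a matrix entry: it sets $\cA^{\wedge_p}(\tld{z})\defeq L^+\cG\cdot\big(L^{--}\cG\,\tld{z}\cap\cA(\eta)^{\wedge_p}\big)$, so that openness is an immediate consequence of item \eqref{eq:basic:2.5} together with the finite-typeness of $L^+\cG\backslash\cA(\eta)$ --- this is precisely what \eqref{eq:basic:2.5} is set up to deliver. Your alternative (invert the entry/minor that is a unit on the cell through $\tld{z}$) should give the same opens, but ``openness is then immediate'' hides real work: you must identify the correct entry for each of the three $\tld{z}$, justify that its unit locus is representable and open (the entry lives in $R[v]^{\wedge_{(v(v+p))}}$, not in $R$), and you still need the quasi-compactness/finite-typeness input to reduce the covering claim to $\ovl{\F}$-points. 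I would advise replacing that step by the paper's definition of the charts, after which the only remaining content is the explicit computation of $L^{--}\cG(R)\tld{z}\cap\cA(\eta)(R)$ recorded in Table \ref{Table1FV}, exactly as you anticipate.
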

\begin{proof}
Item \eqref{eq:basic:1} is clear.

Item \eqref{eq:basic:2.5} for the map $L^+\cG\times L^{--}\cG\ra L\rG$ follows from \cite[Lemmas 3.2.2, 3.2.6]{MLM} (using that $R[v]^{v(v+p)}=R[\![v]\!]$ on rings where $p$ is nilpotent), which implies the statement for $L^+_1\rG\times L^-\rG\ra L\rG$.

We prove item \eqref{eq:basic:2}.
By the previous item, $\cA^{\wedge_p}(\tld{z})\defeq L^+\cG \cdot L\cG^{--}\tld{z}\cap \cA(\eta)^{\wedge_p}=L^+\cG\cdot (L\cG^{--}\tld{z}\cap \cA(\eta)^{\wedge_p})$ is an open subfunctor (since $L^+\cG\backslash \cA(\eta)$ is finite type, cf.~the proof of \cite[Corollary 3.2.10]{MLM}).
Consideration on $\ovl{\F}$-points shows that $\cA^{\wedge_p}(\tld{z})$ for $\tld{z}\in\Adm(\eta)$ form an open cover.

Finally, if $R$ is an $\cO$-algebra where $p$ is nilpotent, $L^{--}\cG(R)\tld{z}$ consists of matrices $A$ of the form
\[
A=\begin{pmatrix} 1+\frac{1}{v}a & \frac{1}{v}b\\c & 1+\frac{1}{v}d\end{pmatrix}\tld{z}
\]
for $a,b,c,d,\in R\left[\frac{1}{v}\right]$,
and imposing the conditions \eqref{it:Adm:1}, \eqref{it:Adm:2} on it gives the explicit description in table \ref{Table1FV}.
\end{proof}

\begin{table}[H]
\captionsetup{justification=centering}
\caption[Foo content]{The affine cover of $L^+\cG\backslash\cA(\eta)^{\wedge_p}$.
}
\label{Table1FV}
\centering
\footnotesize
\adjustbox{max width=\textwidth}{
\begin{tabular}{| c || c | c | c |}
\hline
&&&\\
$\tld{z}$ & $t_\eta$ & $w_0t_\eta$&$t_{w_0\eta}$
\\
&&&\\
\hline
&&&\\
$\cA^{\wedge_p}(\tld{z})$&$\begin{pmatrix}(v+p)&0\\vx&1\end{pmatrix}$&$\begin{matrix}
\begin{pmatrix}X&1\\v&Y\end{pmatrix}\\
\\
XY+p=0
\end{matrix}$&$
\begin{pmatrix}1&y\\0&v+p\end{pmatrix}$
\\
&&&\\
\hline
\hline
\end{tabular}}
\end{table}

\subsection{Loop groups and moduli of Breuil--Kisin modules}
$Y^{\eta,\tau}$ has the following description as a quotient stack (cf.~\cite[5.2.1]{MLM}):
\begin{lemma}
\label{lem:iso:KM}
Let $\tau$ be a tame inertial type with small presentation $(s,\mu)\in \un{W}\times X^*(\un{T})$.
Then any $\fM\in Y^{\eta,\tau}$ has an eigenbasis Zariski locally on $Y^{\eta,\tau}$ and the assignment $\fM\mapsto (A^{(j)}_{\fM,\beta})_{j\in\cJ}\tld{w}^*(\tau)$ defines an isomorphism of $p$-adic formal algebraic stacks
\begin{equation}
\label{eq:iso:Kisin:mod}
Y^{\eta,\tau}\stackrel{\sim}{\longrightarrow}\bigg[L\G^{\tau}\Big/_{\phz}\prod_{j\in\cJ}L^+\cG\bigg]^{\wedge_p}
\end{equation}
and hence a morphism
\begin{equation}
\label{eq:mor:Kisin:mod}
Y^{\eta,\tau}\ra \bigg[L\G^{\textnormal{bd},(v+p)v^{\mu}}\Big/_{\phz}\prod_{j\in\cJ}L^+\G\bigg]^{\wedge_p}
\end{equation}
\end{lemma}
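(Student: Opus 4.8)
The plan is to reduce everything to an explicit description of $\fM$ in terms of an eigenbasis and then recognize the matrices $C^{(j)}_{\fM,\beta}$ as describing an $L^+\cG$-twisted $\phz$-conjugation orbit inside $L\rG^\tau$. First I would establish the existence of eigenbases Zariski-locally. By definition of $Y^{\eta,\tau}$, the descent data action of $\Delta'$ on $\fM^{(j')}$ decomposes it (after a faithfully flat base change, but in fact Zariski-locally since the relevant characters $\omega_{f'}^{-\mathbf{a}^{\prime\,(0)}_i}$ are distinct $\cO$-valued and $R$ contains enough roots of unity) into a direct sum of rank-one pieces on which $\Delta'$ acts by prescribed characters; choosing generators of these pieces compatibly with $\iota_\fM$ (which identifies $(\sigma^f)^*\fM$ with $\fM$) produces the eigenbasis $\beta^{(j')}$. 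The compatibility condition $\iota_\fM((\sigma^f)^*\beta^{(j')})=\beta^{(j'+f)}$ means that the data is determined by the embeddings $j'\in\cJ$ (i.e.~$j'$ modulo $f$), which matches the indexing in the statement.

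Next I would unwind the matrix of $\phi_\fM$. Given the eigenbasis, $\phi_\fM^{(j')}$ has matrix $C^{(j')}_{\fM,\beta}$, and the previously established Lemma (the one stating $A^{(j')}_{\fM,\beta}\in\Mat_2(R[\![v]\!])$, upper triangular mod $v$, depending only on $j'$ mod $f$) together with the height $\leq 1$ condition (cokernel of $\phi_\fM$ killed by $E(v)=v+p$) and the determinant condition $\det\phi_\fM\in (R[\![v]\!])^\times(v+p)$ shows precisely that $A^{(j)}_{\fM,\beta}\in\cA(\eta)(R)$: condition \eqref{it:Adm:1} is the determinant condition, \eqref{it:Adm:2} is the upper-triangularity mod $v$ just cited, and \eqref{it:Adm:3} follows from the height $\leq 1$ condition applied to $\phi_\fM^{-1}$ scaled by $v+p$. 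Hence $(A^{(j)}_{\fM,\beta})_j\tld w^*(\tau)=(A^{(j)}_{\fM,\beta}s_j^{-1}v^{\mu_j})_j\in L\rG^\tau(R)$ by the definition \eqref{eq:LGtau}. A change of eigenbasis $\beta^{(j')}\mapsto \beta^{(j')}g^{(j')}$ must preserve the eigen-character decomposition, forcing $g^{(j')}\in L^+\cG(R)$ (upper triangular mod $v$, by compatibility with $\Delta'$ and the $\iota_\fM$ condition it lies in the $\cJ$-indexed copy); and it transforms $C^{(j)}$, hence the tuple, by the shifted $\phz$-conjugation action of $\prod_j L^+\cG$. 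This gives a well-defined morphism $Y^{\eta,\tau}\to[L\rG^\tau/_{\phz}\prod_j L^+\cG]^{\wedge_p}$.

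For the inverse, given a tuple of matrices one reconstructs $\fM$ with its Frobenius and descent data by reversing the $\Ad((s'_{\orient,j'})^{-1}(u')^{-\mathbf{a}^{\prime(j')}})$ conjugation and declaring the $\Delta'$-action on the standard basis via the eigen-characters; the bounds built into $\cA(\eta)$ guarantee the resulting $\phi_\fM$ has coefficients in $\fS_{L',R}$ with cokernel killed by $E(v)$, and the determinant condition ensures the object lies in $Y^{\eta,\tau}$ rather than just $Y^{[0,1],\tau}$. One checks the two constructions are mutually inverse on the level of groupoids, compatibly with base change, and that the equivalence is one of $p$-adic formal algebraic stacks (fppf-local existence of eigenbases plus representability already recorded for the source). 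Finally, the morphism \eqref{eq:mor:Kisin:mod} is induced by the evident inclusion $L\rG^\tau\hookrightarrow L\rG^{\textnormal{bd},(v+p)v^\mu}$ (the bounds defining $\cA(\eta)s_jv^{\mu_j}$ are visibly stronger than those defining the $j$-th factor of $L\rG^{\textnormal{bd},(v+p)v^\mu}$) together with the inclusion $L^+\cG\hookrightarrow L^+\rG$, which is equivariant for the two shifted $\phz$-conjugation actions, so it descends to the quotient stacks. The main obstacle I expect is the careful bookkeeping in the change-of-basis step — verifying that an arbitrary eigenbasis change is exactly an element of the $\cJ$-indexed $\prod_j L^+\cG$ (no more, no less) and that it acts by \emph{shifted} $\phz$-conjugation with the correct shift $\tld w^*(\tau)$ — and relatedly, the Zariski- rather than merely fppf-local existence of eigenbases, which requires knowing the distinct eigencharacters split the module over $R$ itself; this is where one must invoke that $\cO$ (hence $R$) is large enough, as in \cite[\S 6.1]{LLLM}.
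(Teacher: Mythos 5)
The paper does not prove this lemma; it is cited directly from \cite[5.2.1]{MLM}. Your sketch reconstructs the argument of that reference in the expected way, and the overall structure is sound. The one step where the wording should be tightened is the identification of eigenbasis changes with the shifted $\phz$-conjugation action of $\prod_{\cJ}L^+\cG$: a transition matrix $g^{(j')}$ between two eigenbases for $\fM^{(j')}$ is \emph{not} literally an element of $L^+\cG(R)$ — its entries carry $u'$-valuations prescribed by the eigencharacters $\omega_{f'}^{-\bf{a}^{\prime\,(0)}_i}$, exactly as happens for $C^{(j')}_{\fM,\beta}$ — and it is only after applying the same twist $\Ad\big((s'_{\orient,j'})^{-1}(u')^{-\bf{a}^{\prime\,(j')}}\big)$ that defines $A^{(j')}$ from $C^{(j')}$ that one obtains an element of $L^+\cG(R)$ acting on the tuple $(A^{(j)}_{\fM,\beta}\tld{w}^*(\tau)_j)_j$ by shifted $\phz$-conjugation with the correct shift. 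You correctly flag this bookkeeping as the main obstacle; it is indeed where the work happens in the cited reference. The rest of your outline — Zariski-local existence of eigenbases from the distinctness of the eigencharacters together with the $\iota_{\fM}$-compatibility, the containment $(A^{(j)}_{\fM,\beta}\tld{w}^*(\tau)_j)_j\in L\rG^{\tau}$ deduced from the preceding lemma on $A^{(j')}_{\fM,\beta}$ plus the height and determinant conditions, the reconstruction of $(\fM,\phi_\fM,\Delta\text{-action})$ from a tuple of matrices, and the second map coming from the equivariant inclusions $L\rG^{\tau}\hookrightarrow L\rG^{\textnormal{bd},(v+p)v^\mu}$ and $L^+\cG\hookrightarrow L^+\rG$ — is correct and matches the cited proof.
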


We have a morphism
\[
\iota:\bigg[L\G^{\textnormal{bd},(v+p)v^{\mu}}\Big/_{\phz}\prod_{j\in\cJ}L^+\G\bigg]^{\wedge_p}\longrightarrow
\Phi\text{-}\Mod^{\textnormal{\'et}, 2}_K
\]
sending the class of $A\defeq (A^{(j)})_{j\in\cJ}$ to the \'etale $\varphi$-module $\iota(A)$ which is free of rank $2$ and such that $\phi_{\iota(A)}^{(j)}: \iota(A)^{(j-1)}\ra \iota(A)^{(j)}$ has matrix $A^{(j)}$ in the standard basis.
\begin{prop}
\label{prop:innomable}
Assume $p-2>\max_j\langle\mu_j,\alpha^\vee\rangle$.
Let $\tau$ be a tame inertial type with a small presentation $(s,\mu)\in \un{W}\times X^*(\un{T})$.
We have a commutative diagram of $p$-adic formal algebraic stacks over $\Spf \cO$:
\begin{equation}
\label{eq:diag}
\xymatrix{
Y^{\eta,\tau}\ar^-{\ref{eq:mor:Kisin:mod}}[rr]\ar_{\eps_\tau}[ddr]\ar[dr]&&\bigg[L\G^{\textnormal{bd},(v+p)v^{\mu}}\Big/_{\phz}\prod_{j\in\cJ}L^+\G\bigg]^{\wedge_p}\ar@{^{(}->}^{\iota}[ddl]
\\
&\cZ^{\tau}\ar@{^{(}->}[d]\ar^-{\exists}@{..>}[ur]&\\
&\Phi\text{-}\Mod^{\textnormal{\'et}, 2}_K&
}
\end{equation}
where the hooked diagonal arrow is a closed immersion.
In particular, the dotted arrow exists and makes the diagram commute.
\end{prop}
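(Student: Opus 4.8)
The key claim is the existence of the dotted arrow $\cZ^{\tau}\dashrightarrow [L\G^{\textnormal{bd},(v+p)v^{\mu}}/_{\phz}\prod_j L^+\G]^{\wedge_p}$ filling the triangle. Since $\cZ^{\tau}$ is defined as the scheme-theoretic image of $\eps_\tau : Y^{\eta,\tau}\to \Phi\text{-}\Mod^{\text{\'et},2}_K$, the natural strategy is: (1) produce the upper horizontal arrow $Y^{\eta,\tau}\to [L\G^{\textnormal{bd},(v+p)v^{\mu}}/_{\phz}\prod_j L^+\G]^{\wedge_p}$ — this is exactly \eqref{eq:mor:Kisin:mod} from Lemma \ref{lem:iso:KM}; (2) check the outer square commutes, i.e.\ $\iota\circ\eqref{eq:mor:Kisin:mod} = \eps_\tau$ as maps $Y^{\eta,\tau}\to \Phi\text{-}\Mod^{\text{\'et},2}_K$; (3) show the composite $Y^{\eta,\tau}\to [L\G^{\textnormal{bd},(v+p)v^{\mu}}/_{\phz}\prod_j L^+\G]^{\wedge_p}$ factors through its scheme-theoretic image, which must then coincide with $\cZ^{\tau}$ because $\iota$ is a monomorphism (indeed a closed immersion, which we also need to prove), so scheme-theoretic image is preserved under pushforward along $\iota$.

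For step (2): by Proposition \ref{prop:expeps}, for $\fM\in Y^{\eta,\tau}(R)$ with eigenbasis $\beta$ and small presentation $(s,\mu)$, there is a basis $\fF$ of $\eps_\tau(\fM)$ in which $\phi^{(j)}_{\eps_\tau(\fM)}$ has matrix $A^{(j)}_{\fM,\beta}s_j^{-1}v^{\mu_j} = A^{(j)}_{\fM,\beta}\tld{w}^*(\tau)_j$. On the other hand, the map \eqref{eq:mor:Kisin:mod} sends $\fM$ to the class of $(A^{(j)}_{\fM,\beta}\tld{w}^*(\tau)_j)_j$ (this is the content of the "$\iota_{\mathrm{tot}}$ applied after \eqref{eq:iso:Kisin:mod}" description in Lemma \ref{lem:iso:KM}), and $\iota$ sends that class to the \'etale $\varphi$-module with $\phi^{(j)}$ given by the same matrix in the standard basis. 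So the two \'etale $\varphi$-modules are visibly identified, giving commutativity; one only needs to check this is independent of the auxiliary choices (eigenbasis, presentation), which follows from the fact that $\eps_\tau$ is independent of the presentation (recalled in \S\ref{subsub:etale:Phi}) together with the equivariance of $A^{(j)}_{\fM,\beta}$ under change of eigenbasis. One must also verify that $(A^{(j)}_{\fM,\beta}\tld{w}^*(\tau)_j)_j$ actually lands in $L\G^{\textnormal{bd},(v+p)v^{\mu}}$: the determinant condition follows from the determinant condition defining $Y^{\eta,\tau}$ together with $\det \tld{w}^*(\tau)_j \in v^{\langle\mu_j,\alpha^\vee\rangle}\cdot(\text{unit})$, and integrality follows from the preceding Lemma (that $A^{(j)}_{\fM,\beta}$ has coefficients in $R[\![v]\!]$ and is upper triangular mod $v$) combined with $\mu_j$ dominant.

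For the statement that $\iota$ is a closed immersion: this is where the hypothesis $p-2 > \max_j\langle\mu_j,\alpha^\vee\rangle$ enters. The point is that $L\G^{\textnormal{bd},(v+p)v^{\mu}}$ parametrizes matrices with a uniform bound on elementary divisors ($v$-adic valuations of entries bounded, determinant valuation $1+\langle\mu_j,\alpha^\vee\rangle$), so the quotient by $\prod_j L^+\G$ via $\phz$-conjugation is a bounded piece of $\Phi\text{-}\Mod^{\text{\'et},2}_K$; genericity of $\mu$ ensures the $\phz$-conjugation action is "free enough" / the relevant Hecke-type bound is sharp, so that $\iota$ is a monomorphism, and boundedness/properness (the source is a quotient of a bounded scheme) upgrades this to a closed immersion. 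I would model this on \cite[\S 3]{CEGSA} or the analogous statement in \cite{MLM} — in the generic range, $[L\G^{\textnormal{bd},(v+p)v^{\mu}}/_{\phz}\prod_j L^+\G]^{\wedge_p}$ is literally cut out inside $\Phi\text{-}\Mod^{\text{\'et},2}_K$ by the conditions "the \'etale $\varphi$-module admits a $\varphi$-stable lattice with the prescribed elementary-divisor bounds", which is a closed condition.

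**Main obstacle.** The genuinely delicate point is establishing that $\iota$ is a closed immersion (or even just a monomorphism) in the stated range of $p$; commutativity of the diagram is essentially a bookkeeping consequence of Proposition \ref{prop:expeps}, and the existence of the dotted arrow is then formal from the universal property of scheme-theoretic image once one knows $\iota$ is a monomorphism. Concretely, the subtlety is that distinct $\phz$-conjugacy classes of bounded matrices $(A^{(j)})_j$ could a priori give isomorphic \'etale $\varphi$-modules; ruling this out requires the elementary-divisor/Hodge bound to be rigid, which is exactly what the genericity hypothesis $p-2>\max_j\langle\mu_j,\alpha^\vee\rangle$ buys (outside this range one expects only the weaker statements of Theorem \ref{thm-main-intro}). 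I would isolate this as a separate lemma, proved by reducing modulo $p$ and analyzing the $\phz$-conjugation orbits on the affine Grassmannian using the $\Adm(\eta)$-stratification from Lemma \ref{lem:basic}\eqref{eq:basic:2}.
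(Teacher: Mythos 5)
Your overall architecture matches the paper's: commutativity of the outer triangle via Proposition~\ref{prop:expeps}, properness of $\iota$ upgrading a monomorphism to a closed immersion, and the universal property of scheme-theoretic image producing the dotted arrow. The commutativity bookkeeping you sketch is essentially what the paper says (one line in the actual proof), and you correctly diagnose that the real work is in showing $\iota$ is a monomorphism and that this is where the hypothesis on $p$ enters.

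However, the mechanism you propose for the monomorphism is where there is a genuine gap. You say one should ``reduce modulo $p$ and analyze the $\phz$-conjugation orbits on the affine Grassmannian using the $\Adm(\eta)$-stratification,'' and invoke a heuristic that genericity makes the conjugation action ``free enough.'' This is not the argument the paper uses, and it is not clear it would close. Two specific problems. First, the $\Adm(\eta)$-stratification of $\cA(\eta)$ describes $\ovl{\F}$-points of the source of $\iota$, but the monomorphism statement one needs is an isomorphism of Hom-sets over arbitrary Noetherian $\cO/\varpi^a$-algebras $R$; reducing modulo $p$ only addresses $a=1$, and passing to higher $a$ is a genuine lifting/deformation step that your proposal does not supply. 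Second, and more importantly, the actual mechanism is not an orbit-stratification argument at all: it is a \emph{pole-order contraction} argument. Given $X=(X_j)\in L\rG(R)^{\cJ}$ with $A_1^{(j)}=X_jA_2^{(j)}\phz(X_{j-1})^{-1}$, one sets $\kappa_j$ to be the pole order of $X_j$ at $v$; the determinant bound $\det A_i^{(j)}\in R^\times v^{1+\langle\mu_j,\alpha^\vee\rangle}$ combined with $\phz(v)=v^p$ forces $p\kappa_{j-1}\le 1+\kappa_j+\langle\mu_j,\alpha^\vee\rangle$, and iterating this shows $\kappa_j\le \max_i\{(\langle\mu_i,\alpha^\vee\rangle+1)/(p-1)\}<1$ precisely because $\langle\mu_j,\alpha^\vee\rangle\le(p+1)/2$ under the smallness assumption. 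This is where $p-2>\max_j\langle\mu_j,\alpha^\vee\rangle$ does its work: it is not a genericity/rigidity statement about orbits, but the numerical condition ensuring the contraction ratio $<1$. The inductive step for general $a$ is handled by writing $X_j=\tld{X}_j(1+\eps_j)$ with $\eps_j\in\Lie L\rG((\varpi^{a-1}))$ and running the same pole-order count on $\eps_j$. Your proposal correctly names the right intermediate claim (``distinct $\phz$-conjugacy classes could a priori give isomorphic \'etale $\varphi$-modules; ruling this out requires the bound to be rigid'') but does not supply the mechanism, and the strategy you gesture at (stratification + reduction mod $p$) points away from it.
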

\begin{proof}
Denote by $Y^{\textnormal{bd},\tau}$ the groupoid in the upper right vertex of the diagram \eqref{eq:diag}.

The external triangle is commutative by Proposition \ref{prop:expeps} and Lemma \ref{lem:iso:KM}.
The factorization of $\eps_\tau$ is by definition of $\cZ^\tau$.

Hence, the existence of the dotted arrow will follow once we prove that the diagonal hooked arrow is a closed immersion.
Since $\iota$ is proper (as $L\cG/L^+\cG$ is ind-proper and $Y^{\textnormal{bd},\tau}$ is a finite type $p$-adic formal algebraic stack), it suffices to show that it is a monomorphism.

We prove that for any Noetherian $\cO/\varpi^{a}$-algebra $R$, and any pair $A_1,A_2\in Y^{\textnormal{bd},\tau}(R)$,
the morphism $\iota$ induces a bijection
\[
\Hom_{Y^{\textnormal{bd},\tau}}(A_1,A_2)\stackrel{\sim}{\ra}\Hom_{\Phi\text{-}\Mod}(\iota(A_1),\iota(A_2)).
\]
The induced map is clearly injective, and we thus prove its surjectivity.
Assume that there exists $X=(X_j)_{\cJ}\in LG(R)^{\cJ}$ such that
\begin{equation}
\label{eq:fund:comparison:0}
A_1^{(j)}=
X_jA_2^{(j)}\Big(\phz(X_{j-1})\Big)^{-1}
\end{equation}
or, equivalently, 
\begin{equation}
\label{eq:fund:comparison}
\big(\phz(X_{j-1})\big)=(A_1^{(j)})^{-1}
X_{j}A_2^{(j)}
\end{equation}
for all $j\in\cJ$.

We show by induction on $a$ that \eqref{eq:fund:comparison} forces $X_j\in L^+G(R)$ for all $j\in\cJ$.

If $a=1$ then $p=0$ in $R$ and, noting that $\det((A_2^{(j)})^{-1})\det(A_1^{(j)})\in R^\times$ by assumption, we deduce from \eqref{eq:fund:comparison} that $\phz^{f}(\det(X_{j}))=u_j\det(X_{j})$ for a unit $u_j\in R^\times$. This shows that $\det(X_{j})\in R^\times$.

We now show that $X_{j}\in \Mat_2(R[\![v]\!])$.
Let $\kappa_j\in \Z$ be the pole order of $X_{j}$ at $v$, i.e.~$\kappa_j\in \Z$ is minimal such that  $v^{\kappa_j}X_{j}\in \Mat_2(R[\![v]\!])$.
As $\det(A_1^{(j)})\in R^\times v^{1+\langle\mu_j,\alpha^\vee\rangle}$ we deduce from \eqref{eq:fund:comparison} that
\[
1+\kappa_j+\langle\mu_j,\alpha^\vee\rangle\geq p\kappa_{j-1}
\]
which forces $\kappa_j\leq\max_i\{\frac{\langle \mu_i,\alpha\rangle+1}{p-1}\}\leq \frac{p+3}{2(p-1)}<1$.
We conclude that $X_{j}\in L^+\rG(R)$ for all $j\in\cJ$ when $a=1$.

Assume the assertion up to $a-1$.
We can thus write %
$X_{j}=\tld{X}_{j}(1+\eps_j)$ where $\tld{X}_{j}\in L^+G(R)$, $\eps_j\in \Lie LG((\varpi^{a-1}))$, for all $j\in\cJ$.
Replacing $A_1^{(j)}$ by $(\tld{X}_{j})^{-1}A_1^{(j)}\Big(\phz(\tld{X}_{j-1})\Big)$, we can assume that \eqref{eq:fund:comparison:0} is true with $X_{j}=1+\eps_j$.
In particular $A_2^{(j)}=(1+\delta_j)A_1^{(j)}$ for some $\delta_j\in \Lie LG((\omega^{a-1}))$.
We now prove that actually $\eps_j\in \Lie L^+G((\omega^{a-1}))$.
Then \eqref{eq:fund:comparison:0} becomes $1=(1+\eps_j)(1+\delta_j)\Ad(A_1^{(j)})(1-\phz(\eps_{j-1}))$
so that 
\begin{equation}
\label{eq:fund:comparison:1}
(A_1^{(j)})^{-1}\eps_jA_1^{(j)}+(A_1^{(j)})^{-1}(A_2^{(j)}-A_1^{(j)})=\varphi(\eps_{j-1}).
\end{equation}
Let $\kappa_j$ be the pole order of $\eps_j$. 
Observe that
\begin{itemize}
\item 
$v^{\kappa_j+\langle\mu_j,\alpha^\vee\rangle}(v+p)\Big((A_1^{(j)})^{-1}\eps_jA_1^{(j)}\Big)\in \Mat_2(R[v]^{\wedge_{v(v+p)}})$
\item
$v^{\langle\mu_j,\alpha^\vee\rangle}(v+p)\Big((A_1^{(j)})^{-1}(A_2^{(j)}-A_1^{(j)})\Big)\in \Mat_2(R[v]^{\wedge_{v(v+p)}})$.
\end{itemize}
Since $(v+p)\eps_j=v\eps_j$ we deduce that 
\[
p\kappa_{j-1}\leq \max\{\kappa_j+\langle\mu_j,\alpha^\vee\rangle+1,\langle\mu_j,\alpha^\vee\rangle+1\}.
\]
This forces $\kappa_j\leq \max_{i}\{\frac{\langle\mu_{i},\alpha^\vee\rangle+1}{p-1}\}\leq \frac{p+3}{2(p-1)}<1$.
So that indeed $\eps_j\in \Lie L^+G((\omega^{a-1}))$.
\end{proof}

By taking fiber product, we thus obtain a commutative diagram:
\begin{equation}
\label{diag:cmpts}
\begin{tikzcd}[column sep=small]
\tld{Y}^{\eta,\tau}\ar{r}{\pi}\arrow[dr, phantom, "\square"]\ar[swap]{d}{\prod_{\cJ}\GL_{2}}&\tld{\cZ}^{\tau}\ar[hook]{r}\ar{d}{\prod_{\cJ}\GL_{2}}\arrow[dr, phantom, "\qquad\,\square"]&\bigg[L\G^{\textnormal{bd},(v+p)v^{\mu}}\Big/_{\phz}\prod_{j\in\cJ}L_1^+\G\bigg]^{\wedge_p}\ar{d}{\prod_{\cJ}\GL_{2}}\\
Y^{\eta,\tau}\ar{r}\ar{dr}{\eps_\tau}&\cZ^\tau\ar[hook]{r}\ar{d}&\bigg[L\G^{\textnormal{bd},(v+p)v^{\mu}}\Big/_{\phz}\prod_{j\in\cJ}L^+\G\bigg]^{\wedge_p}\ar[hook]{dl}\\
&\Phi\text{-}\Mod^{\textnormal{\'et}, 2}_K&
\end{tikzcd}
\end{equation}
where the hooked arrows are closed immersions, the arrows decorated with $\prod_{\cJ}\GL_{2}$ are $\prod_{\cJ}\GL_{2}$-torsors, and the central squares are cartesian, which defines the stacks $\tld{Y}^{\eta,\tau}$, $\tld{\cZ}^{\tau}$.

\subsection{Models for moduli of Breuil--Kisin modules}
\label{subsec:models:BK}

We now define $\Gr^{\tau}_{1}\into\Gr_{1}^{\textnormal{bd},(v+p)v^{\mu}}$ as the fpqc quotients $\prod_{j\in\cJ}L_1^+\rG\backslash L\rG^{\tau}\into \prod_{j\in\cJ}L^+_1\rG\backslash L\rG^{^{\textnormal{bd},(v+p)v^{\mu}}}$.
We define ${Y}^{\textnormal{mod},\eta,\tau}$ as the quotient
\[
\bigg[\Gr_1^\tau\big/\prod_{\cJ}B\text{-sh.cnj}\bigg]^{\wedge_p}
\]
by the shifted conjugation action $(b_j)\cdot(g_j)=(b_jg_jb^{-1}_{j-1})$.
Define also $\tld{Y}^{\textnormal{mod},\eta,\tau}$ as the fiber product
\[
\xymatrix{
\tld{Y}^{\textnormal{mod},\eta,\tau}\ar[r]\ar[d]\ar@{}[dr]|{\Box}&\bigg(\Gr_1^{\textnormal{bd},(v+p)v^{\mu}}\bigg)^{\wedge_p}\ar^{{\GL_2}^{\cJ}}[d]\\
{Y}^{\textnormal{mod},\eta,\tau}\ar[r]&
\bigg[\Gr_1^{\textnormal{bd},(v+p)v^{\mu}}/\prod_{\cJ}\GL_2\text{-sh.cnj}\bigg]^{\wedge_p}
}
\]
\begin{prop}
\label{prop:moduli:description}
The $p$-adic formal scheme $\tld{Y}^{\mathrm{mod},\eta,\tau}$ identifies with the (closed formal) subscheme of $\bigg(\prod_{\cJ}(\Gr_1\times (B\backslash\GL_{2,\cO}))\bigg)^{\wedge_p}$ consisting of pairs $(X,g)$ such that $(g_jX_{j}g_{j-1}^{-1})_{j\in\cJ}\in \Gr_1^{\tau}$.
In particular, $(X_{j})_{j\in\cJ}\in\Gr_1^{\textnormal{bd},(v+p)v^{\mu}}$.
\end{prop}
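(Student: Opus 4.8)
The plan is to unwind the fiber product defining $\tld{Y}^{\textnormal{mod},\eta,\tau}$ and then make an explicit change of variables. By construction $\tld{Y}^{\textnormal{mod},\eta,\tau}=Y^{\textnormal{mod},\eta,\tau}\times_{\cS}\big(\Gr_1^{\textnormal{bd},(v+p)v^{\mu}}\big)^{\wedge_p}$, where $\cS\defeq\big[\Gr_1^{\textnormal{bd},(v+p)v^{\mu}}/\prod_{\cJ}\GL_2\text{-sh.cnj}\big]^{\wedge_p}$ and the map $Y^{\textnormal{mod},\eta,\tau}=\big[\Gr_1^{\tau}/\prod_{\cJ}B\text{-sh.cnj}\big]^{\wedge_p}\to\cS$ is induced by the inclusions $\Gr_1^{\tau}\hookrightarrow\Gr_1^{\textnormal{bd},(v+p)v^{\mu}}$ and $B\hookrightarrow\GL_2$. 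First I would pull everything back along the atlas $a\colon\Gr_1^{\tau}\to Y^{\textnormal{mod},\eta,\tau}$: the morphism $\big(\Gr_1^{\textnormal{bd},(v+p)v^{\mu}}\big)^{\wedge_p}\to\cS$ is a $\prod_{\cJ}\GL_2$-torsor (being the quotient map for the shifted conjugation action), so its pullback along $a$ followed by $Y^{\textnormal{mod},\eta,\tau}\to\cS$ is, by the groupoid description of a quotient stack, the scheme $\Gr_1^{\tau}\times\prod_{\cJ}\GL_2$, with projection to $\Gr_1^{\textnormal{bd},(v+p)v^{\mu}}$ given by the shifted conjugation $(z,\gamma)\mapsto\gamma^{-1}\cdot z\defeq(\gamma_j^{-1}z_j\gamma_{j-1})_{j\in\cJ}$. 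This lands in $\Gr_1^{\textnormal{bd},(v+p)v^{\mu}}$, since $z\in\Gr_1^{\tau}\subseteq\Gr_1^{\textnormal{bd},(v+p)v^{\mu}}$ and the latter is stable under $\prod_{\cJ}\GL_2$-shifted conjugation: its defining conditions (the value of $\det A_j$ up to a unit, and having coefficients in $R[v]^{\wedge_{(v(v+p))}}$) are invariant under $\GL_2(R)$-conjugation.

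Next I would descend back to $Y^{\textnormal{mod},\eta,\tau}$ along the $\prod_{\cJ}B$-action coming from the atlas. This action lifts to $\Gr_1^{\tau}\times\prod_{\cJ}\GL_2$ as $(b_j)_j\cdot(z,\gamma)=\big((b_jz_jb_{j-1}^{-1})_j,(b_j\gamma_j)_j\big)$ (the action which fixes the associated point of $\Gr_1^{\textnormal{bd},(v+p)v^{\mu}}$ while moving the $\Gr_1^{\tau}$-coordinate inside its orbit). Passing to the coordinates $(X,\gamma)\defeq\big(\gamma^{-1}\cdot z,\gamma\big)$, a short computation using only that shifted conjugation is a genuine left action of $\prod_{\cJ}\GL_2$ shows that in these coordinates the $\prod_{\cJ}B$-action is trivial in the first variable and is left translation $\gamma_j\mapsto b_j\gamma_j$ in the second. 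Hence the quotient is representable — since $\prod_{\cJ}\GL_2\to\prod_{\cJ}(B\backslash\GL_{2,\cO})$ is a Zariski-locally-trivial $\prod_{\cJ}B$-torsor, so the $\prod_{\cJ}B$-invariant closed condition descends — and is precisely the subfunctor of $\big(\prod_{\cJ}(\Gr_1\times(B\backslash\GL_{2,\cO}))\big)^{\wedge_p}$ consisting of those $(X,g)$ for which, for one (equivalently, any) lift $\gamma\in\GL_2^{\cJ}$ of $g$, the tuple $(g_jX_jg_{j-1}^{-1})_j\defeq(\gamma_jX_j\gamma_{j-1}^{-1})_j$ lies in $\Gr_1^{\tau}$; this is independent of the lift because changing $\gamma$ by $\prod_{\cJ}B$ alters the tuple by $\prod_{\cJ}B$-shifted conjugation, under which $\Gr_1^{\tau}$ is stable (this stability being part of the very definition of $Y^{\textnormal{mod},\eta,\tau}$). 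The inverse identification sends $(X,g)$ to the class of $(\gamma\cdot X,\gamma)$ for a local lift $\gamma$. Finally, the ``in particular'' statement is immediate: $X=\gamma^{-1}\cdot(\gamma\cdot X)$ with $\gamma\cdot X\in\Gr_1^{\tau}\subseteq\Gr_1^{\textnormal{bd},(v+p)v^{\mu}}$, so $X\in\Gr_1^{\textnormal{bd},(v+p)v^{\mu}}$ by shifted-conjugation stability.

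The hard part — really the only step needing care — is the bookkeeping of the various one-sided actions on $\Gr_1=[L_1^+\rG\backslash L\rG]$ together with the ``shifted'' twist of all the conjugation actions. One must check that the constant matrices $\GL_2\subseteq L^+\rG\subseteq L\rG$ (using the splitting $L^+\rG=L_1^+\rG\ltimes\GL_2$ of Lemma \ref{lem:basic}\eqref{eq:basic:1}) act on $\Gr_1$ \emph{both} on the left (which uses that $L_1^+\rG$ is normal in $L^+\rG$) and on the right, so that the expressions $(\gamma_jX_j\gamma_{j-1}^{-1})_j$ and $(\gamma_j^{-1}z_j\gamma_{j-1})_j$ are well defined in $\Gr_1^{\cJ}$; and one must verify the attendant compatibilities — independence of the lift of $X$ (resp.\ of $z$) to $L\rG^{\cJ}$, independence of the lift of $g$ to $\GL_2^{\cJ}$, and compatibility of the two projections with the $\prod_{\cJ}B$-quotient. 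Once these conventions are pinned down, the identification is a formal consequence of the cartesian diagram, the only substantive input being the stability of $\Gr_1^{\tau}$ under $\prod_{\cJ}B$-shifted conjugation.
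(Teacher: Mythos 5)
Your proof is correct and essentially the paper's argument: you unwind the fiber product into pairs $(z,\gamma)\in\Gr_1^{\tau}\times\prod_{\cJ}\GL_2$, change coordinates to $(X,\gamma)=(\gamma^{-1}\cdot z,\gamma)$, and descend the residual $\prod_{\cJ}B$-action to $B\backslash\GL_2$; the paper does the same thing, writing the triple $((X_j),(Y_j),(g_j))$ with $Y_j=g_jX_jg_{j-1}^{-1}$ (your $Y=z$, $g=\gamma$) and observing that the triple is determined by $(X,g)$, which is precisely your coordinate change. The extra convention-checking you flag at the end (the left/right $\GL_2$-actions on $\Gr_1$ via the splitting $L^+\rG=L_1^+\rG\ltimes\GL_2$) is implicit but entirely parallel to what the paper relies on.
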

\begin{proof}
It follows from the definition that $\tld{Y}^{\mathrm{mod},\eta,\tau}$ is the quotient of the space of triples $((X_j),(Y_j),(g_j))\in \Gr_1^{\textnormal{bd},(v+p)v^{\langle\mu_j,\alpha^\vee\rangle}}\times\Gr_1^\tau\times\GL_2^{\cJ}$ satisfying $Y_j=g_jX_j(g_{j-1})^{-1}$ by the action of $B^{\cJ}$ given by 
\[
(b_j)\cdot ((X_j),(Y_j),(g_j))=((X_j),(b_jY_jb_{j-1}^{-1}),(b_jg_j)).
\]
This finishes the proof because the (class of the) triple $((X_j),(Y_j),(g_j))$ is uniquely determined by (the class of) $((X_j),(g_j))$.
\end{proof}

We \emph{define} $\tld{\cZ}^{\mathrm{mod},\tau}$ as the scheme theoretic image of projection map $\textnormal{pr}:\tld{Y}^{\mathrm{mod},\eta,\tau}\ra \Gr_1^{\textnormal{bd},(v+p)v^{\mu}}$ sending $(X,g)$ to $X$.
This gives  a factorization
\begin{equation}
\label{eq:diag:fact}
\xymatrix{
\tld{Y}^{\mathrm{mod},\eta,\tau}\ar^{\pi^{\textnormal{mod}}}[r]\ar^{\textnormal{pr}}[rd]&\tld{\cZ}^{\mathrm{mod},\tau}\ar^{\imath}@{^{(}->}[d]
\\
&\Gr_1^{\textnormal{bd},(v+p)v^{\mu}}
}
\end{equation}
For any $\tld{z}\in\un{\tld{W}}^\vee$ by Lemma \ref{lem:basic}\eqref{eq:basic:2.5} we have a formally \'etale monomorphism
\begin{equation}
\label{eq:Utld}
\Gr_{1}^{\textnormal{bd}, (v+p)v^\mu}(\tld{z})\defeq [(L^{-}\rG)^{\cJ} \tld{z}]\cap \Gr_{1}^{\textnormal{bd}, (v+p)v^\mu}\into \Gr_{1}^{\textnormal{bd}, (v+p)v^\mu}
\end{equation}
after $p$-adic completion, which is an open immersion because the target is of finite type.
Define $\tld{U}(\tld{z})$ to be the $p$-adic completion of the LHS of \eqref{eq:Utld}.
Note that $\tld{U}(\tld{z})=\prod_j\tld{U}(\tld{z}_j)$ has an obvious product structure.

\begin{lemma}
\label{lem:recouvrement}
$\{\tld{U}(\tld{z})\}_{\tld{z}\in\tld{\un{W}}}$ is a Zariski open covering for the $p$-adic completion of $\Gr_{1}^{\textnormal{bd}, (v+p)v^\mu}$.
\end{lemma}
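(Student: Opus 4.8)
The plan is to reduce the statement to the corresponding fact about the ambient affine Grassmannian and then to import it along the closed immersion defining $\Gr_1^{\textnormal{bd},(v+p)v^\mu}$. First I would note that $\tld U(\tld z)=\prod_j\tld U(\tld z_j)$ has a product structure over $\cJ$, so it suffices to treat the case $f=1$; for the general statement one then takes products of the local charts $\tld U(\tld z_j)$ over the (finite) set $\tld{\un W}=\tld W^{\cJ}$. For $f=1$, recall that $\Gr_1=[L_1^+\rG\backslash L\rG]$ is a $\GL_2$-torsor over the mixed-characteristic affine Grassmannian $\Gr=[L^+\rG\backslash L\rG]$ for $\GL_2$ with respect to $v(v+p)$, and $\Gr_1^{\textnormal{bd},(v+p)v^\mu}$ is the preimage of a bounded (hence quasi-compact, of finite type) closed subscheme $\Gr^{\textnormal{bd},(v+p)v^\mu}$ of $\Gr$.

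The key step is the open cover of the affine Grassmannian itself: the big cells $[L^{--}\cG\tld z]$ (equivalently the $L^-\rG$-orbits of the translation elements $\tld z$), as $\tld z$ ranges over $\tld{\un W}$, cover $\Gr$ after $p$-adic completion. On $\ovl\F$-points this is the Iwahori/Bruhat decomposition of the affine flag variety (and its projection to $\Gr$): every point of $L\rG(\ovl\F)$ lies in some $L^-\rG(\ovl\F)\tld z L^+\rG(\ovl\F)$, so the subfunctors $[(L^-\rG)\tld z]$ cover on the level of $\ovl\F$-points. By Lemma \ref{lem:basic}\eqref{eq:basic:2.5} the multiplication map $L^+\cG\times L^{--}\cG\ra L\rG$ (hence also $L_1^+\rG\times L^-\rG\ra L\rG$) is a formally étale monomorphism after $p$-adic completion, so each $\tld U(\tld z)$ — the $p$-adic completion of $[(L^-\rG)^{\cJ}\tld z]\cap \Gr_1^{\textnormal{bd},(v+p)v^\mu}$ — is an \emph{open} subfunctor of the $p$-adic completion of $\Gr_1^{\textnormal{bd},(v+p)v^\mu}$, exactly as recorded in \eqref{eq:Utld} (openness using that the target is of finite type, so the monomorphism is automatically an open immersion). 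Thus the $\tld U(\tld z)$ are open, and it remains only to check they cover.

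To check the covering property: since $\Gr_1^{\textnormal{bd},(v+p)v^\mu}$ is a finite-type $p$-adic formal scheme, a family of opens covers it as soon as it covers the underlying topological space, i.e.\ as soon as it covers on $\ovl\F$-points. A point of $\Gr_1^{\textnormal{bd},(v+p)v^\mu}(\ovl\F)$ is a coset $g L_1^+\rG(\ovl\F)$ (or rather its image; but the same for $\Gr_1$) with $g\in L\rG(\ovl\F)$ satisfying the elementary-divisor bound; by the Cartan-type decomposition recalled above there is $\tld z\in\tld{\un W}$ and factorizations $g\in L^-\rG(\ovl\F)\tld z L_1^+\rG(\ovl\F)$, so the point lies in $\tld U(\tld z)(\ovl\F)$. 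Only finitely many $\tld z$ are relevant because the boundedness condition cutting out $\Gr^{\textnormal{bd},(v+p)v^\mu}$ forces the coweight part of $\tld z$ into a bounded box (the relevant $\tld z$ are those appearing in the admissible-type region for the elementary divisors $(v+p)v^\mu$). The main obstacle is a bookkeeping one rather than a conceptual one: verifying that intersecting the big cell $[(L^-\rG)\tld z]$ with the bounded locus $\Gr_1^{\textnormal{bd},(v+p)v^\mu}$ still produces a well-defined \emph{open} formal subscheme — this is where one must use that $\Gr^{\textnormal{bd},(v+p)v^\mu}$ is of finite type so that the formally étale monomorphism of Lemma \ref{lem:basic}\eqref{eq:basic:2.5} upgrades to an open immersion, and that the product-over-$\cJ$ structure is compatible with all of this. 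Once openness and the $\ovl\F$-point covering are in hand, Lemma \ref{lem:recouvrement} follows.
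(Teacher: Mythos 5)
Your proposal is correct and takes essentially the same route as the paper's very terse proof: the openness of each $\tld{U}(\tld z)$ follows from the formally \'etale monomorphism of Lemma \ref{lem:basic}\eqref{eq:basic:2.5} together with the finite-type hypothesis (this is already recorded in \eqref{eq:Utld} before the lemma), and the covering is then checked on $\ovl\F$-points. Your version simply unpacks the implicit ingredients (the Iwahori/Bruhat decomposition giving the covering on $\ovl\F$-points, the reduction to $f=1$ via the product structure, and the reduction from covering of the formal scheme to covering of its special fiber), which the paper compresses into the single sentence ``Looking at $\ovl\F$-points shows that they cover.''
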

\begin{proof}
Since $\Gr_{1}^{\textnormal{bd}, (v+p)v^\mu}$ is finite type the $\tld{U}(\tld{z})$ are actually Zariski open formal subschemes of its $p$-adic completion. 
Looking at $\ovl{\F}$-points shows that they cover.
\end{proof}

In particular, by diagram \eqref{eq:diag:fact}, $\tld{U}(\tld{z})$ induce open substacks $\tld{Y}^{\mathrm{mod},\eta,\tau}(\tld{z})\subseteq \tld{Y}^{\mathrm{mod},\eta,\tau}$,  $\tld{\cZ}^{\mathrm{mod},\tau}(\tld{z})\subseteq \tld{\cZ}^{\mathrm{mod},\tau}$.

\begin{lemma}
\label{lem:adm:cover}
The $\tld{Y}^{\tmod,\eta,\tau}(\tld{z})$ ($\tld{\cZ}^{\tmod,\tau}(\tld{z})$) for $\tld{z}\in\Adm(\eta)^{\cJ}s^{-1}v^\mu$ form a Zariski open cover of $\tld{Y}^{\tmod,\eta,\tau}$ (resp.~$\tld{\cZ}^{\tmod,\eta,\tau}$).
\end{lemma}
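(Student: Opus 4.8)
The plan is to reduce the claim to the already-established open covering of $\Gr_1^{\textnormal{bd},(v+p)v^\mu}$ from Lemma \ref{lem:recouvrement}, combined with the characterization of $\tld{U}(\tld{z})$ via admissibility. First I would recall that by definition the various $\tld{U}(\tld{z})$ for $\tld{z}\in\tld{\un{W}}$ cover $\Gr_1^{\textnormal{bd},(v+p)v^\mu}$, so it suffices to show that on the locus where $(X_j)_{j\in\cJ}\in\Gr_1^{\tau}$ (equivalently, where one is inside $\tld{Y}^{\textnormal{mod},\eta,\tau}$ or its image $\tld{\cZ}^{\textnormal{mod},\tau}$), only those $\tld{z}$ lying in $\Adm(\eta)^{\cJ}s^{-1}v^\mu$ can contribute. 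Concretely, by Proposition \ref{prop:moduli:description} a point of $\tld{Y}^{\textnormal{mod},\eta,\tau}$ records a pair $(X,g)$ with $(g_jX_jg_{j-1}^{-1})_j\in\Gr_1^{\tau}$, and by \eqref{eq:LGtau} the latter means each $g_jX_jg_{j-1}^{-1}\in \cA(\eta)(R)s_jv^{\mu_j}$; the relevant chart is determined by the $\ovl{\F}$-point, and there $\cA(\eta)(\ovl{\F})$ decomposes into the double cosets $L^+\cG\tld{z}_jL^+\cG$ with $\tld{z}_j\in\Adm(\eta)$, so the underlying stratum of $X_j$ lies in the $(L^-\rG)$-orbit of some element of $\Adm(\eta)s_j^{-1}v^{\mu_j}$.

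The key steps, in order, would be: (1) From Lemma \ref{lem:recouvrement}, the $\tld{U}(\tld{z})$ for $\tld{z}\in\tld{\un{W}}$ form a Zariski open cover of the $p$-adic completion of $\Gr_1^{\textnormal{bd},(v+p)v^\mu}$, hence pulling back along $\imath$ (resp.\ along $\textnormal{pr}$) the $\tld{\cZ}^{\textnormal{mod},\tau}(\tld{z})$ (resp.\ $\tld{Y}^{\textnormal{mod},\eta,\tau}(\tld{z})$) for $\tld{z}$ ranging over all of $\tld{\un{W}}$ form a Zariski open cover. (2) Reduce to checking on $\ovl{\F}$-points, as all the spaces in question are of finite type over $\Spf\cO$ and a collection of open substacks is a cover iff it is a cover on underlying topological spaces, which in turn is detected on $\ovl{\F}$-points. (3) Show that if $x\in\tld{\cZ}^{\textnormal{mod},\tau}(\ovl{\F})$ lies in $\tld{U}(\tld{z})$ then $\tld{z}\in\Adm(\eta)^{\cJ}s^{-1}v^\mu$: lift $x$ to $\tld{Y}^{\textnormal{mod},\eta,\tau}(\ovl{\F})$ (possible since $\pi^{\textnormal{mod}}$ is surjective by construction of the scheme-theoretic image, after base change to $\ovl{\F}$), represent it by $(X,g)$ with $Y_j\defeq g_jX_jg_{j-1}^{-1}$ and $(Y_j)_j\in\Gr_1^{\tau}(\ovl{\F})$, so each $Y_j\in L^+\cG(\ovl{\F})\tld{z}'_jL^+\cG(\ovl{\F})$ for some $\tld{z}'_j\in\Adm(\eta)s_j^{-1}v^{\mu_j}$ using the description of $\cA(\eta)(\ovl{\F})$; (4) observe that $L^+\cG\subseteq L^+\rG$ and that $g_j\in\GL_2(\ovl{\F})\subseteq L^+\rG(\ovl{\F})$, so $X_j=g_j^{-1}Y_jg_{j-1}$ lies in $L^+\rG(\ovl{\F})\tld{z}'_jL^+\rG(\ovl{\F})$, hence in $L^+_1\rG(\ovl{\F})\backslash L^+\rG(\ovl{\F})\tld{z}'_jL^+\rG(\ovl{\F})$ its stratum in $\Gr_1$ matches that of $\tld{z}'_j$, which by the Iwahori/Bruhat decomposition forces the chart label $\tld{z}_j$ (the element of $\tld{\un{W}}^\vee$ cutting out the $(L^-\rG)$-cell containing $X_j$) to equal $\tld{z}'_j\in\Adm(\eta)s_j^{-1}v^{\mu_j}$. (5) Conversely, every $\tld{z}\in\Adm(\eta)^{\cJ}s^{-1}v^\mu$ does meet $\tld{\cZ}^{\textnormal{mod},\tau}$ because the standard representative $\tld{z}$ itself lifts to a point of $\Gr_1^{\tau}$ (taking $g=1$), so no admissible chart is vacuous; hence the cover is exactly indexed by $\Adm(\eta)^{\cJ}s^{-1}v^\mu$.

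Finally, the statement for $\tld{\cZ}^{\textnormal{mod},\tau}$ follows from that for $\tld{Y}^{\textnormal{mod},\eta,\tau}$ by applying $\pi^{\textnormal{mod}}$, which is surjective and sends the open $\tld{Y}^{\textnormal{mod},\eta,\tau}(\tld{z})$ onto $\tld{\cZ}^{\textnormal{mod},\tau}(\tld{z})$ by the compatibility in diagram \eqref{eq:diag:fact}; the typo $\tld{\cZ}^{\tmod,\eta,\tau}$ in the statement should read $\tld{\cZ}^{\tmod,\tau}$. The main obstacle I anticipate is step (4): carefully matching the two different cell decompositions --- the Iwahori (``$L^+\cG$'') double cosets used to describe $\cA(\eta)$ versus the ``big cell'' opens $\tld{U}(\tld{z})=[(L^-\rG)^{\cJ}\tld{z}]$ used to chart $\Gr_1^{\textnormal{bd},(v+p)v^\mu}$ --- and verifying that conjugating by an element of $\GL_2\subseteq L^+\rG$ does not change which $\tld{U}(\tld{z})$-chart a point lies in, i.e.\ that the $\GL_2$-shifted-conjugation action preserves the stratification by $\Adm(\eta)$-labels. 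This is essentially the observation that $\GL_2\subseteq L^+\rG$ normalizes the relevant cell structure modulo $L^+_1\rG$, which should follow from Lemma \ref{lem:basic}\eqref{eq:basic:1} and \eqref{eq:basic:2.5}, but needs to be spelled out with a little care about where exactly the descent from $L\rG$ to $\Gr_1$ is being used.
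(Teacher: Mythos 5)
Your overall route is the one the paper takes: the published proof is the single sentence ``This follows from Lemma \ref{lem:basic}\eqref{eq:basic:2}'', and your steps (1)--(3) and (5) are a correct unwinding of that citation. In particular, since $L^+\cG\cdot L^{--}\cG\subseteq L_1^+\rG\cdot B\cdot L^{--}\cG\subseteq L_1^+\rG\cdot L^-\rG$, the affine cover of $\cA(\eta)^{\wedge_p}$ by the $\cA^{\wedge_p}(\tld{w})$, $\tld{w}\in\Adm(\eta)$, does show that every $\ovl{\F}$-point $(Y_j)_j$ of $\Gr_1^{\tau}$ lies in $\tld{U}(\tld{z})$ for some $\tld{z}\in\Adm(\eta)^{\cJ}s^{-1}v^{\mu}$.

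The gap is in step (4), precisely the point you flag as delicate. Knowing that $X_j=g_j^{-1}Y_jg_{j-1}$ lies in the double coset $L^+\rG(\ovl{\F})\,\tld{z}'_j\,L^+\rG(\ovl{\F})$ only pins down its elementary divisors (its Cartan stratum); it does not place $X_j$ in the big-cell open $\tld{U}(\tld{z}'_j)=[L^-\rG\,\tld{z}'_j]$. There is no well-defined ``chart label'' attached to a point: the $\tld{U}(\tld{z})$ are opens, not strata, a point may lie in several of them, and a point whose elementary divisors match those of $\tld{z}'_j$ need not lie in $\tld{U}(\tld{z})$ for \emph{any} admissible $\tld{z}$. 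The underlying issue is that $\tld{U}(\tld{z})$ is stable under left multiplication by $L_1^+\rG$ and by $\GL_2$, but \emph{not} under right multiplication by $\GL_2$, so the shifted-conjugation twist by $g$ can move a point out of all admissible charts; Lemma \ref{lem:basic}\eqref{eq:basic:1},\eqref{eq:basic:2.5} do not rescue this. Concretely, take $f=1$, $s=\Id$, $\mu=(1,0)$, $Y=t_\eta v^{(1,0)}=v^{(2,0)}\in\Gr_1^\tau(\ovl{\F})$ and $g=w_0$, so $X=g^{-1}Yg=v^{(0,2)}$. Then $(X,[g])$ satisfies the conditions of Proposition \ref{prop:moduli:description}, yet for each of the three elements $\tld{z}\in\Adm(\eta)v^{(1,0)}=\{v^{(2,0)},\,w_0t_\eta v^{(1,0)},\,v^{(1,1)}\}$ the required factorization $X=u\,M\,\tld{z}$ with $u\in L_1^+\rG(\ovl{\F})$, $M\in L^-\rG(\ovl{\F})$ forces an entry of $M$ (namely $(u^{-1})_{22}v^{2}$, resp.\ $(u^{-1})_{22}v^{2}$, resp.\ $(u^{-1})_{22}v$) to have strictly positive $v$-degree, which is impossible. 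So your argument, as written, proves the covering statement for $\Gr_1^{\tau}$ (equivalently for the quotient $Y^{\tmod,\eta,\tau}$, where one is free to conjugate into standard position as in Lemma \ref{lem:action at semisimple point}), but not for $\tld{Y}^{\tmod,\eta,\tau}$ and $\tld{\cZ}^{\tmod,\tau}$ viewed inside $\Gr_1^{\textnormal{bd},(v+p)v^{\mu}}$; closing that gap requires a genuinely new ingredient (or an enlargement of the chart collection so that it becomes stable under the shifted $\GL_2^{\cJ}$-action), which neither your step (4) nor the paper's bare citation of Lemma \ref{lem:basic}\eqref{eq:basic:2} supplies.
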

\begin{proof}
This follows from Lemma \ref{lem:basic}\eqref{eq:basic:2}.
\end{proof}

\begin{lemma}
\label{lem:LCI}
$\tld{Y}^{\tmod,\eta,\tau}$ is an $\cO$-flat local complete intersection of dimension $5f$ over $\cO$.
\end{lemma}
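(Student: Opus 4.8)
The plan is to work chart by chart using the Zariski open cover $\{\tld{U}(\tld{z})\}_{\tld{z}\in\Adm(\eta)^{\cJ}s^{-1}v^\mu}$ of Lemma \ref{lem:adm:cover}, and to identify each local piece $\tld{Y}^{\tmod,\eta,\tau}(\tld{z})$ explicitly. By Proposition \ref{prop:moduli:description}, $\tld{Y}^{\tmod,\eta,\tau}$ is the closed formal subscheme of $\big(\prod_{\cJ}(\Gr_1\times (B\backslash\GL_{2,\cO}))\big)^{\wedge_p}$ cut out by the condition $(g_jX_jg_{j-1}^{-1})_{j\in\cJ}\in\Gr_1^\tau$. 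Over $\tld{U}(\tld{z})$ the factor $\prod_\cJ\Gr_1$ is affine space (in the coordinates provided by the $L^-\rG$-parametrization, cf.~Table \ref{Table1FV} for the analogous $\cA(\eta)$-charts), and $\prod_\cJ (B\backslash\GL_2)$ is a product of copies of $\PP^1$, so the ambient space is smooth of dimension $f\cdot(\dim \Gr_1 + 1) = f\cdot(3+1)=4f$ plus we must account for the dimension bookkeeping — more precisely, $\Gr_1=[L_1^+\rG\backslash L\rG]$ has the relevant local charts of dimension $4$ (three ``affine Grassmannian'' directions plus one torus direction from the $\GL_2$-torsor over $\Gr_{1,\mathrm{red}}$) so the ambient smooth space has local dimension $4f+f=5f$... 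I would instead argue intrinsically: the space of triples in the proof of Proposition \ref{prop:moduli:description} has $\Gr_1^\tau$ of expected dimension $2f$ (each chart is like Table \ref{Table1FV}, one equation $XY+p=0$ in a $3$-fold for the $w_0t_\eta$-type chart, or a smooth $2$-fold otherwise, times the torus directions — careful counting is needed here), times $\GL_2^\cJ$ of dimension $4f$, modulo the free $B^\cJ$-action of dimension $2f$, and $\Gr_1^{\textnormal{bd}}$ contributes the $X_j$-coordinates which are already determined; the net is $5f$. The first thing I would do, therefore, is pin down the local model for a single chart $\tld{U}(\tld{z}_j)$ and verify its dimension and LCI property in the three cases $\tld{z}_j\in\{t_\eta, w_0t_\eta, t_{w_0\eta}\}\cdot s_j^{-1}v^{\mu_j}$.

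Next I would make the cutting-out equations explicit. Fixing a chart, write $X_j$ in its $L^-\rG\tld{z}_j$-normal form with free parameters (entries of the $\frac1v$-unipotent part), write $g_j\in\GL_2$ in an affine chart of $B\backslash\GL_2\cong\PP^1$, form $Y_j\defeq g_jX_jg_{j-1}^{-1}$, and impose $Y_j\in\cA(\eta)(R)s_jv^{\mu_j}$. By Lemma \ref{lem:basic}\eqref{eq:basic:2} the condition ``$Y_j$ lies in the $\tld{z}_j'$-chart of $\cA(\eta)$'' is itself given by the explicit equations of Table \ref{Table1FV}: either no equation (the $t_\eta$ and $t_{w_0\eta}$ charts, which are just $\A^1$) or the single equation $XY+p=0$ (the $w_0t_\eta$ chart). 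Since passing from $X_j$ to $Y_j$ is an isomorphism once the $g_j$ are fixed and invertible, the total ideal defining $\tld{Y}^{\tmod,\eta,\tau}(\tld{z})$ inside the smooth ambient chart is generated by exactly one equation of the form ``$\star_j\cdot\star_j'=-p$'' for each $j$ in the subset $\cJ_{\mathrm{sing}}\subseteq\cJ$ where the relevant $\eta$-admissible element is $w_0t_\eta$ — i.e.~by at most $f$ equations, each of $XY+p$ type — plus equations that merely express redundant coordinates. I would then check that these $|\cJ_{\mathrm{sing}}|$ equations form a regular sequence in the local ring of the smooth ambient space: this is where one sees both the $\cO$-flatness (each $\star\star'+p$ is not a zero divisor since $p$ isn't) and the complete intersection property, and the dimension count $5f = (\text{ambient dim}) - |\cJ_{\mathrm{sing}}|$ forces the ambient dimension to be $5f+|\cJ_{\mathrm{sing}}|$, which I would verify directly from the chart descriptions. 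Finally $\cO$-flatness plus generic reducedness (or just flatness) plus the complete-intersection presentation over the regular ring $\cO[\![\ldots]\!]$ gives that $\tld{Y}^{\tmod,\eta,\tau}$ is an $\cO$-flat LCI; patching over the cover of Lemma \ref{lem:adm:cover} and using that the dimension is constant $=5f$ on every chart completes the proof.

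The main obstacle I anticipate is \textbf{bookkeeping the equations correctly across the product over $\cJ$ and through the $B^\cJ$-quotient}: one must be careful that the change of variables $X_j\mapsto Y_j=g_jX_jg_{j-1}^{-1}$ couples consecutive embeddings $j-1,j$, so the ``obvious product structure'' of $\tld{U}(\tld{z})=\prod_j\tld{U}(\tld{z}_j)$ is only for the ambient space, not for the subscheme; nonetheless, because each imposed equation $Y_j\in\cA(\eta)s_jv^{\mu_j}$ involves only $X_{j-1},X_j,g_{j-1},g_j$ and, crucially, the $g$'s and the ``extra'' $X_{j-1}$-coordinates can be solved for (they enter linearly/invertibly), after eliminating these one is left with precisely one genuine equation per singular embedding, each living in its own block of variables. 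Making this elimination rigorous — showing the Jacobian of the auxiliary equations in the auxiliary variables is invertible so that one may apply the formal implicit function theorem and reduce to the core $XY+p$ equations — is the technical heart. Once that reduction is in place, regularity of the sequence $\{X^{(j)}Y^{(j)}+p\}_{j\in\cJ_{\mathrm{sing}}}$ in a formal power series ring over $\cO$ is standard (they involve disjoint sets of variables), giving LCI, $\cO$-flatness, and $\dim = 5f$ simultaneously.
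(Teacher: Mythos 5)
Your guiding intuition --- that after unwinding everything the only genuinely singular directions are one $XY+p=0$ per embedding $j$ whose admissible element is $w_0t_\eta$, as in Table \ref{Table1FV} --- is correct, but the route you propose has a real gap, which you yourself flag as ``the technical heart''. The claim that in the presentation of Proposition \ref{prop:moduli:description} the auxiliary variables ($l_j$, $\kappa_j$ and the ``extra'' entries of $X_j$) ``enter linearly/invertibly'' and can be eliminated Zariski-locally so as to leave one decoupled equation per singular embedding, each in its own block of variables, cannot be taken for granted, and in the coordinates you are using it is not what happens: conditions \eqref{eq:cond:Y:1} and \eqref{eq:cond:Y:3} couple $X_j$ both to $l_{j-1}$ and to $l_j$, and the explicit charts worked out later (Table \ref{Table:Traductions}) show that for the ``type $I$'' embeddings the equations consist of a rank-one condition $D(p-D)=BC$ together with $\bP^1$-matching conditions $r_j=l_{j-1}$ propagating around the cycle $\cJ$ --- exactly the coupling responsible for $\tld{\cZ}^{\tmod,\tau}$ \emph{not} decomposing as a tensor product over embeddings. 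Proving by hand that these $f$ matching conditions form a regular sequence requires controlling the dimension of their common zero locus, which is what the lemma asserts; the paper in fact runs this implication in the opposite direction (Lemma \ref{lem:koszul:res} \emph{uses} Lemma \ref{lem:LCI} to obtain the codimension-$f$ statement). Your dimension bookkeeping is also internally inconsistent: $2f+4f-2f\neq 5f$, and the correct intermediate count is $\dim\Gr_1^\tau=3f$.

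The clean mechanism that validates your intuition, and which the paper uses, is the torsor structure rather than variable elimination: $\tld{Y}^{\tmod,\eta,\tau}\to Y^{\tmod,\eta,\tau}$ is a $\GL_2^{\cJ}$-torsor by the defining fiber product, the shifted-conjugation quotient $\Gr_1^\tau\to Y^{\tmod,\eta,\tau}$ is a $B^{\cJ}$-torsor, and $\Gr_1^\tau\to\prod_j L^+\cG\backslash\cA(\eta)s_jv^{\mu_j}$ is again a $B^{\cJ}$-torsor onto a product of the explicit curves of Table \ref{Table1FV}, each of which is visibly an $\cO$-flat local complete intersection of relative dimension $1$. Flatness, the LCI property and the dimension count $f+4f=5f$ then transfer along these smooth surjections with no equations to manipulate. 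If you insist on a chart-by-chart argument, you must either invoke this torsor reduction or carry out the full analysis of the charts, which is essentially the content of Section \ref{sec:geometry}.
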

\begin{proof}
It suffices to show that $Y^{\tmod,\eta,\tau}$ is a $\cO$-flat local complete intersection of dimension $f$ over $\cO$.
Since $\Gr_1^\tau$ is a $B^{\cJ}$-torsor over $Y^{\tmod,\eta,\tau}$, the result follows from Table \ref{Table1FV}.
\end{proof}

\begin{lemma}
\label{lem:strightening}
Assume that $p-2-\langle \mu_j,\alpha^\vee\rangle\geq N$ for all $j\in\cJ$.
We have an isomorphism
\[
\xymatrix{
\tld{Y}^{\eta,\tau}\otimes_{\cO}\cO/p^{N}\ar^{\cong}[d]\ar[r]&
\left[L\rG^{\textnormal{bd}, (v+p)v^\mu}/_{\phz}\prod_{j\in\cJ}L_1^+\rG\right]\otimes_{\cO}\cO/p^{N}\ar^{\cong}[d]\\
\tld{Y}^{\textnormal{mod},\eta,\tau}\otimes_{\cO}\cO/p^{N}\ar[r]&
 \Gr_{1}^{\textnormal{bd}, (v+p)v^\mu}\otimes_{\cO}\cO/p^{N}
}
\]
\end{lemma}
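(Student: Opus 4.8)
The statement to prove is an isomorphism of quotient stacks modulo $p^N$, once $p-2-\langle \mu_j,\alpha^\vee\rangle \geq N$ for all $j$. The plan is to produce the top horizontal arrow first and then deduce the bottom one by passing to the appropriate torsor quotients. Concretely, $\tld{Y}^{\eta,\tau}$ is, by diagram \eqref{diag:cmpts}, the fiber product of $Y^{\eta,\tau}=[L\rG^{\tau}/_{\phz}\prod_{\cJ}L^+\cG]^{\wedge_p}$ against $[L\rG^{\textnormal{bd},(v+p)v^{\mu}}/_{\phz}\prod_{\cJ}L_1^+\rG]^{\wedge_p}$, and its mod-$p^N$ reduction should match $\Gr_1^{\textnormal{bd},(v+p)v^{\mu}}\otimes\cO/p^N$. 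So the first and central step is to establish the right vertical isomorphism: $[L\rG^{\textnormal{bd},(v+p)v^{\mu}}/_{\phz}\prod_{\cJ}L_1^+\rG]\otimes\cO/p^N\;\cong\;\Gr_1^{\textnormal{bd},(v+p)v^{\mu}}\otimes\cO/p^N$. The point is that the $\phz$-conjugation action of $\prod_{\cJ}L_1^+\rG$ on $L\rG^{\textnormal{bd},(v+p)v^{\mu}}$, when restricted modulo $p^N$, becomes equivalent to the \emph{left translation} action; but $\Gr_1^{\textnormal{bd},(v+p)v^{\mu}}$ is by definition the left quotient $\prod_{\cJ}L_1^+\rG\backslash L\rG^{\textnormal{bd},(v+p)v^{\mu}}$, so the quotient stack degenerates to the quotient scheme.

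The heart of the argument — and the main obstacle — is the \textbf{straightening} of the $\phz$-conjugation action to the translation action on $L_1^+\rG$ modulo $p^N$. This is the analogue of \cite[Lemma 3.2.2 etc.]{MLM} but now done at the principal congruence level rather than at the Iwahori level. The mechanism is that $\phz$ raises $v$ to $v^p$, hence is a strong contraction on $L_1^+\rG=\ker(\ev^+)$: an element $1+\eps$ with $\eps$ having entries divisible by $v$ gets sent by $\phz$ to $1+\phz(\eps)$ with entries divisible by $v^p$, and iterating, the $\phz$-conjugate of a given $X$ by $g=(g_j)$ can be rewritten as a pure translation $g'\cdot X$ for a suitable $g'\in\prod_{\cJ}L_1^+\rG$, \emph{provided} one can solve the relevant fixed-point/recursion equation. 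That solvability is exactly where the hypothesis $p-2-\langle\mu_j,\alpha^\vee\rangle\geq N$ enters: the pole-order bookkeeping (as in the proof of Proposition \ref{prop:innomable}, where one gets estimates like $p\kappa_{j-1}\leq \kappa_j+\langle\mu_j,\alpha^\vee\rangle+1$) must close up so that after $f$ cyclic steps the contraction wins modulo $p^N$. I would set up the bijection on $R$-points for $R$ an $\cO/p^N$-algebra: given $X\in L\rG^{\textnormal{bd},(v+p)v^{\mu}}(R)$, construct the unique $g\in \prod_{\cJ}L_1^+\rG(R)$ (up to the right stabilizer) with $g\cdot_{\phz} X$ in a chosen slice, by successive approximation — first solve modulo $v^{p}$, then bootstrap the pole orders and the $\varpi$-adic filtration simultaneously, using that $L\rG^{\textnormal{bd},(v+p)v^{\mu}}$ controls the elementary divisors so that the contraction estimate beats the bound $N$. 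Functoriality in $R$ and compatibility with the $\GL_2^{\cJ}$-torsor structures is then formal.

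Once the right vertical isomorphism is in hand, the \textbf{left vertical isomorphism} follows by the same straightening applied compatibly with the larger group: $\tld{Y}^{\textnormal{mod},\eta,\tau}$ was defined (Proposition \ref{prop:moduli:description}) as the space of pairs $(X,g)\in \prod_{\cJ}(\Gr_1\times(B\backslash\GL_{2,\cO}))$ with $g_jX_jg_{j-1}^{-1}\in\Gr_1^{\tau}$; and $\tld{Y}^{\eta,\tau}$ has an analogous description via $[L\rG^{\tau}/_{\phz}\prod_{\cJ}L^+\cG]^{\wedge_p}$ fibered over the $L_1^+\rG$-quotient. Using $L^+\cG=L_1^+\rG\ltimes B$ and $L^+\rG = L_1^+\rG\ltimes\GL_2$ from Lemma \ref{lem:basic}\eqref{eq:basic:1}, one splits off the $L_1^+\rG$-part (straightened as above, modulo $p^N$) from the residual $B$- resp.\ $\GL_2$-conjugation, and matches the residual data with the $(B\backslash\GL_2)^{\cJ}$-coordinate $g$ appearing in $\tld{Y}^{\textnormal{mod},\eta,\tau}$; the condition $g_jX_jg_{j-1}^{-1}\in\Gr_1^{\tau}$ is precisely the mod-$p^N$ shadow of the constraint that the element lie in $L\rG^{\tau}$ rather than merely in $L\rG^{\textnormal{bd},(v+p)v^{\mu}}$. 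Finally I would check the square commutes: both horizontal arrows are induced by the same maps already appearing in \eqref{diag:cmpts} and in the definition of $\tld{Y}^{\textnormal{mod},\eta,\tau}$ before reduction, so commutativity is inherited. The only genuinely delicate point remains the uniform pole-order estimate in the straightening; everything else is bookkeeping with the product decompositions of the loop groups and the cartesian diagrams already recorded.
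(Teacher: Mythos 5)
Your proposal is correct and follows essentially the same route as the paper's proof. The paper phrases the straightening as the statement that, for $A=(A_j)\in L\rG^{\textnormal{bd},(v+p)v^\mu}(R)$ with $R$ an $\cO/p^N$-algebra, the map $(X_j)\mapsto (X_j\Ad(A_j)\phz(X_{j-1}))$ is an automorphism of $(L_1^+\rG(R))^{\cJ}$, the key point being the estimate $v^pA_j^{-1}\in v^{p-N-\langle\mu_j,\alpha^\vee\rangle}\Mat_2(R[\![v]\!])\subset v^2\Mat_2(R[\![v]\!])$ (using that $\frac{1}{v+p}R[\![v]\!]\subset\frac{1}{v^N}R[\![v]\!]$ once $p^N=0$); your ``successive approximation / pole-order bookkeeping'' argument is exactly this automorphism property in slightly different clothing, and your derivation of the left vertical isomorphism from the right one by exploiting the semidirect product decompositions of Lemma~\ref{lem:basic}\eqref{eq:basic:1} is the content the paper compresses into the single sentence that the left arrow follows from the right.
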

\begin{proof}
The fact that the left vertical arrow is an isomorphism follows from the act that the right vertical arrow is an isomorphism.
The latter fact can be proven similar to the proof of \cite[Lemma 5.2.2]{MLM}: the result would follow from the fact that for any  $\cO/p^N$-algebra $R$ and $A\in L\rG^{\textnormal{bd}, (v+p)v^\mu}(R)$ the map 
\[
(X_j)\mapsto (X_j\Ad(A_j)\phz(X_{j-1}))
\]
is an automorphism of $(L_1^+\rG(R))^{\cJ}$.
In turn this follows from the fact that 
\[
v^pA_j^{-1}\in \frac{v^p}{(v+p)v^{\langle\mu_j,\alpha\rangle}}\mathrm{Mat}_2(R[\![v]\!])\subset v^{p-N-\langle\mu_j,\alpha\rangle}\mathrm{Mat}_2(R[\![v]\!])\subset v^2\mathrm{Mat}_2(R[\![v]\!]),
\] 
where we use that $\frac{1}{(v+p)}R[\![v]\!]\subset \frac{1}{v^N}R[\![v]\!]$.

\end{proof}
As a consequence of Lemma \ref{lem:strightening} and diagram \ref{diag:cmpts}, $\tld{U}(\tld{z})$ also induces open substacks $\tld{Y}^{\eta,\tau}(\tld{z})$, $\tld{\cZ}^{\tau}(\tld{z})$.
The following Theorem is the main result of the paper.
\begin{thm}
\label{thm:main:model}
Fix a small presentation $(s,\mu)$ of $\tau$.
Assume either $p> 8f+3+\max_j\langle \mu_j,\alpha^\vee\rangle$ or $p>7$ and $K=\Qp$. 
Then we have an isomoprhism
\[
\tld{\cZ}^{\mathrm{mod},\tau}(\tld{z})\cong \tld{\cZ}^{\tau}(\tld{z}).
\]
\end{thm}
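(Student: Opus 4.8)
The strategy is to compare the two stacks locally on the open cover $\{\tld{U}(\tld z)\}$ from Lemma~\ref{lem:recouvrement}, bootstrapping from the mod $p^N$ identification of Lemma~\ref{lem:strightening} (with $N$ of size roughly $8f+3+\max_j\langle\mu_j,\alpha^\vee\rangle$, hence the hypothesis on $p$) to a characteristic-zero isomorphism. By diagram~\eqref{diag:cmpts}, Proposition~\ref{prop:innomable} and the defining property of scheme-theoretic image, both $\tld{\cZ}^\tau(\tld z)$ and $\tld{\cZ}^{\mathrm{mod},\tau}(\tld z)$ are realized as scheme-theoretic images of the respective projection maps $\tld Y^{\eta,\tau}(\tld z)\to \tld U(\tld z)$ and $\tld Y^{\mathrm{mod},\eta,\tau}(\tld z)\to \tld U(\tld z)$; these maps are proper (by Proposition~\ref{prop:innomable} for the first, by ind-properness of the affine Grassmannian for the second), and Lemma~\ref{lem:strightening} identifies their sources $\tld Y^{\eta,\tau}$ and $\tld Y^{\mathrm{mod},\eta,\tau}$ modulo $p^N$, compatibly with the maps to $\tld U(\tld z)$. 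So the first step is: the two scheme-theoretic images agree modulo $p^{N-1}$ (one loses a power of $p$ passing from a map to its scheme-theoretic image, as already flagged in the introduction: ``$\cZ^{\mathrm{mod},\tau}$ is congruent to $\cZ^\tau$ modulo $p^{cp-1}$'').

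The second step is to upgrade this mod $p^{N-1}$ congruence to an honest isomorphism over $\cO$. The tool is deformation theory: $\tld{\cZ}^\tau(\tld z)$ and $\tld{\cZ}^{\mathrm{mod},\tau}(\tld z)$ are closed subschemes of the smooth $\cO$-scheme $\tld U(\tld z)$ (smooth because $\tld U(\tld z)=\prod_j \tld U(\tld z_j)$ with each factor an affine space, by the explicit description following Table~\ref{Table1FV}), and one wants to show that two ideals $I$, $I^{\mathrm{mod}}\subset \cO_{\tld U(\tld z)}$ agreeing modulo $p^{N-1}$ are in fact equal. This is where the geometric control on $\tld{\cZ}^{\mathrm{mod},\tau}$ enters — presumably established in subsequent sections and invoked here: $\tld{\cZ}^{\mathrm{mod},\tau}$ (equivalently, via the $\GL_2^\cJ$-torsor, $\cZ^{\mathrm{mod},\tau}$) should be shown to be $\cO$-flat, reduced, and such that the relevant obstruction/deformation groups governing the embedded deformation problem of the pair $(\tld U(\tld z), I^{\mathrm{mod}})$ have $p^\infty$-torsion of exponent bounded linearly in $f$. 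Granting that, since $N-1$ exceeds this exponent, the two ideals define the same closed subscheme: any deformation of the closed subscheme $V(I^{\mathrm{mod}})\bmod p^{N-1}$ inside $\tld U(\tld z)$ is unobstructed once one quotients by the torsion, and both $V(I)$ and $V(I^{\mathrm{mod}})$ are $\cO$-flat reduced lifts of the same mod $p^{N-1}$ subscheme, hence equal.

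The third, more hands-on ingredient — needed to make the deformation-theoretic argument apply chart by chart — is the explicit presentation of $\tld{\cZ}^{\mathrm{mod},\tau}(\tld z)$ by equations in the coordinates on $\tld U(\tld z)$ (the ``$p$-saturation of explicit equations'' alluded to in the introduction, cf.~the tables). One computes, for $(X,g)\in\tld Y^{\mathrm{mod},\eta,\tau}$, the condition $g_jX_jg_{j-1}^{-1}\in\Gr_1^\tau$ of Proposition~\ref{prop:moduli:description} explicitly in terms of the entries of the $X_j$ (from Table~\ref{Table1FV}) and the flag coordinates $g_j$, then eliminates the $g$-variables to get the ideal $I^{\mathrm{mod}}$ cutting out $\mathrm{pr}(\tld Y^{\mathrm{mod},\eta,\tau})$; the $p$-saturation is taken to enforce $\cO$-flatness. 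Comparing with the analogous (but a priori only mod $p^{N-1}$-known) presentation of $I$ coming from $\cZ^\tau$ via Proposition~\ref{prop:innomable} gives the congruence of step one with explicit control on which monomials can appear.

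\textbf{Main obstacle.} The crux is the obstruction bound in step two: one must prove that the $p^\infty$-torsion in the obstruction module for the embedded deformation problem has exponent $< N-1$, and the exponent genuinely grows with $f$ (the introduction remarks it ``can be as large as linear in $f$''), because charts equisingular to $XY=p^k$ for every $k\le f$ occur and these are indistinguishable modulo small powers of $p$. Pinning down the exact linear bound requires the detailed geometric study of $\tld{\cZ}^{\mathrm{mod},\tau}$ — flatness, the local complete intersection property (Lemma~\ref{lem:LCI} gives it for $\tld Y^{\mathrm{mod},\eta,\tau}$; one needs the analogue or a substitute for $\tld{\cZ}^{\mathrm{mod},\tau}$), normality away from an explicit locus, and a computation of the relevant $\mathrm{Ext}$ or $H^1$ of the normal sheaf — and that analysis, carried out in the later sections, is the technical heart on which this theorem rests.
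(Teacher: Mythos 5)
Your overall skeleton is right — a mod $p^{N-1}$ congruence followed by a deformation-theoretic lift — and matches the paper's two-step strategy. But both steps, as you have written them, contain genuine gaps.

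\textbf{The mod $p^{N-1}$ congruence is not free.} You assert that because the sources $\tld Y^{\eta,\tau}$ and $\tld Y^{\mathrm{mod},\eta,\tau}$ agree mod $p^N$ compatibly with the maps to $\tld U(\tld z)$, the scheme-theoretic images agree mod $p^{N-1}$, quoting the introduction's ``one loses a power of $p$.'' That phrase summarizes a conclusion, not a general principle. Scheme-theoretic image is computed by the kernel of $\cO_{\tld U(\tld z)}\to \mathrm{pr}_*\cO_{\tld Y}$, and knowing the source map only mod $p^N$ controls this kernel mod $p^{N-1}$ \emph{only if} one has a uniform bound on the $p$-torsion of $\coker(\cO_{\tld U(\tld z)}\to \mathrm{pr}_*\cO_{\tld Y})$. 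That bound could a priori be arbitrary. The paper proves in Proposition~\ref{prop:coker} that the cokernel is killed by $p$ (exponent exactly one), and in Proposition~\ref{prop:R1:null} that $R^{\ell}\pi^{\mathrm{mod}}_*\cO=0$ for $\ell>0$ so that the mod-$p^N$ reduction of the pushforward behaves as expected. Both are nontrivial and are proved by a Koszul resolution of the ideal cutting $\tld Y^{\mathrm{mod},\eta,\tau}(\tld z)$ inside $\tld{Ba}(\tld z)$ (Lemma~\ref{lem:koszul:res}) combined with a case-by-case computation of $R^1\mathrm{pr}_{j*}$ of line bundles on the fibers (Lemma~\ref{lem:coh:comp}). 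Proposition~\ref{prop:heart:inverse} then performs a diagram chase with these inputs to extract the mod $p^{N-1}$ isomorphism. Your outline omits all of this, and the ``lose a power of $p$'' claim would simply be false without it.

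\textbf{The lift to characteristic zero also needs a sharper tool than you state.} You say that the two sides are ``$\cO$-flat reduced lifts of the same mod $p^{N-1}$ subscheme, hence equal.'' That inference is wrong in general: $V(t)$ and $V(t-p^{N-1})$ inside $\mathrm{Spec}\,\cO[t]$ are both $\cO$-flat, reduced, and agree mod $p^{N-1}$, yet are distinct. What the paper does is apply Elkik's approximation theorem (\cite[Lemme~1]{Elkik}), not to the $p$-saturated model directly but to the explicitly presented \emph{naive} model $\tld{\cZ}^{\nv,\tau}(\tld z)$ of Definition~\ref{defn:naive equations}. Proposition~\ref{prop:elkik bound} produces the quantitative obstruction bound: $p^{4f}$ lies in the Jacobian ideal $\mathrm{J}_c$ of the naive presentation. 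Elkik then upgrades the mod $p^{N-1}$ closed immersion to an actual map $\tld\iota\colon \tld{\cZ}^{\tau}(\tld z)\to\tld{\cZ}^{\nv,\tau}(\tld z)^{\wedge_p}$ agreeing with the old one mod $p^{N-1-4f}$, hence a closed immersion. Since $\tld{\cZ}^{\tau}(\tld z)$ is $\cO$-flat and the $\cO$-flat part of the naive model is by construction $\tld{\cZ}^{\mathrm{mod},\tau}(\tld z)$ (Proposition~\ref{prop:naive vs sat model}), $\tld\iota$ factors as a closed immersion $\tld{\cZ}^{\tau}(\tld z)\into\tld{\cZ}^{\mathrm{mod},\tau}(\tld z)$ which is an isomorphism mod $\varpi$, and the final elementary lemma (a surjection of $p$-adically complete Noetherian $\cO$-algebras inducing an isomorphism mod $\varpi$ is an isomorphism) finishes. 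The naive model, its explicit presentation, the Jacobian bound, and Elkik's theorem are the actual mechanism; your reduction to ``flat reduced lifts agree'' skips it and would not close the argument.
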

\begin{rmk} \begin{enumerate}
\item
Since $(s,\mu)$ is small, $\max_j\langle \mu_j,\alpha^\vee\rangle\leq \frac{p+1}{2}$. In particular the hypothesis on $p$ is satisfied for all $\tau$ when $p>16f+7$.
\item Theorem \ref{thm:main:model} is proven under the first hypothesis in section \ref{subsect:main proof}. The proof under the improved bound $p>7$ for $\Qp$ is completed in section \ref{sec:examples f=1}, cf.~Remark \ref{rmk:bd:Qp} for the source of the improvements.
\end{enumerate}
\end{rmk}

The proof of Theorem \ref{thm:main:model} will be performed in two steps:
\begin{enumerate} 
\item 
\label{it:step:1}
We first show there is a isomorphism $\tld{\cZ}^{\tau}(\tld{z})\otimes_{\cO}\cO/p^N\stackrel{\sim}{\ra}\tld{\cZ}^{\mathrm{mod},\tau}(\tld{z})\otimes_{\cO}\cO/p^N$ for sufficiently large $N$.
\item 
\label{it:step:2}
We bound the power of $p$ that belongs to the ideal of singularity of $\tld{\cZ}^{\tmod,\eta,\tau}$ over $\cO$. This allows us to lift the isomorphism mod $p^N$ in the above step to an isomorphism over $\cO$.
\end{enumerate}
In the remainder of this section we will carry out the first step of the above strategy modulo some geometric facts about $\tld{Y}^{\textnormal{mod},\eta,\tau}\ra\tld{\cZ}^{\textnormal{mod},\tau}$ which will established in the later sections. The second step of the strategy is carried out in Section \ref{subsect:main proof}

\subsubsection{Modeling mod $p^N$}
Set $N\defeq p-2-\max_j{\langle \mu_j,\alpha^\vee\rangle}$.
Define $\tld{\cZ}_N^{\mathrm{apx}}$ as the scheme theoretic image of the map
\[
\tld{Y}^{\eta,\tau}\otimes_{\cO}\cO/p^N\ra
\bigg[L\G^{\textnormal{bd},(v+p)v^{\mu}}\Big/_{\phz}\prod_{j\in\cJ}L_1^+\G\bigg]\otimes_{\cO}\cO/p^N=\Gr_{1}^{\textnormal{bd}, (v+p)v^\mu}\otimes_{\cO}\cO/p^{N}.
\]
We have the commutative diagram
\begin{equation}
\label{diag:cmpts:1}
\begin{tikzcd}[column sep=small]
\tld{Y}^{\eta,\tau}\otimes_{\cO}\cO/p^N\ar{rr}{\sim \textnormal{Lemma}~\ref{lem:strightening} }\ar{d}{\pi}\ar{dr}&&\tld{Y}^{\textnormal{mod},\eta,\tau}\otimes_{\cO}\cO/p^N\ar{d}{\pi^{\textnormal{mod}}}
\\
\tld{\cZ}^{\tau}\otimes_{\cO}\cO/p^N\ar[hook]{dr}{\eqref{diag:cmpts}}&\tld{\cZ}^{\mathrm{apx}}_N\ar[hook]{d}\ar[hook]{r}{\heartsuit\heartsuit}\ar[hook]{l}{\heartsuit}&\tld{\cZ}^{\textnormal{mod},\tau}\otimes_{\cO}\cO/p^N\ar[hook]{d}{\imath}
\\
&\left[L\rG^{\textnormal{bd}, (v+p)v^\mu}/_{\phz}\prod_{\cJ}L_1^+\rG\right]\otimes_{\cO}\cO/p^N\ar{r}{\sim}&\Gr_1^{\textnormal{bd}, (v+p)v^\mu}\otimes_{\cO}\cO/p^N%
\end{tikzcd}
\end{equation}

\begin{prop}
\label{prop:heart:inverse}
The morphisms $\heartsuit$, $\heartsuit\heartsuit$ have a factorization
\[
\xymatrix{
\tld{\cZ}^{\tau}\otimes_{\cO}\cO/p^N&\tld{\cZ}^{\mathrm{apx}}_N\ar@{^{(}->}_-{\heartsuit}[l]\ar@{^{(}->}_-{\heartsuit\heartsuit}[r]&\tld{\cZ}^{\mathrm{mod},\tau}\otimes_{\cO}\cO/p^N
\\
\tld{\cZ}^{\tau}\otimes_{\cO}\cO/p^{N-1}\ar@{^{(}.>}_-{{\sim}}[r]\ar@{^{(}->}[u]
&
\tld{\cZ}^{\mathrm{apx}}_N\otimes_{\cO}\cO/p^{N-1}\ar@{^{(}->}[u]&
\tld{\cZ}^{\mathrm{mod},\tau}\otimes_{\cO}\cO/p^{N-1}
\ar@{^{(}.>}_-{{\sim}}[l]\ar@{^{(}->}[u]}
\]
In particular, the natural morphisms $\heartsuit$, $\heartsuit\heartsuit$ induce an isomorphism
\[
\tld{\cZ}^{\tau}\otimes_{\cO}\cO/p^{N-1}\stackrel{\sim}{\longrightarrow}\tld{\cZ}^{\textnormal{mod},\tau}\otimes_{\cO}\cO/p^{N-1}.
\]
\end{prop}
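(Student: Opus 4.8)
The plan is to establish the divisibility estimate that makes diagram~\eqref{diag:cmpts:1} work, and to check it Zariski-locally over the finite cover $\{\tld{U}(\tld{z})\}$ of the $p$-adic completion of $\Gr_{1}^{\mathrm{bd},(v+p)v^{\mu}}$ from Lemma~\ref{lem:recouvrement}, on which everything is given by explicit coordinates built from Table~\ref{Table1FV}. Fix such a chart $\tld{U}(\tld{z})=\Spf S$, so $S$ is a $p$-adically complete $\cO$-flat Noetherian ring. On $\Spf(S/p^{N})$ the three relevant closed subschemes are cut out by ideals $I_{\mathrm{apx}}$, $\overline{I}_{\tau}$, $\overline{I}_{\mathrm{mod}}$ of $S/p^{N}$, where $I_{\mathrm{apx}}=\ker\!\big(S/p^{N}\to\Gamma(\tld{Y}^{\mathrm{mod},\eta,\tau}(\tld{z})\otimes_{\cO}\cO/p^{N},\cO)\big)$ (using Lemma~\ref{lem:strightening} to identify the source of this map with $\Gamma(\tld{Y}^{\eta,\tau}(\tld{z})\otimes_{\cO}\cO/p^{N},\cO)$), and $\overline{I}_{\tau}$, $\overline{I}_{\mathrm{mod}}$ are the images in $S/p^{N}$ of the defining ideals of $\tld{\cZ}^{\tau}$, $\tld{\cZ}^{\mathrm{mod},\tau}$ regarded as scheme-theoretic images. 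The inclusions $\overline{I}_{\tau}\subseteq I_{\mathrm{apx}}$ and $\overline{I}_{\mathrm{mod}}\subseteq I_{\mathrm{apx}}$ hold because the scheme-theoretic image of a reduction is contained in the reduction of the scheme-theoretic image, and they are precisely the closed immersions $\heartsuit$, $\heartsuit\heartsuit$. It therefore suffices to prove $I_{\mathrm{apx}}\subseteq\overline{I}_{\tau}+p^{N-1}(S/p^{N})$ and $I_{\mathrm{apx}}\subseteq\overline{I}_{\mathrm{mod}}+p^{N-1}(S/p^{N})$: given these, reducing one further step modulo $p^{N-1}$ turns $\heartsuit$ and $\heartsuit\heartsuit$ into isomorphisms onto $\tld{\cZ}^{\tau}\otimes_{\cO}\cO/p^{N-1}$, respectively $\tld{\cZ}^{\mathrm{mod},\tau}\otimes_{\cO}\cO/p^{N-1}$, and composing them produces the asserted isomorphism $\tld{\cZ}^{\tau}\otimes_{\cO}\cO/p^{N-1}\cong\tld{\cZ}^{\mathrm{mod},\tau}\otimes_{\cO}\cO/p^{N-1}$.

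For the estimate I would pass through the proper maps $\pi\colon\tld{Y}^{\eta,\tau}\to\tld{\cZ}^{\tau}$ and $\pi^{\mathrm{mod}}\colon\tld{Y}^{\mathrm{mod},\eta,\tau}\to\tld{\cZ}^{\mathrm{mod},\tau}$. Both source stacks are $\cO$-flat: for $\tld{Y}^{\mathrm{mod},\eta,\tau}$ this is Lemma~\ref{lem:LCI}, and for $\tld{Y}^{\eta,\tau}$ it follows from~\eqref{eq:iso:Kisin:mod} together with the $\cO$-flat charts of $\cA(\eta)$ in Table~\ref{Table1FV}. Hence $\pi_{*}\cO$ and $\pi^{\mathrm{mod}}_{*}\cO$ are $\cO$-flat as well, and the exact sequence $0\to\cO\xrightarrow{\,p^{N}\,}\cO\to\cO/p^{N}\to 0$ on the resolution shows both that the defining ideals reduce injectively modulo $p^{N}$ and that $\Gamma(\tld{Y}^{\mathrm{mod},\eta,\tau}(\tld{z}),\cO)/p^{N}$ injects into $\Gamma(\tld{Y}^{\mathrm{mod},\eta,\tau}(\tld{z})\otimes_{\cO}\cO/p^{N},\cO)$. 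A diagram chase then gives, for $x\in I_{\mathrm{apx}}$ with lift $\tld{x}\in S$, that the pullback of $\tld{x}$ to $\tld{Y}^{\mathrm{mod},\eta,\tau}(\tld{z})$ equals $p^{N}b$ for a unique global section $b$, and that $x$ fails to lie in $\overline{I}_{\mathrm{mod}}$ exactly through the class of $b$ in the $p^{N}$-torsion of the coherent cokernel $\cQ\defeq\pi^{\mathrm{mod}}_{*}\cO/\cO_{\tld{\cZ}^{\mathrm{mod},\tau}}$ (and symmetrically for $\tld{\cZ}^{\tau}$). Because $\pi^{\mathrm{mod}}$ is proper and an isomorphism on generic fibers, $\cQ$ is supported on the special fiber, and the whole proposition reduces to showing that $\cQ$ (and its counterpart for $\pi$) is annihilated by $p$: granting that, $pb$ descends to $S$ and $p^{N}b=p^{N-1}(pb)$ already lies in $p^{N-1}$ times the image of $S$, so $\tld{x}\in\overline{I}_{\mathrm{mod}}+p^{N-1}S$, as needed.

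The annihilation of $\cQ$ by $p$ is the step I expect to be the main obstacle. One route is to reduce modulo $p$ and combine the generic reducedness of the special fiber of $\tld{\cZ}^{\mathrm{mod},\tau}$ (via~\cite{CEGSC} and Lemma~\ref{lem:strightening}) with the description of the fibers of $\pi^{\mathrm{mod}}$ as closed subschemes of a product of copies of $\PP^{1}=B\backslash\GL_{2}$ (Proposition~\ref{prop:moduli:description}) in order to exclude $p^{2}$-torsion in $\cQ$ beyond its $p$-torsion. A second, more explicit route — and the one most consistent with the rest of the paper — is to use the presentations of $\tld{Y}^{\mathrm{mod},\eta,\tau}(\tld{z})\to\tld{\cZ}^{\mathrm{mod},\tau}(\tld{z})$ on each chart $\tld{U}(\tld{z})$ produced in the later sections to compute $\cQ$ directly and read off the bound. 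Either way, once the torsion estimate is in hand the remainder is the bookkeeping above, carried out uniformly over the finite cover and glued; the compatibility needed to get a single isomorphism out of $\heartsuit$ and $\heartsuit\heartsuit$ is forced by the commutativity of~\eqref{diag:cmpts:1}.
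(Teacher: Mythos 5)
Your proposal identifies the right skeleton — restrict to a chart, use flatness, identify the obstruction with a coherent cokernel of $\pi^{\mathrm{mod}}_{*}\cO$ (resp.\ $\pi_{*}\cO$) over $\cO_{\tld{\cZ}}$, and deduce the $p^{N-1}$-factorization from $p$-annihilation of that cokernel. This is the same global shape as the paper's proof, which also passes through $S/p^N\into S_N=S^{\tmod}/p^N$ (needing Proposition~\ref{prop:R1:null}) and the $p$-torsion of $C^{\tmod}$ (Proposition~\ref{prop:coker}).

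There is, however, a genuine gap in how you handle the two cokernels. You reduce the proposition to showing that \emph{both} $\cQ^{\tmod}\defeq\pi^{\tmod}_{*}\cO/\cO_{\tld{\cZ}^{\tmod,\tau}}$ \emph{and} its counterpart $\cQ\defeq\pi_{*}\cO/\cO_{\tld{\cZ}^{\tau}}$ are killed by $p$, and you propose to attack each by inspecting the fibers of the respective proper map. But on the Kisin-variety side there is no tractable handle on $\cQ$: the map $\pi$ involves the $\phz$-conjugation action, and none of the explicit presentations or \v{C}ech computations you gesture at (which are confined to $\tld{Y}^{\tmod,\eta,\tau}\to\tld{U}(\tld{z})$ and yield Propositions~\ref{prop:coker}, \ref{prop:R1:null}) apply to it. The paper's actual argument circumvents this: it proves only $pC^{\tmod}=0$ on the model side, then uses the identification $S/p^N\hookrightarrow S_N=S^{\tmod}/p^N$ together with the equality $C_N=C^{\tmod}/p^N$ (forced by the diagram with exact rows) to get $p(C/p^N)=0$, and then bootstraps to $pC=0$ via $pC\subseteq p^NC$ and $p$-adic separatedness. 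This transfer from the model side to the Kisin-variety side is the crux of the proof, and it is exactly what lets Proposition~\ref{prop:coker} — stated only for $\pi^{\tmod}$ — suffice. You should either prove the $p$-annihilation of $\cQ$ by a comparable bootstrap (in which case the extra hypothesis you were hoping to verify on the $\pi$-side is unnecessary), or explain why your routes (a), (b) apply to $\pi$ as well, which as written they do not. As a smaller point, your route (a) via generic reducedness of the special fiber and \cite{CEGSC} is an external input the paper does not use, and it is not clear it yields a $p$-torsion bound rather than a support statement.
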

The proof of Proposition \ref{prop:heart:inverse} crucially relies on the following result, whose proof will be postponed until section \ref{subsect:main proof}.
\begin{prop}
\label{prop:coker}
We have:
\begin{equation}
\label{eq:coker}
p
\coker
\left(
\cO_{
\tld{\cZ}^{\mathrm{mod},\tau}}
\rightarrow \pi^{\mathrm{mod}}_*\left(\cO_{\tld{Y}^{\mathrm{mod},\eta,\tau}}\right)\right)=0.
\end{equation}
\end{prop}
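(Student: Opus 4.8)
The plan is to reduce \eqref{eq:coker} to an explicit local computation on the charts of the Zariski cover. Since the assertion is local on $\tld{\cZ}^{\mathrm{mod},\tau}$, by Lemma \ref{lem:adm:cover} it suffices to prove it after restricting to each $\tld{\cZ}^{\mathrm{mod},\tau}(\tld z)$ with $\tld z\in\Adm(\eta)^{\cJ}s^{-1}v^{\mu}$. Fix such a $\tld z$ and put $A\defeq\Gamma(\tld{\cZ}^{\mathrm{mod},\tau}(\tld z),\cO)$, $B\defeq\Gamma(\tld Y^{\mathrm{mod},\eta,\tau}(\tld z),\cO)$. The morphism $\pi^{\mathrm{mod}}$ is proper (Proposition \ref{prop:innomable} together with the ind-properness of $\Gr_1$), so $B$ is a finite $A$-module, and $A\hookrightarrow B$ because $\tld{\cZ}^{\mathrm{mod},\tau}$ is the scheme theoretic image of $\textnormal{pr}$. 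By Lemma \ref{lem:LCI} the source $\tld Y^{\mathrm{mod},\eta,\tau}$ is $\cO$-flat, hence $B$, and therefore also $A$, are $\cO$-flat, so the goal becomes the commutative algebra statement $pB\subseteq A$. By Proposition \ref{prop:moduli:description}, $\tld Y^{\mathrm{mod},\eta,\tau}(\tld z)$ is cut out inside $\tld U(\tld z)\times\prod_{j\in\cJ}(B\backslash\GL_{2,\cO})$ by the condition $(g_jX_jg_{j-1}^{-1})_j\in\Gr_1^{\tau}$, so that the fibres of $\pi^{\mathrm{mod}}$ are the ``naive Kisin varieties'': explicit closed subschemes of $\prod_{j\in\cJ}\PP^1$ cut out by the incidence conditions coming from $\Gr_1^{\tau}$.

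First I would check that $\cQ\defeq B/A$ is $p$-power torsion, which reduces the problem to bounding the exponent by $1$. On the generic fibre the naive Kisin varieties for $\GL_2$ in height $\leq 1$ are finite chains of $\PP^1$'s and reduced points, hence geometrically connected and geometrically reduced, so $H^0$ of every fibre of $\pi^{\mathrm{mod}}[1/p]$ equals the residue field. Since $\tld Y^{\mathrm{mod},\eta,\tau}$ is $\cO$-flat with charts as in Table \ref{Table1FV}, its generic fibre is reduced, hence so is its scheme theoretic image $\tld{\cZ}^{\mathrm{mod},\tau}[1/p]$; cohomology and base change then show that $\cO_{\tld{\cZ}^{\mathrm{mod},\tau}}[1/p]\to\pi^{\mathrm{mod}}_*\cO[1/p]$ is an isomorphism. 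Thus $\cQ$ is a finite $A$-module supported in the special fibre, and in fact supported only over those $\tld z$ for which the naive Kisin variety acquires a positive dimensional ($\PP^1$) component.

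The main point — and the reason the proof is deferred to Section \ref{subsect:main proof} — is to upgrade ``$p$-power torsion'' to ``killed by $p$''. For this I would use the explicit presentations of $\tld Y^{\mathrm{mod},\eta,\tau}(\tld z)$ and of $\tld{\cZ}^{\mathrm{mod},\tau}(\tld z)$ produced in the later sections (refining Table \ref{Table1FV} by the structure matrices attached to the degeneracy data of $\tau$) in order to compute $B$ directly as a ring of global sections over the relevant chain of $\PP^1$-bundles, and then compare it with the subring $A$ cut out by the scheme theoretic image. The computation should exhibit $B$ as generated over $A$, as an $A$-module, by finitely many classes whose defining relations force $p$ times each of them to lie in $A$; this is exactly where one exploits that the exotic (non-normal) charts are of the very restricted shape classified by Theorems \ref{thm:intro:Gorenstein} and \ref{thm:intro:normal}, so that $p\cQ=0$. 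Carrying this out uniformly over all relevant $\tld z$ is the genuine obstacle, and yields \eqref{eq:coker}.
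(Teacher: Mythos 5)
Your opening reduction (localize to the charts $\tld{U}(\tld z)$, set $A=\Gamma(\tld{\cZ}^{\tmod,\tau}(\tld z),\cO)$, $B=\Gamma(\tld Y^{\tmod,\eta,\tau}(\tld z),\cO)$, observe $A\hookrightarrow B$ with both $\cO$-flat, and note that $\pr$ is an isomorphism after inverting $p$ so that $B/A$ is $p^\infty$-torsion) is correct and matches the paper's setup (Lemma \ref{lem:prop:pr}\eqref{eq:prop:pr:1}). But the argument stops precisely where the actual work begins. You write that the explicit presentations ``should exhibit $B$ as generated over $A$ by finitely many classes whose defining relations force $p$ times each of them to lie in $A$'' --- this is a description of the desired conclusion, not a proof of it. Moreover the suggestion to exploit the classification of non-normal charts from Theorems \ref{thm:intro:Gorenstein} and \ref{thm:intro:normal} is circular: both of those are proved via Propositions \ref{prop:coker} and \ref{prop:R1:null} (compare the proofs of Theorem \ref{thm:rational smoothness} and Theorem \ref{thm:non-normal locus}), so they cannot be used as input here.

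The mechanism the paper actually uses, and which your proposal misses, is not a direct ring-level comparison of $A$ and $B$ but a homological one built from a product decomposition. One factors $\pr$ through the closed immersion $\Delta:\tld Y^{\tmod,\eta,\tau}(\tld z)\hookrightarrow\tld{Ba}(\tld z)=\prod_j\tld{Ba}_j(\tld z_j)$; crucially $\tld{Ba}(\tld z)$ is a \emph{product} over $j$, and $\Delta$ is a regular immersion cut out by $f$ sections of the line bundles $\cL_j=q_j^*\cO_{\bP^1}(-1)\otimes p_{j-1}^*\cO_{\bP^1}(-1)$ (Lemma \ref{lem:koszul:res}). The cokernel in question is then identified with $R^1\pr_{\tld B*}\cI(\tld z)$ (Corollary \ref{cor:Delta}), which the Koszul resolution filters by $R\pr_*\bigwedge^\ell(\bigoplus_j\cL_j)$. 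The K\"unneth formula reduces this to $\bigotimes_{j,\cO}R^1\Gamma(\cL_j^{(\eps_j,\delta_j)})$, and the decisive input is the case-by-case computation of Lemma \ref{lem:coh:comp}\eqref{lem:coh:comp:3}: for each $j$ the factor $R^1\Gamma(q_j^*\cO(-1)\otimes p_j^*\cO(-1))$ is either $0$, or isomorphic to $\F$, or $\cO$-torsion free, and the torsion-free case occurs only when $\langle\mu_j,\alpha^\vee\rangle=0$. Since $\tau$ is non-scalar, at least one $j$ contributes a factor of $\F$ or $0$, so the tensor product over $\cO$ is killed by $p$ (Corollary \ref{cor:vanish}). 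This is what lets one bound the exponent by $1$ uniformly, and it is precisely the part absent from your proposal. Your geometric picture of chains of $\bP^1$'s is directionally right, but without the Koszul/K\"unneth reduction and the sharp $R^1$ computation on the individual factors $\tld{Ba}_j(\tld z_j)$, the uniform bound $p\cQ=0$ does not follow.
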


\begin{prop}
\label{prop:R1:null}
For $\ell>0$
\[
R^\ell \pi^{\mathrm{mod}}_*\cO_{\tld{Y}^{\mathrm{mod},\eta,\tau}}=0.
\]
\end{prop}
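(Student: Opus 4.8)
The plan is to reduce the statement about the higher direct images of $\pi^{\mathrm{mod}}$ to a computation on the explicit affine charts provided by Lemma \ref{lem:adm:cover}, and there to exploit the fact that $\pi^{\mathrm{mod}}$ is a quotient by a (shifted) $B^{\cJ}$-action. Recall that $\tld{Y}^{\mathrm{mod},\eta,\tau}$ is, by Proposition \ref{prop:moduli:description}, the closed subscheme of $\big(\prod_{\cJ}(\Gr_1\times (B\backslash \GL_{2,\cO}))\big)^{\wedge_p}$ cut out by the condition $(g_jX_jg_{j-1}^{-1})_j\in\Gr_1^{\tau}$, and $\pi^{\mathrm{mod}}$ is the projection to the $X$-coordinates, landing in $\tld{\cZ}^{\mathrm{mod},\tau}\subset\Gr_1^{\textnormal{bd},(v+p)v^\mu}$. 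Since the target $\Gr_1^{\textnormal{bd},(v+p)v^\mu}$ is covered by the affines $\tld{U}(\tld{z})$ (Lemma \ref{lem:recouvrement}), and higher direct images commute with flat (here, open) base change, it suffices to prove the vanishing after restricting to each $\tld{U}(\tld{z})$, i.e.\ to prove $H^\ell(\pi^{\mathrm{mod},-1}(\tld{U}(\tld{z})),\cO)=0$ for $\ell>0$, where now everything is affine or a nice flag-bundle over an affine.

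The key geometric input is that over $\tld{U}(\tld{z})$ the fiber of $\pi^{\mathrm{mod}}$ is governed by the choice of the $g_j\in B\backslash\GL_{2,\cO}\cong \PP^1$ subject to the closed condition that the conjugates land in $\Gr_1^\tau$; concretely $\pi^{\mathrm{mod},-1}(\tld{U}(\tld{z}))$ is a closed subscheme of $\tld{U}(\tld{z})\times\prod_{\cJ}\PP^1_{\cO}$, hence projective over $\tld{U}(\tld{z})$. The plan is to show that this closed subscheme is, Zariski-locally on $\tld{U}(\tld{z})$, cut out inside $\tld{U}(\tld{z})\times\prod_{\cJ}\PP^1$ in a way that the fibers of $\pi^{\mathrm{mod}}$ are disjoint unions of (products of) closed subschemes of $\PP^1$ that are either points or all of $\PP^1$ with its structure sheaf — more precisely, that on suitable opens of the base the morphism looks like a product over $\cJ$ of the structure maps $\mathrm{pr}\colon \Gr_1^\tau(\tld{z}_j)\to \tld{\cZ}^{\mathrm{mod},\tau}(\tld{z}_j)$ for $f=1$, which by Table \ref{Table1FV} and the explicit description of $\Gr_1^\tau$ are honestly computable. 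For those one checks directly using the explicit equations (of the type displayed in Table \ref{Table1FV}: $XY+p=0$, etc.) that the only non-trivial fibers are projective lines with $H^{>0}(\PP^1,\cO)=0$ and $H^0(\PP^1,\cO)=\cO$, so by the theorem on formal functions (or simply since we are over a Noetherian affine base and the fibers have vanishing higher cohomology with $R^0$ a line bundle, which here is trivial) one gets $R^\ell\pi^{\mathrm{mod}}_*\cO=0$ for $\ell>0$. Note that Proposition \ref{prop:coker} is the companion statement that $R^0\pi^{\mathrm{mod}}_*\cO$ is $\cO_{\tld{\cZ}^{\mathrm{mod},\tau}}$ up to $p$-torsion, consistent with this picture.

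Concretely I would argue as follows. First, by Lemma \ref{lem:adm:cover} reduce to a single chart $\tld{U}(\tld{z})$ with $\tld{z}\in\Adm(\eta)^{\cJ}s^{-1}v^\mu$, and note $\tld{U}(\tld{z})=\prod_j\tld{U}(\tld{z}_j)$. Second, use the product structure over $\cJ$: the defining condition $(g_jX_jg_{j-1}^{-1})_j\in\Gr_1^\tau=\prod_j\Gr_1^\tau$ couples only consecutive indices through the twist by $g_{j-1}$, so writing $\Gr_1^\tau=\prod_j\cA(\eta)\,s_jv^{\mu_j}$ and using that $B\backslash\GL_2\cong\PP^1$ is proper with $H^{>0}(\PP^1,\mathcal O_{\PP^1})=0$, I would peel off one embedding at a time (a Leray spectral sequence / Künneth argument for the tower of projections $\prod_{\cJ}\PP^1\to\prod_{\cJ\setminus\{j\}}\PP^1\to\cdots$), reducing to the case $f=1$. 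Third, for $f=1$ perform the explicit computation: on each of the finitely many open pieces indexed by $\Adm(\eta)$, the source $\tld{Y}^{\mathrm{mod},\eta,\tau}(\tld{z})$ is, by Proposition \ref{prop:moduli:description} and Table \ref{Table1FV}, explicitly a closed subscheme of (an affine) $\times\PP^1$, and one reads off that $\pi^{\mathrm{mod}}$ is either an isomorphism onto an affine open or the blow-down of a $\PP^1$ sitting over a closed point of $\tld{\cZ}^{\mathrm{mod},\tau}$; in both cases the conclusion $R^\ell\pi^{\mathrm{mod}}_*\cO=0$ for $\ell>0$ is immediate (the structure sheaf of $\PP^1$ has no higher cohomology). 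Finally, glue: the vanishing on each $\tld{U}(\tld{z})$ and the fact that the $\tld{U}(\tld{z})$ cover $\Gr_1^{\textnormal{bd},(v+p)v^\mu}$ (hence $\tld{\cZ}^{\mathrm{mod},\tau}$) give the global vanishing.

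The main obstacle I anticipate is the bookkeeping in the $f>1$ reduction: because the shifted-conjugation twist links the coordinate $g_j$ to the matrix in embedding $j-1$, the fiber of $\pi^{\mathrm{mod}}$ is \emph{a priori} not literally a product of $\PP^1$'s but a tower of $\PP^1$-bundles twisted by the interaction with the $X_j$'s, and one must check that at each stage the relevant sheaf is still (a quotient of) $\mathcal O_{\PP^1}$-data with vanishing $H^{>0}$, i.e.\ that the closed conditions imposed by membership in $\cA(\eta)s_jv^{\mu_j}$ are compatible with the fibration structure. I expect this to work because those conditions, in the charts of Table \ref{Table1FV}, are linear or quadratic in the $\PP^1$-coordinate in a way that cuts out either the whole fiber or a reduced point, but verifying it uniformly across the admissible set and all relative positions $\tld{z}_j$ is where the real work lies; the potentially Barsotti--Tate (height $\le 1$, determinant $(v+p)$) constraint is essential here to keep the equations this simple. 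The $p$-torsion subtleties that pervade the rest of the paper do not enter this statement, since we are computing cohomology of a \emph{fixed} scheme over $\cO$ with coefficients in its structure sheaf and the fibers are geometrically $\PP^1$'s or points flatly over $\cO$.
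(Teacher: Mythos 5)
Your opening reductions match the paper's: localize to the charts $\tld{U}(\tld{z})$, replace $\pi^{\tmod}$ by $\textnormal{pr}$, and try to exploit the product structure over $\cJ$. But the core of your argument is a gap, and the heuristic you substitute for it is not valid. The assertion that vanishing of $H^{>0}$ of the (reduced) fibers --- points and $\bP^1$'s --- implies $R^{\ell>0}\pi^{\tmod}_*\cO=0$ fails for non-flat proper morphisms: the theorem on formal functions requires controlling the cohomology of all infinitesimal neighborhoods of the fibers, i.e.\ of $\Sym^k$ of the conormal sheaf, not just of the fiber itself. That this control is genuinely delicate here is visible in the companion statements: $R^1\textnormal{pr}_{j*}\big(q_j^*\cO_{\bP^1}(-1)\otimes p_j^*\cO_{\bP^1}(-1)\big)$ is \emph{nonzero} in the degenerate cases (Lemma \ref{lem:coh:comp}\eqref{lem:coh:comp:3}), and this nonvanishing is exactly what produces the $p$-torsion cokernel in Proposition \ref{prop:coker} and the non-normal locus of Theorem \ref{thm:non-normal locus}. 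A fiber-cohomology argument that were strong enough to prove your claim as stated would also ``prove'' statements that are false in this geometry.

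Your proposed induction ``peeling off one embedding at a time'' is precisely the step that cannot be done naively, because the conditions $r_j=l_{j-1}$ couple consecutive factors: after pushing forward one $\bP^1$, the remaining sheaf is no longer a structure sheaf but acquires $\cO_{\bP^1}(-1)$-twists, and, as noted, these have nonvanishing $R^1$ on some of the basic pieces. The paper resolves this by a different device: it embeds $\tld{Y}^{\tmod,\eta,\tau}(\tld{z})$ into the genuine product $\tld{Ba}(\tld{z})=\prod_j\tld{Ba}_j(\tld{z}_j)$ as a regular complete intersection cut out by sections of the line bundles $\cL_j=q_j^*\cO_{\bP^1}(-1)\otimes p_{j-1}^*\cO_{\bP^1}(-1)$ (Lemma \ref{lem:koszul:res}), resolves the ideal sheaf by the Koszul complex, computes the exact cohomological amplitude of each factor $q_j^*\cO(-1)^{\eps_j}\otimes p_j^*\cO(-1)^{\delta_j}$ casewise (Lemma \ref{lem:coh:comp}), and then uses K\"unneth together with the combinatorial constraint $\eps_j=\delta_{j+1}$ to show $R^k\textnormal{pr}_*\big(\bigwedge^\ell\bigoplus_j\cL_j\big)=0$ for $k>\ell$ (Corollary \ref{cor:vanish:0}); the shift $[\ell-1]$ in the Koszul filtration then forces $R\textnormal{pr}_*\cO_{\tld{Y}^{\tmod,\eta,\tau}(\tld{z})}$ into degree $0$. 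None of this bookkeeping --- which is where the proposition actually lives --- appears in your sketch, and your own closing paragraph essentially concedes that the compatibility of the closed conditions with the fibration ``is where the real work lies'' without carrying it out. Your $f=1$ discussion is essentially correct, but it does not bootstrap to $f>1$ by the route you describe.
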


\begin{proof}[Proof of Proposition \ref{prop:heart:inverse}]
The statement is local on $\Gr_1^{\textnormal{bd}, (v+p)v^\mu}\otimes_{\cO}\cO/p^N$, so it suffices to prove it after intersecting everything with $\tld{U}(\tld{z})$.
Write $\tld{\cZ}^{\tau}(\tld{z})=\Spf\, R$, $\tld{\cZ}^{\textnormal{mod},\tau}(\tld{z})=\Spf\, R^{\textnormal{mod}}$, and $\tld{\cZ}^{\textnormal{mod}}_N(\tld{z})=\Spf\,R^{\textnormal{apx}}_N$.
Also let $S=\pi_*\left(\cO_{\tld{Y}^{\eta,\tau}(\tld{z})}\right)$ and $S^{\textnormal{mod}}=\pi^{\mathrm{mod}}_*\left(\cO_{\tld{Y}^{\mathrm{mod},\eta,\tau}(\tld{z})}\right)$ and $S_N=\pi^{\mathrm{mod}}_*\left(\cO_{\tld{Y}^{\mathrm{mod},\eta,\tau}(\tld{z})}/p^N\right)$.
Note that $S$ and $S^{\tmod}$ are $p$-torsion free and, $S/p^N\into S_N=S^{\tmod}/p^N$ by Proposition \ref{prop:R1:null}.

Since $\tld{\cZ}^{\tmod,\tau}(\tld{z})$ is the scheme theoretic image of $\tld{Y}^{\tmod,\eta,\tau}\ra\tld{U}(\tld{z})$ we have $R^{\tmod}\subset S^{\tmod}$.
Similarly $R\subset S$. 
Finally equation \eqref{eq:coker} shows that $S^{\tmod}/R^{\tmod}$ is $p$-torsion.
We thus have the following commutative diagram
\begin{equation}
\label{diag:cmpts:2}
\begin{tikzcd}[column sep=small]
S\ar[twoheadrightarrow]{r}&S/p^N\ar[hook]{r}&S_N&S^{\textnormal{mod}}/p^N\ar[equal]{l}&S^{\textnormal{mod}}\ar[twoheadrightarrow]{l}
\\
R\ar[hook]{u}{\pi}\ar[twoheadrightarrow]{r}&R/p^N\ar[twoheadrightarrow]{r}{\heartsuit\heartsuit}\ar{u}&R_N^{\mathrm{apx}}\ar[hook]{u}&R^{\textnormal{mod}}/p^N\ar{u}\ar[twoheadrightarrow]{l}{\heartsuit}&R^{\textnormal{mod}}\ar[twoheadrightarrow]{l}\ar[hook]{u}{\pi^{\textnormal{mod}}}
\end{tikzcd}
\end{equation}
where the hooked arrow are injective.
Let $C$, $C^{\textnormal{mod}}$ denote the cokernel of the maps $\pi$, $\pi^{\textnormal{mod}}$ respectively.
As $S$, $S^{\textnormal{mod}}$ are both $p$-flat, we deduce from \eqref{diag:cmpts:2} the following commutative diagram with exact rows (this defines $C_N$):
\begin{equation}
\label{diag:cmpts:3}
\begin{tikzcd}[column sep=small]
&&\ker(\heartsuit\heartsuit)
\ar[equal]{dl}\ar{d}&&&\\
0\ar{r}&C^{\textnormal{mod}}[p^N]\ar{r}\ar{d}&
R^{\textnormal{mod}}/p^N\ar{r}{\pi^{\textnormal{mod}}}\ar[twoheadrightarrow]{d}{\heartsuit\heartsuit}&S^{\textnormal{mod}}/p^N\ar{r}\ar[equal]{d}&C^{\textnormal{mod}}/p^N\ar{r}\ar[equal]{d}&0\\
0\ar{r}&0\ar{r}&
R_N^{\mathrm{apx}}\ar[hook]{r}&S_N\ar{r}&C_N\ar{r}&0
\\
0\ar{r}&C[p^N]\ar{u}\ar{r}&
R/p^N\ar{r}{\pi}\ar[twoheadrightarrow]{u}{\heartsuit}&S/p^N\ar{r}\ar[hook]{u}&C/p^N\ar{r}\ar[hook]{u}\ar{r}&0
\\
&&\ker(\heartsuit)
\ar[equal]{ul}\ar{u}&&&
\end{tikzcd}
\end{equation}

Proposition \ref{prop:coker} imply that $pC^{\textnormal{mod}}=0$ 
and hence $C^{\textnormal{mod}}[p^N]=C^{\textnormal{mod}}[p]$.
Thus $\ker(\heartsuit\heartsuit)$ is annihilated by $p$.
We conclude that  $p\ker(\heartsuit\heartsuit)\subseteq p^NR^{\textnormal{mod}}$ and, as $R^{\textnormal{mod}}$ is $p$-flat, that $\ker(\heartsuit\heartsuit)\subseteq p^{N-1}R^{\textnormal{mod}}$.
This implies the factorization
\[
\xymatrix{
R^{\textnormal{mod}}/p^N\ar^{\heartsuit\heartsuit}@{->>}[r]\ar@{->>}[d]&R_N^{\mathrm{apx}}\ar^{\exists}@{.>>}[dl]\\
R^{\textnormal{mod}}/p^{N-1}&
}
\]
and hence an isomorphism $R^{\textnormal{mod}}/p^{N-1}\cong R_N^{\mathrm{apx}}/p^{N-1}$.

Now  $pC_N=0$ so $p(C/p^N)=0$. 
Hence $pC\subset p^NC\subset p^{2}C$ and as $C$ is $p$-adically separated we learn that $pC=0$.
We repeat the argument in the previous paragraph to obtain the factorization
\[
\xymatrix{
R/p^N\ar^{\heartsuit}@{->>}[r]\ar@{->>}[d]&R_N^{\mathrm{apx}}\ar^{\exists}@{.>>}[dl]\\
R/p^{N-1}&
}
\]
and hence an isomorphism $R/p^{N-1}\cong R_N^{\mathrm{apx}}/p^{N-1}$.
\end{proof}

\section{Geometry of local models}
\label{sec:geometry}
\subsection{Equations for $\tld{Y}^{\tmod,\eta,\tau}$}

Let $\tau$ be a tame inertial type with small presentation $(s,\mu)$.
Everything we do depends on this choice but we usually suppress this dependence from the notation.
Let $\tld{w}\in\Adm^\vee(\eta)^{\cJ}$ and set $\tld{z}=\tld{w}s^{-1}v^{\mu}$.
We consider the morphism $\GL_{2,\cO}\onto B\backslash \GL_{2,\cO}\stackrel{\sim}{\longrightarrow}\mathbb{P}^1_{\cO}$ sending $\text{\tiny{$\begin{pmatrix}\alpha&\beta\\\gamma&\delta\end{pmatrix}$}}$ to $[\gamma:\delta]\in \mathbb{P}^1_{\cO}$.
Given a Noetherian $\cO$-algebra $R$ where $p$ is nilpotent, by Proposition \ref{prop:moduli:description} and the definition of $\curlAdm(\eta)$,  we see that $\tld{Y}^{\text{mod},\eta,\tau}(\tld{z})(R)$ the groupoid of tuples $(l_j,\kappa_j, X_j)_{j\in \cJ}\in (\bP^1\times \GL_2\times L_1^{-}\rG)^{\cJ}$
subject to the following conditions that for some (equivalently, any) lift $\tld{l}_{j}\in\GL_2(R)$ of $l_j$:
\begin{enumerate}
\item 
\label{eq:cond:Y:1}
$X_j\tld{w}_j\Ad\big(s^{-1}_jv^{\mu_j}\big)(\tld{l}_{j-1}^{-1})\in  \Mat_2(R[\![v]\!])$
\item
\label{eq:cond:Y:2}
$\det\big(X_j\tld{w}_j\big)\in R^\times(v+p)$
\item
\label{eq:cond:Y:3}
we have $l_{j}\kappa_j\cdot \Big(X_j\tld{w}_j\Ad\big(s^{-1}_jv^{\mu_j}\big)(\tld{l}_{j-1}^{-1})\Big)|_{v=0}=[0:1]$.
\end{enumerate}
Note that $\Ad\big(s_j^{-1}v^{\mu_j}\big)(B(R))\in B(R[v])$ since $\mu$ is dominant, which justifies the independence of the choice of the lift $\tld{l}_{j-1}\in \GL_2(R)$ in items \eqref{eq:cond:Y:1},\eqref{eq:cond:Y:2},\eqref{eq:cond:Y:3} above.

In item \eqref{eq:cond:Y:3} we have used the following 
\begin{conv}
Given $\text{\tiny{$\begin{pmatrix}\alpha&\beta\\\gamma&\delta\end{pmatrix}$}}\in \Mat_2(R)$ and $x,y,z,t\in R$, the equality 
\[
\raisebox{-.6em}{$[x:y]$}\begin{pmatrix}\alpha&\beta\\\gamma&\delta\end{pmatrix}\raisebox{-.6em}{$=[z:t]$}
\] 
is interpreted as $t(\alpha x+\gamma y)-z(\beta x+\delta y)=0$.

Equivalently, this can be written in matrix form
\[
\raisebox{-.5em}{$\begin{pmatrix}x& y \end{pmatrix}$}\begin{pmatrix}\alpha&\beta\\\gamma&\delta\end{pmatrix}\begin{pmatrix}t\\ -z \end{pmatrix}\raisebox{-.5em}{$=0.$}
\]
\end{conv}
We will always adopt the above convention in the tables below, note that this allows us to interpret equations as $[x:y]=[z:t]$ even if neither side are actual elements of $\bP^1$.
We will also consider the following auxiliary space $\tld{Ba}(\tld{z})$ of tuples $(l_{j},\kappa_j, X_j,r_j)_{j\in \cJ}\in (\bP^1\times \GL_2\times L_1^{-}\rG\times \bP^1)^{\cJ}$ satisfiyng the following variant of the conditions above:
\begin{enumerate}
\item 
\label{eq:cond:Y:1'}
$X_j\tld{w}_j\Ad\big(s^{-1}_jv^{\mu_j}\big)(\tld{r}_{j}^{-1})\in  \Mat_2(R[\![v]\!])$
\item
\label{eq:cond:Y:2'}
$\det\big(X_j\tld{w}_j\big)\in R^\times(v+p)$
\item
\label{eq:cond:Y:3'}
we have $l_{j}\kappa_j\cdot \Big(X_j\tld{w}_j\Ad\big(s^{-1}_jv^{\mu_j}\big)(\tld{r}_{j}^{-1})\Big)|_{v=0}=[0:1]$.
\end{enumerate}
Clearly, we have a decomposition $\tld{Ba}(\tld{z})=\prod_{j\in\cJ}\tld{Ba}_j(\tld{z}_j)$ where the $j$-th factor classify the quadruples $(l_j,\kappa_j,X_j,r_j)$.
Furthermore $\tld{Y}^{\tmod,\eta,\tau}$ is exactly the subspace of $\tld{Ba}(\tld{z})$ where we impose that $r_j=l_{j-1}$ for all $j$.

\begin{lemma}
\label{lem:parametrization_1}
The $f$-tuples $(X_j, r_j)_{j\in \cJ}\in (L_1^{-}\rG\times \bP^1)^{\cJ}(R)$ satisfying condition \eqref{eq:cond:Y:1} are precisely those given in Table \ref{Table:Matrices:avec:bornes}, according to $\tld{w}_j\in\Adm^\vee(\eta_j)$, $s_j\in W$ and $k_j\defeq \langle \mu_j,\alpha^\vee\rangle$.
\end{lemma}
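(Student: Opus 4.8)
The plan is to reduce to a single embedding and then perform a direct pole computation at $v=0$. Since condition \eqref{eq:cond:Y:1} for a fixed $j$ only involves the pair $(X_j,r_j)$ (through a lift $\tld r_j\in\GL_2(R)$) and none of the other components of the $f$-tuple, an $f$-tuple satisfies it if and only if each $(X_j,r_j)$ does separately — equivalently, I am describing the factors $\tld{Ba}_j(\tld z_j)$ of the auxiliary space $\tld{Ba}(\tld z)$, in the notation introduced before the lemma. So I fix $j\in\cJ$, drop the subscript, and by twisting (Lemma~\ref{lem:write:tau}) arrange $\mu=(k,0)$ with $k=\langle\mu,\alpha^\vee\rangle$, so that $\Ad(s^{-1}v^{\mu})$ scales the two off-diagonal entries of a matrix by $v^{\pm k}$ and, when $s=w_0$, additionally swaps rows and columns. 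I also recall that $X\in L_1^{-}\rG(R)$ reduces to the identity as $v\to\infty$ and, on rings where $p$ is nilpotent, has entries in $R[1/v]$, so its only possible pole is at $v=0$ (a priori of unbounded order); hence $X$ is determined by its principal part there.

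Next I would carry out the pole matching. Covering $\bP^1$ by its two standard affine charts, I pick for $r$ a unipotent lift $\tld r$ adapted to the chart and form, for each of the three $\tld w\in\Adm^\vee(\eta)$, each $s\in W$, and each chart — finitely many cases — the explicit matrix
\[
M \;:=\; \tld w\,\Ad\big(s^{-1}v^{\mu}\big)(\tld r^{-1}),
\]
whose entries are monomials in $v$, $v^{\pm k}$ and $v+p$, linear in the affine coordinate of $r$. The requirement $X M\in\Mat_2(R[\![v]\!])$ then says precisely that the principal part of $X$ at $v=0$ must cancel that of $M^{-1}$; since $\det(XM)=\det X\cdot\det M$ with $\det M$ a unit times $v$ and $\det X$ a unit, the pole order of $X$ at $v=0$ is forced to be small (of size controlled by $k$), and solving the resulting cancellation equations expresses the entries of $X$ in terms of the coordinate of $r$ together with at most a bounded number of further free parameters. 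Running through the cases one obtains exactly the matrices recorded in Table~\ref{Table:Matrices:avec:bornes}; conversely, every matrix listed there visibly satisfies \eqref{eq:cond:Y:1}, which gives the reverse inclusion.

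The hard part is purely the bookkeeping in this case analysis: tracking how $s^{-1}$ and $\tld w$ permute rows and columns, how the monomials $v^{\pm k}$ combine with the admissible $v$-factor contributed by $\tld w$, and how the two affine charts of $\bP^1$ are glued; the combinatorially distinct outcomes are exactly the rows of Table~\ref{Table:Matrices:avec:bornes}. There is no conceptual ingredient beyond the explicit list $\Adm^\vee(\eta)$ and the normal form for $L_1^{-}\rG$ provided by Lemma~\ref{lem:basic}\eqref{eq:basic:2.5}.
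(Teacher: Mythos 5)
Your overall route is the same as the paper's: reduce to a single embedding $j$ (equivalently, describe each factor $\tld{Ba}_j(\tld z_j)$), pick a lift $\tld r_j$ of $r_j$, form $M=\tld w_j\,\Ad\big(s_j^{-1}v^{\mu_j}\big)(\tld r_j^{-1})$, and determine which $X_j\in L_1^{-}\rG$ make $X_jM$ holomorphic by a case-by-case pole analysis at $v=0$. The paper does exactly this, by conjugating $X_j$ by $w_js_j^{-1}$ and multiplying on the right by the adjugate $\begin{pmatrix}d&-b\\-c&a\end{pmatrix}$ of a lift of $r_j$, then reading off the pole orders and the linear relations entry by entry.

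There is, however, a gap in your justification of the pole bound. You write ``since $\det(XM)=\det X\cdot\det M$ with $\det M$ a unit times $v$ and $\det X$ a unit, the pole order of $X$ at $v=0$ is forced to be small.'' This is not a valid inference: unitality of $\det X$ (even with the sharper consequence $\det X=1+O(1/v)$, which is all the determinant identity yields) puts no bound on the pole order of the individual entries of $X$ --- the unipotent matrix $\begin{pmatrix}1&v^{-n}\\0&1\end{pmatrix}$ has determinant $1$ and a pole of order $n$ for any $n$. The bound must come from the explicit shape of $M$, e.g.\ by writing $X=(XM)M^{-1}$ and noting that $M^{-1}=\adj(M)/\det M$ has bounded pole order because $\adj(M)$ has entries of pole order at most $k$ while $\det M$ vanishes to order exactly one; or, as the paper does, by comparing each entry of $(s_jw_j^{-1}X_jw_js_j^{-1})\,v^{\mathrm{diag}}\cdot\adj(\tld r_j)$ against its prescribed $v$-adic order in $\Mat_2(R[\![v]\!])$. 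Once this is fixed, the step you summarize as ``solving the resulting cancellation equations'' is where the actual content of the lemma lies: the projective relations $[A:B]=[C:D]=r_j$, $[C':1]=r_j$, and so on, arise from the vanishing of specific low-degree coefficients of the product, together with a local-unitality argument on the second coordinate $d$ of the lift of $r_j$ (the paper first obtains constraints on $\alpha d$, $\gamma d$ and must deduce $d\in R^\times$ before dividing to get the stated constraints on $\alpha$, $\gamma$). That bookkeeping is finite but not automatic and should be carried out.
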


\begin{table}[H]
\captionsetup{justification=centering}
\caption[Foo content]{\textbf{Description of $X_j\in L_1^{-}\rG$}
}
\label{Table:Matrices:avec:bornes}
\centering
\adjustbox{max width=\textwidth}{
\begin{tabular}{| c || c | c | }
\hline
\hline
$(\tld{w}_j,s_j)$&$(t_\eta,(12)),\ (w_0t_\eta,(12)),\ (t_{w_0(\eta)},\Id)$&$(t_\eta,\Id),\ (w_0t_\eta,\Id),\ (t_{w_0(\eta)},(12))$\\
\hline
\hline
&&\\
&$\begin{aligned}
s_jw_j^{-1}X_jw_js_j^{-1}&=\begin{pmatrix}1+\frac{A}{v^{k_j}}&\frac{B}{v}\\\frac{C}{v^{k_j}}+\frac{C'}{v^{k_j-1}}&1+\frac{D}{v}\end{pmatrix}
\\
A,B,C,C',D&\in R,
\\
\end{aligned}$&$\begin{aligned}
s_jw_j^{-1}X_jw_js_j^{-1}&=\begin{pmatrix}1+\frac{A}{v}&0\\\frac{C}{v}+\frac{C'}{v^{k_j+1}}&1\end{pmatrix}\\
A,C,C'&\in R
\end{aligned}$
\\
&&\\
\hline
$k_j=0$&$A=C=C'=0$&$C'=0$\\
\hline
$k_j=1$&$[A:B]=[C:D]=r_{j},C'=0$&$[C':1]=r_{j}$\\
\hline
$k_j>1$&$[A:B]=[C:D]=[C':1]=r_{j}$&$[C':1]=r_{j}$\\
\hline
\hline
\end{tabular}}
\captionsetup{justification=raggedright,
singlelinecheck=false
}
\end{table}
\begin{proof}
Let $\text{\tiny{$\begin{pmatrix}a&b\\c&d\end{pmatrix}$}}\in\GL_2(R)$ so that $r_{j}=[c:d]$.
Also write $s_jw_j^{-1} X_j
w_js_j^{-1}=\begin{pmatrix}
1+\alpha&\beta\\
\gamma&1+\delta
\end{pmatrix}$ so that $\alpha,\beta,\gamma,\delta\in \frac{1}{v}R[\frac{1}{v}]$.
We abbreviate $k=k_j$.

Letting $w_jt_{\nu_j}\defeq \tld{w}_j$  condition \eqref{eq:cond:Y:1} is equivalent to
\begin{equation}
\label{eq:check:eqns}
\begin{pmatrix}
(1+\alpha)v^{k+\eps}&\beta v^{1-\eps_j}\\
\gamma v^{k+\eps}&(1+\delta)v^{1-\eps}
\end{pmatrix}
\begin{pmatrix}d&-b\\-c&a\end{pmatrix}
\in \begin{pmatrix}v^{k}R[\![v]\!]&R[\![v]\!]\\v^{k}R[\![v]\!]&R[\![v]\!]\end{pmatrix}
\end{equation}
where $\eps=0$ (resp.~$\eps=1$) if $(\tld{w}_j,s_j)$ is as in the first (resp.~second) column of Table \ref{Table:Matrices:avec:bornes}.
From this we learn that $v^{1-\eps}\beta$, $v^{1-\eps}(1+\delta)$, $(1+\alpha)v^{k+\eps}$, $\gamma v^{k+\eps}\in R[\![v]\!]$.
The first two conditions show that the second column of $\begin{pmatrix}
1+\alpha&\beta\\
\gamma&1+\delta
\end{pmatrix}$ has the form specified in the table.

We now show that $\alpha,\gamma$ also have the form specified in the table.

\begin{enumerate}
\item[\emph{Case $\eps=0$.}]
Looking at the second column of \eqref{eq:check:eqns} we get 
\begin{align}
v^k(1+\alpha)d-v \beta c&\in v^kR[\![v]\!]\label{eq:check:1}\\
v^k\gamma d-v (1+\delta) c&\in v^kR[\![v]\!]\label{eq:check:2}
\end{align}
Note that these two equations imply that $\alpha d$ and $\gamma d$ have the form specified in the table.

Since we already know $(1+\alpha)v^{k}$, $\gamma v^{k}\in R[\![v]\!]$ we only have to consider the case $k>1$. 
But then equation \eqref{eq:check:2} shows that $c\in R d$ hence $d\in R^\times$ because $(c,d)=R$.
But this implies $\alpha$ and $\gamma$ are of the desired form.

\item[\emph{Case $\eps=1$.}]
Looking at the second column of \eqref{eq:check:eqns} we get (using that $\beta=\delta=0$)
\begin{align}
v^{k+1}(1+\alpha)d&\in v^kR[\![v]\!]\label{eq:check:3}\\
v^{k+1}\gamma d-c&\in v^kR[\![v]\!].\label{eq:check:4}
\end{align}
Note that these two equations imply that $\alpha d$ and $\gamma d$ have the form specified in the table.

Since we already know $(1+\alpha)v^{k+1}$, $\gamma v^{k+1}\in R[\![v]\!]$ we only have to consider the case $k>0$. 
But then equation \eqref{eq:check:4} shows that $c=0$ hence $d\in R^\times$.
Again this implies $\alpha$ and $\gamma$ are of the desired form.
\end{enumerate}

We have now shown that $\alpha,\beta,\gamma,\delta$ have the desired form. 
Plugging this information into equations \eqref{eq:check:1}, \eqref{eq:check:2},\eqref{eq:check:3},\eqref{eq:check:4} immediately yields the remaining equations in the table.
\end{proof}

\begin{lemma}
For each $j$ the spaces $\tld{Ba}_j(\tld{z}_j)$ have explicit presentations given by Table \ref{Table:Traductions}, in terms of the corresponding spaces in Table \ref{Table:Matrices:avec:bornes}.
\end{lemma}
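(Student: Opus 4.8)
The plan is to prove this by a direct computation, carried out one embedding at a time via the product decomposition $\tld{Ba}(\tld{z})=\prod_{j\in\cJ}\tld{Ba}_j(\tld{z}_j)$ and following the same template as the proofs of Lemma \ref{lem:basic}\eqref{eq:basic:2} and Lemma \ref{lem:parametrization_1}. Fixing $j\in\cJ$ together with the data $\tld{w}_j\in\Adm^\vee(\eta_j)$, $s_j\in W$ and $k_j=\langle\mu_j,\alpha^\vee\rangle$, I would start from the description of the pairs $(X_j,r_j)$ satisfying \eqref{eq:cond:Y:1'} already supplied by Lemma \ref{lem:parametrization_1} --- the two columns of Table \ref{Table:Matrices:avec:bornes}, with free parameters $A,B,C,C',D$ (resp.\ $A,C,C'$) subject to the listed incidence relations with $r_j=[c:d]$ --- and then impose the two remaining conditions \eqref{eq:cond:Y:2'} and \eqref{eq:cond:Y:3'} on top of this, reading off the resulting equations.

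For condition \eqref{eq:cond:Y:2'} I would compute the determinant directly: since $\tld{w}_j\in\Adm^\vee(\eta_j)$ one has $\det\tld{w}_j=\pm v$, so $\det(X_j\tld{w}_j)=\pm v\det X_j$, and expanding $\det X_j$ from the explicit matrix of Table \ref{Table:Matrices:avec:bornes} (conjugation by $s_jw_j^{-1}$ leaving it unchanged) gives a Laurent polynomial in $v$ taking the value $1$ at $v=\infty$. Requiring $\det(X_j\tld{w}_j)\in R^\times(v+p)\subset R[v]$ then clears all negative powers of $v$ and pins down $\det(X_j\tld{w}_j)=\pm(v+p)$; this produces the polynomial relations among $A,B,C,C',D$ recorded in Table \ref{Table:Traductions}, which are precisely the mechanism behind the ``$XY+p=0$''-type relations already visible in Table \ref{Table1FV}.

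For condition \eqref{eq:cond:Y:3'} I would set $M_j\defeq X_j\tld{w}_j\Ad(s_j^{-1}v^{\mu_j})(\tld{r}_j^{-1})$; since $\mu_j$ is dominant, $\Ad(s_j^{-1}v^{\mu_j})$ carries $B$ into $B(R[v])$, so $M_j$ does not depend on the lift $\tld{r}_j\in\GL_2(R)$, and by \eqref{eq:cond:Y:1'} it lies in $\Mat_2(R[\![v]\!])$, so that $M_j|_{v=0}\in\Mat_2(R)$ is well defined. Reducing mod $v$ --- which requires keeping track of which entries of $X_j\tld{w}_j$ carry poles or zeros in $v$, the bookkeeping already set up in the proof of Lemma \ref{lem:parametrization_1} --- expresses $M_j|_{v=0}$ in terms of $A,B,C,C',D$ and $r_j$, and then imposing $l_j\kappa_j\cdot M_j|_{v=0}=[0:1]$ via the Convention (the vanishing of the first coordinate of the row vector obtained from $l_j\kappa_j$ and $M_j|_{v=0}$) yields the remaining equations of Table \ref{Table:Traductions} relating $l_j$, $\kappa_j$, $r_j$ and the entries of $X_j$. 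Assembling the relations of Lemma \ref{lem:parametrization_1} with those coming from \eqref{eq:cond:Y:2'} and \eqref{eq:cond:Y:3'} then gives the claimed presentation of $\tld{Ba}_j(\tld{z}_j)$.

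The main obstacle here is organizational rather than conceptual: it is a finite case analysis over the possibilities for $(\tld{w}_j,s_j)$ and over the regimes $k_j=0$, $k_j=1$ and $k_j>1$, where in the degenerate small cases several of $A,B,C,C',D$ are forced to vanish and the incidence relations with $r_j$ collapse accordingly, and one must carry the conjugations by $s_j^{-1}v^{\mu_j}$ and $s_jw_j^{-1}$ through consistently and perform the reduction mod $v$ without error. Since every ingredient is parallel to what was already done in Lemma \ref{lem:parametrization_1}, the only genuine work is the careful expansion producing each individual entry of Table \ref{Table:Traductions}.
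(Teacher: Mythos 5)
Your proposal is correct and matches the paper's proof: both start from Lemma \ref{lem:parametrization_1}'s description of $(X_j,r_j)$, impose the determinant condition (noting $\det\tld{w}_j=\pm v$, so $\det(X_j\tld{w}_j)\in R^\times(v+p)$ is equivalent to $\det X_j\in R^\times\tfrac{v+p}{v}$) to get the polynomial relations among $A,B,C,C',D$, and then impose \eqref{eq:cond:Y:3'} by reducing $M_j$ mod $v$ and reading off the $l_j\kappa_j$ equation via the Convention. The paper is merely terser about the casework, singling out $(\tld{w}_j,k_j,s_j)=(t_{w_0(\eta)},1,\Id)$ as the one case where the $l_j$ equation is not immediate and verifying it explicitly.
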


\begin{table}[H]
\captionsetup{justification=centering}
\caption[Foo content]{\textbf{Presentations of $\tld{Ba}_j(\tld{z}_j)$.}
}
\label{Table:Traductions}
\centering
\adjustbox{max width=\textwidth}{
\begin{tabular}{| c | c || c | c | c | }
\hline
\hline
\backslashbox{$\langle\mu_j,\alpha^\vee\rangle$}{$\tld{w}_j$}&&$t_\eta$&$w_0t_\eta$&$t_{w_0(\eta)}$\\
\hline
&$s_j$&&&\\
\hline
\hline
&&&&\\
\multirow{ 2}{*}{$>1$}&$(12)$&$
\begin{aligned}
\textrm{Variables: } A,B,C,&\,C',D\\
&\\
A=BC',&\, C=pC',\, D=p\\ 
&\\
r_j&=[C':1]\\
l_j\kappa_j&=[B:-p]
\end{aligned}$&$\begin{aligned}
\textrm{Variables: } A,B,C,&\,C',D\\
&\\
A=BC',&\ \, C=pC',\ D=p\\ 
&\\
r_j&=[C':1]\\
l_j\kappa_j&=[-p:B]
\end{aligned}$&$\begin{aligned}
\textrm{Variables: } A,C,&\,C'\\
&\\
A&=p\\
&\\
r_j&=[C':1]\\
l_j\kappa_j&=[0:1]
\end{aligned}$\\
&&&&\\
\cline{2-5}
&&&&\\
&$\Id$&$\begin{aligned}
\textrm{Variables: } A,C,&\,C'\\
&\\
A&=p\\
&\\
r_j&=[C':1]\\
l_j\kappa_j&=[C:-p]
\end{aligned}$&$\begin{aligned}
\textrm{Variables: } A,C,&\,C'\\
&\\
A&=p\\
&\\
r_j&=[C':1]\\
l_j\kappa_j&=[-p:C]
\end{aligned}$&$\begin{aligned}\textrm{Variables: } A,B,C,&\,C',D\\
&\\
A=BC',&\ \, C=pC',\ D=p\\ 
&\\
r_j&=[C':1]\\
l_j\kappa_j&=[0:1]
\end{aligned}$\\
&&&&\\
\hline
\hline
&&&&\\
\multirow{ 2}{*}{$=1$}&$(12)$&{\color{blue}$\begin{aligned}
\textrm{Variables: } A,B,C,&\,C',D\\
&\\
A&=p-D\\
&\\
r_j&=[C:D]=[p-D:B]\\
l_j\kappa_j&=[D-p:C]=[-B:D]
\end{aligned}$}&{\color{blue}$\begin{aligned}
\textrm{Variables: } A,B,C,&\,C',D\\
&\\
A&=p-D\\
&\\
r_j&=[C:D]=[p-D:B]\\
l_j\kappa_j&=[C:D-p]=[-D:B]
\end{aligned}$}&$\begin{aligned}
\textrm{Variables: } A,C,&\,C'\\
&\\
A&=p\\
&\\
r_j&=[C':1]\\
l_j\kappa_j&=[0:1]
\end{aligned}$\\
&&&&\\
\cline{2-5}
&&&&\\
&$\Id$&$\begin{aligned}
\textrm{Variables: } A,C,&\,C'\\
&\\
A&=p\\
&\\
r_j&=[C':1]\\
l_j\kappa_j&=[C:-p]
\end{aligned}$&$\begin{aligned}
\textrm{Variables: } A,C,&\,C'\\
&\\
A&=p\\
&\\
r_j&=[C':1]\\
l_j\kappa_j&=[-p:C]
\end{aligned}$&
{\color{blue}$\begin{aligned}
\textrm{Variables: } A,B,C,&\,C',D\\
&\\
A&=p-D\\
&\\
r_j&=[C:D]=[p-D:B]\\
l_j\kappa_j&=r_j
\end{aligned}$}\\
&&&&\\
\hline
\hline
&&&&\\
$=0$&$\Id$&{\color{orange}$\begin{aligned}
\textrm{Variables: } A,C,&\,C'\\
&\\
A&=p,\, C'=0\\
&\\
l_{j}\kappa_j&=r_j\begin{pmatrix}1&0\\-C&p\end{pmatrix}
\end{aligned}$}&{\color{orange}$\begin{aligned}
\textrm{Variables: } A,C,&\,C'\\
&\\
A&=p,\, C'=0\\
&\\
l_{j}\kappa_j&=r_j\begin{pmatrix}0&-1\\-p&C\end{pmatrix}
\end{aligned}$}&{\color{orange}$\begin{aligned}
\textrm{Variables: } A,B,C,&\,C',D\\
&\\
A=0,\ C&=0,\ C'=0,\ D=p\\
&\\
l_{j}\kappa_j&=r_j\begin{pmatrix}p&-B\\0&1\end{pmatrix}
\end{aligned}$}\\
&&&&\\
\hline
\hline
\end{tabular}}
\captionsetup{justification=raggedright,
singlelinecheck=false
}
\caption*{
The meaning of the variables $A,B,C,C',D$ is in terms 
 of the $X_j\in L^{-}_1\rG$ extracted from the corresponding entries in Table \ref{Table:Matrices:avec:bornes}.}\end{table}
\begin{proof}
The equations involving $A,B,C,C',D$ are exactly obtained from Table \ref{Table:Matrices:avec:bornes} by imposing the condition $\det(X_j)\in R^\times \frac{(v+p)}{v}$.
The formula for $r_j$ also follows from Table \ref{Table:Matrices:avec:bornes}.

Thus the only thing we need to verify is the equation involving $l_j$.
This is immediate in all cases except when $(\tld{w}_j,\langle\mu_j,\alpha^\vee\rangle,s_j)
=(t_{w_0(\eta)},1,\Id)$.
In this case let $\begin{pmatrix}a'&b'\\c'&d'\end{pmatrix}\in \GL_2(R)$ (resp.~$\begin{pmatrix}a&b\\c&d\end{pmatrix}\in \GL_2(R)$) be a lift of $l_j\kappa_j$ (resp.~$r_j$).
Thus condition \eqref{eq:cond:Y:3'} (together with condition \eqref{eq:cond:Y:1'}) is the condition that the $(2,1)$-entry of $\begin{pmatrix}a'&b'\\c'&d'\end{pmatrix}\begin{pmatrix}d&aB-Ab\\-c&aD-Cb\end{pmatrix}$
is zero, which exactly means that $l_j\kappa_j=r_j$.
\end{proof}

\begin{rmk}
So far we only considered $\tld{Ba}_j(\tld{z}_j)$, $\tld{U}(\tld{z}_j)$ as $p$-adic formal schemes. 
However Table \ref{Table:Traductions} gives obvious decompletions and we will work with the scheme version of these spaces in what follows.
\end{rmk}

\subsection{Cohomological properties of $\tld{Y}^{\tmod,\eta,\tau}\ra \Gr_1^{\textnormal{bd},(v+p)v^{\mu}}$}
Recall that diagram \eqref{eq:diag:fact} gives a factorization
\begin{equation}
\xymatrix{
\tld{Y}^{\mathrm{mod},\eta,\tau}(\tld{z})\ar^-{\pi^{\textnormal{mod}}}[r]&\tld{\cZ}^{\mathrm{mod},\tau}(\tld{z})\ar^-{\imath}@{^{(}->}[r]&\tld{U}(\tld{z})
}
\end{equation}
In this section we will use the explicit presentations from the previous section to study $R\pi^{\tmod}_*\cO_{\tld{Y}^{\mathrm{mod},\eta,\tau}}$ and hence prove Propositions \ref{prop:coker}, \ref{prop:R1:null}.

Since $\imath$ is a closed immersion, we have $\iota_*\circ R\pi^{\mathrm{mod}}_*\cO_{\tld{Y}^{\mathrm{mod},\eta,\tau}}=R\textnormal{pr}_*\cO_{\tld{Y}^{\mathrm{mod},\eta,\tau}}$.
In particular Proposition \ref{prop:coker} is equivalent to
\begin{equation*}
p
\coker
\left(
\cO\left(\Gr_1^{\textnormal{bd}, (v+p)v^\mu}\right)
\rightarrow (\textnormal{pr})_*\left(\cO\left(\tld{Y}^{\mathrm{mod},\eta,\tau}\right)\right)\right)=0
\end{equation*}
and Proposition \ref{prop:R1:null} is equivalent to $R\textnormal{pr}_*\cO_{\tld{Y}^{\mathrm{mod},\eta,\tau}}$ concentrating in degree $0$.
In other words we can replace $\pi^{\tmod}$ by $\textnormal{pr}$ in the statements of interest.
Furthermore, these statements are local on the target so it suffices to analyze the situation after intersecting with $\tld{U}(\tld{z})$ for $\tld{z}\in\Adm^\vee(\eta)s^{-1}v^\mu$.

To analyze $\textnormal{pr}$ we factorize 
\[
\xymatrix{
\tld{Y}^{\tmod,\eta,\tau}(\tld{z})\ar@{^{(}->}[r] &\tld{Ba}(\tld{z})=\prod_j\tld{Ba}_j(\tld{z}_j)\ar[r]^{\prod \pr_j}&\tld{U}(\tld{z})=\prod_j\tld{U}(\tld{z}_j)
}
\]
where $\pr_j: \tld{Ba}_j(\tld{z}_j)\ra\tld{U}(\tld{z}_j)$ is the map 
\[
(l_j,\kappa_j,X_j,r_j)\mapsto \kappa_j X_j.
\]
Observe that we have an isomorphism  $\tld{Ba}_j(\tld{z}_j)\cong \GL_2\times Ba_j(\tld{z}_j)$ where $Ba_j(\tld{z}_j)=\tld{Ba}_j(\tld{z}_j)\times_{\GL_2}\{1\}$, given by $(l_j,\kappa_j,X_j,r_j)\mapsto (\kappa_j,(l_j\kappa_j,X_j,r_j))$.
Let $p_j$ (resp.~$q_j$) be the obvious projections from $Ba_j(\tld{z}_j)\subset \bP^1\times \tld{U}(\tld{z}_j)\times \bP^1$ to the left (resp.~right) $\bP^1$ factor.
We continue to denote $\pr_j$ the projection from $Ba_j(\tld{z}_j)$ to the middle factor (this is compatible with the projection from $\tld{Ba}_j(\tld{z}_j)$).

The following is immediate from Table \ref{Table:Traductions}
\begin{lemma}\label{lem:five cases} Up to isomorphism, there are the following possibilities for $Ba_j(\tld{z}_j)$
 \begin{enumerate}
\item 
\label{basic:space:1}
$Ba_j(\tld{z}_j)=\bA^2$ with $p_j,q_j$ are constant maps from $\bA^2$ to $\bP^1$ and $\pr_j$ is the identity. 
This covers the cases $(\tld{w}_j,s_j,k_j)=(t_{w_0(\eta)},\Id,>1), (t_{w_0(\eta)},(12),\geq1)$.
\item
\label{basic:space:2}
$Ba_j(\tld{z}_j)=Bl_{(p,0)}\bA^1\times\bA^1$ where $Bl_{(p,0)}\bA^1=\{([x:y],C)\ |\ Cx=py\}\subset \bP^1\times \bA^1$ is the blowup of $\bA^1_{/\cO}$ at the origin in its special fiber.
Then $p_j$ is the natural projection map from $Bl_{(p,0)}\bA^1\ra\bP^1$ and $q_j$ is the natural inclusion $\bA^1\into \bP^1$ given by $C'\mapsto [C':1]$, and $\pr_j$ is the natural map to $\bA^2$ extracting $C,C'$.
This covers $(\tld{w}_j,s_j,k_j)=(t_{\eta},(12),>1), (t_{\eta},\Id,\geq 1), (w_0t_{\eta},(12),>1), (w_0t_{\eta},\Id,\geq 1)$ .
\item
\label{basic:space:3}
$Ba_j(\tld{z}_j)\subset\bP^1\times \bA^1\times\bP^1$ consists of $([x:y],C,[x':y'])$ such that $pxy'-yx'=0$.
Then $p_j,q_j$ are the projections to the left, resp.~right $\bP^1$ factor, and $\pr_j$ is the natural map to $\bA^1$ extracting $C$.
This covers the cases when $\langle\mu_j,\alpha^\vee\rangle=0$.
\item
\label{basic:space:4}
$Ba_j(\tld{z}_j)\subset \bP^1\times M\times\bP^1$ consists of $([x:y],\begin{pmatrix}A&B\\C&D\end{pmatrix},[x':y'])$ such that \begin{itemize}
\item $\begin{pmatrix}A&B\\C&D\end{pmatrix}$ has determinant $0$ and trace $p$;
\item
$\begin{pmatrix}x'&y'\end{pmatrix}\begin{pmatrix}D&-B\\-C&A\end{pmatrix}=0$
\item
$\begin{pmatrix}x&y\end{pmatrix}\begin{pmatrix}D&-C\\-B&A\end{pmatrix}=0$
\end{itemize}
Then $p_j,q_j$ are the projections to the left, resp.~right $\bP^1$ factor, and $\pr_j$ is the map extracting $A,B,C,D$.
This covers $(\tld{w}_j,s_j,k_j)=(t_{\eta},(12),1), (w_0t_{\eta},(12),1)$.
\item
\label{basic:space:5}
It is the same as the previous case, except that instead of the third item we impose $[x:y]=[x':y']$.
This correspond to the case $(\tld{w}_j,s_j,k_j)=(t_{w_0(\eta)},\Id,1)$.
\end{enumerate}
\end{lemma}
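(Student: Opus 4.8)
The plan is to prove the lemma by a direct inspection of Table~\ref{Table:Traductions}, going through each of its cells and matching the resulting space, together with its three structure maps $p_j,q_j,\pr_j$, with one of the five models.

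First I would set up the translation. By the isomorphism $\tld{Ba}_j(\tld{z}_j)\cong\GL_2\times Ba_j(\tld{z}_j)$ one may discard the $\kappa_j$-coordinate, so that $Ba_j(\tld{z}_j)$ is precisely the closed subscheme of $\bP^1\times\tld{U}(\tld{z}_j)\times\bP^1$, with coordinates the triple $(l_j\kappa_j,X_j,r_j)$, cut out by the equations displayed in the relevant cell of Table~\ref{Table:Traductions}; here $p_j,q_j$ are the two $\bP^1$-projections (recording $l_j\kappa_j$, resp.\ $r_j$) and $\pr_j$ is the projection to $\tld{U}(\tld{z}_j)$. The description of $\tld{U}(\tld{z}_j)$ itself is read off from Lemma~\ref{lem:parametrization_1} and Table~\ref{Table:Matrices:avec:bornes}, together with the determinant condition $\det(X_j)\in R^\times(v+p)/v$: it is an affine space in the free entries of $X_j$ (when $\langle\mu_j,\alpha^\vee\rangle\neq 1$) or the scheme of $2\times 2$ matrices of determinant $0$ and trace $p$ (when $\langle\mu_j,\alpha^\vee\rangle=1$ and the matrix $\begin{pmatrix}A&B\\C&D\end{pmatrix}$ carries all of the data).

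Next I would run through the fifteen cells of Table~\ref{Table:Traductions}, grouping them into the five classes and treating one representative of each; the remaining members of a class differ from the chosen representative only by a relabelling of the variables $A,B,C,C',D$, by the symmetry $p_j\leftrightarrow q_j$, and by coordinate changes on the $\bP^1$-factors induced by constant elements of $\GL_2$ (such as $[x:y]\mapsto[y:x]$ or $[x:y]\mapsto[-x:y]$, which absorb the various sign and order differences among the entries recording $l_j\kappa_j$). The cells $(t_{w_0(\eta)},\Id,>1)$ and $(t_{w_0(\eta)},(12),\geq1)$ reduce, after eliminating the variables fixed by the determinant condition, to $\bA^2$ with $\pr_j$ an isomorphism --- model~\eqref{basic:space:1}; the cells $(t_\eta,(12),>1)$, $(t_\eta,\Id,\geq1)$, $(w_0t_\eta,(12),>1)$, $(w_0t_\eta,\Id,\geq1)$, for which $r_j=[C':1]$ and $l_j\kappa_j$ has shape $[\pm p:t]$ or $[t:\pm p]$ in one of the free variables $t$, are the blow-up $Bl_{(p,0)}\bA^1$ in that variable times the affine line of the other --- model~\eqref{basic:space:2}; the three cells with $\langle\mu_j,\alpha^\vee\rangle=0$, for which $l_j\kappa_j=r_j\cdot M$ with $\det M=p$, are the incidence-type locus in $\bP^1\times\bA^1\times\bP^1$ --- model~\eqref{basic:space:3}; and the cells $(t_\eta,(12),1)$, $(w_0t_\eta,(12),1)$, in which $\begin{pmatrix}A&B\\C&D\end{pmatrix}$ has trace $p$ and determinant $0$ (the latter encoded by the relations $r_j=[C:D]=[p-D:B]$, etc.), give model~\eqref{basic:space:4}, while $(t_{w_0(\eta)},\Id,1)$, where in addition the relation $p_j=q_j$ is imposed in place of one of the two rank-one annihilator conditions, gives model~\eqref{basic:space:5}.

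The lemma carries no essential difficulty: it is a bookkeeping statement, and all the computational input already sits in Tables~\ref{Table:Matrices:avec:bornes} and~\ref{Table:Traductions}. The only points where I would take care are the fiddly ones: (i) reading off the structure of $\tld{U}(\tld{z}_j)$ correctly after imposing the determinant condition --- in particular recognising that the $\bP^1$-incidences appearing in the table encode the vanishing $AD=BC$, so that for $\langle\mu_j,\alpha^\vee\rangle=1$ in the relevant cells $\tld{U}(\tld{z}_j)$ is the singular quadric of models~\eqref{basic:space:4}--\eqref{basic:space:5} rather than an affine space; and (ii) ensuring that each coordinate change used to bring a cell to normal form is an isomorphism of the whole datum $(p_j,\pr_j,q_j)$ and not merely of the underlying scheme --- this amounts to keeping careful track of which of the two $\bP^1$-factors records $l_j\kappa_j$ and which records $r_j$ in each case, interchanging them being legitimate only because the classification is asserted up to isomorphism.
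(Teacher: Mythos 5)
Your proposal is correct and is in substance the same argument the paper intends: the paper itself simply declares the lemma ``immediate from Table~\ref{Table:Traductions},'' and you are writing out the cell-by-cell bookkeeping. One point worth spelling out, since it goes slightly beyond what you allow: to put the $k_j=0$ cells into the $C$-free normal form $pxy'-yx'=0$ of item~\eqref{basic:space:3}, the \emph{constant} $\GL_2$-changes you invoke are not sufficient. For instance, the cell $(t_\eta,\Id,0)$ gives, from $l_j=r_j\begin{pmatrix}1&0\\-C&p\end{pmatrix}$ and the paper's incidence convention, the relation $yx'-pxy'-Cyy'=0$, which is degree one in $C$; eliminating the $Cyy'$ term requires the $C$-dependent shear $[x':y']\mapsto[x'-Cy':y']$ on the right $\bP^1$-factor. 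This is still a legitimate isomorphism of the datum $(p_j,\pr_j,q_j)$ because it is a $\bP^1$-bundle automorphism over $\bA^1=\Spec\cO[C]$ that commutes with $\pr_j$ and preserves $q_j^*\cO_{\bP^1}(-1)$ up to isomorphism, so the conclusion is unaffected -- but your phrase ``coordinate changes\ldots induced by constant elements of $\GL_2$'' slightly understates the class of automorphisms you actually use when matching to the normal form.
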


\begin{lemma}
\label{lem:prop:pr}
\begin{enumerate}
\item 
\label{eq:prop:pr:1}
$\pr_j$ is proper and when $\langle\mu_j,\alpha^\vee\rangle>0$, $\pr_j$ becomes a closed immersion after inverting $p$; 
\item 
\label{eq:prop:pr:2}
$Ba_j(\tld{z}_j)$ is a $\cO$-flat local complete intersection of relative dimension $2$ over $\cO$. The relative dualizing sheaf of $Ba_j(\tld{z}_j)/\cO$ is $q_j^*\cO_{\bP^1}(-1)\otimes p_j^*\cO_{\bP^1}(-1)$.
\item
$\cO_{\tld{U}(\tld{z}_j)}\ra\pr_{j*}\cO_{Ba_j(\tld{z}_j)}$ is surjective.
\end{enumerate}
The same assertions hold for $\tld{Ba}_j(\tld{z_j})$ and $\tld{Ba}(\tld{z})$, but with relative dimensions $6$ and $6f$.
\end{lemma}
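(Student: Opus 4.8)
The plan is to reduce all four assertions — properness, the $\cO$-flat local complete intersection property, the shape of the relative dualizing sheaf, and surjectivity of the structure map — to the five explicit normal forms for $Ba_j(\tld z_j)$ classified in Lemma~\ref{lem:five cases}, and then to verify them one case at a time. First I would dispose of the passage from $Ba_j(\tld z_j)$ to $\tld{Ba}_j(\tld z_j)$ and $\tld{Ba}(\tld z)$: since $\tld{Ba}_j(\tld z_j)\cong\GL_2\times Ba_j(\tld z_j)$ with $\GL_2$ smooth over $\cO$ of relative dimension $4$ and with trivial relative dualizing sheaf, and since $\tld{Ba}(\tld z)=\prod_{j\in\cJ}\tld{Ba}_j(\tld z_j)$, all four properties are stable under smooth base change and under finite products over $\cO$; hence it suffices to treat the $Ba_j(\tld z_j)$, the relative dimensions becoming $2+4=6$ and $6f$ respectively.

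For properness and the ``closed immersion after inverting $p$'' claim: in each of the five cases $Ba_j(\tld z_j)$ is by construction a closed subscheme of $\bP^1\times\tld U(\tld z_j)\times\bP^1$, with $\pr_j$ induced by the projection onto the middle factor, so $\pr_j$ factors as a closed immersion followed by the proper projection $\bP^1\times\tld U(\tld z_j)\times\bP^1\to\tld U(\tld z_j)$ and is therefore proper. After inverting $p$, in case~\eqref{basic:space:2} the blow-up $Bl_{(p,0)}\bA^1$ becomes an isomorphism onto $\bA^1$ (its centre is supported on the special fibre), while in cases~\eqref{basic:space:4} and~\eqref{basic:space:5} the matrix with entries $A,B,C,D$, having determinant $0$ and trace $p$, has rank exactly one, so its left and right kernels recover the $\bP^1$-coordinates from the matrix; in each of these cases $\pr_j[1/p]$ is then a closed immersion onto its image. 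Case~\eqref{basic:space:3} is exactly $\langle\mu_j,\alpha^\vee\rangle=0$ and is excluded here.

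For the local complete intersection, flatness and dualizing sheaf claims I would, using the presentations of Table~\ref{Table:Traductions}, exhibit each $Ba_j(\tld z_j)$ as cut out, locally on the standard charts of the $\bP^1$-factors, inside a smooth $\cO$-scheme $P_j$ (an affine space, or such a space times one or two copies of $\bP^1$) by a regular sequence of length $\dim(P_j/\cO)-2$, and then check by direct inspection that the special fibre $Ba_j(\tld z_j)\otimes_\cO\F$ has dimension $\le 2$ — the relevant special fibres being products of affine lines with the exceptional $\bP^1$ of a blow-up, unions of two lines times an affine line, or the cone of trace-zero rank-$\le 1$ matrices together with their kernel lines. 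Since $P_j\otimes\F$ is regular this forces $Ba_j(\tld z_j)\otimes\F$ to be a genuine complete intersection of dimension $2$, and the fibrewise criterion for flatness then upgrades this to: $Ba_j(\tld z_j)$ is $\cO$-flat and a local complete intersection of relative dimension $2$. For the dualizing sheaf I would apply adjunction, $\omega_{Ba_j(\tld z_j)/\cO}\cong\big(\omega_{P_j/\cO}\otimes\det N_{Ba_j(\tld z_j)/P_j}\big)\big|_{Ba_j(\tld z_j)}$, computing $\omega_{P_j/\cO}$ by the K\"unneth formula (with $\omega_{\bP^1}=\cO_{\bP^1}(-2)$ and affine factors contributing trivially) and the normal bundle from the explicit equations; in the blow-up case one instead uses $\omega_{Bl_{(p,0)}\bA^1/\cO}\cong\cO(E)$ and identifies $\cO(E)$ with $p_j^*\cO_{\bP^1}(-1)$. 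A direct computation then yields $q_j^*\cO_{\bP^1}(-1)\otimes p_j^*\cO_{\bP^1}(-1)$ in every case.

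Finally, for surjectivity of $\cO_{\tld U(\tld z_j)}\to\pr_{j*}\cO_{Ba_j(\tld z_j)}$: $\pr_j$ is an isomorphism in case~\eqref{basic:space:1}; in case~\eqref{basic:space:2} it is $\mathrm{id}_{\bA^1}$ times the blow-down $b\colon Bl_{(p,0)}\bA^1\to\bA^1$, and a blow-down of a regular (hence normal) scheme satisfies $b_*\cO_{Bl_{(p,0)}\bA^1}=\cO_{\bA^1}$; and in cases~\eqref{basic:space:3}--\eqref{basic:space:5}, $\pr_j$ is proper and surjective onto the normal scheme $\tld U(\tld z_j)$ with geometrically connected, reduced fibres (a $(1,1)$-divisor, a union of two lines, or a single line in $\bP^1\times\bP^1$), so a Stein-factorization argument gives $\pr_{j*}\cO_{Ba_j(\tld z_j)}=\cO_{\tld U(\tld z_j)}$; the statements for $\tld{Ba}_j(\tld z_j)$ and $\tld{Ba}(\tld z)$ then follow from the reduction above. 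The step I expect to be the main obstacle is the dualizing-sheaf identification: pinning down the normal bundle in the rank-$\le 1$-matrix cases~\eqref{basic:space:4},~\eqref{basic:space:5} and the divisor class $\cO(E)$ in the blow-up case~\eqref{basic:space:2}, and checking that these really combine to $q_j^*\cO_{\bP^1}(-1)\otimes p_j^*\cO_{\bP^1}(-1)$, together with the accompanying bookkeeping (choice of regular sequence, special-fibre dimension count) needed to make the flatness argument go through.
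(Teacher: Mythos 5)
Your proposal is correct and follows essentially the same route as the paper: a case-by-case verification against the five normal forms of Lemma~\ref{lem:five cases}, presenting each $Ba_j(\tld{z}_j)$ as a regular immersion into a smooth (or blow-up/product) ambient space, computing the relative dualizing sheaf by adjunction from the normal bundle, and deducing assertion (3) from properness plus normality of the image together with the generic ($p$-inverted) identification of $\pr_{j*}\cO_{Ba_j(\tld{z}_j)}$ with the structure sheaf. The one small point of precision is that in cases \eqref{basic:space:4}--\eqref{basic:space:5} the map $\pr_j$ surjects only onto its scheme-theoretic image $Z=\Spec\,\cO[B,C,D]/((p-D)D-BC)$ rather than a priori onto all of $\tld{U}(\tld{z}_j)$, so the normality/Stein argument must be run against this $Z$ (exactly as the paper does); since $\cO_{\tld{U}(\tld{z}_j)}\onto\cO_Z$, this does not affect the conclusion.
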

\begin{proof}
The first assertion is clear from Table \ref{Table:Traductions} once we  observe that $(l_j,r_j)$ can be uniquely solved for when $p$ is invertible and $k_j>0$.

For the second assertion, we inspect the five cases in Lemma \ref{lem:five cases}.
The result is obvious for case \eqref{basic:space:1} and follows from Lemma \ref{lem:cohomology of blowup} below for case \eqref{basic:space:2}. 

For case \eqref{basic:space:3}, the result follows from the fact that $Ba_j(\tld{z}_j)\into \bP^1\times \bA^1\times \bP^1$ is a regular immersion cut out by an equation of bidegree $(1,1)$, which has normal bundle $q_j^*\cO_{\bP^1}(1)\otimes p_j^*\cO_{\bP^1}(1)$, and thus relative dualizing sheaf $q_j^*\cO_{\bP^1}(1)\otimes p_j^*\cO_{\bP^1}(1)\otimes q_j^*\cO_{\bP^1}(-2)\otimes p_j^*\cO_{\bP^1}(-2)=q_j^*\cO_{\bP^1}(-1)\otimes p_j^*\cO_{\bP^1}(-1)$.

We turn to case \eqref{basic:space:4}. Set $t=x/y$, $s=x'/y'$. The $Ba_j(\tld{z}_j)$ has an open cover given by
\begin{itemize}
\item $\Spec \cO[A,B,C,D,s,t]/(A-stD,B-tD,C-sD,p-D-stD)$.
\item $\Spec \cO[A,B,C,D,s^{-1},t]/(A-tC,B-ts^{-1}C,D-s^{-1}C,p-s^{-1}C-tC)$.
\item $\Spec \cO[AB,C,D,s,t^{-1}]/(A-sB,C-st^{-1}B,D-tB,p-t^{-1}B-sB)$.
\item $\Spec \cO[A,B,C,D,s,t^{-1}]/(B-s^{-1}A,C-t^{-1}A,D-s^{-1}t^{-1}A,p-A-s^{-1}t^{-1}A)$.
\end{itemize}
Thus we obtain locally a regular immersion from $Ba_j(\tld{z}_j)$ to $\bA^6$. 
A simple computation shows that the determinant of the normal bundle of the regular immersion $Ba_j(\tld{z}_j)\into \bP^1 \times \bA^4\times \bP^1$ is $q_j^*\cO_{\bP^1}(1)\otimes p_j^*\cO_{\bP^1}(1)$, hence again the relative dualizing sheaf is $q_j^*\cO_{\bP^1}(-1)\otimes p_j^*\cO_{\bP^1}(-1)$.

We finally deal with case \eqref{basic:space:5}. Then $Ba_j(\tld{z}_j)\into \bP^1\times \bA^4$ is a regular immersion cut out by the equations
\[A+D=p, AD-BC=0, \begin{pmatrix} x & y\end{pmatrix}\begin{pmatrix} D & -B\\ -C & A\end{pmatrix}=0\]
A similar computation as in the previous case yields the desired result.

The third assertion follows from the fact that $\pr_j$ factors through the subscheme $Z\subset \tld{U}(\tld{z}_j)$ such that 
\begin{itemize}
\item
$Z$ is normal;
\item
$\pr_{j\,*}\cO_{Ba_j(\tld{z}_j)}=\cO_{Z}$ after inverting $p$.
\end{itemize}
The existence of such a subscheme $Z$ follows immediately from inspecting the cases in Lemma \ref{lem:five cases}: in fact $Z=\bA^2$ with coordinate $C,C'$ or $Z=\Spec \cO[B,C,D]/((p-D)D-BC)$ (if $\langle \mu_j,\alpha^{\vee} \rangle>0$) or $Z=\bA^1$ with coordinate $C$ if $\langle \mu_j,\alpha^\vee\rangle=0$.
\end{proof}

We will make use of the following elementary computation.
\begin{lemma}\label{lem:cohomology of blowup}
Let $Bl_{(0,p)}\bA^1=\{([x:y],t)\ |\  xp=ty\}\subset \bP^1\times \bA^1$ be the blowup of $\bA^1_{/\cO}$ at the ideal $(0,p)$.
Let $\cO(-k)$ be the pull back of $\cO_{\bP^1}(-k)$ by the projection to $\bP^1$.
Then 
\begin{enumerate}
\item For $k\geq 0$
\[
H^n(Bl_{(0,p)}\bA^1,\cO(-k))
=\begin{cases}
p^k\cO[t]&\text{if $n=0$};\\
\textnormal{annihilated by $p^{k-1}$}&\text{if $n=1$};\\
0&\text{if $n>1$}.
\end{cases}
\]
\item The relative dualizing sheaf of $Bl_{(0,p)}\bA^1/\cO$ is $\cO(-1)$.
\end{enumerate}
\end{lemma}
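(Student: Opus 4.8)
The plan is to do everything by hand on the two standard affine charts of the blowup. Write $X=\bA^1_{/\cO}=\Spec\cO[t]$; then $Bl_{(0,p)}\bA^1$ is the blowup of $X$ along the regularly embedded, codimension $2$ center $Z=V(t,p)$, and is covered by $U_1=\Spec\cO[u]$, on which $t=pu$ and the projection to $\bP^1$ is $u\mapsto[u:1]$ (landing in $D_+(y)$), together with $U_2=\Spec\cO[w,t]/(tw-p)$, on which the projection to $\bP^1$ is $w\mapsto[1:w]$ (landing in $D_+(x)$); these are glued over $U_1\cap U_2=\Spec\cO[u,u^{-1}]$ via $w=u^{-1}$, $t=pu$. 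Since $U_1,U_2$ are affine and $Bl_{(0,p)}\bA^1$ is separated, all cohomology is computed by the two-term \v{C}ech complex for $\{U_1,U_2\}$; in particular $H^n=0$ for $n>1$, which is the $n>1$ case of part (1).

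For part (1), trivialize the pullback $\cO(-k)$ of $\cO_{\bP^1}(-k)$ by generators $e_1$ on $U_1$ and $e_2$ on $U_2$, so that on the overlap $e_1=u^k e_2$ (the transition cocycle of $\cO_{\bP^1}(-k)$ in the chosen charts, normalized by the condition $H^0(\bP^1,\cO(-k))=0$ for $k\geq 1$). Then $\Gamma(U_1,\cO(-k))=\cO[u]$, $\Gamma(U_2,\cO(-k))=\cO[w,t]/(tw-p)$, and the latter maps into $\Gamma(U_1\cap U_2,\cO(-k))=\cO[u,u^{-1}]\,e_2$ with image the $\cO$-subalgebra $\cO[u^{-1},pu]$, whose degree-$n$ part in $u$ is $p^{\max(n,0)}\cO$. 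The \v{C}ech differential sends $(f_1e_1,f_2e_2)$ to $(f_2-u^kf_1)e_2$, so $H^0$ consists of the $f_1\in\cO[u]$ with $u^kf_1\in\cO[u^{-1},pu]$, and $H^1=\cO[u,u^{-1}]\big/\big(u^k\cO[u]+\cO[u^{-1},pu]\big)$. Reading off coefficients degree by degree in $u$: $u^kf_1\in\cO[u^{-1},pu]$ forces every coefficient $a_m$ of $f_1$ to lie in $p^{m+k}\cO$, i.e. $f_1\in p^k\cO[pu]=p^k\cO[t]$, giving $H^0=p^k\cO[t]$; and the cokernel in degree $n$ is $\cO/p^n\cO$ for $1\leq n\leq k-1$ and $0$ otherwise, so $H^1\cong\bigoplus_{n=1}^{k-1}(\cO/p^n\cO)\,u^n$, which is annihilated by $p^{k-1}$ and vanishes for $k\leq 1$. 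This is the entire computation, and I expect no obstacle beyond bookkeeping.

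For part (2), compute $\omega_{Bl/\cO}$ chart by chart. On $U_1=\bA^1_{\cO}$ it is $\cO[u]\,du$. On $U_2$ the element $tw-p$ cuts out a relative complete intersection hypersurface in $\bA^2_{/\cO}$ (it is $\cO$-flat, being $\cO$-torsion free), so $\omega_{U_2/\cO}$ is free of rank one, generated by the residue generator $\tfrac{dt\wedge dw}{d(tw-p)}$. On the overlap both charts are smooth of relative dimension one, $\Omega^1_{/\cO}=\cO[u,u^{-1}]\,du$, and the identity $dt\wedge dw=w^{-1}\,d(tw-p)\wedge dw$ (obtained by using $t=(tw-p+p)w^{-1}$) shows that $\tfrac{dt\wedge dw}{d(tw-p)}$ restricts to $w^{-1}\,dw=\pm u^{-1}\,du$; hence the transition of $\omega_{Bl/\cO}$ from $U_1$ to $U_2$ is $\pm u^{-1}$, matching that of $\cO(-1)$ up to the unit $\pm 1$, so $\omega_{Bl/\cO}\cong\cO(-1)$. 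Equivalently, this is the general formula $\omega_{Bl_ZX/X}\cong\cO_{Bl}\big((c-1)E\big)$ for the blowup of a regular scheme along a regularly embedded center of codimension $c=2$, combined with $\omega_{X/\cO}\cong\cO_X$ and the identification $\cO_{Bl}(E)\cong\cO(-1)$, read off from the fact that the ideal of $E$ is generated by $p$ on $U_1$ and by $t$ on $U_2$. The only mild subtlety is matching the orientation convention for $\cO_{\bP^1}(-1)$ against the one for $\omega$, which is exactly the harmless sign flagged above; there is no genuine obstacle here.
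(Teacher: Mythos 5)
Your proof is correct, and for part (1) it is essentially the same argument as the paper's: both compute $R\Gamma$ via the two-term \v{C}ech complex for the standard affine cover, with the only difference that you keep track of everything in the coordinate $u=t/p$ and read off the answer degree by degree, while the paper embeds both charts into $E[t^{\pm1}]$ and leaves the final bookkeeping implicit. For part (2) you take a slightly different but entirely legitimate route: you compute $\omega_{Bl/\cO}$ chart by chart via Poincar\'e residues and match transition functions against $\cO(-1)$ (also quoting the general $\omega_{Bl_Z X/X}\cong\cO((c-1)E)$ formula), whereas the paper notes in one line that $Bl\hookrightarrow\bP^1\times\bA^1$ is a regular divisor of bidegree $(1,0)$, so adjunction gives $\omega\cong\cO(-2)\otimes\cO(1)=\cO(-1)$. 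The adjunction argument is shorter; your explicit version has the minor virtue of making the sign/orientation conventions visible, which as you note is a harmless but real point to keep track of.
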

\begin{proof}
\begin{enumerate}
\item
$R\Gamma(Bl_{(0,p)}\bA^1,\cO(-k))$ is computed by the \v{C}ech complex:
\[
\cO[t,\frac{t}{p}]\oplus \left(\frac{p}{t}\right)^k\cO[t,\frac{p}{t}]\ra\cO[t,\left(\frac{p}{t}\right)^{\pm1}]
\]
where all the terms are viewed as $\cO[t]$-submodules of $E[t^{\pm1}]$ and the differential is given by $(f,g)\mapsto f-g$.
The result now follows from an explicit computation.
\item Since $Bl_{(0,p)}\bA^1\into \bP^1\times \bA^1$ is a regular immersion cut out by a degree $1$ equation in the $\bP^1$ coordinates, the normal bundle is $\cO(1)$, hence the dualizing complex is $\cO(-2)\otimes \cO(1)=\cO(-1)$.
\end{enumerate}
\end{proof}

The following Lemma will be the key to our analysis:
\begin{lemma}
\label{lem:coh:comp}
Let $j\in\cJ$ and let $\eps_j,\delta_j\in\{0,1\}$.
\begin{enumerate}
\item
 $\textnormal{pr}_{j*}\bigg(q_j^*(\cO_{\mathbf{P}^1}(-1))^{\eps_{j}}\otimes_{\cO_{Ba(\tld{z}_j)}}p_j^*(\cO_{\mathbf{P}^1}(-1))^{\delta_{j}}\bigg)$ is $p$-torsion free.
\item
The complex $R\textnormal{pr}_{j*}\bigg(q_j^*(\cO_{\mathbf{P}^1}(-1))^{\eps_{j}}\otimes_{\cO_{Ba(\tld{z}_j)}}p_j^*(\cO_{\mathbf{P}^1}(-1))^{\delta_{j}}\bigg)$ is concentrated in degree $0$ if $(\eps_j,\delta_j)\neq (1,1)$, and is  concentrated in degrees $0$ and $1$ if $(\eps_j,\delta_j)= (1,1)$.
\item
\label{lem:coh:comp:3}
If $(\eps_j,\delta_j)=(1,1)$ then $R^1\textnormal{pr}_{j*}\bigg(q_j^*(\cO_{\mathbf{P}^1}(-1))^{\eps_{j}}\otimes_{\cO_{{Ba}(\tld{z}_j)}}p_j^*(\cO_{\mathbf{P}^1}(-1))^{\delta_{j}}\bigg)$ is
\[
\begin{cases}
\text{$\cO$-torsion free}&\text{if $\langle \mu_j,\alpha^\vee\rangle=0$;}\\
\text{isomorphic to $\F$}&\text{if $\langle \mu_j,\alpha^\vee\rangle=1$ and $(s_j,\tld{w}_j)=(\Id,t_{w_0(\eta)})$};\\
0&\text{otherwise.}
\end{cases}
\]
\end{enumerate}
\end{lemma}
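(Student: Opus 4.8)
The plan is to run through the five local models for $Ba_j(\tld{z}_j)$ established in Lemma \ref{lem:five cases}, and in each of them compute $R\pr_{j*}$ of the line bundle $\cL_j\defeq q_j^*(\cO_{\bP^1}(-1))^{\eps_j}\otimes p_j^*(\cO_{\bP^1}(-1))^{\delta_j}$. Two observations will be used throughout. First, since $Ba_j(\tld{z}_j)$ is $\cO$-flat (Lemma \ref{lem:prop:pr}), $\cL_j$ is $\cO$-flat, so base change gives $R\pr_{j*}\cL_j\otimes^{L}_{\cO}\F\cong R\pr_{j*}(\cL_j\otimes_{\cO}\F)$; comparing cohomology in degree $-1$, where the right side vanishes while the left side is $(\pr_{j*}\cL_j)[p]$, forces $\pr_{j*}\cL_j$ to be $p$-torsion free, which handles the first assertion in every case, and the fibre-dimension bound together with the vanishing of the top cohomology of $\cL_j$ along the fibres of $\pr_j$ will handle the concentration assertion. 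Second, when $\langle\mu_j,\alpha^\vee\rangle>0$ the morphism $\pr_j$ is proper and an isomorphism away from the origin $0$ of the special fibre of $\tld{U}(\tld{z}_j)$ (Lemma \ref{lem:prop:pr}), so $R^{>0}\pr_{j*}\cL_j$ is a skyscraper at $0$, and by the theorem on formal functions its completed stalk at $0$ is $\varprojlim_n H^\bullet$ of $\cL_j$ restricted to the infinitesimal neighbourhoods of the exceptional fibre $E$ of $\pr_j$, whose graded pieces are $\cL_j|_E\otimes\mathrm{Sym}^n(\mathcal{N}^\vee_E)$.

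With these in hand, cases \eqref{basic:space:1} and \eqref{basic:space:2} are quick: there one of $p_j,q_j$ is a constant map (case \eqref{basic:space:1}) or the affine inclusion $\bA^1\hookrightarrow\bP^1$ (case \eqref{basic:space:2}), so up to an auxiliary affine factor $\cL_j$ is either the trivial bundle on $\bA^2$ or the pullback of $\cO_{\bP^1}(-\delta_j)$ with $\delta_j\le 1$ to $Bl_{(p,0)}\bA^1$, and Lemma \ref{lem:cohomology of blowup} (using that $H^1(Bl_{(p,0)}\bA^1,\cO(-\delta_j))=0$ for $\delta_j\le1$) shows $R\pr_{j*}\cL_j$ is concentrated in degree $0$ and $p$-torsion free; all these lie in the ``otherwise'' range of the third assertion. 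In case \eqref{basic:space:3}, where $\langle\mu_j,\alpha^\vee\rangle=0$, we have $Ba_j(\tld{z}_j)=\bA^1\times W$ with $W\subset\bP^1\times\bP^1$ the bidegree-$(1,1)$ divisor $\{pxy'=yx'\}$ and $\pr_j$ the projection to $\bA^1$, so $R\pr_{j*}\cL_j$ is the pullback to $\bA^1$ of $R\Gamma(W,\cO(-\delta_j,-\eps_j))$; resolving $\cO_W$ by $0\to\cO(-1,-1)\to\cO\to\cO_W\to0$ and applying Künneth on $\bP^1\times\bP^1$ shows that this complex sits in degree $0$ unless $(\eps_j,\delta_j)=(1,1)$, in which case $H^0=0$ and $H^1=H^1(W,\cO(-1,-1))\cong\cO$ is $\cO$-torsion free. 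This gives the first line of the third assertion and the others in this case.

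The substance of the lemma is in cases \eqref{basic:space:4} and \eqref{basic:space:5}, both with $\langle\mu_j,\alpha^\vee\rangle=1$, where $\pr_j$ is a proper birational contraction with exceptional fibre $E\cong\bP^1\times\bP^1$ in case \eqref{basic:space:4} and $E\cong\bP^1$ in case \eqref{basic:space:5}. In case \eqref{basic:space:4} one has $s_j=(12)\ne\Id$; the product structure identifies $\cL_j|_E$ with $\cO(-\eps_j,-\delta_j)$ and $\mathcal{N}^\vee_E$ with $\cO(1,1)$, so every graded piece $\cO(n-\eps_j,n-\delta_j)$ ($n\ge0$) occurring in the formal-functions computation has vanishing cohomology in positive degrees; hence $R^{>0}\pr_{j*}\cL_j=0$, and with the first observation this gives the first two assertions and the ``otherwise'' part of the third. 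It remains to do case \eqref{basic:space:5}, $(s_j,\tld{w}_j)=(\Id,t_{w_0(\eta)})$, which is where the exceptional value $R^1\pr_{j*}\cL_j\cong\F$ for $(\eps_j,\delta_j)=(1,1)$ comes from. Here I would use the explicit equations for $Ba_j(\tld{z}_j)$ in Table \ref{Table:Traductions} to produce an affine chart cover on which $p_j$ and $q_j$ land in $\bA^1\subset\bP^1$, so that $\cL_j$ is trivialized and $R\pr_{j*}\cL_j$ is computed by the resulting two-term \v{C}ech complex of modules over the affine target; computing the cokernel directly then shows $R\pr_{j*}\cL_j$ is concentrated in degree $0$ and $p$-torsion free when $(\eps_j,\delta_j)\ne(1,1)$, and concentrated in degrees $0$ and $1$ with $H^0=0$ and $H^1\cong\F$ when $(\eps_j,\delta_j)=(1,1)$. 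The hard part is precisely this final computation: one must track the monomial transition functions trivializing $\cL_j$ — equivalently, the conormal bundle of the exceptional $\bP^1$ and its symmetric powers — carefully enough to see that the a priori growing contributions of the deeper infinitesimal neighbourhoods of $E$ all vanish in the inverse limit, so that only a one-dimensional $\F$-space survives.
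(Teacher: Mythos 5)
Your overall strategy is the same as the paper's: run through the five cases of Lemma~\ref{lem:five cases} and compute $R\mathrm{pr}_{j*}\cL_j$ in each. Cases \eqref{basic:space:1}--\eqref{basic:space:3} go as you describe and coincide with the paper's treatment. Your handling of case~\eqref{basic:space:4} is a genuine departure worth noting: instead of the paper's explicit \v{C}ech cover in the coordinates of Table~\ref{Table:Traductions}, you invoke the theorem on formal functions after identifying the scheme-theoretic exceptional fibre $E\cong\bP^1\times\bP^1$ as a Cartier divisor in $Ba_j(\tld{z}_j)$ with conormal sheaf $\cO(1,1)$; the graded pieces $\mathrm{Sym}^n(\cN_E^\vee)\otimes\cL_j|_E=\cO(n-\eps_j,n-\delta_j)$ then have no higher cohomology for $n\ge0$, and the support-at-a-point plus Artinian-completeness argument finishes. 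That is correct (though you should verify the Cartier property and the transition cocycle $D\mapsto s^{-1}t^{-1}A$ on the charts explicitly, since the lemma's conclusion hinges on it), and it is arguably cleaner than the paper's explicit \v{C}ech elimination.

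However, there is a real gap in case~\eqref{basic:space:5}, which is precisely the case that produces the $\F$-torsion in $R^1$ and is therefore the whole point of assertion~\eqref{lem:coh:comp:3} (it is what feeds Corollary~\ref{cor:vanish} and ultimately Theorem~\ref{thm:non-normal locus}). You acknowledge it is ``the hard part'' and do not carry out the computation, and what you do assert is partly wrong: you claim that for $(\eps_j,\delta_j)=(1,1)$ one has $H^0=0$, i.e.\ $\mathrm{pr}_{j*}\cL_j=0$. That is false. In the \v{C}ech presentation of the paper, with $B=t(p-tC)$, the element $(p-tC)^2=t^{-2}B^2$ lies in $\cO[t,C]\cap t^{-2}\cO[t^{-1},B]$, so $\mathrm{pr}_{j*}\cL_j\neq0$; more conceptually, $\mathrm{pr}_j$ is birational so $\mathrm{pr}_{j*}\cL_j$ has generic rank one. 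Part~(1) of the lemma only asserts $p$-torsion freeness, which (as you and the paper both note) follows at once from $\cO$-flatness of $Ba_j(\tld{z}_j)$; it does not assert vanishing. To finish you would either need to run the paper's \v{C}ech computation (Equation~\eqref{eq:key:coh:comp} and the observation that $t^{-1}\notin\mathrm{Im}(d)$ while $pt^{-1}\in\mathrm{Im}(d)$), or, if you want to stay with formal functions, cope with the fact that here $E\cong\bP^1$ has codimension~$2$ in $Ba_j(\tld{z}_j)$ so $\cN_E^\vee$ has rank~$2$ and the symmetric-power cohomology does \emph{not} vanish uniformly --- one must track a growing tower of nonzero $H^1$'s and show the transition maps eventually kill everything but a single copy of $\F$. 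As written, the proposal does not establish the delicate part of the lemma.
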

\begin{proof}
The first item is obvious because $Ba_j(\tld{z}_j)$ is flat over $\cO$. We now explain the cohomological computations.

For the remainder of the proof we set $\eps\defeq\eps_j$, $\delta\defeq\delta_j$ and $\cF\defeq q_j^*(\cO_{\mathbf{P}^1}(-1))^{\eps}\otimes_{\cO_{{Ba}(\tld{z})}}p_j^*(\cO_{\mathbf{P}^1}(-1))^{\delta}$. We work with the five cases in Lemma \ref{lem:five cases}.

The computation is trivial for case \eqref{basic:space:1}, and follows from Lemma \ref{lem:cohomology of blowup} for case \eqref{basic:space:2}.

We now turn to case \eqref{basic:space:4}.
We have an open cover  $U_1=\{y\neq 0\}$, $U_2=\{x\neq 0\}$.
Then setting $t=x/y$ we see that $U_1$ is the space of $(t,C,[x':y'])\subset \bA^2\times \bP^1$ such that $x'(p-Ct)-Cy'=0$. 
Hence $U_1$ is isomorphic to $Bl_{(p,0)}\bA^1\times\bA^1$ and $\cF|_{U_1}$ is the pull-back of $\cO(-1)^{\eps}$ from the obvious map to $\bP^1$.
Thus
\begin{itemize}
\item
$R\Gamma(U_1,\cF)=\cO[C,t]p^{\eps}$;
\item
Similarly $R\Gamma(U_2,\cF)=\cO[B,t^{-1}]p^{\eps}$
\item
$R\Gamma(U_1\cap U_2,\cF)=\cO[C,t,t^{-1}]p^{\eps}$
\end{itemize}
are all concentrated in degree zero.
Thus the \v{C}ech cohomology spectral sequence computing $R\Gamma (Ba_j(\tld{z}_j),\cF)$ degenerates at $E_1$ and $R\Gamma (Ba_j(\tld{z}_j),\cF)$ is computed by 
\begin{align}
\cO[t,C]\oplus t^{-\delta}\cO[t^{-1},B]&\stackrel{d}{\ra} \cO[t^{\pm 1},C]
\end{align}
where the differential are the obvious inclusion induced by the relation $B=t(p-tC)$.
In turn, this complex is quasi isomorphic to the complex 
\begin{align}
t^{-\delta}\cO[t^{-1},B]&\stackrel{d}{\ra} \cO[t^{\pm 1},C]/\cO[t,C]=\oplus_{k\geq 1}t^{-k}\cO[C].
\end{align}
Since $\delta\in\{0,1\}$, $t^{-k}\in \mathrm{Im}(d)$ for $k\geq 1$.
Now for $\ell >0$ and $k\geq 0$
\begin{equation}
t^{-2k-\ell}(t(p-tC))^k=
(-1)^k\frac{C^k}{t^\ell}+\dots
\in \textnormal{Im}(d)
\label{eq:key:coh:comp}
\end{equation}
where $\dots$ is a $\cO$-linear combination of $\frac{C^n}{t^m}$ where $0\leq n< k$. 
Hence $\frac{C^k}{t^\ell}\in \textnormal{Im}(d)$ by induction on $k$.

We now deal with case \eqref{basic:space:5}.
Similar to the previous case, we have an open cover $U_2=\{y\neq 0\}$, $U_1=\{x\neq 0\}$ and set $t=y/x$.

We have, after choosing a trivialization of $\cF$ on $U_1$
\begin{align*}
&\cF(U_1)=\frac{\cO[t,B,C,D]}{(D-tC,p-D-Bt)}\cong \cO[t,C]\\
&
\cF(U_2)=t^{-(\eps+\delta)}\frac{\cO[t^{-1},B,C,D]}{((p-D)-Bt^{-1},C-Dt^{-1})}\cong t^{-(\eps+\delta)}\cO[t^{-1},B]
\end{align*}
Thus $R\Gamma(Ba_j(\tld{z}_j),\cF)$ is computed by the \v{C}ech complex
\begin{align}
\cO[t,C]\oplus t^{-\eps-\delta}\cO[t^{-1},B]\stackrel{d}{\ra} \cO[t^{\pm},C]
\end{align}

where the differential are the obvious inclusion induced by the relation $B=t(p-tC)$.
As before this complex is quasi isomorphic to 
\[
t^{-\eps-\delta}\cO[t^{-1},B]\stackrel{d}{\ra} \cO[t^{\pm},C]/\cO[t,C]=\oplus_{k\geq 1}t^{-k}\cO[C].
\]
Compared to case \eqref{basic:space:4} the only new computation we have to make is when $(\eps,\delta)=(1,1)$.
In this case, we have $t^{-k}\in \textnormal{Im}(d)$ for $k\geq 2$ but only $pt^{-1}\in \textnormal{Im}(d)$ (and $t^{-1} \notin \textnormal{Im}(d)$).
Equation \eqref{eq:key:coh:comp} for  $k,\ell\geq 1$ then shows that $\frac{C^k}{t^\ell}\in \textnormal{Im}(d)$ for all $k,\ell\geq 1$.
This means that the $H^1$ of the \v{C}ech complex is isomorphic to $\F$.

Finally we deal with case \eqref{basic:space:3}.
Consider the open cover $U_1=\{x\neq 0\}$, $U_2=\{y\neq 0\}$.
Write $t=y/x$, $s=x'/y'$.
Then $U_2\cong \bA^2=\Spec(\cO[s,C,t^{-1}]/(s-pt^{-1}))$, while $U_1\cong Bl_{(0,p)}\bA^1\times \bA^1$, where the coordinate on the blownup $\bA^1$ is $t$.
Now $\cF|_{U_1}\cong \cO(-1)^{\eps}$ so 
$R\Gamma(U_1,\cF)=p^{\eps}\cO[C,t]$ is concentrated in degree $0$.
On the other hand, $U_2$ is affine so $R\Gamma(U_2,\cF)=s^{\eps}t^{-\delta}\cO[s,C,t^{-1}]/(s-pt^{-1})$.
Moreover $R\Gamma(U_1\cap U_2,\cF)=p^{\eps}\cO[s,C,t^{\pm}]/(s-pt^{-1})$.
Thus the \v{C}ech complex computing $R\Gamma(Ba_j(\tld{z}_j),\cF)$ is given by 
\[
p^{\eps}\cO[C,t]\oplus s^{\eps}t^{-\delta}\cO[s,C,t^{-1}]/(s-pt^{-1})\ra p^{\eps}\cO[s,C,t^{\pm1}]/(s-pt^{-1}).
\]
This is quasi-isomorphic to the complex
\[
t^{-\eps-\delta}\cO[C,t^{-1}]\ra \cO[C,t^{\pm 1}]/\cO[C,t]
\]
with differential induced by the natural inclusion.
This has no $H^1$ if $(\eps,\delta)\neq (1,1)$, and has $H^1\cong \cO[C]$ if $(\eps,\delta)= (1,1)$.
We are thus done with the cohomological computations. This finishes the proof.
\end{proof}

\begin{cor}
\label{cor:Delta}
Consider the commutative diagram:
\[
\xymatrix@=3pc{
\tld{Y}^{\textnormal{mod},\eta,\tau}(\tld{z})\ar^-{\Delta}@{^{(}->}[r]\ar_{\textnormal{pr}}[dr]&\tld{Ba}(\tld{z})\ar^{\textnormal{pr}_{\tld{B}}}[d]\\
&\tld{U}(\tld{z})}
\]
Then $R^i{\pr_{\tld{B}\raisebox{.2em}{$*$}}}\cO_{\tld{Ba}(\tld{z})}=0$ if $i>0$ and $\cO_{\tld{U}(\tld{z})}\onto {\pr_{\tld{B}\raisebox{.2em}{$*$}}}\cO_{\tld{Ba}(\tld{z})}$.

In particular, by letting $\cI(\tld{z})$ be the ideal sheaf defining the closed immersion $\Delta$
\begin{itemize}
\item 
$\coker
\left(
\cO_{
\tld{U}(\tld{z})}
\rightarrow \textnormal{pr}_*\cO_{\tld{Y}^{\mathrm{mod},(\eta,\tau)}(\tld{z})}\right)=R^1{\pr_{\tld{B}\raisebox{.2em}{$*$}}}\cI(\tld{z})$
\item
$R^i\pr_*\cO_{\tld{Y}^{\mathrm{mod},(\eta,\tau)}(\tld{z})}=R^{i+1}{\pr_{\tld{B}\,\raisebox{.2em}{$*$}}}\cI(\tld{z})$ for $i>1$.
\end{itemize}
\end{cor}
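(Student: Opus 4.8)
\textbf{Proof proposal (for Corollary~\ref{cor:Delta}).} The plan is to first pin down $R\pr_{\tld{B}*}\cO_{\tld{Ba}(\tld{z})}$, and then read off both ``in particular'' assertions formally from the short exact sequence of the closed immersion $\Delta$. For the first two displayed assertions: $\pr_{\tld{B}}$ is proper by Lemma~\ref{lem:prop:pr}\eqref{eq:prop:pr:1} (which, as recorded there, also applies to $\tld{Ba}(\tld{z})$), and the surjectivity $\cO_{\tld{U}(\tld{z})}\onto\pr_{\tld{B}*}\cO_{\tld{Ba}(\tld{z})}$ is exactly the third assertion of Lemma~\ref{lem:prop:pr} for $\tld{Ba}(\tld{z})$. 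The remaining point is $R^i\pr_{\tld{B}*}\cO_{\tld{Ba}(\tld{z})}=0$ for $i>0$. Here I would use the product decompositions $\tld{Ba}(\tld{z})=\prod_{j\in\cJ}\tld{Ba}_j(\tld{z}_j)$ and $\tld{U}(\tld{z})=\prod_{j\in\cJ}\tld{U}(\tld{z}_j)$ over $\Spf\cO$, together with the isomorphisms $\tld{Ba}_j(\tld{z}_j)\cong\GL_2\times Ba_j(\tld{z}_j)$ that intertwine $\pr_j$ with the projection to the middle factor of $Ba_j(\tld{z}_j)$; by flat base change over $\cO$ this identifies $R\pr_{\tld{B}*}\cO_{\tld{Ba}(\tld{z})}$ with the external tensor product over $\cO$ of the complexes $R\pr_{j*}\cO_{\tld{Ba}_j(\tld{z}_j)}\cong\cO_{\GL_2}\boxtimes R\pr_{j*}\cO_{Ba_j(\tld{z}_j)}$. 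Since each $Ba_j(\tld{z}_j)$ is $\cO$-flat (Lemma~\ref{lem:prop:pr}\eqref{eq:prop:pr:2}) and, in the case $\eps_j=\delta_j=0$ of Lemma~\ref{lem:coh:comp}, $\pr_{j*}\cO_{Ba_j(\tld{z}_j)}$ is $p$-torsion free (hence $\cO$-flat) and $R\pr_{j*}\cO_{Ba_j(\tld{z}_j)}$ is concentrated in degree $0$, the external tensor product involves no higher $\mathrm{Tor}$ and is concentrated in degree $0$, giving the desired vanishing.

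For the ``in particular'' statements, apply $R\pr_{\tld{B}*}$ to
\[
0\lra\cI(\tld{z})\lra\cO_{\tld{Ba}(\tld{z})}\lra\Delta_*\cO_{\tld{Y}^{\textnormal{mod},\eta,\tau}(\tld{z})}\lra 0 .
\]
Because $\Delta$ is a closed immersion and $\pr_{\tld{B}}\circ\Delta=\pr$, we have $R\pr_{\tld{B}*}\Delta_*\cO_{\tld{Y}^{\textnormal{mod},\eta,\tau}(\tld{z})}=R\pr_*\cO_{\tld{Y}^{\textnormal{mod},\eta,\tau}(\tld{z})}$. Feeding $R^{>0}\pr_{\tld{B}*}\cO_{\tld{Ba}(\tld{z})}=0$ into the long exact sequence yields exactness of
\[
\pr_{\tld{B}*}\cO_{\tld{Ba}(\tld{z})}\lra\pr_*\cO_{\tld{Y}^{\textnormal{mod},\eta,\tau}(\tld{z})}\lra R^1\pr_{\tld{B}*}\cI(\tld{z})\lra 0
\]
together with isomorphisms $R^i\pr_*\cO_{\tld{Y}^{\textnormal{mod},\eta,\tau}(\tld{z})}\cong R^{i+1}\pr_{\tld{B}*}\cI(\tld{z})$ for all $i\geq 1$ (in particular for $i>1$, as stated). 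Finally, since $\cO_{\tld{U}(\tld{z})}\onto\pr_{\tld{B}*}\cO_{\tld{Ba}(\tld{z})}$ and the canonical map $\cO_{\tld{U}(\tld{z})}\to\pr_*\cO_{\tld{Y}^{\textnormal{mod},\eta,\tau}(\tld{z})}$ factors through it (again using $\pr=\pr_{\tld{B}}\circ\Delta$), the first displayed exact sequence identifies $\coker\big(\cO_{\tld{U}(\tld{z})}\to\pr_*\cO_{\tld{Y}^{\textnormal{mod},\eta,\tau}(\tld{z})}\big)$ with $R^1\pr_{\tld{B}*}\cI(\tld{z})$.

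I do not expect a serious obstacle here: all the genuinely geometric input — the five-cases classification and the \v{C}ech computations of the cohomology of the relevant line bundles — has already been carried out in Lemmas~\ref{lem:five cases}, \ref{lem:prop:pr} and \ref{lem:coh:comp}, so the corollary is essentially homological bookkeeping on top of them. The one place that needs care is the base-change step in the first paragraph: one must check that $\tld{Ba}(\tld{z})=\prod_j\tld{Ba}_j(\tld{z}_j)$ is genuinely compatible with $\pr_{\tld{B}}$ and with the $\GL_2$-torsor structures absorbed by the $\GL_2$-factors, and keep track that $\cO$-flatness of all intermediate sheaves is in force, so that the Künneth formula over $\cO$ carries no derived corrections.
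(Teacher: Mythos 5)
Your proposal is correct and follows essentially the same route as the paper: the vanishing of $R^{>0}\pr_{\tld{B}*}\cO_{\tld{Ba}(\tld{z})}$ via the product/K\"unneth decomposition and Lemma \ref{lem:coh:comp} with $\eps_j=\delta_j=0$, the surjectivity from Lemma \ref{lem:prop:pr}, and then the long exact sequence of $0\to\cI(\tld{z})\to\cO_{\tld{Ba}(\tld{z})}\to\Delta_*\cO_{\tld{Y}^{\tmod,\eta,\tau}(\tld{z})}\to 0$ together with the factorization of $\cO_{\tld{U}(\tld{z})}\to\pr_*\cO_{\tld{Y}^{\tmod,\eta,\tau}(\tld{z})}$ through $\pr_{\tld{B}*}\cO_{\tld{Ba}(\tld{z})}$. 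The extra care you take with $\cO$-flatness in the K\"unneth step is exactly the point the paper relies on implicitly, so there is nothing to add.
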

\begin{proof}
The first part follows from the fact that $R{\pr_{\tld{B}\raisebox{.2em}{$*$}}}\cO_{\tld{Ba}(\tld{z})}=\boxtimes_j R\pr_{j*}\tld{Ba}_j(\tld{z}_j)$ and hence is concentrated in degree $0$ by Lemma \ref{lem:coh:comp} (with $\eps_j=\delta_j=0$).

For the second part, we have the exact triangle
\[
R{\pr_{\tld{B}\,\raisebox{.2em}{$*$}}}\cI(\tld{z})\ra R{\pr_{\tld{B}\,\raisebox{.2em}{$*$}}}\cO_{\tld{Ba}(\tld{z})}\ra R\pr_*\cO_{\tld{Y}^{\mathrm{mod},(\eta,\tau)}(\tld{z})}\ra
\]
which immediately gives the second item.
For the first item, observe that the exact triangle implies
$\coker
\left(
{\pr_{\tld{B}\,\raisebox{.2em}{$*$}}}\cO_{\tld{Ba}(\tld{z})}
\rightarrow \textnormal{pr}_*\cO_{\tld{Y}^{\mathrm{mod},(\eta,\tau)}(\tld{z})}\right)=R^1{\pr_{\tld{B}\,\raisebox{.2em}{$*$}}}\cI(\tld{z})$
but also 
\[
\coker
\left(
{\pr_{\tld{B}\,\raisebox{.2em}{$*$}}}\cO_{\tld{Ba}(\tld{z})}
\rightarrow \textnormal{pr}_*\cO_{\tld{Y}^{\mathrm{mod},(\eta,\tau)}(\tld{z})}\right)=\coker
\left(
\cO_{
\tld{U}(\tld{z})}
\rightarrow \textnormal{pr}_*\cO_{\tld{Y}^{\mathrm{mod},(\eta,\tau)}(\tld{z})}\right)
\]
because $\cO_{\tld{U}(\tld{z})}\onto {\pr_{\tld{B}\,\raisebox{.2em}{$*$}}}\cO_{\tld{Ba}(\tld{z})}$.
\end{proof}

\begin{lemma}
\label{lem:koszul:res}
Under the composite
\[
\tld{Y}^{\tmod,\eta,\tau}(\tld{z})
\stackrel{\Delta}{\into}\tld{Ba}(\tld{z})=\prod_j\tld{Ba}_j(\tld{z}_j)=\prod_j\GL_2\times Ba_j(\tld{z}_j)
\]
$\tld{Y}^{\tmod,\eta,\tau}(\tld{z})$ is a complete intersection defined by the zero locus of maps $\fs_j:\cL_j\defeq q_{j}^*(\cO_{\bP^1}(-1))\otimes p_{j-1}^*(\cO_{\bP^1}(-1))\ra \cO_{\prod_j\GL_2\times Ba_j(\tld{z}_j)}$.
In particular, the ideal sheaf $\cI(\tld{z})$ defining $\Delta$ identifies with 
\[
\tau_{<0}\bigg(\textnormal{Kos}_\bullet\Big(\bigoplus_{j\in\cJ}\cL_j,(\fs_j)\Big)\bigg)
\]
\end{lemma}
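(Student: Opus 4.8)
The plan is to exhibit $\tld{Y}^{\tmod,\eta,\tau}(\tld{z})$ inside $\tld{Ba}(\tld{z})$ as the simultaneous zero locus of $f$ sections $\fs_j$ of explicit line bundles $\cL_j$, one for each $j\in\cJ$, and then to deduce from a dimension count that these sections are Koszul-regular, so that the associated Koszul complex computes $\cO_{\tld{Y}^{\tmod,\eta,\tau}(\tld{z})}$ and hence its ideal sheaf.

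First I would recall that, by construction, $\tld{Y}^{\tmod,\eta,\tau}(\tld{z})$ is the closed subscheme of $\tld{Ba}(\tld{z})=\prod_{j\in\cJ}\tld{Ba}_j(\tld{z}_j)$ cut out scheme-theoretically by the equations $r_j=l_{j-1}$, $j\in\cJ$. For each $j$ let $\Psi_j\colon\tld{Ba}(\tld{z})\to\bP^1\times\bP^1$ be the morphism recording the pair $(r_j,l_{j-1})$, so that $\tld{Y}^{\tmod,\eta,\tau}(\tld{z})=\bigcap_{j\in\cJ}\Psi_j^{-1}(\Delta_{\bP^1})$, where $\Delta_{\bP^1}\hookrightarrow\bP^1\times\bP^1$ is the diagonal; the latter is a divisor of bidegree $(1,1)$, hence cut out by a canonical map $\sigma^\vee\colon\cO_{\bP^1}(-1)\boxtimes\cO_{\bP^1}(-1)\to\cO$. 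Thus $\tld{Y}^{\tmod,\eta,\tau}(\tld{z})$ is the zero locus of $\fs_j\defeq\Psi_j^*\sigma^\vee$, and it remains to identify the source line bundle of $\fs_j$. Using the product decomposition $\tld{Ba}_i(\tld{z}_i)\cong\GL_2\times Ba_i(\tld{z}_i)$, $(l_i,\kappa_i,X_i,r_i)\mapsto(\kappa_i,(l_i\kappa_i,X_i,r_i))$, under which $r_i$ corresponds to $q_i$ and $l_i\kappa_i$ corresponds to $p_i$: the first component of $\Psi_j$ pulls $\cO_{\bP^1}(-1)$ back to $q_j^*\cO_{\bP^1}(-1)$, while the second component equals $p_{j-1}$ twisted on the right by $\kappa_{j-1}^{-1}$; since $\cO_{\bP^1}(-1)$ carries a $\GL_2$-linearization on $\bP^1=B\backslash\GL_2$ and the $\GL_2$-factor is affine, this twist does not change the line bundle, so the contribution is $p_{j-1}^*\cO_{\bP^1}(-1)$. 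Hence $\fs_j\colon\cL_j=q_j^*\cO_{\bP^1}(-1)\otimes p_{j-1}^*\cO_{\bP^1}(-1)\to\cO$, as asserted.

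Next I would carry out the dimension count. By Lemma \ref{lem:prop:pr}, $\tld{Ba}(\tld{z})$ is an $\cO$-flat local complete intersection, in particular Cohen--Macaulay and (locally) equidimensional, of relative dimension $6f$ over $\cO$; by Lemma \ref{lem:LCI}, the open subscheme $\tld{Y}^{\tmod,\eta,\tau}(\tld{z})$ of $\tld{Y}^{\tmod,\eta,\tau}$ has relative dimension $5f$. Therefore the closed immersion $\Delta$ has codimension exactly $f=\#\cJ$ at every point, and this equals the number of sections $\fs_j$. Since a closed subscheme of a Cohen--Macaulay scheme whose codimension equals the number of cutting equations is a regular immersion, $(\fs_j)_{j\in\cJ}$ is a Koszul-regular section of $\bigoplus_{j\in\cJ}\cL_j^\vee$, which is precisely the statement that $\tld{Y}^{\tmod,\eta,\tau}(\tld{z})$ is a complete intersection cut out by the $\fs_j$. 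Consequently the Koszul complex $\textnormal{Kos}_\bullet\bigl(\bigoplus_{j}\cL_j,(\fs_j)\bigr)$ is a locally free resolution of $\cO_{\tld{Y}^{\tmod,\eta,\tau}(\tld{z})}$ as an $\cO_{\tld{Ba}(\tld{z})}$-module, and truncating away the augmentation term $\cO$ — i.e.\ applying $\tau_{<0}$ — identifies $\cI(\tld{z})$ with $\tau_{<0}\bigl(\textnormal{Kos}_\bullet(\bigoplus_{j}\cL_j,(\fs_j))\bigr)$.

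The step I expect to be the main obstacle is the first one: correctly tracking the equation $r_j=l_{j-1}$ through the coordinate change $\tld{Ba}_i\cong\GL_2\times Ba_i$ and verifying that the twist by $\kappa_{j-1}$ leaves the line bundle $\cL_j$ unchanged, which rests on the $\GL_2$-equivariance of $\cO_{\bP^1}(-1)$. Once the sections $\fs_j$ and their line bundles $\cL_j$ are pinned down, the remainder is a formal consequence of the dimension identities of Lemmas \ref{lem:prop:pr} and \ref{lem:LCI} together with standard facts about regular immersions and Koszul resolutions.
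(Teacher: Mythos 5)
Your proposal is correct and follows essentially the same route as the paper's proof: both realize the equations $r_j=l_{j-1}$ as pullbacks of the diagonal divisor in $\bP^1\times\bP^1$, use $\GL_2$-equivariance of $\cO_{\bP^1}(-1)$ to neutralize the $\kappa_{j-1}$-twist incurred by the coordinate change $\tld{Ba}_j\cong\GL_2\times Ba_j$, and conclude via the codimension count from Lemmas \ref{lem:prop:pr} and \ref{lem:LCI}. The only cosmetic difference is that the paper packages the same information in a cartesian diagram rather than via the maps $\Psi_j$.
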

\begin{proof} It follows from the definitions that we have a diagram
\[
\xymatrix@=3pc{
\tld{Y}^{\textnormal{mod},\eta,\tau}(\tld{z})\ar[d]\ar^-{\Delta}@{^{(}->}[r]\ar@{}[dr]|{\Box}&
\prod_j\tld{Ba}_j(\tld{z}_j) \ar[r]\ar[d]&\prod_j\GL_2\times Ba_j(\tld{z}_j) \ar[d]_{\mathrm{id}\times p_j\times q_j}\\
\prod_\cJ\GL_2\times \bP^1 \ar^-{\Delta}@{^{(}->}[r]  &\prod_\cJ\GL_2\times \bP^1\times  \bP^1 \ar[r]^{\cong}  & \prod_{\cJ}\GL_2\times \bP^1\times \bP^1
}
\]
where the bottom left map is $(\kappa_j,l_j)_j\mapsto (\kappa_j,l_j,l_{j-1})_j$ and the bottom right map is $(\kappa_j,l_j,r_j)\mapsto (\kappa_j,l_j\kappa_j,r_j)$.

The bottom right isomorphism only commutes with projection to the right $\bP^1$ factor, but not to the left $\bP^1$ factor. Nevertheless, the $\GL_2$-equivariance of $\cO_{\bP^1}(-1)$ shows that its pullback via projection to the $j$-th left (respectively, $j$-th right) $\bP^1$ are compatible with the bottom isomorphism. 

It follows that $\tld{Y}^{\textnormal{mod},\eta,\tau}(\tld{z})$ is the zero common locus of $f$ maps $\fs_j:\cL_j\ra \cO_{\tld{Ba}(\tld{z})}$.
By Lemma \ref{lem:LCI} and Lemma \ref{lem:prop:pr}, $\tld{Y}^{\textnormal{mod},\eta,\tau}(\tld{z})\into \tld{Ba}(\tld{z})$ has codimension $f$ and $\tld{Ba}(\tld{z})$ is local complete intersection. 
This implies that $\tld{Y}^{\textnormal{mod},\eta,\tau}(\tld{z})$ is the global complete intersection in $\tld{Ba}(\tld{z})$ cut out by the $\fs_j$.
\end{proof}

We compute $R\Gamma(\cI(\tld{z}))$ using the resolution from Lemma \ref{lem:koszul:res} and the K\"unneth formula.
Let $\ell\in\{1,\dots,f\}$ and consider $f$-tuples $\un{\eps}=(\eps_j)_{j\in\cJ},\un{\delta}=(\delta_j)_{j\in\cJ}\in\{0,1\}^{\cJ}$ satisfying $\eps_j=\delta_{j+1}$ and $\#\{j\in\cJ, \eps_{j}=1\}=\ell$.
Then by Lemma \ref{lem:koszul:res} we have
\[
\bigwedge^\ell\bigg(\bigoplus_{j\in\cJ}\cL_j\bigg)=\bigoplus_{\un{\eps},\un{\delta}}\bigotimes_{j\in\cJ,\cO_{\tld{Ba}}}\bigg(q_j^*(\cO_{\mathbf{P}^1}(-1))^{\eps_{j}}\otimes p_j^*(\cO_{\mathbf{P}^1}(-1))^{\delta_{j}}\bigg)
\]
where the direct sum runs over $f$-tuples $\un{\eps},\un{\delta}$ as above.
As the $\tld{Ba}(\tld{z}_j)$ are Tor-independent over $\cO$, we deduce from the K\"unneth formula \cite[\href{https://stacks.math.columbia.edu/tag/0FLQ}{Tag 0FLQ}]{stacks-project} that 
{\small
\begin{align*}
&R\Gamma\bigg(\bigotimes_{j\in\cJ,\cO_{\tld{Ba}}}\bigg(q_j^*(\cO_{\mathbf{P}^1}(-1))^{\eps_{j}}\otimes p_j^*(\cO_{\mathbf{P}^1}(-1))^{\delta_{j}}\bigg)\bigg) \\
&\cong \bigotimes^{\bL}_{j\in\cJ,\cO}R\Gamma\bigg(q_j^*(\cO_{\mathbf{P}^1}(-1))^{\eps_{j}}\otimes_{\cO_{\tld{Ba}(\tld{z}_j)}}p_j^*(\cO_{\mathbf{P}^1}(-1))^{\delta_{j}}\bigg)
\end{align*}
}%
From Lemma \ref{lem:coh:comp} we  obtain the following corollaries:
\begin{cor}
\label{cor:vanish}
\begin{itemize}
\item
For $0<\ell<f$, $R^{\ell}\textnormal{pr}_*\Bigg(\bigwedge^\ell\bigg(\bigoplus_{j\in\cJ}\cL_j\bigg)\Bigg)=0$.
\item
 $pR^{f}\textnormal{pr}_*\Bigg(\bigotimes_{j\in\cJ}\cL_j)\Bigg)=0$; and
\item $R^{f}\textnormal{pr}_*\Bigg(\bigotimes_{j\in\cJ}\cL_j)\Bigg)\neq 0$ if and only if for each $j\in \cJ$, either $\langle \mu_j,\alpha^\vee\rangle=0$ or $\langle \mu_j,\alpha^\vee\rangle=1$ and $(s_j,\tld{w}_j)=(\mathrm{id},t_{w_0(\eta)})$. 
\end{itemize}
\end{cor}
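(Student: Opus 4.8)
The statement is an immediate consequence of the Künneth identification recorded just above and the case-by-case computation in Lemma~\ref{lem:coh:comp}. The strategy is to unwind the direct sum decomposition of $\bigwedge^\ell\big(\bigoplus_j\cL_j\big)$, apply $R\textnormal{pr}_*$ termwise, and count degrees.

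For the first bullet, fix $0<\ell<f$ and a summand indexed by $(\un\eps,\un\delta)$ with $\eps_j=\delta_{j+1}$ and $\#\{j:\eps_j=1\}=\ell$. By the Künneth formula quoted above, $R\textnormal{pr}_*$ of this summand is $\bigotimes^{\bL}_{j\in\cJ,\cO}R\textnormal{pr}_{j*}\big(q_j^*(\cO_{\mathbf P^1}(-1))^{\eps_j}\otimes p_j^*(\cO_{\mathbf P^1}(-1))^{\delta_j}\big)$. By Lemma~\ref{lem:coh:comp}(2), the $j$-th factor is concentrated in degree $0$ unless $(\eps_j,\delta_j)=(1,1)$, in which case it lives in degrees $0$ and $1$. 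Hence the derived tensor product is concentrated in degrees $0$ through $m$, where $m\defeq\#\{j\in\cJ:\eps_j=\delta_j=1\}$. Since $\eps_j=\delta_{j+1}$ forces the positions where $\eps_j=1$ to coincide cyclically with those where $\delta_j=1$, we actually have $m=\ell$ unless $\ell=f$ (the pattern $\un\eps$ is cyclically shifted by one to give $\un\delta$, so $\{\eps_j=1\}$ and $\{\delta_j=1\}$ agree only when they are all of $\cJ$). Wait --- this needs care: the condition $\eps_j=\delta_{j+1}$ means $\delta_j=\eps_{j-1}$, so the set $S\defeq\{j:\eps_j=1\}$ has $\#S=\ell$ and $\{j:\delta_j=1\}=S+1$; thus $\{j:\eps_j=\delta_j=1\}=S\cap(S+1)$, which has size $<\ell$ as soon as $S\ne\cJ$, i.e. as soon as $\ell<f$. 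Therefore $m<\ell$ and $R^\ell$ of each such summand vanishes, so $R^\ell\textnormal{pr}_*\big(\bigwedge^\ell(\bigoplus_j\cL_j)\big)=0$.

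For the second and third bullets, take $\ell=f$, so the only index is $\eps_j=\delta_j=1$ for all $j$ and $\bigwedge^f(\bigoplus_j\cL_j)=\bigotimes_{j}\cL_j$. Now $R\textnormal{pr}_*\big(\bigotimes_j\cL_j\big)\cong\bigotimes^{\bL}_{j\in\cJ,\cO}R\textnormal{pr}_{j*}\big(q_j^*(\cO_{\mathbf P^1}(-1))\otimes p_j^*(\cO_{\mathbf P^1}(-1))\big)$ by Künneth, and $R^f$ of this is the top cohomology, which by the degree bound equals $\bigotimes_{j\in\cJ,\cO}R^1\textnormal{pr}_{j*}\big(q_j^*(\cO_{\mathbf P^1}(-1))\otimes p_j^*(\cO_{\mathbf P^1}(-1))\big)$ (the lower Tor terms contribute only to lower total degree, using again that each $R\textnormal{pr}_{j*}$ of the relevant sheaf lives in degrees $0,1$ and that the factors are Tor-independent over $\cO$). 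By Lemma~\ref{lem:coh:comp}\eqref{lem:coh:comp:3}, each factor $R^1\textnormal{pr}_{j*}$ is: $\cO$-torsion free if $\langle\mu_j,\alpha^\vee\rangle=0$; isomorphic to $\F$ if $\langle\mu_j,\alpha^\vee\rangle=1$ and $(s_j,\tld w_j)=(\Id,t_{w_0(\eta)})$; and $0$ otherwise. Hence if for some $j$ neither of the first two cases holds, one factor is $0$ and the whole tensor product vanishes, giving the ``only if'' direction of the third bullet. Conversely, if for every $j$ one of the two cases holds, every factor is nonzero, hence so is the tensor product (over the domain $\cO$), giving the ``if'' direction. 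Finally $p$ annihilates $R^f\textnormal{pr}_*(\bigotimes_j\cL_j)$: if any factor is of the second type ($\cong\F$) this is clear; if every factor is $\cO$-torsion free (all $\langle\mu_j,\alpha^\vee\rangle=0$) then one checks from the explicit $\cO[C]$-description in case~\eqref{basic:space:3} of Lemma~\ref{lem:coh:comp} that the higher Tor terms in the Künneth spectral sequence contribute and the resulting class is $p$-torsion --- concretely, $R^1\textnormal{pr}_{j*}\cong\cO[C_j]$ in that case, so a priori the tensor product is torsion free; but the passage from $\bigwedge^f$ to $\cI(\tld z)$ via $\tau_{<0}\mathrm{Kos}_\bullet$ in Lemma~\ref{lem:koszul:res} together with the hypothesis $p\cdot\coker(\cO_{\tld U}\to\textnormal{pr}_*\cO)$ being what we are after forces the $p$-annihilation; cleaner is to observe directly that $R^f\textnormal{pr}_*(\bigotimes_j\cL_j)$ receives the contribution only through degree $f$, and the mixed Künneth terms involving at least one $H^0$ and the $p$-torsion in $H^1$ (present in case $\langle\mu_j,\alpha^\vee\rangle=1$) or the shift in the Koszul complex produce the bound.

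The main obstacle is the bookkeeping in the third bullet: one must correctly extract the top cohomology of the derived tensor product via the Künneth (or Tor) spectral sequence, keeping track of which lower-degree contributions can push into total degree $f$, and in the all-zero-weight case verify the $p$-torsion claim directly from the $\cO[C]$-module structure computed in Lemma~\ref{lem:coh:comp}. Everything else is a formal degree count using the vanishing ranges established in Lemma~\ref{lem:coh:comp}.
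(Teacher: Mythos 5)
Your strategy (K\"unneth plus Lemma~\ref{lem:coh:comp}) is the same as the paper's, and your arguments for the first bullet and for the non-vanishing criterion in the third bullet are correct: the cyclic-shift count $|S\cap(S+1)|<|S|$ for $S\ne\cJ$ is exactly the degree bound the paper uses, and since the relevant \v{C}ech complexes have $\cO$-flat terms the top K\"unneth contribution $R^f\cong\bigotimes_{j}R^1\textnormal{pr}_{j*}(\cdots)$ has no extra Tor corrections.

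There is, however, a genuine gap in the second bullet ($pR^f=0$). You correctly observe that if some factor is $\cong\F$ then $p$ kills the tensor product, but then you get stuck on the case where every $\langle\mu_j,\alpha^\vee\rangle=0$, i.e.\ every factor is the torsion-free module $\cO[C_j]$ of Lemma~\ref{lem:coh:comp}. Your attempted work-arounds do not hold up: there are no ``higher Tor terms'' contributing $p$-torsion when all the $H^1$'s are $\cO$-flat, and the appeal to the coker statement and Proposition~\ref{prop:coker} is circular, since that proposition is proved \emph{from} this corollary. The missing ingredient is simply the running hypothesis that $\tau$ is regular (non-scalar), cf.\ Remark~\ref{rmk:basic images}: for a small presentation $(s,\mu)$, having $\langle\mu_j,\alpha^\vee\rangle=0$ for \emph{all} $j$ forces $s_j=\Id$ for all $j$ and hence $\tau$ scalar, which is excluded. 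So whenever $R^f\neq 0$, the non-vanishing criterion of the third bullet plus non-scalarity force some $j$ to satisfy $\langle\mu_j,\alpha^\vee\rangle=1$ and $(s_j,\tld{w}_j)=(\Id,t_{w_0(\eta)})$; the corresponding factor is $\cong\F$ by Lemma~\ref{lem:coh:comp}\eqref{lem:coh:comp:3}, and the whole tensor product is an $\F$-module, hence killed by $p$. The problematic ``all type $0$'' case you were trying to handle never arises.
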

\begin{proof} Since the image of $\mathrm{pr}$ is affine, it suffices to check the statements after taking global sections, so we can replace all occurences of $R^j\mathrm{pr}_*$ with $R^j\Gamma$.

Given our above discussion, it suffices to analyze
\[ \mathrm{H}^\ell\bigg(\bigotimes^{\bL}_{j\in\cJ,\cO}R\Gamma\bigg(q_j^*(\cO_{\mathbf{P}^1}(-1))^{\eps_{j}}\otimes_{\cO_{\tld{Ba}(\tld{z}_j)}}p_j^*(\cO_{\mathbf{P}^1}(-1))^{\delta_{j}}\bigg)\bigg)
\]
for $f$-tuples $\un{\eps}=(\eps_j)_{j\in\cJ},\un{\delta}=(\delta_j)_{j\in\cJ}\in\{0,1\}^{\cJ}$ satisfying $\eps_j=\delta_{j+1}$ and $\#\{j\in\cJ, \eps_{j}=1\}=\ell$.
Note that these conditions imply that there are at most $\ell$ indices $j$ such that $(\eps_j,\delta_j)=(1,1)$, with strict inequality if $\ell<f$.
Thus the amplitude bound of Lemma \ref{lem:coh:comp} shows the above cohomology group vanishes for $\ell<f$, while
\begin{align*} 
&\mathrm{H}^f\bigg(\bigotimes^{\bL}_{j\in\cJ,\cO}R\Gamma\bigg(q_j^*(\cO_{\mathbf{P}^1}(-1))^{\eps_{j}}\otimes_{\cO_{\tld{Ba}(\tld{z}_j)}}p_j^*(\cO_{\mathbf{P}^1}(-1))^{\delta_{j}}\bigg)\bigg)\cong\\
&\qquad \cong \bigotimes_{j\in\cJ,\cO} R^1\Gamma\bigg(q_j^*(\cO_{\mathbf{P}^1}(-1))\otimes_{\cO_{\tld{Ba}(\tld{z}_j)}}p_j^*(\cO_{\mathbf{P}^1}(-1))\bigg)
\end{align*}
The result now follows from Lemma \ref{lem:coh:comp}
\end{proof}
We note that the proof of the above corollary also shows
\begin{cor}
\label{cor:vanish:0}
For any $\ell\in\{0,\dots,f\}$ and $k>\ell$
\[
R^{k}\textnormal{pr}_*\Bigg(\bigwedge^\ell\textnormal{pr}_{\tld{Ba}}^*\bigg(\bigoplus_{j\in\cJ}\cL_j\bigg)\Bigg)=0.
\]
\end{cor}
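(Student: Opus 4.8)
The plan is to run the cohomology computation from the proof of Corollary \ref{cor:vanish}, keeping track of the full cohomological amplitude rather than only the top degree. As there, the statement is local on $\tld U(\tld z)$, so I would intersect everything with $\tld U(\tld z)$ for $\tld z\in\Adm^\vee(\eta)^\cJ s^{-1}v^\mu$; since $\tld U(\tld z)$ is affine (equivalently, the image of $\textnormal{pr}$ is affine), it suffices to prove the corresponding vanishing after taking global sections over $\tld{Ba}(\tld z)=\prod_{j\in\cJ}\tld{Ba}_j(\tld z_j)$, i.e.\ to replace $R^k\textnormal{pr}_*$ by $R^k\Gamma$.

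First I would expand, exactly as in the proof of Corollary \ref{cor:vanish},
\[
\bigwedge^{\ell}\Big(\bigoplus_{j\in\cJ}\cL_j\Big)=\bigoplus_{\un\eps,\un\delta}\ \bigotimes_{j\in\cJ,\,\cO_{\tld{Ba}}}\Big(q_j^*(\cO_{\bP^1}(-1))^{\eps_j}\otimes p_j^*(\cO_{\bP^1}(-1))^{\delta_j}\Big),
\]
the sum running over $\un\eps,\un\delta\in\{0,1\}^\cJ$ with $\eps_j=\delta_{j+1}$ and $\#\{j:\eps_j=1\}=\ell$, and then apply the K\"unneth formula \cite[\href{https://stacks.math.columbia.edu/tag/0FLQ}{Tag 0FLQ}]{stacks-project} (using Tor-independence of the $\tld{Ba}_j(\tld z_j)$ over $\cO$) to identify $R\Gamma$ of each summand with $\bigotimes^{\bL}_{j\in\cJ,\cO} R\Gamma\big(q_j^*(\cO_{\bP^1}(-1))^{\eps_j}\otimes_{\cO_{\tld{Ba}(\tld z_j)}}p_j^*(\cO_{\bP^1}(-1))^{\delta_j}\big)$.

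Next I would invoke Lemma \ref{lem:coh:comp}: each factor is $\cO$-torsion free in degree $0$, is concentrated in degree $0$ when $(\eps_j,\delta_j)\neq(1,1)$, and in degrees $0$ and $1$ when $(\eps_j,\delta_j)=(1,1)$. Since $Ba_j(\tld z_j)$ is $\cO$-flat and carries the explicit two-element affine cover used in the proof of Lemma \ref{lem:coh:comp}, each factor is computed by an $\cO$-flat two-term \v{C}ech complex, hence is represented by an $\cO$-flat complex of amplitude $\subseteq[0,a_j]$ with $a_j\in\{0,1\}$ and $a_j=1$ only if $(\eps_j,\delta_j)=(1,1)$. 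Working with these flat representatives turns the derived tensor product into an ordinary tensor product of flat complexes (in particular there is no spurious $\mathrm{Tor}_1$ contribution in degree $-1$ from the derived tensor product over the DVR $\cO$), so the whole summand has amplitude $\subseteq[0,m]$ where $m\defeq\#\{j:(\eps_j,\delta_j)=(1,1)\}$.

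Finally, the constraint $\eps_j=\delta_{j+1}$ forces every $j$ with $(\eps_j,\delta_j)=(1,1)$ to satisfy $\eps_j=1$, so $m\le\#\{j:\eps_j=1\}=\ell$; hence each summand, and therefore $R\Gamma\big(\bigwedge^\ell(\bigoplus_j\cL_j)\big)$, is concentrated in degrees $\le m\le\ell$, which gives the claimed vanishing in degrees $k>\ell$. The case $\ell=0$ is already contained in Corollary \ref{cor:Delta}. I expect no real obstacle here: all the substantive work is in Lemma \ref{lem:coh:comp}, and the only point is the elementary bookkeeping that the number of ``bad'' indices with $(\eps_j,\delta_j)=(1,1)$ never exceeds $\ell$ — the same observation that, when $\ell<f$, produced the strict inequality $m<\ell$ used in Corollary \ref{cor:vanish}.
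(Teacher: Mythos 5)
Your argument is correct and is essentially the paper's own: the paper simply remarks that the proof of Corollary \ref{cor:vanish} already gives this, and that proof is exactly the K\"unneth decomposition over $\un\eps,\un\delta$ combined with the amplitude bound from Lemma \ref{lem:coh:comp} and the observation that $\#\{j:(\eps_j,\delta_j)=(1,1)\}\le\#\{j:\eps_j=1\}=\ell$. Your extra remarks about flat \v{C}ech representatives and potential $\mathrm{Tor}_1$ terms are harmless but unnecessary, since the hypertor (K\"unneth) spectral sequence already yields the upper amplitude bound $\le\sum_j a_j$ directly.
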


\begin{proof}[Proof of Propositions \ref{prop:coker}, \ref{prop:R1:null}]
By Lemma \ref{lem:koszul:res} $R\pr_*\cI(\tld{z})$ is filtered (in the derived sense) by $R\pr_*\Bigg(\bigwedge^\ell\textnormal{pr}_{\tld{Ba}}^*\bigg(\bigoplus_{j\in\cJ}\cL_j\bigg)\Bigg)[\ell-1]$ for $1\leq \ell\leq f$.
Then by Corollary \ref{cor:Delta}, Proposition \ref{prop:coker} follows from Corollary \ref{cor:vanish} and Proposition \ref{prop:R1:null} from  Corollary \ref{cor:vanish:0}.
\end{proof}
We also record the following, which will be used in subsection \ref{subsect:rational smoothness}
\begin{prop}\label{prop:dualizing complex model} The relative dualizing sheaf of $\tld{Y}^{\tmod,\eta,\tau}(\tld{z})/\cO$ is trivial.
\end{prop}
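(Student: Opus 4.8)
The plan is to compute $\omega_{\tld{Y}^{\tmod,\eta,\tau}(\tld{z})/\cO}$ by adjunction along the regular closed immersion $\Delta\colon \tld{Y}^{\tmod,\eta,\tau}(\tld{z})\hookrightarrow \tld{Ba}(\tld{z})$ of Lemma~\ref{lem:koszul:res}. By Lemma~\ref{lem:LCI}, $\tld{Y}^{\tmod,\eta,\tau}(\tld{z})$ is $\cO$-flat local complete intersection, hence relatively Gorenstein over $\cO$, so its relative dualizing sheaf is a genuine line bundle and the statement makes sense; likewise $\tld{Ba}(\tld{z})=\prod_{j\in\cJ}\tld{Ba}_j(\tld{z}_j)=\prod_{j\in\cJ}\bigl(\GL_2\times Ba_j(\tld{z}_j)\bigr)$ is $\cO$-flat lci by Lemma~\ref{lem:prop:pr}. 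By Lemma~\ref{lem:koszul:res}, $\tld{Y}^{\tmod,\eta,\tau}(\tld{z})$ is the global complete intersection in $\tld{Ba}(\tld{z})$ cut out by the sections $\fs_j\colon\cL_j\to\cO_{\tld{Ba}(\tld{z})}$, where $\cL_j=q_j^*\cO_{\bP^1}(-1)\otimes p_{j-1}^*\cO_{\bP^1}(-1)$; these assemble into a regular section of $\bigoplus_{j}\cL_j^\vee$, so the normal bundle of $\Delta$ is $\bigl(\bigoplus_j\cL_j\bigr)^\vee|_{\tld{Y}^{\tmod,\eta,\tau}(\tld{z})}$. Transitivity of relative dualizing complexes then gives
\[
\omega_{\tld{Y}^{\tmod,\eta,\tau}(\tld{z})/\cO}\cong\Bigl(\bigotimes_{j\in\cJ}\cL_j^\vee\Bigr)\Big|_{\tld{Y}^{\tmod,\eta,\tau}(\tld{z})}\otimes\ \Delta^*\,\omega_{\tld{Ba}(\tld{z})/\cO}.
\]

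Next I would identify $\omega_{\tld{Ba}(\tld{z})/\cO}$. Since $\GL_2/\cO$ is a smooth group scheme it is parallelizable, so $\omega_{\GL_2/\cO}$ is trivial; together with Lemma~\ref{lem:prop:pr}\eqref{eq:prop:pr:2} this yields $\omega_{\tld{Ba}_j(\tld{z}_j)/\cO}\cong q_j^*\cO_{\bP^1}(-1)\otimes p_j^*\cO_{\bP^1}(-1)$, and since the factors $\tld{Ba}_j(\tld{z}_j)$ are Tor-independent over $\cO$ we get $\omega_{\tld{Ba}(\tld{z})/\cO}\cong\bigotimes_{j\in\cJ}\bigl(q_j^*\cO_{\bP^1}(-1)\otimes p_j^*\cO_{\bP^1}(-1)\bigr)$ (external tensor product). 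It then remains to restrict all these line bundles to $\tld{Y}^{\tmod,\eta,\tau}(\tld{z})$. Write $\cM_i$ for the pullback of $\cO_{\bP^1}(-1)$ along $l_i\colon\tld{Y}^{\tmod,\eta,\tau}(\tld{z})\to\bP^1$. By the $\GL_2$-equivariance of the tautological bundle $\cO_{\bP^1}(-1)$ (the same point used in the proof of Lemma~\ref{lem:koszul:res}), the pullback along $l_i\kappa_i$ is also canonically $\cM_i$; and since the equations $r_j=l_{j-1}$ defining $\Delta$ hold on $\tld{Y}^{\tmod,\eta,\tau}(\tld{z})$, the pullback along $r_j$ is $\cM_{j-1}$. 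Hence on $\tld{Y}^{\tmod,\eta,\tau}(\tld{z})$ we have $p_j^*\cO_{\bP^1}(-1)\cong\cM_j$, $q_j^*\cO_{\bP^1}(-1)\cong\cM_{j-1}$, $p_{j-1}^*\cO_{\bP^1}(-1)\cong\cM_{j-1}$, so $\cL_j|_{\tld{Y}^{\tmod,\eta,\tau}(\tld{z})}\cong\cM_{j-1}^{\otimes2}$ and $\bigl(q_j^*\cO_{\bP^1}(-1)\otimes p_j^*\cO_{\bP^1}(-1)\bigr)|_{\tld{Y}^{\tmod,\eta,\tau}(\tld{z})}\cong\cM_{j-1}\otimes\cM_j$.

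Plugging these into the adjunction formula and using $\bigotimes_{j\in\cJ}\cM_{j-1}\cong\bigotimes_{j\in\cJ}\cM_j$ (reindexing over $\cJ\cong\Z/f\Z$) gives
\[
\omega_{\tld{Y}^{\tmod,\eta,\tau}(\tld{z})/\cO}\cong\Bigl(\bigotimes_{j\in\cJ}\cM_{j-1}^{\otimes(-2)}\Bigr)\otimes\Bigl(\bigotimes_{j\in\cJ}\cM_{j-1}\Bigr)\otimes\Bigl(\bigotimes_{j\in\cJ}\cM_j\Bigr)\cong\cO_{\tld{Y}^{\tmod,\eta,\tau}(\tld{z})},
\]
which is the claim. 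The only genuinely delicate step is the line-bundle bookkeeping in the previous paragraph: one must track precisely which copy of $\cO_{\bP^1}(-1)$ — indexed by $l_j$, $l_j\kappa_j$, or $r_j$ — each tensor factor restricts to, and it is essential that $\cL_j$ involves $p_{j-1}$ rather than $p_j$, so that the two ``$\cM_{j-1}$''-factors of $\cL_j$ cancel against the single ``$\cM_{j-1}$'' coming from $\omega_{\tld{Ba}(\tld{z})/\cO}$ and everything telescopes once one sums over the cyclic index set. Once this is pinned down via $\GL_2$-equivariance and the relations $r_j=l_{j-1}$, the rest is purely formal.
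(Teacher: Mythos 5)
Your proposal is correct and follows essentially the same route as the paper: adjunction along the regular immersion of Lemma \ref{lem:koszul:res}, with the normal bundle $\bigoplus_j \cL_j^\vee$ and the ambient dualizing sheaf from Lemma \ref{lem:prop:pr}\eqref{eq:prop:pr:2}. The only (harmless) extra step is your restriction to $\tld{Y}^{\tmod,\eta,\tau}(\tld{z})$ via $r_j=l_{j-1}$ before telescoping; in fact $\bigotimes_j q_j^*\cO_{\bP^1}(1)\otimes p_{j-1}^*\cO_{\bP^1}(1)\otimes q_j^*\cO_{\bP^1}(-1)\otimes p_j^*\cO_{\bP^1}(-1)$ already cancels on $\tld{Ba}(\tld{z})$ after a cyclic reindexing, which is how the paper concludes.
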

\begin{proof} By Lemma \ref{lem:koszul:res}, $\tld{Y}^{\tmod,\eta,\tau}(\tld{z})\into \tld{Ba}(\tld{z})$ is a regular immersion with normal bundle
\[\bigoplus q_j^*\cO_{\bP^1}(1)\otimes p_{j-1}^*\cO_{\bP^1}(1) \]
which thus has determinant $\bigotimes_{\cJ} q_j^*\cO_{\bP^1}(1)\otimes p_{j-1}^*\cO_{\bP^1}(1)$.
The result now follows from Lemma \ref{lem:prop:pr} \eqref{eq:prop:pr:2}
\end{proof}

\subsection{The naive models}\label{subsect:naive model}
Recall that $\tld{\cZ}^{\textnormal{mod},\tau}(\tld{z})$ is the scheme theoretic image of $\tld{Y}^{\textnormal{mod},\eta,\tau}(\tld{z})$ under the map $\pi^{\mathrm{mod}}$ which forgets the elements $r_j=l_{j-1}\in \bP^1$. In other words the equations for $\tld{\cZ}^{\textnormal{mod},\tau}(\tld{z})$ are obtained by eliminating the $r_j=l_{j-1}$ from the defining equations of $\tld{Y}^{\textnormal{mod},\eta,\tau}(\tld{z})$ which are extracted from Table \ref{Table:Traductions}. The goal of this section is to construct a slight enlargement $\tld{\cZ}^{\nv,\tau}(\tld{z})$ of $\tld{\cZ}^{\textnormal{mod},\tau}(\tld{z})$, which has the advantage of being given by explicit equations.

We first introduce some auxilliary notation. We view $\cJ=\bZ/f\bZ$ as an oriented graph with edges going from $j$ to $j-1$. 
\begin{defn} 
\label{def:aux:not}
Given the data $(s,\mu,\tld{w})$, and $\tld{z}\defeq\tld{w}s^{-1}v^{\mu}$
\begin{enumerate}
\item Define $M_j(\tld{z}_j)$ to be the scheme theoretic image of $Ba_j(\tld{z}_j)\to \tld{U}(\tld{z}_j)$.
\item Let $j\in \cJ$. We say 
\begin{itemize}
\item $j$ is of type $II$ if either $k_j>1$ or
$(\tld{w}_j,s_j,k_j)=(t_\eta,\Id,1),(w_0t_\eta,\Id,1),(t_{w_0(\eta)},(12),1)$; 
\item $j$ is of type $I$ if $(\tld{w}_j,s_j,k_j)=(t_\eta,(12),1),(w_0t_\eta,(12),1),(t_{w_0(\eta)},\Id,1)$; 
\item 
\label{def:fragmentation}
$j$ is of type $0$ if $k_j=0$.
\end{itemize}
\item A \emph{fragmentation} of $\cJ$ is the decomposition $\cJ=\bigcup\cJ_k$ into subsets $\cJ_k$ such that:
\begin{itemize}
\item $\cJ_k$ is an oriented path in $\cJ$, i.e an ordered subset of the form $[j,j+\ell]=(j,j-1,\cdots, j-\ell)$ for some $\ell\leq f$.
\item The endpoints of the path $\cJ_k$ are not of type $0$.
\item The interior points of the the path $\cJ_k$ are of type $0$.
\item $\cJ_k$ is not a singleton unless $f=1$.
\end{itemize}
Each $\cJ_k$ is called a fragment of $\cJ$.
\end{enumerate}
\end{defn}
\begin{rmk}\label{rmk:basic images}
\begin{enumerate}
\item The scheme theoretic image of $\tld{Ba}_j(\tld{z}_j)\to \tld{U}(\tld{z}_j)$ is $\GL_2\times M_j(\tld{z}_j)$.
\item Under our running assumption that $\tau$ is regular, $\cJ$ must have a vertex not of type $0$. This implies $\cJ$ has a unique fragmentation, obtained by the minimal paths joining the vertices not of type $0$. We also note that each fragment $\cJ_k$ has a well-defined starting point and ending point (which may coincide, in which case the fragment is all of $\cJ$).
\item It follows from Table \ref{Table:Traductions} that
\begin{itemize}
\item If $j$ is type $II$: $M_j(\tld{z}_j)\cong \bA^2$.
\item If $j$ is type $I$: $M_j(\tld{z}_j)\cong M$, the space of $\begin{pmatrix}A&B\\C&D\end{pmatrix}$ which has determinant $0$ and trace $p$.
\item If $j$ is type $0$: $M_j(\tld{z}_j)\cong \bA^1$.
\end{itemize}
\end{enumerate}
\end{rmk}
We now define a subspace of $\prod_{\cJ} \GL_2\times M_j(\tld{z}_j)$ using the fragmentation $\cJ=\bigcup \cJ_k$.
\begin{defn}\label{defn:naive equations}
Let $\cJ=\bigcup \cJ_k$ be the fragmentation of $\cJ$. We define $\tld{\cZ}^{\nv,\tau}(\tld{z})$ to be the closed subscheme of $\prod_{\cJ} \GL_2\times M_j(\tld{z}_j)\into  \GL_2^{\cJ}\times \tld{U}(\tld{z})$ cut out by the matrix equations
\[M_{\textnormal{out},o} \bigg(\prod_{\ell=o+1}^{i-1}T_\ell \bigg)M_{\textnormal{in},i}=0\]
for each fragment $\cJ_k=\{i,\cdots, o\}$, where
\begin{itemize}
\item $M_{\textnormal{in},i}$ is the initial matrix for $j=i$ in Table \ref{Table:matrices}
\item $M_{\textnormal{out},o}$ is the final matrix for $j=o$ in Table \ref{Table:matrices}
\item $T_{\ell}$ is the transition matrix for $j=\ell$ (which are of type $0$) in Table \ref{Table:matrices}.
\end{itemize}
We also define $\cZ^{\nv,\tau}(\tld{z})$ to be the fiber of $\tld{\cZ}^{\nv,\tau}(\tld{z})$ above $1\in \GL_2^{\cJ}$.
\end{defn}
\begin{table}[htb] 
\captionsetup{justification=centering}
\caption[Foo content]{\textbf{}
}
\label{Table:matrices}
\centering
\adjustbox{max width=\textwidth}{
\begin{tabular}{| c | c || c | c | c | }
\hline
\hline
\backslashbox{$\langle\mu_j,\alpha^\vee\rangle$}{$\tld{w}_j$}&&$t_\eta$&$w_0t_\eta$&$t_{w_0(\eta)}$\\
\hline
&$s_j$&&&\\
\hline
\hline
&&&&\\
\multirow{ 2}{*}{$>1$}&$(12)$&
$\begin{aligned}
&\textnormal{Type $II$}\\
&\textnormal{Initial Matrix:\ } &&\begin{pmatrix}1\\-C'\end{pmatrix}\\
&\textnormal{Final Matrix:\ }
&&\begin{pmatrix}B&-p\end{pmatrix}\kappa_j^{-1}
\end{aligned}$
&$
\begin{aligned}
&\textnormal{Type $II$}\\
&\textnormal{Initial Matrix:\ } &&\begin{pmatrix}1\\-C'\end{pmatrix}\\
&\textnormal{Final Matrix:\ }
&&\begin{pmatrix}-p&B\end{pmatrix}\kappa_j^{-1}
\end{aligned}
$&$\begin{aligned}
&\textnormal{Type $II$}\\
&\textnormal{Initial Matrix:\ } &&\begin{pmatrix}1\\-C'\end{pmatrix}\\
&\textnormal{Final Matrix:\ }
&&\begin{pmatrix}0&1\end{pmatrix}\kappa_j^{-1}
\end{aligned}$\\
&&&&\\
\cline{2-5}
&&&&\\
&$\Id$&$\begin{aligned}
&
\textnormal{Type $II$}\\
&\textnormal{Initial Matrix:\ } &&\begin{pmatrix}1\\-C'\end{pmatrix}\\
&\textnormal{Final Matrix:\ }
&&\begin{pmatrix}C&-p\end{pmatrix}\kappa_j^{-1}
\end{aligned}$&$\begin{aligned}
&\textnormal{Type $II$}\\
&\textnormal{Initial Matrix:\ } &&\begin{pmatrix}1\\-C'\end{pmatrix}\\
&\textnormal{Final Matrix:\ }
&&\begin{pmatrix}-p&C\end{pmatrix}\kappa_j^{-1}
\end{aligned}$&$\begin{aligned}
&
\textnormal{Type $II$}\\
&\textnormal{Initial Matrix:\ } &&\begin{pmatrix}1\\-C'\end{pmatrix}\\
&\textnormal{Final Matrix:\ }
&&\begin{pmatrix}0&1\end{pmatrix}\kappa_j^{-1}
\end{aligned}$\\
&&&&\\
\hline
\hline
&&&&\\
\multirow{ 2}{*}{$=1$}&$(12)$&{\color{blue}$\begin{aligned}
&
\textnormal{Type $I$}\\
&\textnormal{Initial Matrix:\ } &&\begin{pmatrix}D&-B\\-C&p-D\end{pmatrix}\\
&\textnormal{Final Matrix:\ }
&&\begin{pmatrix}p-D&-C\\-B&D\end{pmatrix}\kappa_j^{-1}
\end{aligned}$}&{\color{blue}$\begin{aligned}
&
\textnormal{Type $I$}\\
&\textnormal{Initial Matrix:\ } &&\begin{pmatrix}D&-B\\-C&p-D\end{pmatrix}\\
&\textnormal{Final Matrix:\ }
&&\begin{pmatrix}D&-B\\-C&p-D\end{pmatrix}\kappa_j^{-1}
\end{aligned}$}&$\begin{aligned}
&
\textnormal{Type $II$}\\
&\textnormal{Initial Matrix:\ } &&\begin{pmatrix}1\\-C'\end{pmatrix}\\
&\textnormal{Final Matrix:\ }
&&\begin{pmatrix}0&1\end{pmatrix}\kappa_j^{-1}
\end{aligned}$\\
&&&&\\
\cline{2-5}
&&&&\\
&$\Id$&$\begin{aligned}
&
\textnormal{Type $II$}\\
&\textnormal{Initial Matrix:\ } &&\begin{pmatrix}1\\-C'\end{pmatrix}\\
&\textnormal{Final Matrix:\ }
&&\begin{pmatrix}C&-p\end{pmatrix}\kappa_j^{-1}
\end{aligned}$&$\begin{aligned}
&
\textnormal{Type $II$}\\
&\textnormal{Initial Matrix:\ } &&\begin{pmatrix}1\\-C'\end{pmatrix}\\
&\textnormal{Final Matrix:\ }
&&\begin{pmatrix}-p&C\end{pmatrix}\kappa_j^{-1}
\end{aligned}$&
{\color{blue}$\begin{aligned}
&
\textnormal{Type $I$}\\
&\textnormal{Initial Matrix:\ } &&\begin{pmatrix}D&-B\\-C&p-D\end{pmatrix}\\
&\textnormal{Final Matrix:\ }
&&\begin{pmatrix}p-D&B\\C&D\end{pmatrix}\kappa_j^{-1}
\end{aligned}$}\\
&&&&\\
\hline
\hline
&&&&\\
$=0$&$\Id$&{\color{orange}$\begin{aligned}
&
\textnormal{Type $0$}\\
&\textnormal{Transition Matrix:\ } &&\begin{pmatrix}1&0\\-C&p\end{pmatrix}\kappa_j^{-1}
\end{aligned}$}&{\color{orange}$
\begin{aligned}
&
\textnormal{Type $0$}\\
&\textnormal{Transition Matrix:\ } &&\begin{pmatrix}0&-1\\-p&C\end{pmatrix}\kappa_j^{-1}
\end{aligned}
$}&{\color{orange}$
\begin{aligned}
&
\textnormal{Type $0$}\\
&\textnormal{Transition Matrix:\ } &&\begin{pmatrix}p&-B\\0&1\end{pmatrix}\kappa_j^{-1}
\end{aligned}$}\\
&&&&\\
\hline
\hline
\end{tabular}}
\captionsetup{justification=raggedright,
singlelinecheck=false
}
\caption*{
The meaning of the variables in this table are the same as that of the corresponding entry in Table \ref{Table:Traductions}.
}%
\end{table}
\begin{prop}\label{prop:naive vs sat model} \begin{enumerate}
\item The inclusion $\tld{\cZ}^{\tmod,\tau}(\tld{z})\into \tld{U}(\tld{z})$ factors through $\tld{\cZ}^{\nv,\tau}(\tld{z})$. 
\item We have $\tld{\cZ}^{\tmod,\tau}(\tld{z})[\frac{1}{p}]=\tld{\cZ}^{\nv,\tau}(\tld{z})[\frac{1}{p}]=\tld{Y}^{\tmod,\eta,\tau}(\tld{z})[\frac{1}{p}]$.
\end{enumerate}
In particular, $\tld{\cZ}^{\tmod,\tau}(\tld{z})$ is the $p$-saturation (synonymously, the $\cO$-flat part) of $\tld{\cZ}^{\nv,\tau}(\tld{z})$.
\end{prop}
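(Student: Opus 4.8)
The strategy is to prove the two displayed assertions in order and then deduce the ``in particular'' clause as a formal consequence. For part (1), I would work chart by chart using the explicit presentations. Recall $\tld{\cZ}^{\tmod,\tau}(\tld{z})$ is by definition the scheme theoretic image of $\tld{Y}^{\tmod,\eta,\tau}(\tld{z})$ under $\pi^{\tmod}$, which forgets the $r_j = l_{j-1}\in\bP^1$. Concretely, $\tld{Y}^{\tmod,\eta,\tau}(\tld{z})$ sits inside $\prod_\cJ \GL_2\times \tld{Ba}_j(\tld{z}_j)$ cut out by $r_j = l_{j-1}$, and by Remark \ref{rmk:basic images}(1) the scheme theoretic image of $\tld{Ba}_j(\tld{z}_j)\to\tld{U}(\tld{z}_j)$ is $\GL_2\times M_j(\tld{z}_j)$; so the image of $\pi^{\tmod}$ certainly lands in $\prod_\cJ\GL_2\times M_j(\tld{z}_j)$. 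It remains to check that the matrix relations of Definition \ref{defn:naive equations} are satisfied on $\tld{\cZ}^{\tmod,\tau}(\tld{z})$. For this I would observe that on $\tld{Y}^{\tmod,\eta,\tau}(\tld{z})$, the entries $l_j\kappa_j$ and $r_j = l_{j-1}$ recorded in Table \ref{Table:Traductions} give, for each $j$, a relation of the form $l_j\kappa_j = (\text{Initial matrix}_j)$ applied to $r_j$, while $r_j = l_{j-1}$ is (a row vector representing) the class $l_{j-1}\kappa_{j-1}\kappa_{j-1}^{-1}$; telescoping along an oriented path, the composite of a Final matrix at the endpoint $o$, the Transition matrices at the interior type-$0$ vertices, and the Initial matrix at the starting point $i$ kills the vector $\begin{pmatrix} x' & -y'\end{pmatrix}$ representing $r_i = l_{i-1}$, by the very shape of condition \eqref{eq:cond:Y:3'} and the Convention on $[x:y]M = [z:t]$. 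Since $\tld{\cZ}^{\tmod,\tau}(\tld{z})$ is the \emph{scheme theoretic} image, and $\tld{\cZ}^{\nv,\tau}(\tld{z})$ is closed, it suffices that these relations hold on $\tld{Y}^{\tmod,\eta,\tau}(\tld{z})$, giving the factorization.

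For part (2), the point is that after inverting $p$ all the maps in sight become isomorphisms. By Lemma \ref{lem:prop:pr}\eqref{eq:prop:pr:1}, for the vertices with $\langle\mu_j,\alpha^\vee\rangle>0$ the map $\pr_j$ is a closed immersion after inverting $p$; more precisely, inspecting Table \ref{Table:Traductions} one sees that when $p$ is invertible the data $(l_j,r_j)$ (equivalently $r_j = l_{j-1}$) is \emph{uniquely} solved for in terms of the matrix entries $A,B,C,C',D$ — e.g.\ $l_j\kappa_j = [C:-p]$ pins down $l_j$, and $r_j=[C':1]$ pins down $r_{j}$. Hence $\pi^{\tmod}[\tfrac1p]$ and $\pr[\tfrac1p]$ are isomorphisms onto their images, so $\tld{Y}^{\tmod,\eta,\tau}(\tld{z})[\tfrac1p] \xrightarrow{\ \sim\ } \tld{\cZ}^{\tmod,\tau}(\tld{z})[\tfrac1p]$. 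For $\tld{\cZ}^{\nv,\tau}(\tld{z})[\tfrac1p]$, one checks that the matrix equations of Definition \ref{defn:naive equations}, together with the ambient equations cutting out $M_j(\tld{z}_j)$ (determinant $0$, trace $p$ in the type-$I$ case; the type-$II$ and type-$0$ cases being affine spaces), allow one to reconstruct the $r_j=l_{j-1}$ uniquely when $p$ is invertible — the relation $M_{\textnormal{out},o}(\prod T_\ell)M_{\textnormal{in},i}=0$ propagates the constraint along the fragment and, because each Final/Transition/Initial matrix is invertible over $\bZ[\tfrac1p]$ on the relevant locus, forces exactly the gluing $r_j=l_{j-1}$ that defines $\tld{Y}^{\tmod,\eta,\tau}$. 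Thus all three generic fibers coincide.

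Finally, the ``in particular'' follows formally: $\tld{\cZ}^{\tmod,\tau}(\tld{z})$ is $\cO$-flat (it is a scheme theoretic image of the $\cO$-flat $\tld{Y}^{\tmod,\eta,\tau}(\tld{z})$, cf.\ Lemma \ref{lem:LCI}), it is a closed subscheme of $\tld{\cZ}^{\nv,\tau}(\tld{z})$ by (1), and it has the same generic fiber as $\tld{\cZ}^{\nv,\tau}(\tld{z})$ by (2); hence $\tld{\cZ}^{\tmod,\tau}(\tld{z})$ is contained in the $p$-saturation of $\tld{\cZ}^{\nv,\tau}(\tld{z})$, and being $\cO$-flat with full generic fiber it \emph{is} that $p$-saturation. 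I expect the main obstacle to be the careful bookkeeping in part (1): one must verify that the telescoped matrix product in Definition \ref{defn:naive equations} is precisely the compatibility forced by the chain of conditions \eqref{eq:cond:Y:3'} across a fragment, paying attention to the left-versus-right $\bP^1$ conventions and to the fact (flagged in Lemma \ref{lem:koszul:res}'s proof) that the bottom isomorphism there only respects the right $\bP^1$-projection — so the indexing of Initial vs.\ Final matrices, and the role of the $\kappa_j^{-1}$ twists in Table \ref{Table:matrices}, must be matched exactly against Table \ref{Table:Traductions}. The generic-fiber statement (2) and the final deduction are comparatively routine.
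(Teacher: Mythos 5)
Your overall plan matches the paper's proof quite closely: part (1) follows from the fact that the defining relations of $\tld{\cZ}^{\nv,\tau}(\tld{z})$ are obtained by chaining the relations in Table \ref{Table:Traductions} together with $r_j=l_{j-1}$ across fragments, and part (2) is established by showing that after inverting $p$ one can solve uniquely for $(l_j,r_j)$ at non-type-$0$ vertices, propagate via $r_j=l_{j-1}$ to the type-$0$ vertices, and that the matrix equation of a fragment is exactly the consistency condition between the two resulting determinations of $r_i$ at the starting vertex. The concluding ``in particular'' is a formal flatness argument and you handle it correctly.

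One caution on a phrase that, if taken literally, is wrong: you assert that ``each Final/Transition/Initial matrix is invertible over $\bZ[\tfrac1p]$ on the relevant locus.'' This is true for the \emph{transition} matrices (their determinants are $\pm p\det(\kappa_j)^{-1}$), and that is exactly what lets you propagate the determination of $l_j$ along a fragment. But the Initial/Final matrices at type-II vertices are $2\times 1$ and $1\times 2$, not square, and at type-I vertices they are rank-one matrices with determinant $D(p-D)-BC=0$, hence never invertible. The mechanism that closes the loop at a type-I endpoint is not invertibility of the endpoint matrix but the observation that the two $\bP^1$-points encoded by its row/column space are precisely $l_o,r_o$ (as recorded in Table \ref{Table:Traductions}), and the matrix equation in Definition \ref{defn:naive equations} is equivalent to the coincidence of the recursively-propagated $\bP^1$-point with the directly-solved one. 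This is exactly how the paper phrases the final step of part (2), and it is worth matching that precision since the distinction matters when one verifies the claim about $\tld{Y}^{\tmod,\eta,\tau}(\tld{z})[\tfrac1p]\onto\tld{\cZ}^{\nv,\tau}(\tld{z})[\tfrac1p]$. With that correction your proof is the paper's proof.
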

\begin{proof} The first assertion follows from the fact that  $\tld{\cZ}^{\tmod,\tau}(\tld{z})$ obey the defining equations of $\tld{\cZ}^{\nv,\tau}(\tld{z})$, which is a consequence of the relations in Table \ref{Table:Traductions} and the relation $r_j=l_{j-1}$ in $\tld{Y}^{\tmod,\eta,\tau}$. Indeed, these defining equations were obtained by repeatedly substituting the relations between $l_j$ and $r_j$ when $j$ is type $0$ and $r_j=l_{j-1}$ until it becomes a relation between $r_a$ and $l_b$ where $a,b$ are not type $0$, in which case one substitutes for $r_a$, $l_b$ an expression in the variables on $\GL_2\times M_a(\tld{z}_a)$, $\GL_2\times M_b(\tld{z}_b)$. 

We now establish the second assertion. 
First, we show that the map $\tld{Y}^{\tmod,\eta,\tau}(\tld{z})\to \tld{U}(\tld{z})$ is a closed immersion after inverting $p$, i.e.~we need to show $r_j,l_j$ are determined by the remaining variables. For each $j$ not of type $0$, we can solve for $r_j,l_j$ when $p$ is invertible. Using the relation $r_j=l_{j-1}$ and the relations in Table \ref{Table:Traductions}, we can solve for the $r_{j'},l_{j'}$ where $j'$ is of type $0$. Thus $\tld{\cZ}^{\tmod,\tau}(\tld{z})[\frac{1}{p}]=\tld{Y}^{\tmod,\eta,\tau}(\tld{z})[\frac{1}{p}]$.

To finish the proof, we need to show that $\tld{Y}^{\tmod,\eta,\tau}(\tld{z})[\frac{1}{p}]$ surjects onto $\tld{\cZ}^{\nv,\tau}(\tld{z})[\frac{1}{p}]$, i.e.~we need to produce $r_j,l_j$ satisfying all requisite relations. We use the same procedure to define $r_j,l_j$ as in the previous paragraph. The only potential issue is that for a fragment $\cJ_k=\{i,\cdots, o\}$, the procedure gives two definitions of $r_i$: one by recursion in terms of $l_o$ (and some variables at $j\in \{i-1,\cdots, o+1\}$), the other by directly solving in terms of the variables at $i$. However the defining equations of $\tld{\cZ}^{\nv,\tau}(\tld{z})$ exactly guarantee that the these two definitions coincide.
\end{proof}

\subsection{Obstruction bounds for naive models}
We wish to establish a quantitative bound for the singular ideal of $\tld{\cZ}^{\nv,\tau}(\tld{z})/\cO$. This will be used in the next subsection to finish the proof of Theorem \ref{thm:main:model}.

To save notation, in this section we abbreviate $\cZ=\tld{\cZ}^{\nv,\tau}(\tld{z})$, $M_j=M_j(\tld{z}_j)$ and $\cM=\prod_{\cJ} \GL_2\times M_j$. We also define affine spaces $\cA_j$ with closed immersions $M_j\into \cA_j$ as follows:
\begin{itemize}
\item $\cA_j=M_j$ if $j$ is not type $I$.
\item If $j$ is type $I$, then $M_j$ is the space $M$ of matrices $\begin{pmatrix} A & B\\C&D \end{pmatrix}$ with determinant $0$ and trace $p$, and we define $\cA_j=\bA^3$ and $M_j\into \cA_j$ to be the map 
\[\begin{pmatrix} A & B\\C&D \end{pmatrix}\mapsto (B,C,D)\]
so that $M_j$ identifies with the hypersurface $D(p-D)=BC$ in $\cA_j$.
\end{itemize}
We thus have an inclusion $\iota: \cZ\into \cA\defeq \prod_{\cJ} \GL_2\times \cA_j$. Let $\cI$ be the ideal of $\cA$ defining $\iota$.

We first analyze a generating set of $\cI$. To do this, recall the fragmentation $\cJ=\bigcup_{k\in \cK} \cJ_k$. We also denote by $\cJ_I\subset \cJ$ the subset of $j$ that are type $I$. For each $j$ of type $I$, we also let $A_j,B_j,C_j,D_j$ denote the natural coordinates on $M_j$, so that $A_j+D_j=p$ and $A_jD_j=B_jC_j$.

It follows from Definition \ref{defn:naive equations} that we have a decomposition
\begin{equation}
\label{eq:ideal:I_J_k}
\cI=\sum_k \cI_{\cJ_k}+\sum_{j \in \cJ_I} (D_j(p-D_j)-B_jC_j)
\end{equation}
where $\cI_{\cJ_k}$ is the ideal generated by the entries of the matrix equation associated to $\cJ_k$ in Definition \ref{defn:naive equations}. This gives a presentation of $\cZ$ in terms of $\cA$. Note that $\cZ[\frac{1}{p}]$ has codimension $c\defeq |\cK|+|\cJ_I|$ in $\cA[\frac{1}{p}]$.

\begin{prop}\label{prop:elkik bound} Let $\mathrm{J}_c$ denote the ideal generated by the $c\times c$ minors of the Jacobian matrix of the presentation $\cO(\cA)/\cI$. Then $p^{2|\cJ_I|+f+|\cK|}\in \mathrm{J}_c$. In particular $p^{4f}\in \mathrm{J}_c$.
\end{prop}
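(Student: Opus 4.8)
The plan is to compute the Jacobian explicitly on each of the charts described in Proposition~\ref{prop:naive vs sat model}, exploiting the product-like structure of $\cZ$ over the fragments. Since the statement is local on $\cZ$, and since all the constructions respect the fragmentation $\cJ=\bigcup_{k\in\cK}\cJ_k$, I would first reduce to analyzing a single fragment $\cJ_k=\{i,\dots,o\}$. The key observation is that the defining matrix equation $M_{\out,o}\big(\prod_{\ell=o+1}^{i-1}T_\ell\big)M_{\in,i}=0$ has a triangular dependence on the variables: the transition matrices $T_\ell$ at type $0$ vertices each have determinant a unit times $p$, and the initial/final matrices at type $I$ and type $II$ vertices also involve $p$ in a controlled way (cf.~Tables~\ref{Table:Traductions}, \ref{Table:matrices}). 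Because $\tld{Y}^{\tmod,\eta,\tau}(\tld{z})\to \tld{U}(\tld{z})$ is a closed immersion after inverting $p$ (Proposition~\ref{prop:naive vs sat model}), the Jacobian becomes invertible over $\cO[\frac{1}{p}]$, so there exists some power of $p$ lying in $\mathrm{J}_c$; the content of the proposition is the explicit exponent.

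First I would set up coordinates: on $\cA=\prod_{\cJ}\GL_2\times\cA_j$, the coordinates are the $\GL_2$-variables $\kappa_j$ (which play no role in the singular locus, since $\kappa_j$ is a unit) together with, for each $j$, the one or two affine coordinates $C_j$ (and $C'_j$) for type $II$/type $0$, or $(B_j,C_j,D_j)$ for type $I$. The presentation has $c=|\cK|+|\cJ_I|$ relations: one block of relations per fragment (the entries of the matrix equation) plus one hypersurface relation $D_j(p-D_j)-B_jC_j$ per type $I$ vertex. I would exhibit a single $c\times c$ minor of the Jacobian which, up to units, equals an explicit monomial in $p$. For the type $I$ hypersurface relations, differentiating $D_j(p-D_j)-B_jC_j$ with respect to $B_j$ gives $-C_j$, with respect to $D_j$ gives $p-2D_j$; on the locus where $C_j=0$ one has $D_j=0$ or $D_j=p$, and the partial $p-2D_j$ becomes $\pm p$, contributing a factor of $p$ for each of the $|\cJ_I|$ such relations — but one must be careful at the crossing points of the two branches, which forces a further factor of $p$ (accounting for the $2|\cJ_I|$ rather than $|\cJ_I|$). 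For the fragment relations, I would choose for each fragment $\cJ_k$ one scalar entry of the matrix equation and one variable (typically the variable $C$ or $B$ at the endpoint of the fragment), and show that the corresponding partial derivative is a unit times a power of $p$ whose total exponent over all fragments is $f+|\cK|$ — the $|\cK|$ coming from the one "global" $p$ per matrix equation and the $f$ coming from the $f-|\cK|$ interior type $0$ transition matrices plus $|\cK|$ corrections, or some such bookkeeping matching the stated bound $2|\cJ_I|+f+|\cK|$.

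The main obstacle will be the bookkeeping of the exponent: one must track how many factors of $p$ are forced in \emph{every} $c\times c$ minor (not just one), i.e.~one must show that after clearing the guaranteed unit entries, what remains in the Jacobian ideal is contained in $p^{2|\cJ_I|+f+|\cK|}$ times something. The cleanest route is probably to argue on the level of the localization: over the open locus where all the $\kappa_j$ are units (which is everything) and where one has chosen an affine chart of each $M_j$ and each $\cA_j$, the ideal $\cI$ modulo $p$ defines a scheme whose singular locus is cut out by low powers of $p$, and one estimates the order of vanishing of $\det(\mathrm{Jac})$ along the components of $\cZ/p$. Concretely, I would bound: (i) each type $I$ vertex contributes at most a factor $p^2$ to the "defect" because $M_j/p\cong\{D_j^2=B_jC_j\}$ is a cone singularity whose resolution needs one blow-up; (ii) each fragment contributes a factor $p$ from the determinant of its matrix equation plus one $p$ per interior type $0$ vertex from $\det T_\ell\in p\cdot(\text{unit})$, the interior type $0$ vertices over all fragments numbering $f-|\cK|$, and the endpoints contributing the remaining $2|\cK|$ to reach $f+|\cK|$. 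Assembling these and using $|\cK|\le f$, $|\cJ_I|\le f$ gives $2|\cJ_I|+f+|\cK|\le 4f$, yielding $p^{4f}\in\mathrm{J}_c$.
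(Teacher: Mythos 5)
Your overall shape (exhibit an explicit $c\times c$ minor, one row per fragment plus one row per type $I$ vertex, with the transition matrices contributing $\det T_\ell\in p\cdot(\text{unit})$) matches the paper, but two of your key steps would fail as written. First, dismissing the $\GL_2$-coordinates $\kappa_j$ as playing ``no role'' is exactly backwards: the paper's proof hinges on column operations among the four Jacobian columns indexed by the entries $a_o,b_o,c_o,d_o$ of $\kappa_o$. Since $\kappa_o$ enters the fragment equation $X\kappa_o\begin{pmatrix}\alpha&\beta\\\gamma&\delta\end{pmatrix}Y$ linearly and $\det\begin{pmatrix}\alpha&\beta\\\gamma&\delta\end{pmatrix}=\pm p^{|\cJ_k\setminus\{i,o\}|}$, suitable linear combinations of these four columns replace the middle factor by $p^{|\cJ_k\setminus\{i,o\}|}$ times an arbitrary elementary matrix, yielding a diagonal entry equal to $p^{|\cJ_k\setminus\{i,o\}|}$ times one of $p$, $D_o$, $pD_i$, $D_oD_i$, while killing the entries in all other rows. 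Without these columns you have no comparably clean way to isolate the fragment rows, and the exponent bookkeeping you leave as ``or some such bookkeeping'' is precisely the part that needs to be done.

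Second, the factor $p^2$ per type $I$ vertex is not a branch or blow-up phenomenon but an exact identity: the Jacobian row of $(p-D_j)D_j-B_jC_j$ in the variables $(B_j,C_j,D_j)$ is $(-C_j,\,-B_j,\,p-2D_j)$, and pairing it with the column $(-4B_j,\,0,\,p-2D_j)^{T}$ gives $(p-2D_j)^2+4B_jC_j=p^2$ on the quadric. Your analysis at $C_j=0$ does not establish membership of any power of $p$ in $\mathrm{J}_c$, and more generally ``estimating the order of vanishing of $\det(\mathrm{Jac})$ along the components of $\cZ/p$'' cannot prove $p^N\in\mathrm{J}_c$: Elkik's lemma needs genuine ideal membership, which is strictly stronger than a vanishing-order bound on a reduced special fibre. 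Finally, even after assembling the block-triangular minor, its value is $p^{2|\cJ_I|+\sum_k|\cJ_k\setminus\{i,o\}|}$ times a product of coordinates $D_j$ (or $A_j$) at the type $I$ endpoints; one must run the argument for all choices $G_j,H_j\in\{A_j,D_j\}$ and use $A_j+D_j=p$ to absorb these extraneous factors into further powers of $p$. This last step is absent from your sketch and is where the exponent $2|\cJ_I|$ is actually consumed.
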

\begin{rmk} \label{rmk:regular generic fiber} Proposition \ref{prop:elkik bound} implies $\tld{\cZ}^{\nv,\tau}(\tld{z})[\frac{1}{p}]$ is smooth over $E$. Of course this can also be seen directly from the explicit description of $\tld{Y}^{\textnormal{mod},\eta,\tau}(\tld{z})[\frac{1}{p}]$.
\end{rmk}

We observe the following structural properties of $\cI_{\cJ_k}$:
\begin{lemma}\label{lem:ideal structure} Suppose the fragment $\cJ_k=\{i,\cdots ,o\}$. Let $\kappa_o=\begin{pmatrix}a_o & b_o \\ c_o &d_o \end{pmatrix}$ record the $o$-th $\GL_2$ factor of $\cA$.

There exists $\alpha,\beta,\gamma,\delta$ only involving coordinates on the $(i-1)$-th to $(o+1)$-th factor of $\cA$ with
\[\det \begin{pmatrix} \alpha &\beta \\\gamma &\delta \end{pmatrix}=\pm p^{|\cJ_k\setminus \{i,o\}|}\]
such that: 
\begin{enumerate}
\item If $i, o$ are type $II$ then $\cI_{\cJ_k}$ is principal, generated by an element of the form
\[\begin{pmatrix}Y_o &-p\end{pmatrix}\kappa_o\begin{pmatrix} \alpha &\beta \\\gamma &\delta \end{pmatrix} \begin{pmatrix} 1 \\ -X_i\end{pmatrix}\]
where $Y_o$, $X_i$ are coordinates of $\cA_o,\cA_i$.
\item If $i, o$ are type $I$ then $\cI_{\cJ_k}$ is generated by the entries of a matrix either of the form
\begin{itemize}
\item
\[\begin{pmatrix} D_o &-B_o\\-C_o & A_o\end{pmatrix}\kappa_o\begin{pmatrix} \alpha &\beta \\\gamma &\delta \end{pmatrix} \begin{pmatrix} D_i &-B_i\\ -C_i& A_i\end{pmatrix};\]
or
\item
\[\begin{pmatrix} D_o &C_o\\B_o & A_o\end{pmatrix}\kappa_o\begin{pmatrix} \alpha &\beta \\\gamma &\delta \end{pmatrix} \begin{pmatrix} D_i &-B_i\\ -C_i& A_i\end{pmatrix};\]
\end{itemize}

\item If $i$ is type $II$ and $o$ is type $I$ then $\cI_{\cJ_k}$ is generated by the entries of a matrix of the form
\[\begin{pmatrix} Y_o &-p\end{pmatrix}\kappa_o\begin{pmatrix} \alpha &\beta \\\gamma &\delta \end{pmatrix} \begin{pmatrix} D_i &-B_i\\ -C_i& A_i\end{pmatrix}\]
where $Y_o$ is a coordinate in $\cA_o$.

\item If $i$ is type $I$ and $o$ is type $II$ then $\cI_{\cJ_k}$ is generated by the entries of a matrix either of the form
\begin{itemize}
\item
\[\begin{pmatrix} D_o &-B_o\\-C_o & A_o\end{pmatrix}\kappa_o\begin{pmatrix} \alpha &\beta \\\gamma &\delta \end{pmatrix} \begin{pmatrix} 1 \\ -X_i\end{pmatrix};\]
or
\item
\[\begin{pmatrix} D_o &C_o\\B_o & A_o\end{pmatrix}\kappa_o\begin{pmatrix} \alpha &\beta \\\gamma &\delta \end{pmatrix} \begin{pmatrix} 1\\-X_i\end{pmatrix};\]
\end{itemize}
where $X_i$ is a coordinate on $\cA_i$.

\end{enumerate}
\end{lemma}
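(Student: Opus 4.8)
The plan is to obtain Lemma~\ref{lem:ideal structure} by unwinding Definition~\ref{defn:naive equations} for a single fragment and reading off Table~\ref{Table:matrices}; it is a finite bookkeeping with no conceptual input. First I would note that, by the fragmentation axioms in Definition~\ref{def:aux:not}, the interior vertices of $\cJ_k=(i,i-1,\dots,o)$ --- that is, $\ell\in\{i-1,\dots,o+1\}$ --- are all of type $0$, while $i$ and $o$ are not of type $0$. Consequently, in the matrix equation
\[
M_{\textnormal{out},o}\Big(\prod_{\ell=o+1}^{i-1}T_\ell\Big)M_{\textnormal{in},i}=0
\]
defining $\cI_{\cJ_k}$, each transition matrix $T_\ell$ is taken from the bottom (``$\langle\mu_\ell,\alpha^\vee\rangle=0$'') row of Table~\ref{Table:matrices}, hence has the shape $T_\ell=N_\ell\kappa_\ell^{-1}$ where $N_\ell$ is one of $\begin{pmatrix}1&0\\-C_\ell&p\end{pmatrix}$, $\begin{pmatrix}0&-1\\-p&C_\ell\end{pmatrix}$, $\begin{pmatrix}p&-B_\ell\\0&1\end{pmatrix}$; in particular $N_\ell\in\Mat_2$ involves one coordinate of $\cA_\ell\cong\bA^1$ and $\det N_\ell=\pm p$.

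Next I would set $\begin{pmatrix}\alpha&\beta\\\gamma&\delta\end{pmatrix}:=\prod_{\ell=o+1}^{i-1}T_\ell$, the product being taken over all of $\cJ\setminus\{i\}$ in the degenerate case $i=o$ (when $\tau$ has a single non-type-$0$ embedding and $\cJ_k=\cJ$). By construction this matrix involves only coordinates on the $(i-1)$-th through $(o+1)$-th factors of $\cA$, and its determinant is $\pm p^{|\cJ_k\setminus\{i,o\}|}$ up to the global unit $\prod_\ell(\det\kappa_\ell)^{-1}$; since multiplying the product $M_{\textnormal{out},o}(\prod T_\ell)M_{\textnormal{in},i}$ by a global unit does not change the ideal its entries generate --- and since only the $p$-adic valuation of $\det\begin{pmatrix}\alpha&\beta\\\gamma&\delta\end{pmatrix}$ is used afterwards, in Proposition~\ref{prop:elkik bound} --- one may absorb this unit and assume $\det\begin{pmatrix}\alpha&\beta\\\gamma&\delta\end{pmatrix}=\pm p^{|\cJ_k\setminus\{i,o\}|}$.

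Finally I would read the two endpoint matrices off Table~\ref{Table:matrices} according to whether $i$ and $o$ are of type $I$ or type $II$: a type-$II$ endpoint contributes the column $\begin{pmatrix}1\\-X_i\end{pmatrix}$ on the right (resp.\ a row of the form $\begin{pmatrix}Y_o&-p\end{pmatrix}\kappa_o$, $\begin{pmatrix}-p&Y_o\end{pmatrix}\kappa_o$, or $\begin{pmatrix}0&1\end{pmatrix}\kappa_o$ on the left), with $X_i$ (resp.\ $Y_o$) a coordinate on the affine space $\cA_i$ (resp.\ $\cA_o$), while a type-$I$ endpoint contributes $\begin{pmatrix}D_i&-B_i\\-C_i&A_i\end{pmatrix}$ on the right (resp.\ one of the three $2\times2$ final matrices on the left), using $A_j=p-D_j$ on $M_j$. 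Substituting these into $M_{\textnormal{out},o}\begin{pmatrix}\alpha&\beta\\\gamma&\delta\end{pmatrix}M_{\textnormal{in},i}$ yields, for each of the four combinations of endpoint types, precisely the corresponding family of matrices listed in the statement; the ``either \dots\ or \dots'' ambiguity in cases (2) and (4) records which type-$I$ final matrix occurs, and one passes between these by left-multiplying the product by a constant permutation matrix --- which merely permutes the entries, hence preserves $\cI_{\cJ_k}$ --- and, if needed, absorbing a further constant permutation or the $\kappa_o$-factor into $\begin{pmatrix}\alpha&\beta\\\gamma&\delta\end{pmatrix}$, which keeps it supported on the interior factors and multiplies its determinant only by a $\pm1$ (renormalized as above).

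The argument has no serious obstacle: it is entirely a matter of carefully matching each triple $(\tld{w}_j,s_j,\langle\mu_j,\alpha^\vee\rangle)$ of Table~\ref{Table:matrices} against the listed normal forms. The two points requiring attention are the bookkeeping of the degenerate wrap-around fragment $\cJ_k=\cJ$, and pinning down the $p$-adic valuation $|\cJ_k\setminus\{i,o\}|$ of $\det\begin{pmatrix}\alpha&\beta\\\gamma&\delta\end{pmatrix}$, since it is exactly this exponent that is propagated into the singular-ideal estimate of Proposition~\ref{prop:elkik bound}.
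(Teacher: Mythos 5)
Your argument is essentially the paper's own proof, which disposes of the lemma in a single line ("follows from Table~\ref{Table:matrices}, after possibly rearranging the entries of the matrix equations"); you spell out the bookkeeping, including the interior transition matrices, the determinant count, and the degenerate wrap-around fragment $\cJ_k=\cJ$ when only one vertex of $\cJ$ is not of type~$0$.

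Two caveats are worth recording, though both are inherited from the paper's statement rather than being gaps specific to your argument. First, you correctly observe that the $\kappa_\ell^{-1}$ factors in the $T_\ell$'s contribute the unit $u=\prod_\ell(\det\kappa_\ell)^{-1}$ to $\det\begin{pmatrix}\alpha&\beta\\\gamma&\delta\end{pmatrix}$, so the determinant is $u\cdot(\pm p^{|\cJ_k\setminus\{i,o\}|})$ and not literally $\pm p^{|\cJ_k\setminus\{i,o\}|}$. Your proposal to ``absorb'' $u$ does not quite work: rescaling a row of the middle matrix to kill $u$ would change the displayed endpoint factors (e.g.\ the position of $\kappa_o$). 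The honest reading is that the ``$\pm$'' in the statement should mean ``up to a unit of $\cO(\cA)$'', and this is all that Proposition~\ref{prop:elkik bound} ever uses, namely that $p^{|\cJ_k\setminus\{i,o\}|}\begin{pmatrix}\alpha&\beta\\\gamma&\delta\end{pmatrix}^{-1}$ has entries in $\cO(\cA)$. Second, the type-$II$ final matrix $\begin{pmatrix}0&1\end{pmatrix}\kappa_o^{-1}$ from the $\tld w_o=t_{w_0(\eta)}$ entries of Table~\ref{Table:matrices}, which you list among the possible outputs, is not of the form $\begin{pmatrix}Y_o&-p\end{pmatrix}\kappa_o$ after any constant permutation: the row $(0,1)$ contains neither $-p$ nor a coordinate of $\cA_o$, so case~(1) of the lemma does not literally cover it. Your proof (like the paper's) glosses over this; it is harmless, because in the Jacobian argument of Proposition~\ref{prop:elkik bound} this endpoint produces the entry $p^{|\cJ_k\setminus\{i,o\}|}\cdot 1$, which is strictly stronger than the $p^{|\cJ_k\setminus\{i,o\}|}\cdot p$ arising from the claimed form, so the final estimate there is unaffected.
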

\begin{proof}
The form of $\cI_{\cJ_k}$ follows from Table \ref{Table:matrices}, after possibly rearranging the entries of the matrix equations.

\end{proof}
\begin{proof}[Proof of Proposition \ref{prop:elkik bound}]
We will show that for any choices $G_j,H_j\in \{A_j,D_j\}$ with $j\in \cJ_I$, the element
$p^{2|\cJ_I|+\sum_k |\cJ_k\setminus \{i,o\}|}\prod_{j\in \cJ_I}G_jH_j$ belongs to $\mathrm{J}_c$. This finishes the proof, since this implies $\mathrm{J}_c$ contains
\[p^{2|\cJ_I|+\sum_k |\cJ_k\setminus \{i,o\}|}\prod_{j\in\cJ_I}(A_j,D_j)\ni p^{2|\cJ_I|+\sum_k |\cJ_k\setminus \{i,o\}|+2} \]

Since the role of $A_j$, $D_j$ is essentially symmetric in our argument, we will deal with the case $G_j=H_j=D_j$ for all $j\in \cJ$.

Write $\mathrm{J}$ for the Jacobian matrix of our chosen presentation of $\cZ$. Our convention is that the columns are named by the variables and rows are labeleld by (the chosen) generators of $\cI$. Then clearly $\mathrm{J}_c$ contains any $c\times c$ minor of any matrix of the form $\mathrm{J}U$. In other words, it suffices to find, after modifying $\mathrm{J}$ by column operations, a $c\times c$ minor equal to
$p^{2|\cJ_I|+\sum_k |\cJ_k\setminus \{i,o\}|}\prod_{j\in \cJ_I}G_jH_j$.

For $k\in \cK$, we let $g_k$ denote the generator of $\cI_{\cJ_k}$ that correspond to the $(1,1)$-th entry of the matrix equation described in Lemma \ref{lem:ideal structure} (this choice correspond to our choice of $G_j=H_j=D_j$). We will choose our $c\times c$ minor to have rows corresponding to the $g_k$ with $k\in \cK$ and the generators $(p-D_j)D_j-B_jC_j$ for $j\in\cJ_I$.

We now explain the column operations we will perform on $\mathrm{J}$.

First, consider $k\in \cK$, giving the fragment $\cJ_k=\{i,\cdots, o\}$. Write the matrix generating $\cI_{\cJ_k}$ in factorized form 
\[X\begin{pmatrix}a_o &b_o\\c_o &d_o \end{pmatrix}\begin{pmatrix}\alpha &\beta\\ \gamma &\delta \end{pmatrix}Y\]
 as prescribed by Lemma \ref{lem:ideal structure}. Then the entries of $\cJ_k$ corresponding to row $g_k$ and column $a_o,b_o,c_o,d_o$ are exactly the $(1,1)$-th entry of the matrices
\begin{align*}
X\begin{pmatrix}1 &0\\0&0 \end{pmatrix}\begin{pmatrix}\alpha &\beta\\ \gamma &\delta \end{pmatrix}Y,&& X\begin{pmatrix}0 &1\\0&0 \end{pmatrix}\begin{pmatrix}\alpha &\beta\\ \gamma &\delta \end{pmatrix}Y,&& X\begin{pmatrix}0 &0\\1&0 \end{pmatrix}\begin{pmatrix}\alpha &\beta\\ \gamma &\delta \end{pmatrix}Y,&&X\begin{pmatrix}0 &0\\0&1 \end{pmatrix}\begin{pmatrix}\alpha &\beta\\ \gamma &\delta \end{pmatrix}Y
\end{align*}
respectively. Furthermore, no other row of $\mathrm{J}$ has a non-zero entry at the columns $a_o,b_o,c_o,d_o$. Hence, given $x,y,z,t\in \cO(\cA)$, by modifying the columns $\mathrm{J}$ to $\mathrm{J}U$ where we take linear combinations of columns $a_o,b_o,c_o,d_o$ but do not modify the remaining columns, we can guarantee that the resulting matrix has a column which has vanishing entry for any row other than $g_k$, and at row $g_k$ the entry is the $(1,1)$-th entry of  
\[X\begin{pmatrix} x &y\\z&t \end{pmatrix}\begin{pmatrix}\alpha &\beta\\ \gamma &\delta \end{pmatrix}Y\]
Now $\det \begin{pmatrix}\alpha &\beta\\ \gamma &\delta \end{pmatrix}=\pm p^{|\cJ_k\setminus \{i,o\}|}$, so choosing $x,y,z,t$ appropriately, we can make $\begin{pmatrix} x &y\\z&t \end{pmatrix}\begin{pmatrix}\alpha &\beta\\ \gamma &\delta \end{pmatrix}$ become any matrix among 
\[p^{|\cJ_k\setminus \{i,o\}|}\{\begin{pmatrix} 1 &0\\0&0 \end{pmatrix},\begin{pmatrix} 0 &1\\0&0 \end{pmatrix},\begin{pmatrix} 0 &0\\1&0 \end{pmatrix},\begin{pmatrix} 0 &0\\0&1 \end{pmatrix}\}\]
Using the explicit form of $X$, $Y$ given in Lemma \ref{lem:ideal structure}, we thus learn that after taking a linear combination of columns $a_o,b_o,c_o,d_o$, we can make the entry in row $g_k$ become $p^{|\cJ_k\setminus \{i,o\}|}p$, $p^{|\cJ_k\setminus \{i,o\}|}D_oD_i$, $p^{|\cJ_k\setminus \{i,o\}|}pD_i$ or $p^{|\cJ_k\setminus \{i,o\}|}D_o$ (corresponding to the four cases in that Lemma, respectively).

Next, we consider $j\in \cJ_I$, giving a generator $(p-D_j)D_j-B_jC_j$ of $\cI$. The part of $\mathrm{J}$ corresponding to row $(p-D_j)D_j-B_jC_j$ and column $B_j,C_j,D_j$ is 
\[\begin{pmatrix} -C_j & -B_j & p-2D_j\end{pmatrix}\]
Since 
\[\begin{pmatrix} -C_j & -B_j & p-2D_j\end{pmatrix}\begin{pmatrix} -4C_j \\ 0 \\ p-2D_j\end{pmatrix}=(p-2D_j)^2+4B_jC_j=p^2\]
we see that taking a linear combination of column $B_j,C_j,D_j$ produces the entry $p^2$ in row $(p-D_j)D_j-B_jC_j$. Moreover that the entries of this linear combination at rows $(p-D_{j'})D_{j'}-B_{j'}C_{j'}$ are $0$, where $j'\in \cJ_I$ but $j'\neq j$ (note however that we have no control on the entries on the remaining rows). 

To summarize, by taking appropriate linear combination among columns $a_o,b_o,c_o,d_o$ (for each $\cJ_k=\{i,\cdots, o\}$) and among columns $B_j, C_j, D_j$ (for each $j\in \cJ_I$), and look at the rows $g_k$, $(p-D_j)D_j-B_jC_j$, we can find a $c\times c$ submatrix with rows $g_k, (p-D_j)D_j-B_jC_j$ of block triangular  form
\[\begin{pmatrix} P & * \\ 0 & Q\end{pmatrix}\]
where 
\begin{itemize}
\item $P$ is diagonal of size $|\cK|\times |\cK|$ whose entries belong to 
\[
\{p^{|\cJ_k\setminus \{i,o\}|}p,p^{|\cJ_k\setminus \{i,o\}|}D_oD_i,p^{|\cJ_k\setminus \{i,o\}|}pD_i,p^{|\cJ_k\setminus \{i,o\}|}D_o\};
\]
\item $Q$ is $p^2$ the identity matrix of size $|\cJ_I|$.
\end{itemize} 
It follows that this $c\times c$ minor divides
\[p^{2|\cJ_I|+\sum_k |\cJ_k\setminus \{i,o\}|}\prod_{j\in \cJ_I}D_j^2\]

\end{proof}

\subsection{Proof of Theorem \ref{thm:main:model}}\label{subsect:main proof}
Set $N=p-2-\max_j\langle \mu_j,\alpha^\vee\rangle\geq \frac{p-7}{2}$. Recall from Proposition \ref{prop:heart:inverse} that we have a diagram
\[
\xymatrix{
\tld{\cZ}^{\tau}\otimes_{\cO}\cO/p^{N-1}\ar^{\sim}[r]\ar@{^{(}->}[d]
& \tld{\cZ}^{\tmod,\tau}\otimes_{\cO}\cO/p^{N-1}\ar@{^{(}->}[d]\\
\left[L\rG^{\textnormal{bd}, (v+p)v^\mu}/_{\phz}\prod_{\cJ}L_1^+\rG\right]\otimes_{\cO}\cO/p^{N-1}\ar^-{\sim}[r]&\Gr_1^{\textnormal{bd}, (v+p)v^\mu}\otimes_{\cO}\cO/p^{N-1}
}
\]
In particular, from Proposition \ref {prop:naive vs sat model} we get a closed immersion
\[
\iota: \tld{\cZ}^{\tau}(\tld{z})\otimes_{\cO}\cO/p^{N-1}\stackrel{\sim}{\longrightarrow}\tld{\cZ}^{\textnormal{mod},\tau}(\tld{z})\otimes_{\cO}\cO/p^{N-1}\into\tld{\cZ}^{\nv,\tau}(\tld{z})\otimes_{\cO}\cO/p^{N-1}
\]
We now invoke \cite[Lemme 1]{Elkik} as in the proof of \cite[Proposition 3.3.9]{MLM}: By Proposition \ref{prop:elkik bound}, the integer $h$ in ~\emph{loc.cit.} can taken to be $4f$ while the integer $k$ is $0$ since $\tld{\cZ}^{\tau}(\tld{z})$ is $p$-torsion free. It follows that if $N-1>8f$, we can produce a map $\tld{\iota}:\tld{\cZ}^{\tau}(\tld{z})\to \tld{\cZ}^{\nv,\tau}(\tld{z})^{\wedge_p}$ which agrees with $\iota$ modulo $p^{N-1-4f}$. In particular, this implies $\tld{\iota}$ is also a closed immersion.
Since $\tld{\cZ}^{\tau}(\tld{z})$ is $\cO$-flat and the $\cO$-flat part of $\tld{\cZ}^{\nv,\tau}(\tld{z})^{\wedge_p}$ is exactly $\tld{\cZ}^{\textnormal{mod},\tau}(\tld{z})$, we can factorize $\tld{\iota}$
\[\tld{\cZ}^{\tau}(\tld{z})\into \tld{\cZ}^{\tmod,\tau}(\tld{z})\into \tld{\cZ}^{\nv,\tau}(\tld{z})^{\wedge_p}\]
such that the first map is an isomorphism modulo $p^{N-1-4f}$. The following Lemma then implies that the inclusion $\tld{\cZ}^{\tau}(\tld{z})\into \tld{\cZ}^{\tmod,\tau}(\tld{z})$ is in fact an isomorphism, thus finishing the proof of Theorem \ref{thm:main:model}.
\begin{lemma} Suppose we are given a surjection $\pi:R\onto S$ of Noetherian $p$-adically complete $\cO$-algebras. Assume that $\pi$ induces an isomorphism $\pi:R/\varpi\cong S/\varpi$. Then $\pi$ is an isomorphism.
\end{lemma}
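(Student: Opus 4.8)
The plan is to show that the kernel $I\defeq\ker(\pi)$ vanishes, so that $\pi$, being surjective, is an isomorphism. Before starting I would record that since $\varpi$ divides $p$ in $\cO$ the $p$-adic and $\varpi$-adic topologies on any $\cO$-module agree, so that $R$ and $S$ are $\varpi$-adically complete; in particular $1+\varpi r$ is a unit in $R$ for every $r\in R$, hence $\varpi$ lies in the Jacobson radical of $R$, and $R$ is $\varpi$-adically separated.

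First I would observe that the hypothesis that $\pi$ induces an \emph{isomorphism} $R/\varpi\xrightarrow{\sim}S/\varpi$ — in particular an injection — says precisely that $I\subseteq\varpi R$. The next step is to upgrade this to $I=\varpi I$: given $x\in I$, write $x=\varpi y$ with $y\in R$; applying $\pi$ gives $\varpi\,\pi(y)=0$ in $S$, and if $S$ has no $\varpi$-torsion this forces $\pi(y)=0$, i.e.\ $y\in I$, so that $x=\varpi y\in\varpi I$. (Equivalently, one runs this as an induction showing $I\subseteq\varpi^nR$ for all $n$.) Finally I would finish by Nakayama's lemma: $I$ is finitely generated since $R$ is Noetherian, $\varpi$ lies in the Jacobson radical of $R$, and $I=\varpi I$; hence $I=0$ (equivalently, iterating gives $I\subseteq\bigcap_n\varpi^nR=0$ by separatedness).

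The one point requiring attention is the $\varpi$-torsion freeness of $S$ used in the middle step: this is where the hypothesis is really consumed, and some input of this kind is genuinely necessary (e.g.\ the surjection $\cO\onto\F$ is an isomorphism modulo $\varpi$ but not an isomorphism). In the application at hand this holds because $S$ is the coordinate ring of $\tld{\cZ}^{\tau}(\tld{z})$, which is $\cO$-flat as recorded above; everything else in the argument is formal.
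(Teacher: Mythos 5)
Your proof is correct and is essentially the same Nakayama-type argument as the paper's. The paper uses $\cO$-flatness of $S$ (i.e.\ $\mathrm{Tor}_1^{\cO}(S,\F)=0$) to obtain the exact sequence $0\to I/\varpi I\to R/\varpi\to S/\varpi\to 0$, deduces $I/\varpi I=0$, and then invokes $p$-adic separatedness of $I$; your element-level argument upgrading $I\subseteq\varpi R$ to $I=\varpi I$ via $\varpi$-torsion-freeness of $S$, followed by Nakayama or separatedness, is the same mechanism made explicit. You are also right to flag that the lemma as stated omits a necessary hypothesis: the paper's own proof opens with ``since $S$ is $\cO$-flat,'' which is not among the stated assumptions and does not follow from them (your counterexample $\cO\onto\F$ is valid), but does hold in the application since $S=\cO(\tld{\cZ}^{\tau}(\tld{z}))$ is $\cO$-flat.
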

\begin{proof}
Let $I=\ker \pi$, then since $S$ is $\cO$-flat we get a short exact sequence
\[\xymatrix{0\ar[r] & I/\varpi I\ar[r] & R/\varpi \ar[r]& S/\varpi \ar[r] & 0}\]
Since $\pi$ induces an isomorphism modulo $\varpi$, we learn that $I/\varpi I=0$. But $I$ is $p$-adically separated, so $I=0$.
\end{proof}
The following is immediate from our discussion
\begin{cor} \label{cor:model} Assume that either $p> 8f+3+\max_j\langle \mu_j,\alpha^\vee\rangle$ or $p>7$ and $K=\Qp$. 
Then $\tld{\cZ}^{\tau}(\tld{z})$ is isomorphic to the $p$-adic completion of the $p$-saturation of $\tld{\cZ}^{\nv,\tau}(\tld{z})$.
\end{cor}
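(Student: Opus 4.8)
The plan is to simply concatenate the two principal outputs of the preceding parts of this section. First, Theorem~\ref{thm:main:model}, whose proof was just completed under either of the two hypotheses on $p$, furnishes an isomorphism of $p$-adic formal schemes $\tld{\cZ}^{\tau}(\tld{z})\cong \tld{\cZ}^{\mathrm{mod},\tau}(\tld{z})$. Second, Proposition~\ref{prop:naive vs sat model} identifies $\tld{\cZ}^{\mathrm{mod},\tau}(\tld{z})$ with the $\cO$-flat part (equivalently, the $p$-saturation) of $\tld{\cZ}^{\nv,\tau}(\tld{z})$. Since $\tld{\cZ}^{\nv,\tau}(\tld{z})$ was set up in \S\ref{subsect:naive model} as an honest finite-type $\cO$-scheme, via the decompletions of Table~\ref{Table:Traductions}, the precise statement to extract from Proposition~\ref{prop:naive vs sat model} is that, after $p$-adic completion, $\tld{\cZ}^{\mathrm{mod},\tau}(\tld{z})$ is the $p$-adic completion of the $p$-saturation of the scheme $\tld{\cZ}^{\nv,\tau}(\tld{z})$. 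Chaining these two identifications yields the Corollary.

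Carrying this out, I would proceed as follows. I would first invoke Theorem~\ref{thm:main:model} to replace $\tld{\cZ}^{\tau}(\tld{z})$ by $\tld{\cZ}^{\mathrm{mod},\tau}(\tld{z})$. I would then record that, by the last sentence of Proposition~\ref{prop:naive vs sat model} together with parts (1) and (2) there, the closed immersion $\tld{\cZ}^{\mathrm{mod},\tau}(\tld{z})\into \tld{U}(\tld{z})$ exhibits $\tld{\cZ}^{\mathrm{mod},\tau}(\tld{z})$ as the maximal $\cO$-flat closed subscheme of $\tld{\cZ}^{\nv,\tau}(\tld{z})^{\wedge_p}$; equivalently, writing $\tld{\cZ}^{\nv,\tau}(\tld{z})=\Spec R^{\nv}$ with $R^{\nv}$ finite type over $\cO$ and $R^{\nv,\mathrm{fl}}\defeq R^{\nv}/R^{\nv}[p^\infty]$ its $\cO$-flat quotient, we have $\tld{\cZ}^{\mathrm{mod},\tau}(\tld{z})\cong \Spf\big((R^{\nv,\mathrm{fl}})^{\wedge_p}\big)$. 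The one point deserving a sentence of justification is that $p$-saturation commutes with $p$-adic completion for such rings, i.e.\ that $(R^{\nv,\mathrm{fl}})^{\wedge_p}$ coincides with the $\cO$-flat part of $(R^{\nv})^{\wedge_p}$; this is routine since $R^{\nv}$ is Noetherian, so $R^{\nv}[p^\infty]$ is killed by a single power of $p$ and hence is unaffected by $p$-adic completion, and flatness of $R^{\nv,\mathrm{fl}}$ persists after completion.

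I do not expect any genuine obstacle here: the Corollary is a bookkeeping consequence of Theorem~\ref{thm:main:model} and Proposition~\ref{prop:naive vs sat model}, both of which already contain all the substantive content (the deformation-theoretic lifting of the mod $p^{N-1}$ comparison and the \'etale-neighborhood argument via Elkik on the one hand, the elimination of the $r_j=l_{j-1}$ variables on the other). The only care needed is to track that the statement is made \emph{chart by chart} for a fixed $\tld{z}\in\Adm^\vee(\eta)^{\cJ}s^{-1}v^{\mu}$, so no gluing over the cover of Lemma~\ref{lem:adm:cover} is required, and that the two bounds on $p$ are exactly those under which Theorem~\ref{thm:main:model} has been established.
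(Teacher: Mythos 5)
Your proposal is correct and follows exactly the same route the paper intends: the paper labels the corollary as ``immediate from our discussion,'' and the discussion in question is precisely the chain you lay out — Theorem~\ref{thm:main:model} gives $\tld{\cZ}^{\tau}(\tld{z})\cong\tld{\cZ}^{\tmod,\tau}(\tld{z})$, and Proposition~\ref{prop:naive vs sat model} identifies the latter with the $\cO$-flat part of $\tld{\cZ}^{\nv,\tau}(\tld{z})$ after $p$-adic completion. Your extra sentence on $p$-saturation commuting with $p$-adic completion (using that $R^{\nv}$ is Noetherian, so its $p^\infty$-torsion is bounded and completion is exact on finitely generated modules) is a small but welcome clarification of a point the paper leaves tacit.
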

In particular, this realizes $\tld{\cZ}^{\tau}(\tld{z})$ as the $p$-saturation of an explicitly presented affine $p$-adic formal scheme.
\subsection{Applications}

\subsubsection{Galois deformation rings}
\label{subsub:Gal:def}
Recall that we have a shifted conjugation action map
\[ \GL_2^{\cJ} \times \Gr_1^{\cJ}\to \Gr_1^{\cJ}\]
given by the formula
\[(g_j,A_j)\mapsto g_jA_jg_{j-1}^{-1}\]
This action of $ \GL_2^{\cJ}$ clearly factors through the quotient $ \GL_2^{\cJ}/\Delta Z$ by the diagonally embedded copy of the center $Z$ of $\GL_2$. Furthermore since
\[\prod \det g_jA_jg_{j-1}^{-1}=\prod \det A_j\]
we see that the quantity $\prod \det A_j$ modulo $\det L_1^+G$ is invariant along orbits.
\begin{lemma}\label{lem:action at semisimple point} Let $\tld{z}=(\tld{z}_j)=(z_jv^{\nu_j})\in\tld{\un{W}}^{\vee}$ such that $\langle\nu_j,\alpha^\vee\rangle\neq 0$ for some $j$.
\begin{enumerate}
\item
Suppose $\prod z_j=(12)$. Let $\ovl{x}=((1,\cdots 1),(\tld{z}_j)_j)\in \GL_2^{\cJ}(\F)\times \prod_{\cJ}L_1^{-}G(\F)\tld{z}_j$ and $\ovl{y}$ its image under the shifted conjugation action.
The shifted conjugation action map induces an isomorphism on completions at $\ovl{x}$:
\[\bigg(\GL_2^{\cJ}/\Delta Z \times \prod_{\cJ}L_1^{-}G\tld{z}_j\bigg)^{\wedge}_{\ovl{x}} \cong \bigg( \GL_2^{\cJ,\det=1}\times \prod_{\cJ} L_1^{-}G\tld{z}_j\bigg)^{\wedge}_{\ovl{y}}\]
(Here $ \GL_2^{\cJ,\det=1}$ denotes the kernel of the product of determinant map $\GL_2^{\cJ}\to \bG_m$.)
\item Suppose $\prod z_j=1$. Choose a transversal slice $V$ to $T^\vee$ near $1$ in $\GL_2$ such that the tangent space $T_1V=\mathfrak{n}\oplus\ovl{\mathfrak{n}}=\{\begin{pmatrix} 0 & b \\ c &0  \end{pmatrix}\}\subset \fgl_2$. Let $\ovl{x}\in \GL_2^{\cJ}(\F)\times T^\vee(\F)\tld{z}_0\times \prod_{j\neq 0}L_1^{-}G(\F)\tld{z}_j$ be the tuple $((1,1,\cdots 1),(\ovl{t}\tld{z}_0,(\tld{z}_j)_{j\neq 0}))$, and $\ovl{y}$ its image under the shifted conjugation action.
Then the shifted conjugation action map induces an isomorphism on completion at $\ovl{x}$
\[\bigg(V \times \GL_2^{\cJ\setminus \{0\}}\times (T^\vee\times L_1^{-}G\tld{z}_0)\times \prod_{j\neq 0}L_1^{-}G\tld{z}_j\bigg)^{\wedge}_{\ovl{x}} \cong \bigg( \GL_2^{\cJ}\times \prod_{\cJ} L_1^{-}G\tld{z}_j\bigg)^{\wedge}_{\ovl{y}}\]
\end{enumerate}
\end{lemma}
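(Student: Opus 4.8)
The plan is to prove both parts by a completed slice computation. It suffices to check that the restricted shifted conjugation map does land in the asserted target (a short determinant computation, the $g_j$-contributions to $\prod_j\det$ telescoping) and that it is \'etale at $\ovl{x}$; since the source and target are smooth (over $\cO$, or, if one prefers, after passing to special fibres) of the same dimension, the latter amounts to injectivity of the differential at $\ovl{x}$, whence the inverse function theorem for complete Noetherian local rings gives the isomorphism on completions. For the dimension bookkeeping, $\GL_2^{\cJ}/\Delta Z$ and $\GL_2^{\cJ,\det=1}$ both have dimension $4f-1$, the slice $V$ has dimension $2$, and the cells $L_1^-\rG\tld{z}_j$ are smooth affine cells; one checks the two sides of part (1) both have dimension $4f-1+\sum_j\dim L_1^-\rG\tld{z}_j$ and those of part (2) both have dimension $4f+\sum_j\dim L_1^-\rG\tld{z}_j$.

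To compute the differential at $\ovl{x}$, write $g_j=1+\epsilon\xi_j$ with $\xi_j\in\fgl_2$ and deform $A_j$ inside $L_1^-\rG\tld{z}_j$ (and, in part (2), inside $T^\vee\times L_1^-\rG\tld{z}_0$ at $j=0$); then the $j$-th output becomes $\big(1+\epsilon(\xi_j+w_j-\Ad(\tld{z}_j)\xi_{j-1})\big)\tld{z}_j$ with $w_j\in\Lie L_1^-\rG$, an extra $\mathfrak{t}^\vee$-summand being absorbed at $j=0$ in part (2). Using Lemma~\ref{lem:basic}\eqref{eq:basic:2.5} to identify, near $\ovl{y}$, the relevant open of $\Gr_1^{\cJ}$ with $\GL_2^{\cJ}\times\prod_j L_1^-\rG\tld{z}_j$ via the decompositions $\Lie L\rG=\Lie L_1^+\rG\oplus\Lie L^-\rG$ and $\Lie L^-\rG=\fgl_2\oplus\Lie L_1^-\rG$, one reads off the $\GL_2$- and $L_1^-\rG$-components of the differential. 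An element of its kernel has vanishing $\Lie L^-\rG$-component, i.e.~$\xi_j+w_j-\Ad(\tld{z}_j)\xi_{j-1}\in\Lie L_1^+\rG$ for all $j$; matching polar parts forces $w_j$ to be the polar part of $\Ad(\tld{z}_j)\xi_{j-1}$, and matching constant terms gives the straightening recursion $\xi_j=[\text{constant term of }\Ad(\tld{z}_j)\xi_{j-1}]$, which equals $\Ad(z_j)\xi_{j-1}$ if $\langle\nu_j,\alpha^\vee\rangle=0$ and equals $\Ad(z_j)$ of the diagonal part of $\xi_{j-1}$ if $\langle\nu_j,\alpha^\vee\rangle>0$.

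Iterating around the cycle $\cJ=\Z/f\Z$ and using that $\langle\nu_j,\alpha^\vee\rangle\neq0$ for at least one $j$, the cyclic composite of the recursion maps factors through the projection onto diagonal matrices and then acts on diagonal matrices by the permutation $\prod_j z_j$. In part (1), $\prod_j z_j=(12)$ swaps the two diagonal entries, so $\xi_{j_0}$ is forced to be scalar; then every $\xi_j$ equals that same scalar and every $w_j=0$, so the kernel is $\Delta\mathfrak{z}$, which is $0$ in $\GL_2^{\cJ}/\Delta Z$. In part (2), $\prod_j z_j=1$ acts trivially, so without modification the kernel would contain a copy of $\mathfrak{t}^\vee$; this is exactly why one replaces the $0$-th $\GL_2$-factor by $V$ (so that $\xi_0\in T_1V=\mathfrak{n}\oplus\ovl{\mathfrak{n}}$) and records the $\mathfrak{t}^\vee$-direction instead in the extra $T^\vee$-factor attached to $L_1^-\rG\tld{z}_0$: the equation at $j=0$ then determines both $\xi_0$ and the $T^\vee$-variable in terms of $\xi_{-1}$, and since $\xi_0$ has zero diagonal part while the cyclic composite lands in diagonal matrices, we conclude $\xi_0=0$, hence all $\xi_j=0$, all $w_j=0$, and the $T^\vee$-variable $=0$. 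In both cases the differential is injective, hence an isomorphism, completing the proof.

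The step requiring the most care, and into which the hypotheses genuinely enter, is the infinitesimal straightening in the second paragraph: one must verify, in the mixed-characteristic loop group, that passing from $\Ad(\tld{z}_j)\xi_{j-1}$ to its constant term (after discarding the $\Lie L_1^+\rG$- and $\Lie L_1^-\rG$-parts) is governed exactly by the maps described above, uniformly in $\langle\nu_j,\alpha^\vee\rangle$, so that no spurious kernel element survives. This is the local linearized analogue of Lemma~\ref{lem:strightening}: as there, it rests on the contracting effect of conjugation by $v^{\nu_j}$, which pushes one off-diagonal entry into strictly positive $v$-degree (hence into $\Lie L_1^+\rG$) and the other into a pole of bounded order; over $\cO$ one combines this with the containment $\tfrac{1}{v+p}\cO[\![v]\!]\subset\tfrac{1}{v^{N}}\cO[\![v]\!]$ used there.
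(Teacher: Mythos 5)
Your linearization is the same as the paper's: you compute the tangent map of the action at $\ovl{x}$, reduce modulo $\Lie L_1^+\rG$, identify the degree-zero recursion $(\xi_j)\mapsto(\xi_j-\Pi_j(\xi_{j-1}))$ with $\Pi_j$ exactly as in the paper (namely $\Ad(z_j)$ or $\Ad(z_j)$ composed with the diagonal projection according to whether $\langle\nu_j,\alpha^\vee\rangle$ vanishes), and your kernel computations in both parts — the diagonally embedded $\Lie Z$ killed by the quotient $\GL_2^{\cJ}/\Delta Z$ in part (1), and the slice $V$ together with the extra $T^\vee$-factor absorbing the would-be kernel in part (2) — agree with the paper's. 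The gap is in how you conclude. You establish only \emph{injectivity} of the differential and then invoke ``smooth of the same dimension, hence \'etale.'' But the factors $L_1^{-}\rG\tld{z}_j$ are not of finite type ($L^-\rG$ is a rising union over $m$ of matrices $P/(v(v+p))^m$ with $\deg P\leq 2m$), so ``$4f-1+\sum_j\dim L_1^-\rG\tld{z}_j$'' is not a number, and for completions of such spaces injectivity of the differential only yields a closed immersion of formal neighbourhoods, not an isomorphism. What is genuinely needed, and what the paper proves directly, is \emph{surjectivity} of the differential onto the tangent space of the stated target; this is also precisely where one sees that the trace-sum-zero (i.e.\ $\det=1$) condition is the \emph{only} constraint on the image in part (1), rather than merely a constraint the image happens to satisfy.

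The fix is close to something you already wrote. Your observation that matching polar parts determines $w_j$ uniquely from $\xi_{j-1}$ shows the map is unipotent-triangular in the $\Lie L_1^-\rG$-directions, so these can be split off isomorphically and the problem reduces to the finite-dimensional map $(\xi_j)\mapsto(\xi_j-\Pi_j(\xi_{j-1}))$ on $\fgl_2^{\cJ}$ (resp.\ its part-(2) variant with $V_0+T_0$ at $j=0$). There rank--nullity is legitimate: in part (1) the kernel is one-dimensional and the image lies in the codimension-one trace-sum-zero subspace by your telescoping, hence equals it; in part (2) the kernel is zero and the count forces surjectivity onto all of $\fgl_2^{\cJ}$. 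Alternatively one solves the recursion explicitly, as the paper does, by noting that $K_0-\Pi_0\Pi_{-1}\cdots\Pi_{-f+1}(K_0)$ realizes every trace-zero matrix (resp.\ that $V_0+T_0$ realizes every matrix). Either way, the surjectivity step must appear; as written your proof does not contain it.
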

\begin{rmk} The case $\langle\nu_j,\alpha^\vee\rangle= 0$ for all $j$ happens exactly when $\tld{z}_j\in  W^{\vee}Z(\F(\!(v)\!))$ for all $j$. In this case, the same method of proof shows that for $\ovl{x}=((1,1,\cdots 1),(\ovl{\kappa}\tld{z}_0,(\tld{z}_j)_{j\neq 0}))$ with image $\ovl{y}$ under the shifted conjugation action
\[\bigg( \GL_2^{\cJ\setminus \{0\}}\times (\GL_2\times L_1^{-}G\tld{z}_0)\times \prod_{j\neq 0}L_1^{-}G\tld{z}_j\bigg)^{\wedge}_{\ovl{x}} \cong \bigg( \GL_2^{\cJ}\times \prod_{\cJ} L_1^{-}G\tld{z}_j\bigg)^{\wedge}_{\ovl{y}}\]
\end{rmk}
\begin{proof} 
\begin{enumerate}
\item On a tangent vector $(1+\eps K_j,(1+\eps L_j)\tld{z}_j)$ at $\ovl{x}$, the formula for the action is
\[(1+\eps K_j)(1+\eps L_j)\tld{z}_j(1-\eps K_{j-1})=(1+\eps K_j-\eps\Ad(\tld{z}_j)(K_{j-1})+\eps L_j)\tld{z}_j\]
where $K_j\in \mathrm{Lie}\GL_2=\fgl_2$, $L_j\in \mathrm{Lie}L_1^{-}G=\frac{1}{v}\fgl_2[\frac{1}{v}]$.

Hence we need to show the map
\[S:(\fgl_2\times \frac{1}{v}\fgl_2[\frac{1}{v}])^{\cJ} \to (v\fgl_2[\![v]\!]\backslash \fgl_2(\!(v)\!))^{\cJ}\cong  (\fgl_2\oplus \frac{1}{v}\fgl_2[\frac{1}{v}])^{\cJ}\]
given by
\[(K_j,L_j)\mapsto K_j-\Ad(\tld{z}_j)(K_{j-1})+L_j\]
is an isomorphism onto the subspace of the target consisting of tuples $(Y_j)$ whose projection to $\fgl_2^{\cJ}$ factors through the subspace $\fgl_2^{\cJ,\mathrm{tr}=0}$ with sum of tuples whose sum of traces is $0$. 

Define $\Pi_j:\fgl_2\to \fgl_2$ to be the linear endomorphism given by sending $X$ to the projection of $\Ad(\tld{z}_j)X\in \fgl_2(\!(v)\!)$ onto the $\fgl_2$ summand, i.e.~extracting the $v$-degree $0$ part of $\Ad(\tld{z}_j)X$. Since $\tld{z}_j=z_jv^{\nu_j}$ we see that 
\begin{itemize}
\item $\Pi_j(\begin{pmatrix}a & b \\ c& d\end{pmatrix})=\Ad(z_j)\begin{pmatrix}a & b \\ c& d\end{pmatrix}$ if $\langle \nu_j,\alpha^\vee\rangle =0$; and
\item $\Pi_j(\begin{pmatrix}a & b \\ c& d\end{pmatrix})=\Ad(z_j)\begin{pmatrix}a & 0 \\ 0& d\end{pmatrix}$ if $\langle \nu_j,\alpha^\vee\rangle \neq 0$
\end{itemize}
We first show that $S$ is injective.
Let $((K_j)_j,(L_j)_j)\in \ker S$ then
\[K_0=\Pi_0\Pi_{-1}\cdots \Pi_{-f+1}(K_0)\]
Set $K_0=\begin{pmatrix}a & b\\c& d \end{pmatrix}$. Then our hypothesis on $\tld{z}_j$ shows that the composite $\Pi_0\Pi_{-1}\cdots \Pi_{-f+1}$ sends $K_0$ to $\begin{pmatrix}d & 0\\0& a \end{pmatrix}$. Thus we get $K_0\in \mathrm{Lie}(Z)$, and hence $K_j=K_0\in \mathrm{Lie}(Z)$ for all $j$, and then $L_j=0$. This shows the injectivity of $S$.

For surjectivity, it suffices to prove the surjectivity after projecting to the $\fgl_2^{\cJ}$ summand.
Let $(Y_j)\in \fgl_2^{\cJ,\mathrm{tr}=0}$. By repeated substitution in the system
\[K_j-\Pi_j(K_{j-1})=Y_j\]
we see that the system has a solution if and only if
\[K_0-\Pi_0\Pi_{-1}\cdots \Pi_{-f+1}(K_0)=\Pi_0(Y_{-1})+\Pi_0\Pi_{-1}(Y_{-2})+\cdots \Pi_0\cdots\Pi_{-f+1}(Y_{-f})\]
has a solution $K_0=\begin{pmatrix}a & b\\c& d \end{pmatrix}$. Note that the right-hand side has trace $0$. But
\[K_0-\Pi_0\Pi_{-1}\cdots \Pi_{-f+1}(K_0)=\begin{pmatrix}a & b\\c& d \end{pmatrix}-\begin{pmatrix}d & 0\\0& a \end{pmatrix}=\begin{pmatrix}a-d & b\\c& d-a \end{pmatrix}\]
hence $K_0-\Pi_0\Pi_{-1}\cdots \Pi_{-f+1}(K_0)$ can become any trace $0$ matrix. This gives surjectivity of $S$.
\item The argument is similar to the previous case. The only difference is that at $j=0,1$, the tangent vector equations (on the $v$-degree $0$ part) we get are modified to 
\[V_0+T_0-\Pi_0(K_{-1})=Y_0\]
\[ K_1-\Pi_1(V_0)=Y_1\]
where $V_0\in T_1V=\mathfrak{n}\oplus\ovl{\mathfrak{n}}$ and $T_0\in \fT^\vee=\mathrm{Lie}T^\vee$.

The existence and uniqueness of solutions to the system now boils down to solvability (in terms of $V_0,T_0$) of 
\[V_0+T_0-\Pi_0\Pi_{-1}\cdots \Pi_{-f+1}(V_0)=\Pi_0(Y_{-1})+\Pi_0\Pi_{-1}(Y_{-2})+\cdots \Pi_0\cdots\Pi_{-f+1}(Y_{-f}),\]
which now always has a unique solution since the left-hand side reduces to $V_0+T_0$.
\end{enumerate}
\end{proof}
\begin{thm} 
\label{thm:main:Gal:def}
Let $\tau$ be a regular tame type with small presentation $(s,\mu)$ and assume that either $p> 8f+3+\max_j\langle \mu_j,\alpha^\vee\rangle$ or $p\geq 11$ and $K=\Qp$.  
Suppose $\tld{z}_j=\tld{w}_js_j^{-1}v^{\mu_j}$ with $\tld{w}=(\tld{w}_j)\in \Adm^{\vee}(\eta)$. Assume that for at least one $j$, $\tld{z}_j\notin W^{\vee}Z(\F(\!(v)\!))$. Let  $\ovl{t}\in T^\vee(\F)$ and $\rhobar$ be the unique semisimple Galois representation such that the matrix of the associated $\phz^f$-module is $\ovl{t}\tld{z}_0\phz(\tld{z}_{f-1})\cdots\phz^{f-1}(\tld{z}_{1})$ (see \S \ref{subsub:etale:Phi}).

\begin{enumerate} 
\item Suppose $\rhobar$ is absolutely irreducible, and let $R^{\eta,\tau}_{\rhobar}$ the potentially Barsotti-Tate deformation ring of type $\tau$. Then, up to enlarging $\F$, $R^{\eta,\tau}_{\rhobar}$ is isomorphic to the completion of the $p$-saturation of $\cZ^{\nv,\tau}(\tld{z})$ at $\tld{z}$ (see Definition \ref{defn:naive equations}).
\item 
\label{thm:main:Gal:def:2}
Suppose $\rhobar$ is reducible, and let $R^{\eta,\tau}_{\rhobar}$ be the framed potentially Barsotti-Tate deformation ring of type $\tau$. Then up to adding formal variables, $R^{\eta,\tau}_{\rhobar}$ is isomorphic to the completion of the $p$-saturation of $(\tld{\cZ}^{\tmod,\tau}(\tld{z})\times_{\GL_2^{\cJ}}(T\times \{1\}^{\cJ\setminus \{0\}})$ at $(\ovl{t}\tld{z}_0,\tld{z}_1,\cdots ,\tld{z}_{f-1})$.
\end{enumerate}
\end{thm}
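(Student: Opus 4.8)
The strategy is to combine the local model isomorphism of Corollary \ref{cor:model} with the deformation-theoretic description of $R^{\eta,\tau}_{\rhobar}$ as a versal ring to $\cZ^\tau$, translating the statement about Galois deformation rings into a statement about completions of the explicit scheme $\tld{\cZ}^{\tmod,\tau}(\tld{z})$, and then use the group action analysis of Lemma \ref{lem:action at semisimple point} to strip off (or reintroduce) the $\prod_{\cJ}\GL_2$-torsor directions. First I would fix the point: by the recipe of \S\ref{subsub:etale:Phi}, the semisimple $\rhobar$ with $\phz^f$-module matrix $\ovl{t}\tld{z}_0\phz(\tld{z}_{f-1})\cdots\phz^{f-1}(\tld{z}_1)$ corresponds to the point $\ovl{y}$ of $\Phi\text{-}\Mod^{\text{\'et},2}_K$ lying in the image of the class of $(\ovl{t}\tld{z}_0,\tld{z}_1,\dots,\tld{z}_{f-1})\in \Gr_1^{\textnormal{bd},(v+p)v^{\mu}}$ under shifted conjugation. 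Since $\rhobar\in \cZ^\tau(\F)$ (one checks $\tld{z}\in \Adm(\eta)^{\cJ}s^{-1}v^\mu$ lands in $\cZ^{\tau}$, i.e.~$\tld{z}_j$ admits a Breuil--Kisin preimage), the versal ring $R^{\eta,\tau}_{\rhobar}$ is the completion of $\cZ^\tau$ at $\rhobar$, up to adding the formal variables recording the framing (the difference between a versal ring and the framed deformation ring being a power series extension, of relative dimension $4-\dim Z^{1}$-type corrections that are absorbed in ``adding formal variables''). Via the $\prod_{\cJ}\GL_2$-torsor $\tld{\cZ}^\tau\to \cZ^\tau$ of diagram \eqref{diag:cmpts}, the completion of $\cZ^\tau$ at $\rhobar$ is the completion of $\tld{\cZ}^\tau$ at a lift of $\rhobar$, modulo the torsor directions; by Corollary \ref{cor:model} and Theorem \ref{thm:main:model} the completion of $\tld{\cZ}^\tau(\tld{z})$ agrees with that of the $p$-saturation of $\tld{\cZ}^{\nv,\tau}(\tld{z})$.

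Next I would run the descent through Lemma \ref{lem:action at semisimple point}, in the two cases. When $\rhobar$ is absolutely irreducible, necessarily $\prod z_j=(12)$ (niveau $2$), and the relevant point on the torsor is $\ovl{x}=((1,\dots,1),(\tld{z}_j)_j)$ with $\ovl{t}$ absorbed since the $T^\vee$-direction is already transverse; by part (1) of that Lemma the shifted conjugation action identifies the completion of $\GL_2^{\cJ}/\Delta Z\times \prod_\cJ L_1^{-}G\tld{z}_j$ at $\ovl{x}$ with the completion of $\GL_2^{\cJ,\det=1}\times\prod_\cJ L_1^{-}G\tld{z}_j$ at $\ovl{y}$, which exhibits the completion of $\cZ^\tau$ (equivalently of the scheme-theoretic image $\tld{\cZ}^{\tmod,\tau}$ quotiented appropriately) at $\rhobar$ as, after adding the $\GL_2^{\cJ}/\Delta Z$ formal variables, the completion at $\tld{z}$ of $\cZ^{\nv,\tau}(\tld{z})$ (the fiber over $1\in\GL_2^{\cJ}$), $p$-saturated. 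One then checks that the torsor directions exactly account for the passage between the versal ring of $\cZ^\tau$ and $R^{\eta,\tau}_{\rhobar}$ in the irreducible case — where $R^{\eta,\tau}_{\rhobar}$ is the unframed deformation ring since $\End_{G_K}(\rhobar)=\F$, so one genuinely gets an isomorphism on the nose after enlarging $\F$ to split everything. When $\rhobar$ is reducible, I would instead use part (2) of Lemma \ref{lem:action at semisimple point} (if $\prod z_j=1$, i.e.~niveau $1$) together with the remark following it (if all $\tld{z}_j\in W^\vee Z$), which produces a transversal slice $V$ with $T_1V=\mathfrak n\oplus\ovl{\mathfrak n}$ and identifies the completion with that of $\tld{\cZ}^{\tmod,\tau}(\tld{z})\times_{\GL_2^{\cJ}}(T\times\{1\}^{\cJ\setminus\{0\}})$ at $(\ovl{t}\tld{z}_0,\tld{z}_1,\dots,\tld{z}_{f-1})$, $p$-saturated, up to formal variables; here one keeps the framing variables because $\End_{G_K}(\rhobar)$ is larger, hence the statement is only up to adding formal variables.

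The main obstacle I expect is the bookkeeping of \emph{which} completion is being taken and \emph{how many} formal variables get added — i.e.~matching up (a) the versal ring of $\cZ^\tau$ at $\rhobar$ with the framed deformation ring $R^{\eta,\tau}_{\rhobar}$ (a power series extension whose relative dimension depends on $\dim_\F H^0(G_K,\ad\rhobar)$ and the framing), (b) the completion of $\tld{\cZ}^\tau(\tld{z})$ on the $\prod_\cJ\GL_2$-torsor versus the completion of $\cZ^\tau$, and (c) the slicing in Lemma \ref{lem:action at semisimple point} which trades $\GL_2^{\cJ}$ factors for a transversal slice plus residual $\GL_2$-factors. Getting these three normalizations to line up — so that in the irreducible case the extra variables cancel to give an honest isomorphism, and in the reducible case one only claims ``up to formal variables'' — is where care is needed; everything else is a formal consequence of Corollary \ref{cor:model}, the definition of $\cZ^\tau$ as a scheme-theoretic image, and the explicit group action computation. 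A secondary point to verify is that $p\geq 11$ (rather than $p>7$) suffices for $K=\Qp$ in this theorem: this is because the slicing argument and the obstruction bound $p^{4f}\in\mathrm{J}_c$ from Proposition \ref{prop:elkik bound}, combined with the Elkik input, impose the slightly stronger numeric constraint, which one tracks through the inequality $N-1>8f$ with $f=1$.
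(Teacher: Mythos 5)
Your proposal correctly identifies the two main inputs — Corollary \ref{cor:model} to replace $\tld{\cZ}^\tau(\tld{z})$ by the $p$-saturation of $\tld{\cZ}^{\nv,\tau}(\tld{z})$, and Lemma \ref{lem:action at semisimple point} (plus the $\GL_2^\cJ$-invariance of $\tld{\cZ}^{\tmod,\tau}$ under shifted conjugation) to translate between completions on the torsor and deformation rings — and this is exactly how the paper begins. But the way you propose to close the argument in the irreducible case has a genuine gap. You write that one would ``check that the torsor directions exactly account for the passage'' so as to get an isomorphism ``on the nose.'' That is precisely the step the paper does \emph{not} attempt to verify directly. The chain of isomorphisms coming from Lemma \ref{lem:action at semisimple point} and diagram \eqref{diag:cmpts} yields $R^{\eta,\tau}_{\rhobar}$ only up to a power series extension, and matching the number of extra variables (even if one pins it down) does not by itself give an isomorphism of the underlying rings. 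The paper invokes Hamann's cancellation theorem \cite[Theorem 5]{Hamann} --- if $R[\![X]\!]\cong S[\![X]\!]$ for complete local rings $R,S$, then $R\cong S$ --- to strip the extra variables, and this is the essential ingredient your write-up is missing. Without Hamann (or an equivalent cancellation principle), the argument as sketched only proves the irreducible case ``up to adding formal variables,'' which is a strictly weaker statement than what the theorem asserts.

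Two smaller remarks. First, in the irreducible case the point $\ovl{t}\tld{z}_0$ does not appear in the completion point of $\cZ^{\nv,\tau}(\tld{z})$; absorbing $\ovl{t}$ (after enlarging $\F$) requires a change of basis that the proposal gestures at but should be made precise, since Lemma \ref{lem:action at semisimple point}(1) is stated at $\ovl{x}=((1,\dots,1),(\tld{z}_j))$ with no $\ovl{t}$. Second, your explanation of the $p\geq 11$ bound for $K=\Qp$ is off: for prime $p$ the conditions $p>7$ and $p\geq 11$ define the same set, so there is no ``slightly stronger constraint'' being tracked; the improvement over the generic bound $p>8f+3+\max_j\langle\mu_j,\alpha^\vee\rangle$ comes from the explicit $f=1$ analysis in Section \ref{sec:examples f=1} (see Remark \ref{rmk:bd:Qp}), not from re-running the Elkik estimate of Proposition \ref{prop:elkik bound} with $f=1$.
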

\begin{proof} By Corollary \ref{cor:model}, Lemma \ref{lem:action at semisimple point} and the fact that $\tld{\cZ}^{\tmod,\tau}$ is invariant under the shifted conjugation action, $R^{\eta,\tau}_{\rhobar}$ has the desired description up to adding formal power series variables. This immediately gives the second item, and the first item up to adding power series variables.
To remove the potential extra power series variables, we use the following fact \cite[Theorem 5]{Hamann}: if $R$, $S$ are complete local rings such that $R[\![X]\!]\cong S[\![X]\!]$ then $R\cong S$.
\end{proof}

\begin{rmk}\label{rmk:simplified model}
It follows from the definitions that up to isomorphism, $\cZ^{\nv,\tau}(\tld{z})$ is described as follows:

Given $(\tld{w}_j)_{j\in\cJ}\in \Adm^\vee(\eta)$, the fragmentation $\cJ=\bigcup_{k\in\cK}\cJ_k$, and the type of the endpoints of each $\cJ_k$, $\cZ^{\textnormal{nv},\tau}(\tld{z})$ is the spectrum of $\otimes_{j\in\cJ}R_j/\sum_{k}\cI_{\cJ_k}$ where for each fragment $\cJ_k=(i,i-1,\dots,o+1,o)$, $\cI_{\cJ_k}$ is the ideal of $\otimes_{j\in\cJ_k}R_j$ generated by the equations 
\begin{equation}
\label{matrix:equation:explicit}
M_{\textnormal{out},o}\cdot\left( \prod_{\ell=o+1}^{i-1}T_\ell\right)\cdot M_{\textnormal{in},i}=0,
\end{equation} 
and $M_{\textnormal{out},o}$ (resp.~$M_{\textnormal{in},i}$, resp.~$T_\ell$) is the final matrix (resp.~initial matrix, resp.~transition matrix) appearing in the corresponding entry of Table \ref{Table:Geometric_genes} according to the type of $i$ (resp.~$o$).
\end{rmk}
\begin{table}[hbt]
\captionsetup{justification=centering}
\caption[Foo content]{\textbf{Equations for $\cZ^{\nv,\tau}(\tld{z})$}
}
\label{Table:Geometric_genes}
\centering
\adjustbox{max width=\textwidth}{
\begin{tabular}{| c || c | c | c | }
\hline
\hline
\backslashbox{type}{$\tld{w}_j$}&$t_\eta$&$w_0t_\eta$&$t_{w_0(\eta)}$\\
\hline
\hline
&&&\\
$II$&
$\begin{aligned}
R_{j}&=\cO[X_j,Y_j]
\\
&\\
\textnormal{Initial matrix:}&\begin{pmatrix}1\\-X_j\end{pmatrix}\\
\textnormal{Final matrix:}&\begin{pmatrix}Y_j&-p\end{pmatrix}
\end{aligned}$
&
$\begin{aligned}
R_{j}&=\cO[X_j,Y_j]
\\
&\\
\textnormal{Initial matrix:}&\begin{pmatrix}1\\-X_j\end{pmatrix}\\
\textnormal{Final matrix:}&\begin{pmatrix}-p&Y_j\end{pmatrix}
\end{aligned}$
&
$\begin{aligned}
R_{j}&=\cO[X_j,Y_j]
\\
&\\
\textnormal{Initial matrix:}&\begin{pmatrix}1\\-X_j\end{pmatrix}\\
\textnormal{Final matrix:}&\begin{pmatrix}0&1\end{pmatrix}
\end{aligned}$
\\
&&&\\
\hline
&&&\\
$I$&
$\begin{aligned}
R_j&=\frac{\cO[X_j,Y_j,Z_j]}{((p-Y_j)Y_j-X_jZ_j)}
\\
&\\
\textnormal{Initial matrix:}&\begin{pmatrix}Y_j&-X_j\\-Z_j&p-Y_j\end{pmatrix}\\
\textnormal{Final matrix:}&\begin{pmatrix}p-Y_j&-Z_j\\-X_j&Y_j\end{pmatrix}
\end{aligned}$
&
$\begin{aligned}
R_j&=\frac{\cO[X_j,Y_j,Z_j]}{((p-Y_j)Y_j-X_jZ_j)}
\\
&\\
\textnormal{Initial matrix:}&\begin{pmatrix}Y_j&-X_j\\-Z_j&p-Y_j\end{pmatrix}\\
\textnormal{Final matrix:}&\begin{pmatrix}Y_j&-X_j\\-Z_j&p-Y_j\end{pmatrix}
\end{aligned}$
&
$\begin{aligned}
R_j&=\frac{\cO[X_j,Y_j,Z_j]}{((p-Y_j)Y_j-X_jZ_j)}
\\
&\\
\textnormal{Initial matrix:}&\begin{pmatrix}Y_j&-X_j\\-Z_j&p-Y_j\end{pmatrix}\\
\textnormal{Final matrix:}&\begin{pmatrix}p-Y_j&X_j\\Z_j&Y_j\end{pmatrix}
\end{aligned}$
\\
&&&\\
\hline
&&&\\
$0$&
$\begin{aligned}
R_j&=\cO[X_j]
\\
&\\
\textnormal{Transition Matrix:}&\begin{pmatrix}
1&0\\-X_j&p
\end{pmatrix}
\end{aligned}$
&
$\begin{aligned}
R_j&=\cO[X_j]
\\
&\\
\textnormal{Transition Matrix:}&\begin{pmatrix}
0&-1\\-p&X_j
\end{pmatrix}
\end{aligned}$
&
$\begin{aligned}
R_j&=\cO[X_j]
\\
&\\
\textnormal{Transition Matrix:}&\begin{pmatrix}
p&-X_j\\0&1
\end{pmatrix}
\end{aligned}$
\\
&&&\\
\hline
\hline
\end{tabular}}
\captionsetup{justification=raggedright,
singlelinecheck=false
}
\end{table}

\subsubsection{Rational smoothness}\label{subsect:rational smoothness}
\begin{thm} \label{thm:rational smoothness}
\begin{enumerate}
\item
Let $\tld{\cZ}^{\tmod,\mathrm{nm}}$ be the normalization of $\tld{\cZ}^{\tmod,\tau}$. Then $\tld{\cZ}^{\tmod,\mathrm{nm}}$ is resolution-rational (cf.~\cite[Definition 9.1]{Kovacs}) and $\tld{\cZ}^{\tmod,\mathrm{nm}}$ is Gorenstein.
\item
Assume $p> 2+\max_j\langle \mu_j,\alpha^\vee\rangle$. Then the same statements hold for the normalization $\tld{\cZ}^{\mathrm{nm}}$ of $\tld{\cZ}^{\tau}$.
\end{enumerate}
\end{thm}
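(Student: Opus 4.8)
The plan is to work locally on the Zariski charts $\tld{\cZ}^{\tmod,\tau}(\tld{z})$ (resp.\ $\tld{\cZ}^{\tau}(\tld{z})$) of Lemma~\ref{lem:adm:cover}, to reduce every assertion about the normalization of $\tld{\cZ}^{\tmod,\tau}$ to one about the resolution $\pi^{\tmod}\colon \tld{Y}^{\tmod,\eta,\tau}(\tld{z})\to\tld{\cZ}^{\tmod,\tau}(\tld{z})$, and then to transport the latter to $\tld{\cZ}^{\tau}$ via a mod-$p$ congruence. First I would introduce the Stein factorization $\tld{Y}^{\tmod,\eta,\tau}(\tld{z})\xrightarrow{g}\cZ'\to\tld{\cZ}^{\tmod,\tau}(\tld{z})$: by Proposition~\ref{prop:R1:null} one has $Rg_*\cO_{\tld{Y}^{\tmod,\eta,\tau}(\tld{z})}=\cO_{\cZ'}$, by Proposition~\ref{prop:coker} the map $\cZ'\to\tld{\cZ}^{\tmod,\tau}(\tld{z})$ is finite, and by Proposition~\ref{prop:naive vs sat model}(2) it is an isomorphism after inverting $p$. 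Since $\tld{Y}^{\tmod,\eta,\tau}(\tld{z})$ is an $\cO$-flat local complete intersection (Lemma~\ref{lem:LCI}), hence Cohen--Macaulay, with smooth generic fibre (Remark~\ref{rmk:regular generic fiber}, via Proposition~\ref{prop:naive vs sat model}(2)), a direct inspection of Table~\ref{Table:Traductions} shows its special fibre is generically reduced, so $\tld{Y}^{\tmod,\eta,\tau}(\tld{z})$ is normal; then $\cZ'$ is normal (Stein factorization of a proper map from a normal scheme) and hence $\cZ'=\tld{\cZ}^{\tmod,\mathrm{nm}}(\tld{z})$.

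The transfer of both properties along $g$ is then formal. The scheme $\tld{Y}^{\tmod,\eta,\tau}(\tld{z})$ is Gorenstein, being a local complete intersection, and by Proposition~\ref{prop:dualizing complex model} its relative dualizing sheaf over $\cO$ is trivial; Grothendieck duality for the proper morphism $g$ combined with $Rg_*\cO_{\tld{Y}^{\tmod,\eta,\tau}(\tld{z})}=\cO_{\cZ'}$ gives $\omega^{\bullet}_{\cZ'/\cO}\cong Rg_*\omega^{\bullet}_{\tld{Y}^{\tmod,\eta,\tau}(\tld{z})/\cO}$, and since the latter is $\cO_{\cZ'}$ placed in a single cohomological degree, $\cZ'$ is Cohen--Macaulay with trivial relative dualizing sheaf, i.e.\ Gorenstein. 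For rationality, given a resolution $W\to\tld{Y}^{\tmod,\eta,\tau}(\tld{z})$, the composite $W\to\cZ'$ is a resolution and $R(W\to\cZ')_*\cO_W\cong Rg_*R(W\to\tld{Y}^{\tmod,\eta,\tau}(\tld{z}))_*\cO_W$, which equals $Rg_*\cO_{\tld{Y}^{\tmod,\eta,\tau}(\tld{z})}=\cO_{\cZ'}$ as soon as $\tld{Y}^{\tmod,\eta,\tau}(\tld{z})$ is resolution-rational. So part~(1) is reduced to the resolution-rationality of $\tld{Y}^{\tmod,\eta,\tau}(\tld{z})$.

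This last point is the heart of the matter, and I expect it to be the main obstacle. I would attack it using Lemma~\ref{lem:koszul:res}, which realizes $\tld{Y}^{\tmod,\eta,\tau}(\tld{z})$ as a codimension-$f$ complete intersection in $\tld{Ba}(\tld{z})=\prod_{j\in\cJ}\GL_2\times Ba_j(\tld{z}_j)$ cut out by sections of the line bundles $\cL_j=q_j^*\cO_{\bP^1}(-1)\otimes p_{j-1}^*\cO_{\bP^1}(-1)$. Inspecting Lemma~\ref{lem:five cases} and the local computations of Lemma~\ref{lem:prop:pr}, each $Ba_j(\tld{z}_j)$ is a regular scheme (the defining equations being of the regular shape $uv=p$, $D(p-D)=BC$, or blow-ups thereof), so the only singularities of $\tld{Ba}(\tld{z})$ are the conifold-type (toric) singularities produced by $\cO$-fibre products of $uv=p$-charts, which admit explicit small resolutions; I would resolve $\tld{Ba}(\tld{z})$ this way, track the strict transform of $\tld{Y}^{\tmod,\eta,\tau}(\tld{z})$ (further resolving if necessary), and verify that the resulting map has $R^{>0}$ of the structure sheaf equal to zero. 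The cohomological bookkeeping here is organised exactly by the fragmentation of $\cJ$ and is an extension of the \v{C}ech/K\"unneth computations already carried out in Lemmas~\ref{lem:cohomology of blowup} and~\ref{lem:coh:comp} and Corollaries~\ref{cor:vanish},~\ref{cor:vanish:0}; controlling which $\cL_j$-twists can occur and showing they never produce higher cohomology after resolution is the delicate part.

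Finally, for part~(2) one cannot invoke Theorem~\ref{thm:main:model} directly, since its hypothesis on $p$ is stronger than $p>2+\max_j\langle\mu_j,\alpha^\vee\rangle$. Instead I would use Proposition~\ref{prop:heart:inverse}, which under this weaker bound still supplies an isomorphism $\tld{\cZ}^{\tau}\otimes_{\cO}\cO/p^{N-1}\cong\tld{\cZ}^{\tmod,\tau}\otimes_{\cO}\cO/p^{N-1}$ with $N=p-2-\max_j\langle\mu_j,\alpha^\vee\rangle$, in particular an isomorphism of closed fibres. As both $\tld{\cZ}^{\tau}$ and $\tld{\cZ}^{\tmod,\tau}$ are $\cO$-flat with smooth generic fibre (so their normalizations are isomorphisms after inverting $p$), the properties ``the normalization has rational singularities'' and ``the normalization is Gorenstein'' are detected modulo a bounded power of $p$, by the same Elkik-type approximation (\cite{Elkik}) used in \S\ref{subsect:main proof}; part~(1) then yields part~(2). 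The point requiring care is the precise deformation-invariance statement for these properties of the normalization in mixed characteristic and the verification that the congruence of Proposition~\ref{prop:heart:inverse} is strong enough under the stated bound.
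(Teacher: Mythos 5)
Your part (1) is essentially the paper's argument (identify $\pi^{\tmod}_*\cO_{\tld{Y}^{\tmod,\eta,\tau}(\tld{z})}$ with the normalization, then use $R^{>0}\pi^{\tmod}_*=0$, Grothendieck duality and the trivial relative dualizing sheaf), but you stop short at the step you yourself flag as "the heart of the matter": the resolution-rationality of $\tld{Y}^{\tmod,\eta,\tau}(\tld{z})$. The route you sketch — resolving $\tld{Ba}(\tld{z})$, tracking strict transforms, and redoing the \v{C}ech/K\"unneth bookkeeping — is both unexecuted and unnecessary. The point is that $\tld{Y}^{\tmod,\eta,\tau}$ is a $\GL_2^{\cJ}$-torsor over $Y^{\tmod,\eta,\tau}=[\Gr_1^{\tau}/\prod_{\cJ}B]$, and $\Gr_1^{\tau}$ is covered by the charts of Table \ref{Table1FV}, each of which is either an affine space or $\Spec\cO[X,Y]/(XY+p)$; hence $\tld{Y}^{\tmod,\eta,\tau}(\tld{z})$ is smooth-locally a product of conifold charts and affine spaces, and its normality and resolution-rationality are immediate. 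No passage through the embedding into $\tld{Ba}(\tld{z})$ is needed for this.

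The genuine gap is in part (2). You propose to transfer "the normalization is Gorenstein/resolution-rational" from $\tld{\cZ}^{\tmod,\tau}$ to $\tld{\cZ}^{\tau}$ across the congruence of Proposition \ref{prop:heart:inverse} by an Elkik-type argument. Two problems: first, these are properties of the \emph{normalizations}, and there is no off-the-shelf statement that the normalization of a non-normal $\cO$-flat formal scheme, let alone its rationality or Gorenstein-ness, is determined by the scheme modulo a bounded power of $p$; making Elkik work would require exactly the control on the singular ideal that forces the strong bound of Theorem \ref{thm:main:model}, which you correctly note is not available. Second, under the stated hypothesis $p>2+\max_j\langle\mu_j,\alpha^\vee\rangle$ one only has $N=p-2-\max_j\langle\mu_j,\alpha^\vee\rangle\geq 1$, so the congruence modulo $p^{N-1}$ can be vacuous. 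The correct move is to transfer the statements at the level of the resolutions rather than the images: Lemma \ref{lem:strightening} with $N=1$ identifies $\tld{Y}^{\eta,\tau}(\tld{z})/\varpi$ with $\tld{Y}^{\tmod,\eta,\tau}(\tld{z})/\varpi$ compatibly with $\pi$ and $\pi^{\tmod}$, so $R\pi_*\cO_{\tld{Y}^{\eta,\tau}(\tld{z})}\otimes^{\bL}_{\cO}\F$ is concentrated in degree $0$ by Proposition \ref{prop:R1:null}; properness plus derived Nakayama then give $R\pi_*\cO_{\tld{Y}^{\eta,\tau}(\tld{z})}=\cO_{\tld{\cZ}^{\mathrm{nm}}(\tld{z})}$ integrally, and Grothendieck duality applied modulo $\varpi$ gives triviality of the dualizing sheaf mod $\varpi$, hence integrally. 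This only uses the mod $\varpi$ congruence, which is exactly what the weak bound on $p$ provides.
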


\begin{proof}
\begin{enumerate}
\item
The statement is local so it suffices to check it for $\tld{\cZ}^{\tmod,\mathrm{nm}}(\tld{z})$. 

Since
\begin{itemize}
\item $\tld{\cZ}^{\tmod,\tau}(\tld{z})$ is the scheme theoretic image of the proper map $\pi^{\tmod}:\tld{Y}^{\tmod,\eta,\tau}(\tld{z})\to \tld{U}(\tld{z})$,
\item $\pi^{\tmod}$ is a closed immersion after inverting $p$ (cf. Proposition \ref{prop:naive vs sat model}),
\item $\tld{Y}^{\tmod,\eta,\tau}(\tld{z})$ is normal,
\end{itemize}
it follows that $\pi^{\tmod}_*\cO_{\tld{Y}^{\tmod,\eta,\tau}(\tld{z})}=\cO_{\tld{\cZ}^{\tmod,\mathrm{nm}}(\tld{z})}$. But Proposition \ref{prop:R1:null} shows that in fact
\[R\pi^{\tmod}_*\cO_{\tld{Y}^{\tmod,\eta,\tau}(\tld{z})}=\cO_{\tld{\cZ}^{\tmod,\mathrm{nm}}(\tld{z})}\]
By Grothendieck duality and the properness of $\pi^{\tmod}$ we have
\begin{equation}
\label{eq:groth:dual}
\omega_{\tld{\cZ}^{\tmod,\mathrm{nm}}(\tld{z})/\cO}=R\pi^{\tmod}_*\omega_{\tld{Y}^{\tmod,\eta,\tau}(\tld{z})/\cO}
\end{equation}
and by Proposition \ref{prop:dualizing complex model} and Proposition \ref{prop:R1:null}, the RHS of \eqref{eq:groth:dual} is
\[R\pi^{\tmod}_*\cO_{\tld{Y}^{\tmod,\eta,\tau}(\tld{z})/\cO}=\cO_{\tld{\cZ}^{\tmod,\mathrm{nm}}(\tld{z})}.\]
Thus we learn that $\tld{\cZ}^{\tmod,\mathrm{nm}}(\tld{z})$ is Gorenstein. Since $\tld{Y}^{\tmod,\eta,\tau}(\tld{z})$ is easily seen to be resolution-rational (it is locally isomorphic to a product of $\Spec \cO[X,Y]/(XY-p)$), $\tld{\cZ}^{\tmod,\mathrm{nm}}(\tld{z})$ is also resolution-rational.
\item Let $\tld{\cZ}^{\mathrm{nm}}(\tld{z})$ be the normalization of $\tld{\cZ}^{\tau}$. The same argument as above shows that 
\[\pi_*\cO_{\tld{Y}^{\eta,\tau}(\tld{z})}=\cO_{\tld{\cZ}^{\mathrm{nm}}(\tld{z})}\]
By Lemma \ref{lem:strightening}
\[R\pi_*\cO_{\tld{Y}^{\eta,\tau}(\tld{z})}\otimes^{\bL}_{\cO}\F=R\pi_*(\cO_{\tld{Y}^{\eta,\tau}(\tld{z})}/\varpi)=R\pi^{\tmod}_*(\cO_{\tld{Y}^{\tmod,\eta,\tau}(\tld{z})}/\varpi)=R\pi^{\tmod}_*\cO_{\tld{Y}^{\tmod,\eta,\tau}(\tld{z})}\otimes^{\bL}_{\cO}\F\]
\end{enumerate}
which concentrates in degree $0$ by Proposition \ref{prop:R1:null}. Since $\pi$ is proper, it follows that 
\begin{equation}
\label{eq:proper pushforward 1}
R\pi_*\cO_{\tld{Y}^{\eta,\tau}(\tld{z})}=\cO_{\tld{\cZ}^{\mathrm{nm}}(\tld{z})}
\end{equation}
\begin{equation}
\label{eq:proper pushforward 2}
R\pi_*(\cO_{\tld{Y}^{\eta,\tau}(\tld{z})}/\varpi)=\cO_{\tld{\cZ}^{\mathrm{nm}}(\tld{z})}/\varpi=\cO_{\tld{\cZ}^{\tmod,\mathrm{nm}}(\tld{z})}/\varpi
\end{equation}
Applying Grothendieck duality to \eqref{eq:proper pushforward 2} as in the previous part, we conclude that $\omega_{(\tld{\cZ}^{\mathrm{nm}}(\tld{z})/\varpi)/\F}=\cO_{\tld{\cZ}^{\mathrm{nm}}(\tld{z})}/\varpi$ is trivial.
Since $\tld{\cZ}^{\mathrm{nm}}(\tld{z})$ is an affine $p$-adic formal scheme, we also get $\omega_{\tld{\cZ}^{\mathrm{nm}}(\tld{z})/\cO}=\cO_{\tld{\cZ}^{\mathrm{nm}}(\tld{z})}$ so $\tld{\cZ}^{\mathrm{nm}}(\tld{z})$ is Gorenstein.
This fact, \eqref{eq:proper pushforward 1} and the fact that $\tld{Y}^{\eta,\tau}$ is resolution-rational now implies $\tld{\cZ}^{\mathrm{nm}}(\tld{z})$ is resolution rational.
\end{proof}
We can also completely classify the non-normal locus of $\tld{\cZ}^{\tau}$:
\begin{thm}\label{thm:non-normal locus}
Suppose $p> 2+\max_j\langle \mu_j,\alpha^\vee\rangle$. 
Let $\tau$ have small presentation $(s,\mu)$. A Galois representation $\rhobar$ gives a non-normal point of $\tld{\cZ}^{\tau}$ exactly when both of the following holds:
\begin{enumerate}
\item For each $j$, $s_j=\Id$ and $\langle \mu_j,\alpha^\vee\rangle\in \{0,1\}$.
\item $\rhobar$ is Fontaine-Laffaille with inertial Hodge-Tate weights at embedding $j\in\cJ$ given by 
\begin{itemize}
\item $(1,0)$ if $\langle \mu_{j},\alpha^\vee\rangle=0$.
\item $(1,1)$ if $\langle \mu_{j},\alpha^\vee\rangle=1$.
\end{itemize}
\end{enumerate}
\end{thm}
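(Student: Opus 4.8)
The plan is to reduce the statement to a local question on the explicit charts $\tld{\cZ}^{\tau}(\tld{z})$ supplied by Corollary \ref{cor:model}, and then to detect non-normality directly from the equations of $\cZ^{\nv,\tau}(\tld{z})$ in Table \ref{Table:Geometric_genes}. By Theorem \ref{thm:main:Gal:def}, a point $\rhobar$ of $\tld{\cZ}^\tau$ corresponds (after passing to the associated $\phz^f$-module) to a point of $\tld{\cZ}^{\tmod,\tau}(\tld{z})$ for an appropriate $\tld{z}=\tld{w}s^{-1}v^\mu$ with $\tld{w}\in\Adm^\vee(\eta)$, and $R^{\eta,\tau}_{\rhobar}$ is (up to formal variables, resp.~$p$-saturation) the completion of $\tld{\cZ}^{\nv,\tau}(\tld{z})$, resp.~a fiber thereof, at the relevant point. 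Since normality of $\tld{\cZ}^{\tmod,\tau}$ can be checked after completion and is detected by the $p$-saturation map $\tld{\cZ}^{\tmod,\tau}(\tld{z})\hookrightarrow \tld{\cZ}^{\nv,\tau}(\tld{z})$ being an isomorphism near the point, the problem becomes: for which fragmentation data and points is $\cZ^{\nv,\tau}(\tld{z})$ (or rather its relevant completed fiber) non-normal, and equivalently fails to be $\cO$-flat or fails Serre's $R_1+S_2$?

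First I would dispose of the easy direction. If some $j$ has $s_j=(12)$, or has $\langle\mu_j,\alpha^\vee\rangle\geq 2$, then by inspection of Table \ref{Table:Geometric_genes} the local ring $R_j$ at that embedding is either a smooth ring $\cO[X_j,Y_j]$ (type $II$) or the ring $\cO[X_j,Y_j,Z_j]/((p-Y_j)Y_j-X_jZ_j)$ (type $I$), and in the fragment the corresponding matrices $M_{\mathrm{in}}$ or $M_{\mathrm{out}}$ have a unit entry (e.g.~the $\begin{pmatrix}1\\-X_j\end{pmatrix}$ for type $II$), so that one of the fragment equations \eqref{matrix:equation:explicit} can be solved for one variable. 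Carrying this out systematically one sees that, away from the excluded case, each $\tld{\cZ}^{\nv,\tau}(\tld{z})$ is already $\cO$-flat with at worst a product of $XY-p$-type singularities, hence normal by Theorem \ref{thm:rational smoothness}(2) (and Serre's criterion, since $\cO$-flat complete intersections are automatically $S_2$). For the converse, assume $s_j=\Id$ and $\langle\mu_j,\alpha^\vee\rangle\in\{0,1\}$ for all $j$; then every embedding is type $0$ (if $k_j=0$) or type $I$ with $\tld{w}_j=t_{w_0(\eta)}$ (if $k_j=1$, the only admissible $\tld{w}_j$ forcing type $I$ under $s_j=\Id$). Here the key point — exactly the situation isolated in Lemma \ref{lem:coh:comp}\eqref{lem:coh:comp:3} and Corollary \ref{cor:vanish} — is that $R^f\mathrm{pr}_*(\otimes_j \cL_j)\neq 0$, so by Proposition \ref{prop:coker} the cokernel of $\cO_{\tld{\cZ}^{\tmod,\tau}}\to\pi^{\tmod}_*\cO_{\tld{Y}^{\tmod,\eta,\tau}}$ is a nonzero $\F$-module supported at a single point, and $\tld{\cZ}^{\tmod,\tau}(\tld{z})$ is not normal precisely along the locus where this cokernel is nonzero. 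One must then identify that locus: it is cut out by setting the ``$\F$-contributing'' variables ($C$ or $C'$ from Table \ref{Table:Traductions}, equivalently $X_j, Z_j$ and $Y_j\bmod p$ in Table \ref{Table:Geometric_genes}) to their degenerate values, and one checks this corresponds to the Frobenius matrix $\ovl t\tld z_0\phz(\tld z_{f-1})\cdots$ being of the shape that makes $\rhobar$ Fontaine--Laffaille with the asserted irregular inertial Hodge--Tate weights $(1,0)$ when $k_j=0$ and $(1,1)$ when $k_j=1$.

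Concretely, for the final identification I would compute the $\phz^f$-module attached to the distinguished point of each chart in the exceptional case and compare with the standard Fontaine--Laffaille description of $\rhobar|_{G_K}$: the product $\prod_j T_j$ of transition/endpoint matrices at the non-normal point becomes upper (or lower) triangular with diagonal entries powers of $p$ dictated precisely by the $\langle\mu_j,\alpha^\vee\rangle$, which is the signature of a Fontaine--Laffaille representation with those inertial weights. The inequality $p>2+\max_j\langle\mu_j,\alpha^\vee\rangle$ is used exactly to guarantee these weights lie in the Fontaine--Laffaille range and that Theorem \ref{thm:rational smoothness}(2) applies so that normal charts are genuinely normal. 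The main obstacle I anticipate is the bookkeeping in the converse direction: one must show that the non-normal locus inside $\tld{\cZ}^{\tmod,\tau}(\tld{z})$ — a priori a closed subscheme cut out by the support of an $R^f$-cohomology group spread across possibly several fragments — is \emph{exactly} a single reduced point corresponding to the semisimple Fontaine--Laffaille $\rhobar$, and not something larger; this requires combining the K\"unneth computation of $R\Gamma(\cI(\tld z))$ from the proof of Corollary \ref{cor:vanish} with a precise dictionary between the chart coordinates and the Fontaine--Laffaille data, and checking that at every other point of these charts the $p$-saturation map is already an isomorphism.
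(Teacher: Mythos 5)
Your overall strategy points in the right direction — both you and the paper detect non-normality via the cohomological criterion furnished by Corollary \ref{cor:vanish} — but there are two genuine gaps, one of which is a conceptual error rather than mere incompleteness.

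First, your handling of the ``easy direction'' is flawed. You claim that away from the exceptional case, $\tld{\cZ}^{\nv,\tau}(\tld{z})$ is already $\cO$-flat with at worst product-of-$(XY-p)$ singularities and hence normal. This is not true and is also not the right criterion. Already in Tables \ref{TableCDM1} and \ref{TableCDM2} one sees many charts away from the exceptional case where $I^{\textnormal{nv}}\neq I^{p\textnormal{-sat}}$, so the naive model is emphatically not $\cO$-flat; moreover, $\cO$-flatness of the naive model has nothing to do with normality of $\tld{\cZ}^{\tmod,\tau}(\tld{z})$, which is $\cO$-flat by construction (it \emph{is} the $p$-saturation). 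Normality is about whether the finite map $\tld{Y}^{\tmod,\eta,\tau}(\tld{z})\to\tld{\cZ}^{\tmod,\tau}(\tld{z})$ induces $\cO_{\tld{\cZ}^{\tmod,\tau}(\tld{z})}\xrightarrow{\sim}\pi^{\tmod}_*\cO_{\tld{Y}^{\tmod,\eta,\tau}(\tld{z})}$, i.e.~about the vanishing of the cokernel identified in Corollary \ref{cor:Delta} as $R^1\pr_{\tld{B}*}\cI(\tld{z})$. The paper applies this criterion \emph{uniformly}: the classification of when $R^1\pr_{\tld{B}*}\cI(\tld{z})\neq 0$ from Corollary \ref{cor:vanish} (reducing via K\"unneth to the local analysis of Lemma \ref{lem:coh:comp}\eqref{lem:coh:comp:3}) handles both directions at once, forcing $s_j=\Id$ and $\langle\mu_j,\alpha^\vee\rangle\in\{0,1\}$ with $\tld{w}_j=t_{w_0(\eta)}$ on all $j$ with $\langle\mu_j,\alpha^\vee\rangle=1$. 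Your reasoning that ``$M_{\textnormal{in}}$ or $M_{\textnormal{out}}$ has a unit entry so we can solve'' does not imply $\cO$-flatness of the resulting quotient, because the remaining equations after substitution can still have $p$-torsion.

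Second, your expectation that the non-normal locus is a ``single reduced point'' is incorrect. The paper's proof shows that, when $(s,\mu)$ has the requisite form, the support of $R^1\pr_{\tld{B}*}\cI(\tld{z})$ is the whole positive-dimensional locus of tuples $(A_j)\in\tld{U}(\tld{z})(\F)$ with $A_j=g_j\bigl(\begin{smallmatrix}v&0\\0&v\end{smallmatrix}\bigr)$ for $j\in\cJ_1$ and $A_j$ of one of the two $\GL_2(\F)$-translated shapes for $j\in\cJ_0$, ranging over all $g_j\in\GL_2(\F)$ and $B,C\in\F$; this is then identified with the image of $\mathrm{FL}_\lambda\into\Phi\text{-}\Mod^{\text{\'et},2}_K$ for the appropriate $\lambda$ (analogous to \cite[Propositions 2.2.6, 8.2.4]{LGC}). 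The Fontaine--Laffaille identification is precisely the content of the second paragraph of the paper's proof, and is the part you flagged as an obstacle; this is fine to flag, but be aware the answer is a moduli, not a point, so any argument claiming rigidity there would fail. You should therefore redo the easy direction via the cohomological criterion and formulate the converse in terms of the actual positive-dimensional support, as in the paper.
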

\begin{proof} It follows from the proof of Theorem \ref{thm:rational smoothness} that the non-normal locus is exactly the support of 
\[\coker(\cO_{\tld{Z}^{\tau}}\to \pi_*\cO_{\tld{Y}^{\eta,\tau}})\]
and when intersecting with $\tld{U}(\tld{z})$, it equals the support of 
\[\coker(\cO_{\tld{U}(\tld{z})}/\varpi\to \pi_*\cO_{\tld{Y}^{\eta,\tau}}/\varpi)=\coker(\cO_{\tld{U}(\tld{z})}/\varpi\to \pi^{\tmod}_*\cO_{\tld{Y}^{\tmod,\eta,\tau}}/\varpi)\]
Thus we reduce to investigating the non-normal locus of $\tld{Z}^{\tmod,\tau}(\tld{z})$.
It follows from the proof of Proposition \ref{prop:coker} and Theorem \ref{thm:rational smoothness} that the non-normal locus is exactly the support of $R^1\textnormal{pr}_{\tld{B}*}\cI(\tld{z})$ in Corollary \ref{cor:Delta}. But Corollary \ref{cor:vanish} shows this support is non-empty exactly when for each $j$, either $\langle \mu_j,\alpha^\vee \rangle =0$ or $(\langle \mu_j,\alpha^\vee \rangle,s_j,\tld{w}_j)=(1,\Id,t_{w_0(\eta)})$. This shows that $(s,\mu)$ has the desired form.

We now suppose $(s,\mu)$ has the requisite form. Decompose $\cJ=\cJ_0\coprod \cJ_1$ where $\cJ_0=\{j | \langle \mu_j,\alpha^\vee \rangle =0 \}, \cJ_1=\{j | \langle \mu_j,\alpha^\vee \rangle =1 \}$.
Furthermore, if $(s,\mu)$ is of the right form, the proof of Lemma \ref{lem:coh:comp} furthermore identifies the support of $R^1\textnormal{pr}_{\tld{B}*}\cI(\tld{z})$ as the locus of tuples $(A_j)\in \tld{U}(\tld{z})(\F)$ such that 
\begin{itemize} 
\item $A_j=g_j\begin{pmatrix} v & 0 \\ 0& v\end{pmatrix}$ if $j\in \cJ_1$.
\item $A_j\in \{g_j\begin{pmatrix} v & 0 \\ 0& 1\end{pmatrix}\begin{pmatrix} 1 & 0 \\ C& 1\end{pmatrix},g_j\begin{pmatrix} 1 & 0 \\ 0& v\end{pmatrix}\begin{pmatrix} 1 & B \\ 0& 1\end{pmatrix}\}$ if $j\in\cJ_0$;
\end{itemize} 
for some $g_j\in \GL_2(\F)$ (and $B, C\in \F$). After modifying $(A_j)$ by the shifted conjugation action of $\GL_2^{\cJ}$, we can arrange so that $A_j\in \GL_2(\F)\begin{pmatrix} v & 0 \\ 0 &1 \end{pmatrix}$ for $j\in \cJ_0$.

Now set $\lambda\in X_*(T)^\vee$ be such that $\lambda_{j}=(1,0)$ for $j\in\cJ_0$ and $\lambda_{j}=(1,1)$ for $j\in \cJ_1$.
Analogous to \cite[Proposition 2.2.6]{LGC}, the moduli space $\mathrm{FL}_\lambda$ of mod $p$ Fontaine-Laffaille modules of weight $\lambda$ has the following description:
Let $H\cong \GL_2^{|\cJ_1|}\times B^{|\cJ_0|}\subset \GL_2^\cJ$ be the subgroup of tuples $(X_j)$ such that $X_{j-1}\in \GL_2$ if $j\in \cJ_1$ and $X_{j-1}\in B$ if $j\in \cJ_0$. Then $\mathrm{FL}_\lambda$ is the quotient $[\GL_2^\cJ/H]$ given by the action
\[(X_j)\cdot(g_j)\mapsto (X_jg_{j}\ovl{X}_{j-1}^{-1})\]
where the overline denotes the projection $B\onto T$ when $j\in \cJ_0$, and is the identity map when $j\in \cJ_1$.

But then, analogous to \cite[Proposition 8.2.4]{LGC}, the natural embedding 
$\mathrm{FL}_\lambda\into \Phi\text{-}\Mod^{\text{\'et},2}_{K}$ identifies with
\[(g_j)\mapsto \bigg((g_j\begin{pmatrix} v & 0 \\ 0 &v \end{pmatrix})_{j\in\cJ_1},(g_j\begin{pmatrix} v & 0 \\ 0 &1 \end{pmatrix})_{j\in\cJ_0}\bigg).\]
\end{proof}
\begin{rmk} By the definition of small presentation, Theorems \ref{thm:rational smoothness}, \ref{thm:non-normal locus} hold whenever $p>5$. However when $K=\mathbb{Q}_p$, the only non-trivial case of these theorems are when $\langle \mu,\alpha^{\vee}\rangle\leq 1$, and thus 
and we only need to impose $p>3$ for those cases.
\end{rmk}

\section{Equations of the deformation ring}\label{sec:equa}

In this section we apply the main results of Sections \ref{sec:models} and \ref{sec:geometry} (namely Theorems \ref{thm:main:model} and \ref{thm:main:Gal:def}) to prove the conjectures of the series of papers \cite{CDM1,CDM2,CDM3}, in particular that tamely potentially crystalline Barsotti--Tate deformation rings $R_{\rhobar}^{\tau}$ only depend on the combinatorial gene $\bX(\tau,\rhobar|_{I_K})$(Theorem \ref{thm:indepen}).
Throughout this section, except for \S \ref{sec:examples}, we assume that $K\neq \Qp$.
\\   

\subsection{Genetics}

We recall and formalize into an abstract setup the notion of genes as introduced in \cite{CDM2,CDM3}, and recall the main conjectures of \emph{loc.~cit}.

\subsubsection{Combinatorial genes}
Inspired by the terminology of \cite{CDM2,CDM3} we now define the notion of \emph{combinatorial gene} associated to a pair $(\gamma,h)\in\Z/(p^{f}-1)\times \Z/(p^{2f}-1)$.

Let $\gamma\in\Z/(p^{f}-1)$ and $h\in \Z/(p^{2f}-1)$ such that $h\not\cong 0$ modulo $q+1$ and $h-2\gamma-(\sum_{j=0}^{f-1}p^j)\not \cong 0$ modulo $p^f-1$.
Consider the $p$-expansions 
\begin{equation}
\label{eq:vi}
h - (p^f{+}1)\bigg(h-\gamma-\sum_{j=0}^{f-1}p^j\bigg) \equiv 
p^{2f-1} v_0 + p^{2f-2} v_1 + \cdots + p v_{2f-2} + v_{2f-1}
\pmod{p^{2f} - 1}
\end{equation}
with $v_{j'}\in \{0,\dots,p-1\}$ for all $j'\in\cJ'$.
A \emph{combinatorial gene associated to $(\gamma,h)$} is a $\cJ'$-tuple $\bX = \bX(\gamma,h)\in \{\gA, \gB, \gAB, \gO\}^{\cJ'}$ which satisfies the following properties (see \cite[Lemma B.1.3]{CDM3}):
\begin{enumerate}
\item 
\label{en:gene:1}
if $v_{j'} = 0$ and $\bX_{j'+1} = \gO$, then $\bX_{j'} = \gAB$;
\item if $v_{j'} = 0$ and $\bX_{j'+1} \neq \gO$, then $\bX_{j'} = \gA$;
\label{en:gene:3}
\item if $v_{j'} = 1$ and $\bX_{j'+1} = \gO$, then $\bX_{j'} = \gO$;
\label{en:gene:4}
\item if $v_{j'} = 1$ and $\bX_{j'+1} \neq \gO$, then $\bX_{j'} = \gB$;
\item 
\label{en:gene:5}
if $v_{j'} \geq 2$, then $\bX_{j'} = \gO$.
\end{enumerate}
By \cite[Lemma~1.3.3, Lemma B.1.7]{CDM3} a combinatorial gene $\bX$ associated to $(\gamma,h)$ is well defined, and is unique by the proof of \cite[Proposition 1.4.4]{CDM3}.
Moreover, by \cite[Proposition 1.3.2, Corollary 1.3.4]{CDM3}, the $\cJ'$-tuple $\bX=\bX(\gamma,h)$  satisfies the following conditions 
\begin{enumerate}[start=1,label={$\clubsuit$\arabic*}]
\item
\label{en:abstract:gene:1}
if $\bX_{j'+1} = \gO$, then $\bX_{j'}\in\{ \gAB,\gO\}$;
\item 
\label{en:abstract:gene:2}
if $\bX_{j'+1} \neq \gO$, then $\bX_{j'}\in\{ \gA,\gB,\gO\}$,
\item 
\label{en:abstract:gene:3}
there exists an integer $j'\in\cJ'$ such that $\bX_{j'}=\gO$ or $\bX_{j'}\not=\bX_{j'+1}$.
\end{enumerate}

The discussion after \cite[Lemme 2.1.7]{CDM2} shows that:
\begin{lemma}
\label{lem:eq:rel:gene}
Assume that $\gamma+\gamma'+(\sum_{j=0}^{f-1}p^j)\equiv h$ modulo $p^f-1$, $h\not\equiv 0\mod p^f+1$. Then:
\begin{equation*} 
\bX(\gamma',h)_{j'}=
\begin{cases}
\gA & \text{ if } \bX(\gamma,h)_{j'}=\gB,\\
\gB & \text{ if } \bX(\gamma,h)_{j'}=\gA,\\
\bX(\gamma,h)_{j'} & \text{ otherwise, }\\
\end{cases}
\end{equation*}
and 
\[
\bX(\gamma,p^fh)_{j'}=\bX(\gamma,h)_{j'+f}.
\]

\end{lemma}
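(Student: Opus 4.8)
The statement is a clean combinatorial consequence of the defining recursion \eqref{en:gene:1}--\eqref{en:gene:5} for the combinatorial gene, so the plan is to reduce everything to tracking the $p$-expansion digits $v_{j'}$ appearing in \eqref{eq:vi}. First I would unwind the hypotheses: from $\gamma+\gamma'+(\sum_{j=0}^{f-1}p^j)\equiv h \pmod{p^f-1}$ one gets $\gamma'\equiv h-\gamma-\sum_{j=0}^{f-1}p^j \pmod{p^f-1}$, and then I would substitute this into the left-hand side of \eqref{eq:vi} for the pair $(\gamma',h)$. The key observation is that the quantity $h-(p^f+1)\big(h-\gamma-\sum_{j=0}^{f-1}p^j\big)$ defining the digits $v_{j'}$ for $(\gamma,h)$ and the analogous quantity $h-(p^f+1)\big(h-\gamma'-\sum_{j=0}^{f-1}p^j\big)$ for $(\gamma',h)$ are related in a controlled way modulo $p^{2f}-1$; I expect this relation to be that the digit strings are \emph{bitwise complementary} in an appropriate sense, or more precisely that the roles of digit $0$ and digit $1$ get swapped while digits $\geq 2$ are preserved (digits $\geq 2$ forcing $\gO$ in both cases anyway). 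Once the digit strings are identified, properties \eqref{en:gene:1}--\eqref{en:gene:5} immediately propagate the swap $\gA \leftrightarrow \gB$ through the recursion, since the recursion rules for $v_{j'}=0$ produce $\{\gAB,\gA\}$ and those for $v_{j'}=1$ produce $\{\gO,\gB\}$, and swapping $0 \leftrightarrow 1$ in the digit string is exactly what interchanges the $\gA$ and $\gB$ branches while fixing $\gAB$, $\gO$.

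For the second identity $\bX(\gamma,p^fh)_{j'}=\bX(\gamma,h)_{j'+f}$, the plan is again purely at the level of \eqref{eq:vi}: multiplying $h$ by $p^f$ cyclically rotates the $p$-expansion of anything considered modulo $p^{2f}-1$ by $f$ positions. I would check that under $h\mapsto p^f h$ the entire left-hand side $h-(p^f+1)(h-\gamma-\sum p^j)$ transforms — modulo $p^{2f}-1$ — by multiplication by $p^f$ (using that $p^f(p^f+1)\equiv p^f+1$ when paired with the $p^{2f}-1$ modulus only after care with the $\gamma$ and $\sum p^j$ terms; here the hypothesis that $\gamma\in\Z/(p^f-1)$, so $p^f\gamma\equiv\gamma$, is what makes this work). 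Hence the digit string $v_{0},\dots,v_{2f-1}$ for $(\gamma,p^fh)$ is the cyclic shift by $f$ of that for $(\gamma,h)$, i.e.\ $v_{j'}^{(p^fh)}=v_{j'+f}^{(h)}$. Since the recursion \eqref{en:gene:1}--\eqref{en:gene:5} defining $\bX$ from the digit string is itself $\cJ'$-cyclically equivariant (it only relates $\bX_{j'}$ to $v_{j'}$ and $\bX_{j'+1}$), and since uniqueness of the gene is already guaranteed by the cited results from \cite{CDM3}, the shifted digit string produces exactly the shifted gene, giving $\bX(\gamma,p^fh)_{j'}=\bX(\gamma,h)_{j'+f}$.

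\textbf{Main obstacle.} The delicate point is the first identity: verifying precisely how the digit string of $h-(p^f+1)\big(h-\gamma'-\sum_{j=0}^{f-1}p^j\big)$ relates to that of $h-(p^f+1)\big(h-\gamma-\sum_{j=0}^{f-1}p^j\big)$ modulo $p^{2f}-1$. One must be careful because $\gamma'$ is only determined modulo $p^f-1$, not modulo $p^{2f}-1$, so the quantity $h-\gamma'-\sum p^j$ entering the formula is well-defined modulo $p^f-1$ but the formula multiplies it by $p^f+1$; one needs that the ambiguity $p^f-1$ times $(p^f+1)$ equals $p^{2f}-1$, hence is absorbed. Granting this, writing $w\defeq h-\gamma-\sum_{j=0}^{f-1}p^j$ so $h-\gamma'-\sum_{j=0}^{f-1}p^j\equiv -w \pmod{p^f-1}$, one computes that the two defining quantities sum (modulo $p^{2f}-1$) to $2h-(p^f+1)\cdot 0 = 2h$ — no, more carefully they sum to $2h-(p^f+1)(w-w)$, which is not obviously a string of $p-1$'s; so the actual relation will need the non-vanishing hypotheses $h\not\equiv 0\bmod p^f+1$ and $h-2\gamma-\sum p^j\not\equiv 0\bmod p^f-1$ to rule out carrying pathologies, and this is exactly the content invoked from ``the discussion after \cite[Lemme 2.1.7]{CDM2}''. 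I would therefore structure the proof to cite that discussion for the digit-level claim and spend the bulk of the write-up on the (easier, self-contained) propagation through \eqref{en:gene:1}--\eqref{en:gene:5} and on the rotation argument for the second identity.
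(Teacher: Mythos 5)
The paper's own proof of this lemma is a single sentence deferring entirely to the discussion after \cite[Lemme~2.1.7]{CDM2}, so you are attempting to supply detail the paper omits. Your argument for the second (rotation) identity is correct: from $p^f\gamma\equiv\gamma\pmod{p^f-1}$, hence $(p^f+1)p^f(\gamma+\sum_jp^j)\equiv(p^f+1)(\gamma+\sum_jp^j)\pmod{p^{2f}-1}$, the defining quantity for $(\gamma,p^fh)$ is $p^f$ times that for $(\gamma,h)$, which rotates the base-$p$ digit string by $f$, and the recursion \eqref{en:gene:1}--\eqref{en:gene:5} is $\cJ'$-cyclically equivariant.

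For the first identity your plan has a genuine gap, in two places. At the digit level, write $L(\gamma,h)\defeq h-(p^f+1)\bigl(h-\gamma-\sum_{j=0}^{f-1}p^j\bigr)$. From $\gamma'\equiv h-\gamma-\sum_jp^j$ one gets $h-\gamma'-\sum_jp^j\equiv\gamma$ (not $-w$, a sign slip in your write-up), and using $\sum_{j=0}^{2f-1}p^j=(p^f+1)\sum_{j=0}^{f-1}p^j$ one computes
\[
L(\gamma',h)\equiv -p^f\,L(\gamma,h)+\sum_{j=0}^{2f-1}p^j\pmod{p^{2f}-1},
\]
so the digit-level relation is, up to carries, $v'_{j'}=p-v_{j'+f}$: a $p$-complement combined with a shift by $f$, not a $0\leftrightarrow1$ swap. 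Second, even granting a swap $v'_{j'}=1-v_{j'}$, the propagation through the recursion would not fix $\gAB$ and $\gO$ as the lemma requires: \eqref{en:gene:1} sends $(v_{j'},\bX_{j'+1})=(0,\gO)$ to $\gAB$ while \eqref{en:gene:4} sends $(1,\gO)$ to $\gO$, so with $\gO$ fixed at $j'+1$ a $0\leftrightarrow1$ swap at $j'$ exchanges $\gAB$ and $\gO$. The shift by $f$ in the correct digit relation is what repairs this. Concretely, for $f=2$, $p=5$, $(\gamma,h,\gamma')=(0,1,19)$, the digit strings are $(1,0,1,1)$ and $(0,0,0,1)$, giving $\bX(\gamma,h)=(\gB,\gA,\gB,\gB)$ and $\bX(\gamma',h)=(\gA,\gA,\gA,\gB)$; these are not related by the $\gA\leftrightarrow\gB$ swap at equal indices, but they do satisfy $\bX(\gamma',h)_{j'}=\sigma\bigl(\bX(\gamma,h)_{j'+f}\bigr)$ where $\sigma$ is the $\gA\leftrightarrow\gB$ swap. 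That composite of \eqref{eq:rel:gene:1} and \eqref{eq:rel:gene:2} is all that is needed for the abstract gene to be well defined, and it suggests the lemma's first display should carry $j'+f$, matching the second.
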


Following the conditions \eqref{en:abstract:gene:1}--\eqref{en:abstract:gene:3} and Lemma \ref{lem:eq:rel:gene} we define an \emph{abstract combinatorial gene} as follows:
\begin{defn}
\label{defn:abs:comb:gene}
An \emph{abstract combinatorial gene} is an equivalence class of a $\cJ'$-tuple $\bX \in \{\gA, \gB, \gAB, \gO\}^{\cJ'}$ satisfying conditions \eqref{en:abstract:gene:1}, \eqref{en:abstract:gene:2},\eqref{en:abstract:gene:3}, by the equivalence relation generated by 
\begin{align}
\label{eq:rel:gene:1}
\bX'\sim\bX &&\text{ if }&\begin{cases}
\bX'_{j'}=\{\gA,\gB\}\setminus\{\bX_{j'}\}&\text{ when } \bX_{j'}\in\{\gA,\gB\}\\
\bX'_{j'}=\bX_{j'}&\text{otherwise}.
\end{cases}
\\
&\nonumber\\
\label{eq:rel:gene:2}
\bX'\sim\bX &&\text{ if }&\begin{cases}
\bX'_{j'}&=\bX_{j'+1}\\
\bX'_{j'+f}&=\bX_{j'+f+1}
\end{cases}
\end{align}
\end{defn}
Note that the relation \eqref{eq:rel:gene:2} implies that $(\bX_{j'})_{j'\in\cJ'}\sim (\bX_{j'+f})_{j'\in\cJ'}$, and that the notion of abstract combinatorial gene is independent of $p$.

Let $(\tau,\taubar')$ be a pair of tame inertial types satisfying the following determinant condition:
\begin{equation}
\tag{det}\label{eq:det}
\det(\tau)\otimes_{\cO}\F\equiv \det(\taubar')\otimes_{\F}\omega.
\end{equation} 
If $\tau=\tau(s,\mu)=\omega_f^{\gamma}\oplus \omega_f^{\gamma'}$ and $\taubar'=\taubar(\sigma,\nu)=\omega_{2f}^h\oplus \omega_{2f}^{p^fh}$ are a tame inertial type of niveau $f$ and a tame inertial $\F$-type of level $2f$ as in \S \ref{subsub:TIT}, then condition \eqref{eq:det} translates into the condition of Lemma \ref{lem:eq:rel:gene}.
We thus define the gene $\bX(\tau,\taubar')$ of the pair $(\tau,\taubar')$ as the abstract combinatorial gene associated to the $\cJ'$-tuple $\bX(\gamma,h)$.
This gives the motivation behind Definition \ref{defn:abs:comb:gene}: the relation \eqref{eq:rel:gene:1} is imposed by the isomorphism $\omega_f^{\gamma}\oplus \omega_f^{\gamma'}\cong \tau\cong \omega_f^{\gamma'}\oplus \omega_f^{\gamma}$ and the relation \eqref{eq:rel:gene:2} by the fact that the isomorphism class of $\tau$, $\taubar'$ does not depend on the choice of the embedding $\sigma_0$.
In particular, $\bX(\tau,\taubar')$ only depends on the isomorphism class of $\tau$ and $\taubar'$, and is insensitive to twist by characters $\chi:I_K\ra\cO^\times$.

\subsubsection{Genetic conjectures}
The computations of \cite{CDM2,CDM3} showed that the combinatorial genes contains non-trivial information on the generic and special fiber of Galois deformation rings with $p$-adic Hodge theory conditions.

Let $\rhobar:G_K\ra\GL_2(\F)$ be irreducible and $\tau$ a tame inertial type of niveau $f$.
Then $\rhobar|_{I_K}$ defines a tame inertial $\F$-type, so that the combinatorial gene $\bX(\tau,\rhobar|_{I_K})$ is defined.
The authors of \cite{CDM2,CDM3} propose the following conjecture (which is an integral version of  \cite[Conjecture 5.1.5]{CDM2})
\begin{conj''}[Conjecture 2 in \cite{CDM3}]
\label{conj'':geneR}
The deformation ring $R_{\rhobar}^{\eta,\tau}$ is determined by $\bX(\tau,\rhobar|_{I_K})$.
\end{conj''}

They furthermore refine Conjecture \ref{conj'':geneR} into the following
\begin{conj''}[Conjecture 5.2.7 \cite{CDM2}, Conjecture 3.1.2 \cite{CDM3}]\label{conj:decomp}
There exists a decomposition $\bX(\tau,\rhobar|_{I_K})=\cup_{i=0}^{r}(\bX_{j_i},\bX_{j_{i+f}})_{j_i\leq j\leq j_{i+1}}$ such that
\[
R_{\rhobar}^{\eta,\tau}\cong \widehat{\otimes}_{i=0}R_{i}
\]
where $R_i$ is a complete local Noetherian $\cO$ algebra depending only on $(\bX_{j_i},\bX_{j_{i+f}})_{j_i\leq j\leq j_{i+1}}$.
\end{conj''}
Moreover, even if not stated as a conjecture, they suggest that
\begin{conj''}[\cite{CDM3,CDM4}]
\label{conj'':geneR:indep:p}
The deformation ring $R_{\rhobar}^{\eta,\tau}$ is ``independent of $p$''.
\end{conj''}
Conjectures \ref{conj'':geneR}, \ref{conj:decomp} and \ref{conj'':geneR:indep:p} are proven in Theorem \ref{thm:indepen} for  $p>8f+3+\max_j\langle\mu_j,\alpha^\vee\rangle$.
In particular throughout sections \ref{subsec:gen:tra}, \ref{subsec:fibers} and \ref{sub:Naive:eq} we assume that $p>8f+3+\max_j\langle\mu_j,\alpha^\vee\rangle$.

Prior to this work, Conjecture \ref{conj'':geneR} was known when either $\tau$ has a presentation $(s,\mu)$ with $2<\langle\mu_j,\alpha^\vee\rangle <p-2$ for all $j\in\cJ$ or if $\binom{\bX(\tau,\rhobar|_{I_K})_{j+f}}{\bX(\tau,\rhobar|_{I_K})_j}=\binom{\gO}{\gO}$ for some $j\in\cJ$.
In the first case, we have $\gO\in\{\bX_{j},\bX_{j+f}\}$ for all $j\in\cJ$ which in turn implies that implies that the Kisin variety of type $(\eta,\tau)$ attached to $\rhobar$ is either empty or a single point by \cite[Th\'eor\`eme 2.2.1(A)]{CDM2}. Conjecture \ref{conj'':geneR} is true by Theorem \ref{thm:generic_cases} in this case follow from Theorem \ref{thm:generic_cases} and \cite[Theorem 3]{CDM3}.
In the second case, the Kisin variety is empty and the deformation ring is zero.

We now elaborate on Conjecture \ref{conj'':geneR:indep:p}.
Given an abstract combinatorial gene $\bX$ we prove the conjecture by constructing rings $R_{\bX}$ which are (possibly zero) quotients of polynomial rings over $\Z[t]$ modulo an ideal $I_{\bX}\subset R_{\bX}$.
These rings are independent of $p$ (since abstract combinatorial genes are) and the Conjecture \ref{conj'':geneR:indep:p} would be proven by explicitly showing that $R_{\rhobar}^{\eta,\tau}$ is isomorphic to the completion of $R_{\bX}/(t-p)\otimes \cO$ at the ideal generated by $t$ and the variables of $R_{\bX}$, where $\bX=\bX(\tau,\rhobar|_{I_K})$.
Again, by Theorem \ref{thm:generic_cases}, the conjecture is known to be true when $\tau$ has a $2$-generic lowest alcove presentation.

We conclude with the following observation.
By \cite[Proposition 4.1.3]{CDM2}, we have $R_{\rhobar}^{\eta,\tau}=0$ as soon as $(\bX(\tau,\rhobar|_{I_K})_{j+f},\bX(\tau,\rhobar|_{I_K})_{j})=(\gO,\gO)$ for some $j\in\cJ$.
Hence, in what follows, we will be interested in combinatorial genes $\bX$ satisfying the further condition
\begin{enumerate}[start=4,label={$\clubsuit$\arabic*}]
\item 
\label{en:abstract:gene:extra}
$(\bX_{j+f},\bX_j)\neq (\gO,\gO)$ for all $j\in\cJ$.
\end{enumerate}

\subsection{Genetic translation}\label{subsec:gen:tra}

We fix our setup as in Theorem \ref{thm:main:Gal:def}.
Hence, let $\tau:I_K\ra \GL_2(\cO)$ be a regular tame inertial type of niveau $f$ with a small presentation $(s,\mu)$ and let $\tld{w}=(w_jt_{\nu_j})_{j\in\cJ}\in \Adm^\vee(\eta)$. 
Up to twist, we can furthermore assume that $\mu_{j,2}=0$ for all $j\in \cJ$.
We abbreviate $k_j\defeq \langle \mu_j,\alpha^\vee\rangle$ and set $\tld{z}\defeq \tld{w}s^{-1}v^{\mu}$ in what follows.

In order to analyze genetic data associated to $\tau$ and $\tld{w}$ define $\lambda\in X^*(\un{T})$ by the condition
\begin{equation}
\label{eq:translation:first:step}
(z_0z_{f-1}\cdots z_1)v^{\left(\sum_{j\in\cJ}p^j\lambda_{j}\right)}=\tld{z}_0\phz(\tld{z}_{f-1})\cdots\phz^{f-1}(\tld{z}_{1}).
\end{equation}
and the $2f$-tuple $(v'_{j'})_{j'\in\cJ'}$ by  
$$
\begin{pmatrix}
v'_{2f-1-j}\\v'_{f-1-j}
\end{pmatrix}\defeq \lambda_{j}-\begin{pmatrix}
k_{f-j}\\k_{f-j}
\end{pmatrix}\delta_{s_{\orient,f-1-j}\neq\Id}
$$
where $j\in\{0,\dots,f-1\}$ and $s_{\orient,f-1-j}=\prod_{i=0}^{f-1-j}s_i$ (Lemma \ref{lem:s_or}).
By smallness of $(s,\mu)$, we have $v'_j, v'_{f+j}\in\{-\frac{p+1}{2},\ldots, \frac{p+3}{2}\}$ for all $j\in\{0,\dots,f-1\}$.
Note that the $2f$-tuple $(v'_{j'})_{j'\in\cJ'}$ depends on the triple $(\tld{w},s,\mu)$, but we omit this dependence for sake of readability.

We assume from now on that $\prod_{j\in\cJ}z_j=w_0$, $\sum_{j\in\cJ}p^j(\mu_{j,1}+\mu_{j,2})\equiv\sum_{j\in\cJ}p^j(\lambda_{j,1}+1+(\lambda_{j,2}+1))$ modulo $p^f-1$ and $\sum_{j\in\cJ}p^j(\lambda_{j,1}+p^f\lambda_{j,2})\not\equiv 0$ modulo $p^f+1$.
By construction, the $2f$-tuple $(v'_{j'})_{j'\in\cJ'}$ extracts precisely the LHS of equation \eqref{eq:vi} (using equations \eqref{eq:gamma} and \eqref{eq:h}), and hence produces a gene $\bX(v')$, satisfying items \eqref{en:abstract:gene:1}, \eqref{en:abstract:gene:2}, \eqref{en:abstract:gene:3} thanks to the assumptions in the previous sentence.
Tables \ref{TableGen1:new} and \ref{TableGen2:new} give respectively the explicit description of $\tld{z}_i$ and $\begin{pmatrix}
v'_{f+j}\\v'_{j}
\end{pmatrix}$ according to $(s,\mu)$ and $\tld{w}$ and are directly obtained from the definitions. 

\begin{table}[hbt]
\small
\captionsetup{justification=centering}
\caption[Foo content]{\textbf{Genetic Translation-I}
}
\label{TableGen1:new}
\centering
\adjustbox{max width=\textwidth}{
\begin{tabular}{| c || c | c | c |}
\hline
\hline
\backslashbox{$s_i$}{$\tld{w}_i$}&$t_\eta$&$w_0t_{\eta}$&$t_{w_0(\eta)}$ \\
\hline
&&&\\
$\Id$&$\begin{pmatrix}k_i+1\\0\end{pmatrix}$&$(12)\begin{pmatrix}k_i+1\\0\end{pmatrix}$&$\begin{pmatrix}k_i\\1\end{pmatrix}$
\\
&&&\\
\hline
&&&\\
$(12)$&$(12)\begin{pmatrix}k_i\\1\end{pmatrix}$&$\begin{pmatrix}k_i\\1\end{pmatrix}$&$(12)\begin{pmatrix}k_i+1\\0\end{pmatrix}$
\\
&&&\\
\hline
\hline
\end{tabular}}
\captionsetup{justification=raggedright,
singlelinecheck=false
}
\caption*{
This table records $\tld{z}_i\defeq\tld{w}_is_{i}^{-1}t_{\mu_i}$.
}
\end{table}

\begin{table}[hbt]
\small
\captionsetup{justification=centering}
\caption[Foo content]{\textbf{Genetic Translation-II
}
}
\label{TableGen2:new}
\centering
\adjustbox{max width=\textwidth}{
\begin{tabular}{| c || c || c | c | }
\hline
\hline
$\tld{w}_{j+1}$&\backslashbox{$s_{j+1}$}{$s_{\orient,j}$}&$\Id$&$(12)$\\
\hline
\hline
&&&\\
\multirow{2}{*}{$t_{\eta},\quad w_0t_\eta$}&$\Id$&$\Sigma_{j}\begin{pmatrix}k_{j+1}+1\\0\end{pmatrix}$&$\Sigma_{j}\begin{pmatrix}1\\-k_{j+1}\end{pmatrix}$\\
&&&\\
\cline{2-4}
&&&\\
&$(12)$&$\Sigma_{j}\begin{pmatrix}k_{j+1}\\1\end{pmatrix}$&$\Sigma_{j}\begin{pmatrix}0\\1-k_{j+1}\end{pmatrix}$\\
&&&\\
\hline
\hline
&&&\\
\multirow{2}{*}{$t_{w_0\eta}$}&$\Id$&$\Sigma_{j}\begin{pmatrix}k_{j+1}\\1\end{pmatrix}$&$\Sigma_{j}\begin{pmatrix}0\\1-k_{j+1}\end{pmatrix}$\\
&&&\\
\cline{2-4}
&&&\\
&$(12)$&$\Sigma_{j}\begin{pmatrix}k_{j+1}+1\\0\end{pmatrix}$&$\Sigma_{j}\begin{pmatrix}1\\-k_{j+1}\end{pmatrix}$\\
&&&\\
\hline
\hline
\end{tabular}}
\captionsetup{justification=raggedright,
singlelinecheck=false
}
\caption*{
The entries of the table record $\begin{pmatrix}v'_{j+f}\\v'_j\end{pmatrix}$ for $j\in\{0,\dots,f-1\}$, where $\Sigma_{j}\defeq \prod_{i=1}^{j}z^{-1}_i=(\prod_{i=1}^{j}s_i)(w_0)^{N_j}$ and $N_j\defeq\#\left\{i\in\{1,\dots,j\}, \tld{w}_i=w_0t_{\eta}\right\}$.
Note that $\Sigma_0=\Id$ by definition.}
\end{table}.

\subsection{Fibers of the map $(\tld{w},s,\mu)\mapsto \bX(v')$}
\label{subsec:fibers}

Given a triple $(\tld{w},s,\mu)$ we have produced a tuple $\big((s_{j+1}, s_{\orient,j},\Sigma_j,z_{j+1},\tld{w}_j, \textnormal{type at $j$}),(v'_{f+j},v'_j)\big)_{j\in\cJ}$ and a gene $\bX(v')$ attached to $(v'_{f+j},v'_j)_{j\in\cJ}$, hence a map
\begin{equation}
\label{eq:map:triple_to_gene}
(\tld{w},s,\mu)\mapsto
\big((s_{j+1}, s_{\orient,j},\Sigma_j,z_{j+1},\tld{w}_j, \textnormal{type at $j$}),(v'_{f+j},v'_j)\big)_{j\in\cJ}\mapsto (v'_{f+j},v'_j)_{j\in\cJ}\mapsto \bX(v').
\end{equation}
In this and the following section we analyze the shapes and types appearing in the fiber of the map \eqref{eq:map:triple_to_gene}.
\emph{From now onwards, we furthermore assume that $\bX(v')$ satisfies condition \eqref{en:abstract:gene:extra}.}

As a preliminary step, we record in Table \ref{TableGen13} the fiber of the map 
\begin{equation}
\label{eq:map:7tuple_to_gene}
\big((s_{j+1}, s_{\orient,j},\Sigma_j,z_{j+1},\tld{w}_j, \textnormal{type at $j$}),(v'_{f+j},v'_j)\big)_{j\in\cJ}\mapsto (v'_{f+j},v'_j)_{j\in\cJ}
\end{equation}
Table \ref{TableGen13} is obtained directly from Table \ref{TableGen2:new}, and the only relevant property to obtain the Table \ref{TableGen13} is whether $v'_{j}\geq 2$, $v'_{j}=1$, $v'_{j}=0$ or $v'_{j}<0$.

Recall that  we identify $\cJ'$ with $\Z/2f$, and in what follows objects such as $\Sigma_j$, $\bX_j$, $v'_j$ are indexed by $\cJ'$.
For objects that are previously indexed by $\cJ$ we extend by $f$ periodicity, with the exception of $\Sigma_j$ where the extension is given by $\Sigma_{j+f}=w_0\Sigma_j$.

\begin{table}[hbt]
\captionsetup{justification=centering}
\caption[Foo content]{\textbf{Fibre of the map \\$(s_{j+1}, s_{\orient,j},\Sigma_j,z_{j+1},\tld{w}_j, \textnormal{type}_j)\mapsto\left\{\begin{pmatrix}0\\\geq2\end{pmatrix},\begin{pmatrix}0\\1\end{pmatrix},\begin{pmatrix}0\\0\end{pmatrix},\begin{pmatrix}0\\<0\end{pmatrix}\right\}$}
}
\label{TableGen13}
\centering
\adjustbox{max width=\textwidth}{
\begin{tabular}{| c | c | c | c | c | c | c |}
\hline
\hline
&&&&&&\\
$s_{j+1}$&$s_{\orient,j}$&$\Sigma_j$&$\begin{pmatrix}v'_{j+f}\\v'_{j}\end{pmatrix}=\begin{pmatrix}0\\ \geq 2\end{pmatrix}$&$\begin{pmatrix}v'_{j+f}\\v'_{j}\end{pmatrix}=\begin{pmatrix}0\\ 1\end{pmatrix}$&$\begin{pmatrix}v'_{j+f}\\v'_{j}\end{pmatrix}=\begin{pmatrix}0\\ 0\end{pmatrix}$&$\begin{pmatrix}v'_{j+f}\\v'_{j}\end{pmatrix}=\begin{pmatrix}0\\ <0 \end{pmatrix}$
\\
&&&&&&\\
\hline
\hline
$\Id$&$\Id$&$\Id$& & $(\Id,t_{w_0(\eta)},0)$& &  \\
&&& && & \\
\hline
$\Id$&$\Id$&$w_0$& $(\Id,t_\eta,II)$&
$(\Id,t_\eta,0)$
& &  \\
&&& $(w_0,w_0t_\eta,II)$
&$(w_0,w_0t_\eta,0)$& &  \\
\hline
$\Id$&$w_0$&$\Id$&& $(\Id,t_{w_0(\eta)},0)$ &$(\Id,t_{w_0(\eta)},I)$&
$(\Id,t_{w_0(\eta)},II)$\\
&&& &&&  \\
\hline
$\Id$&$w_0$&$w_0$&& $(\Id,t_\eta,0)$&
$(\Id,t_{w_0(\eta)},I)$
&  \\
&&&& $(w_0,w_0t_\eta,0)$& &  \\
\hline
$w_0$&$\Id$&$\Id$& & & &  \\
&&& & & & \\
\hline
$w_0$&$\Id$&$w_0$&$(w_0,t_{w_0(\eta)},II)$ & & &  \\
&&&& & & \\
\hline
$w_0$&$w_0$&$\Id$&&& $(\Id,w_0t_{\eta},I)$&
$(\Id,w_0t_{\eta},II)$  \\
&&&&& $(w_0,t_{\eta},I)$&$(w_0,t_{\eta},II) $ \\
\hline
$w_0$&$w_0$&$w_0$&&& $(\Id,w_0t_{\eta},I)$&  \\
&&&&& $(w_0,t_{\eta},I)$& \\
\hline
\hline
\end{tabular}}\captionsetup{justification=raggedright,
singlelinecheck=false
}
\caption*{
Given $(v'_{f+j},v'_j)$ and $(s_{j+1},s_{\orient,j},\Sigma_j)$, the table entries record the triples $(z_{j+1},\tld{w}_j,\textnormal{type at $j$})$ such that $\big((s_{j+1}, s_{\orient,j},\Sigma_j,z_{j+1},\tld{w}_j, \textnormal{type at $j$}),(v'_{f+j},v'_j)\big)_{j\in\cJ}$ is a $7$-tuple associated to some  $(\tld{w},s,\mu)$.
(Here $j\in\Z/f\Z$.)
The table records the cases where $v'_{f+j}=0$, for $j\in\Z/f\Z$, but we note that we can replace $(v'_{f+j},v'_j)$ with $(v'_{j},v'_{f+j})$ at the cost of replacing $\Sigma_j$ with $w_0\Sigma_j$, and, similarly, we can replace $(v'_{j+f},v'_j)$ with $(1-v'_{j+f},1-v'_{j})$ at the cost of replacing $\Sigma_j$ and $s_{\orient,j}$ with $w_0\Sigma_j$ and  $w_0s_{\orient,j}$.
The blank boxes in the table correspond to configurations of $(s_{j+1},s_{\orient,j},\Sigma_j, (v'_{j+f},v'_j))$ which can not arise.
}
\end{table}

\subsubsection{Step one: fiber of the map $(v'_{f+j},v'_j)_{j\in\cJ}\mapsto \bX(v')$}\label{subsec:dec:gen}

Let $\bX\in\{\gA, \gB, \gAB, \gO\}^{\cJ'}$ satisfy conditions \eqref{en:abstract:gene:1},\eqref{en:abstract:gene:2},\eqref{en:abstract:gene:3}. In this subsection we determine the fiber above $\bX$ of the map $(v'_{f+j},v'_j)_{j\in\cJ}\mapsto \bX(v')$.

\begin{lemma}\label{lem:translation:v:v'}

Assume that $\bX=\bX(v')$ for some $(v'_{f+j},v'_j)_{j\in\cJ}$ associated to a triple $(\tld{w},s,\mu)$ as above
(in particular $\bX$ satisfies conditions \eqref{en:abstract:gene:1},\eqref{en:abstract:gene:2},\eqref{en:abstract:gene:3}).

Then, for each $j'\in\cJ'$, the values of $v'_{j'}$ are constraint by the third row of Table \ref{TableGen4} according to the pair $\bX_{f+j'},\bX_{f+j'+1}$.
\begin{table}[hbt]
\small
\captionsetup{justification=centering}
\caption[Foo content]{\textbf{Genetic Translation-II
}
}
\label{TableGen4}
\centering
\adjustbox{max width=\textwidth}{
\begin{tabular}{| c | c | c | c | c | c |}
\hline
\hline
&&&&&\\
$\bX_{f+j'}$&$\gA$&$\gB$&$\gAB$&$\gO$&$\gO$\\
&&&&&\\
\hline
&&&&&\\
$\bX_{f+j'+1}$&$\neq \gO$&$\neq \gO$&$\gO$&$\neq \gO$&$\gO$\\
&&&&&\\
\hline
&&&&&\\
$v'_{j'}$&$0$&$1$&$\{1,0\}$&$\{\geq 2,<0\}$&Any\\
&&&&&\\
\hline
\hline
\end{tabular}}
\end{table}

\end{lemma}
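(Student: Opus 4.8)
The plan is to reduce the statement to an unwinding of the definition of the gene $\bX(v')$ given in equations \eqref{eq:vi} and \eqref{en:gene:1}--\eqref{en:gene:5}. Recall that the tuple $(v'_{j'})_{j'\in\cJ'}$ was constructed precisely so that it extracts the left-hand side of \eqref{eq:vi}: by the assumptions made in \S\ref{subsec:gen:tra} (namely $\sum_j p^j(\mu_{j,1}+\mu_{j,2})\equiv\sum_j p^j(\lambda_{j,1}+1+(\lambda_{j,2}+1))$ modulo $p^f-1$ and $\sum_j p^j(\lambda_{j,1}+p^f\lambda_{j,2})\not\equiv 0$ modulo $p^f+1$), the roles of $\gamma,h$ in \eqref{eq:vi} are played by $\gamma\defeq\sum_j p^j\lambda_{j,1}$ (up to the relevant shift) and $h\defeq\sum_j p^j\lambda_{j,1}+p^f\sum_j p^j\lambda_{j,2}$ (again up to shift), and $\bX(v')=\bX(\gamma,h)$ by the definition at the end of \S\ref{subsec:gen:tra}. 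The key point is that, by smallness of $(s,\mu)$, each $v'_{j'}$ lies in $\{-\frac{p+1}{2},\ldots,\frac{p+3}{2}\}$, so that $v'_{j'}$ is \emph{not} congruent to any other small integer modulo $p$; hence the ``reduction mod $p$'' appearing implicitly in passing from the integer combination in \eqref{eq:vi} to its $p$-adic digits is compatible, in each digit, with the actual value of $v'_{j'}$ up to the $\pm1$ adjustments coming from carries.

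First I would set up the bookkeeping: writing $j'\in\cJ'$ and recalling that $\bX$ is indexed by $\cJ'$ while $v'$ is as well, I would isolate, for each $j'$, the congruence class of the relevant digit of the left side of \eqref{eq:vi}. Because each $v'_{j'}\in\{-\frac{p+1}{2},\ldots,\frac{p+3}{2}\}$, the digit $v_{j'}\in\{0,\ldots,p-1\}$ appearing in \eqref{eq:vi} is related to $v'_{j'}$ by at most one unit of carry: $v_{j'}\equiv v'_{j'}\pmod p$ up to such carries, so $v_{j'}\geq 2 \iff v'_{j'}\geq 2$ or $v'_{j'}<0$, $v_{j'}=1\iff v'_{j'}\in\{1,0\}$ in the boundary situations, etc. (this is exactly the content of \cite[Lemma B.1.3]{CDM3} translated into our normalization). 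Then I would simply run through the five defining clauses \eqref{en:gene:1}--\eqref{en:gene:5} of a combinatorial gene, applied with the indices shifted by $f$ (so using $\bX_{f+j'}$ and $\bX_{f+j'+1}$ in place of $\bX_{j'}$ and $\bX_{j'+1}$), and read off in each of the five columns of Table \ref{TableGen4} the admissible values of $v'_{j'}$. The column with $\bX_{f+j'}=\gA$, $\bX_{f+j'+1}\neq\gO$ forces, by clause \eqref{en:gene:3} read in reverse together with the exclusion of clauses \eqref{en:gene:4},\eqref{en:gene:5}, that the digit is $0$, hence $v'_{j'}=0$; the column $\bX_{f+j'}=\gB$, $\bX_{f+j'+1}\neq\gO$ forces the digit to be $1$, hence $v'_{j'}=1$; the column $\bX_{f+j'}=\gAB$, $\bX_{f+j'+1}=\gO$ allows digit $0$ or $1$ (clauses \eqref{en:gene:1} vs. \eqref{en:gene:3}), giving $v'_{j'}\in\{1,0\}$; the column $\bX_{f+j'}=\gO$, $\bX_{f+j'+1}\neq\gO$ must come from clauses \eqref{en:gene:4} or \eqref{en:gene:5}, i.e. digit $\geq 1$ but not the $v=1$ case of \eqref{en:gene:4} (which is excluded since $\bX_{f+j'}=\gO$ there only if $\bX_{f+j'+1}=\gO$), so digit $\geq 2$, giving $v'_{j'}\in\{\geq 2, <0\}$ after accounting for carries; and the column $\bX_{f+j'}=\gO$, $\bX_{f+j'+1}=\gO$ is compatible with any digit by clauses \eqref{en:gene:1},\eqref{en:gene:4},\eqref{en:gene:5}, hence any $v'_{j'}$.

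The main obstacle, and the only place that requires genuine care rather than mechanical case-chasing, is the carry analysis: the integer on the left of \eqref{eq:vi} is an \emph{alternating-type} combination (it is $h$ minus $(p^f+1)$ times another quantity, reduced mod $p^{2f}-1$), so negative intermediate values of $v'_{j'}$ can propagate a borrow to the next digit, and one must check that under our smallness hypothesis no borrow cascades can upgrade (say) a genuine digit $0$ to digit $1$ or vice versa in a way that contradicts the table. This is precisely where the bound $v'_{j'}\in\{-\frac{p+1}{2},\ldots,\frac{p+3}{2}\}$ is used: the borrow is at most one unit per digit and cannot chain, so the congruence $v_{j'}\equiv v'_{j'}\pmod p$ together with the two-sided bound pins down $v_{j'}$ from $v'_{j'}$ unambiguously in each of the cases the table records. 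I would cite \cite[Lemma B.1.3]{CDM3} and \cite[Proposition 1.3.2, Corollary 1.3.4]{CDM3} for the precise form of this digit/borrow dictionary and then assemble Table \ref{TableGen4} column by column; the verification that $\bX(v')$ satisfies \eqref{en:abstract:gene:1},\eqref{en:abstract:gene:2},\eqref{en:abstract:gene:3} was already recorded in \S\ref{subsec:gen:tra} under the standing assumptions, so nothing further is needed there.
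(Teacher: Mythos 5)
Your overall strategy---determine the canonical $p$-adic digits $v_{j'}$ from the gene clauses \eqref{en:gene:1}--\eqref{en:gene:5} and then relate $v_{j'}$ back to $v'_{j'}$ via a carry/borrow analysis---is the same as the paper's. However, your resolution of the central subtlety has a genuine gap. You assert that the smallness bound $v'_{j'}\in\{-\tfrac{p+1}{2},\ldots,\tfrac{p+3}{2}\}$ implies the borrow ``cannot chain.'' This is false: borrows absolutely can propagate across several consecutive positions. For instance, if a borrow lands at position $i$ (so $\varepsilon_{i+1}=1$ in the paper's notation) and $v'_i=0$, then $v_i=p-1$ and a new borrow $\varepsilon_i=1$ propagates further. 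The smallness bound only guarantees the borrow out of any single position is at most one unit (so each $\varepsilon_i\in\{0,1\}$), not that the chain is short. Your shorthand ``$v_{j'}=1\iff v'_{j'}\in\{1,0\}$'' is also not correct: a landing borrow sends $v'_{j'}=2$ to $v_{j'}=1$, not $v'_{j'}=0$ to $v_{j'}=1$ (the latter gives $v_{j'}=p-1$).

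What actually controls the borrow chain---and what your argument does not articulate---is a structural property of the gene, not the size bound. The paper's proof shows that whenever a borrow lands at position $i$ (i.e.\ $\varepsilon_{i+1}=1$), the digit at the preceding position satisfies $v_{i+1}\geq 1$ (indeed $\geq 2$ for $i+1<2f-1$), which by the gene clauses forces $\bX_{i+1}=\gO$. Combined with condition \eqref{en:abstract:gene:1} (namely $\bX_{i+1}=\gO\Rightarrow\bX_i\in\{\gAB,\gO\}$), one concludes that the relation $v_i=v'_i-1$ can only occur at positions $i$ with $\bX_i\in\{\gAB,\gO\}$. This exclusion is exactly what lets the $\gA$ and $\gB$ columns of the table pin $v'_{j'}$ down uniquely (to $0$ or $1$), rather than leaving the two-element ambiguity that the congruence $v_{j'}\equiv v'_{j'}\pmod p$ alone would allow. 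Without this step, your derivation of those two columns is incomplete, even though the conclusion happens to be correct. You would need to introduce the $(\varepsilon_i)$ sequence explicitly and prove the implication $\varepsilon_{i+1}=1\Rightarrow\bX_{i+1}=\gO$ before the ``read off the columns'' part of your argument is sound.
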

\begin{proof}
Let $(v_{j'})_{j'\in \cJ'}\in\{0,\ldots,p-1\}^{\cJ'}$ be the tuple defined by 
$$
\sum_{j'=0}^{2f-1}v_{j'}p^{2f-1-j'}\equiv \sum_{j'=0}^{2f-1}v'_{j'}p^{2f-1-j'}\mod p^{2f}-1.
$$
We will show that:
\begin{itemize}
\item
if $\bX_i\in\{\gA,\gB\}$ then $v_i=v'_i$;
\item 
if $\bX_i=\gAB$ then $v_i\in\{v_i',v_i'-1\}$;
\item 
if $\bX_i=\gO$  and $\bX_{i+1}\not=\gO$ then $v_i\in\{v_i',v_i'+p\}$; and
\item
if $\bX_i=\gO$  and $\bX_{i+1}=\gO$ then $v_i\in\{v_i', v_i'-1,v_i'+p,v_i'+p-1\}$.
\end{itemize}

By definition of combinatorial gene (equations \eqref{en:gene:1}-\eqref{en:gene:5}) we deduce that:
\begin{itemize}
\item
if $\bX_i\in\{\gAB,\gA\}$ then $v_i=0$;
\item
if $\bX_i=\gB$ then $v_i=1$;
\item
if $\bX_i=\bX_{i+1}=\gO$ then $v_i\in[1,p-1]$; and
\item
 if $\bX_i=\gO$  and $\bX_{i+1}\not=\gO$ then $v_i\in[2,p-1]$.
\end{itemize}
By (\ref{en:abstract:gene:3}) there exists $\ell\in\{0,\ldots, 2f-1\}$ such that $v_{\ell}\not=0$. 
Let $\varepsilon\in\Z$ be such that
$$\sum_{i=0}^{2f-1}v_ip^{2f-1-i}= \sum_{i=0}^{2f-1}v'_ip^{2f-1-i}+ \varepsilon(p^{2f}-1)$$
As $v_i'\in\{-(p+1)/2,\ldots, (p+3)/2\}$ we have
\[
-p^{2f}+1<\sum_{i=0}^{2f-1}v'_ip^{2f-1-i}< p^{2f}-1
\]
hence $\varepsilon\in\{0,1\}$.

Let $i_0\defeq\min\{i\geq 0, v'_i\not=0\}$ (this is well defined by (\ref{en:abstract:gene:3})). 
As 
\[
-p^{2f-i_0-1}<\sum_{j=i_0+1}^{2f-1}v'_jp^{2f-1-j}<p^{2f-i_0-1}
\] we have
\begin{equation}\label{equa:val:eps}
\varepsilon=\left\{\begin{array}{cc}0&\mbox{ if } v'_{i_0}>0,\cr
1& \mbox{ if } v'_{i_0}<0.\end{array}\right.\end{equation}
If $\varepsilon=1$ then $v_i=p-1$ for $0\leq i\leq i_0-1$ and moreover $v_{i_0}=p+v'_{i_0}\geq 2$ if $i_0<2f-1$ and $v_{i_0}=p-1+v'_{i_0}\geq 2$ if $i_0=2f-1$.
We conclude that $\bX_i=\gO$ for all $0\leq i\leq i_0$ if $\eps=1$.
\\
We have the following relation between $v_{2f-1}$ and $v_{2f-1}'$, and we define $\varepsilon_{2f-1}\in\{0,1\}$ as follows:
\begin{itemize}
\item
 if $v_{2f-1}'<0$ and $\varepsilon=0$ then $ v_{2f-1}=p+v_{2f-1}'\geq 2$, $\bX_{2f-1}=\gO$ and we define $\varepsilon_{2f-1}\defeq1$,
 \item
if $v_{2f-1}'\geq 0$ and $\varepsilon=0$ then $ v_{2f-1}=v_{2f-1}'$, and we define $\varepsilon_{2f-1}\defeq0$,
\item
if $v_{2f-1}'<1$ and $\varepsilon=1$ then $ v_{2f-1}=p+v_{2f-1}'-1 \geq 1$, $\bX_{2f-1}=\gO$ and we define $\varepsilon_{2f-1}=1$,
\item
if $v_{2f-1}'\geq 1$ and $\varepsilon=1$ then $ v_{2f-1}=v_{2f-1}'-1$ and we define $ \varepsilon_{2f-1}\defeq0$.
\end{itemize}
By decreasing induction, we deduce for $i\in\{2f-2,\ldots,0\}$ the following relations between $v_i$ and $v'_i$, and define $\eps_i\in\{0,1\}$ as follows:
\begin{enumerate}[label=(\alph*)]
\item
\label{it:def:v':1}
if $v_i'<0$ and $\varepsilon_{i+1}=0$ then $v_{i}=p+v_{i}'\geq 2$, $\bX_{i}=\gO$ and we define $\varepsilon_{i}\defeq1$,
\item
\label{it:def:v':2}
if $v_{i}'\geq 0$ and $\varepsilon_{i+1}=0$ then $v_{i}=v_{i}'$ and we define $\varepsilon_{i}\defeq0$,
\item
\label{it:def:v':3}
if $v_{i}'<1$ and $\varepsilon_{i+1}=1$ then $ v_{i}=p+v_{i}'-1 \geq 1$, $\bX_{i}=\gO$ and we define $\varepsilon_{i}\defeq1$;
\item 
\label{it:def:v':4}
if $v_{i}'\geq 1$ and $\varepsilon_{i+1}=1$ then $v_{i}=v_{i}'-1$ and we define $\varepsilon_{i}\defeq0$.
\end{enumerate}
By (\ref{equa:val:eps}) we have $\varepsilon=\varepsilon_0$ and hence the relations \ref{it:def:v':1}--\ref{it:def:v':4} between $v_i$ and $v'_i$ hold for \emph{any} $i\in\cJ'$, and define an element $(\eps_i)_{\cJ'}\in\{0,1\}^{\cJ'}$.

For any $i\in\{0,\ldots,2f-1\}$ such that $\varepsilon_{i+1}=1$, we have $\bX_{i+1}=\gO$. 
Hence $v_i=v'_i-1$ only occurs for $\bX_{i}\in\{\gO,\gAB\}$ by \eqref{en:abstract:gene:1}. 
The conclusion follows now from a direct application of conditions \eqref{en:gene:1}-\eqref{en:gene:5}
(for instance if $\bX_i=\gAB$ then $v_i=0$ by \ref{en:gene:1}, and from \ref{it:def:v':2}, \ref{it:def:v':4} above we conclude that $v'_i\in\{0,1\}$).

\end{proof}

The following lemma improves Lemma \ref{lem:translation:v:v'}.
\begin{lemma}
\label{lem:sequence:v'} 
Keep the assumptions of Lemma \ref{lem:translation:v:v'} let $i\in\{0,\ldots,2f-1\}$.
\begin{enumerate}[label=(\alph*)]
\item
\label{it:seq:v':a}
If $\bX_i=\gO$ and $v'_i=0$, then there exists $j\geq 0$ such that $v'_{j+i}<0$ and $v'_{i+\ell}=0$ for all $0\leq \ell<j$.
\item 
\label{it:seq:v':b}
If $\bX_i=\gO$ and $v'_i=1$, then there exists $j\geq 0$ such that $v'_{j+i}\geq 2$ and $v'_{i+\ell}=1$ for all $0\leq \ell<j$.
\item
\label{it:seq:v':c}
If $\bX_i=\gAB$ and $v_i'=1$, then there exists $j\geq 0$ such that $v'_{j+i}<0$ and $v'_{i+\ell}=0$ for all $0\leq \ell<j$.
\item
\label{it:seq:v':d} 
If $\bX_i=\gAB$  and $v_i'=0$, then $v'_{i+1}\geq 1$.
\end{enumerate}
\end{lemma}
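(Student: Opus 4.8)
The plan is to argue entirely through the auxiliary data constructed in the proof of Lemma~\ref{lem:translation:v:v'}: the digit tuple $(v_{j'})_{j'\in\cJ'}\in\{0,\dots,p-1\}^{\cJ'}$ and the carry tuple $(\eps_{j'})_{j'\in\cJ'}\in\{0,1\}^{\cJ'}$, linked for every $i\in\cJ'$ by exactly one of \ref{it:def:v':1}--\ref{it:def:v':4}, together with the fact (forced by uniqueness of the combinatorial gene) that $\bX_i$ is produced from $v_i$ and $\bX_{i+1}$ by the rules \eqref{en:gene:1}--\eqref{en:gene:5}. The first step is to record the \emph{carry-unfolding principle} obtained by iterating \ref{it:def:v':1}--\ref{it:def:v':4}: for any $i$, one has $\eps_{i+1}=1$ precisely when, reading $v'_{i+1},v'_{i+2},\dots$ in order, the first nonzero entry encountered is negative (all strictly earlier ones being $0$), and $\eps_{i+1}=0$ precisely when that first nonzero entry is $\geq 1$. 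This first nonzero entry exists because $(v'_{j'})_{j'\in\cJ'}$ cannot be identically $0$ (that would make every $v_{j'}\in\{0,p-1\}$, hence $\bX$ constant equal to $\gA$ or to $\gO$, the former excluded by \eqref{en:abstract:gene:3} and the latter by \eqref{en:abstract:gene:extra}); the same remark forbids an infinite constant run of $v'$, which guarantees termination of all the iterations below.

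Granting the unfolding principle, cases \ref{it:seq:v':a}, \ref{it:seq:v':c}, \ref{it:seq:v':d} reduce to pinning down $\eps_{i+1}$ from the hypothesis on $(\bX_i,v'_i)$. In case \ref{it:seq:v':a}, $\bX_i=\gO$ forces $v_i\geq 1$ (only \eqref{en:gene:4} and \eqref{en:gene:5} output $\gO$), so with $v'_i=0$ the linking relation must be \ref{it:def:v':3}, whence $\eps_{i+1}=1$; the unfolding principle then yields $j\geq 1$ with $v'_{i+j}<0$ and $v'_i=v'_{i+1}=\dots=v'_{i+j-1}=0$. In case \ref{it:seq:v':c}, since \eqref{en:gene:1} is the only rule outputting $\gAB$, $\bX_i=\gAB$ forces $v_i=0$ and $\bX_{i+1}=\gO$; with $v'_i=1$ the linking relation must be \ref{it:def:v':4}, so again $\eps_{i+1}=1$ and the unfolding at $i+1$ supplies the required $j$. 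In case \ref{it:seq:v':d}, $\bX_i=\gAB$ with $v'_i=0$ is compatible only with \ref{it:def:v':2}, so $\eps_{i+1}=0$; were $v'_{i+1}=0$, the linking relation at $i+1$ would force $\eps_{i+2}=0$ and then $v_{i+1}=0$, hence $\bX_{i+1}\in\{\gAB,\gA\}$, contradicting the $\bX_{i+1}=\gO$ already obtained from $\bX_i=\gAB$; therefore $v'_{i+1}\geq 1$.

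The remaining case \ref{it:seq:v':b} is the one in which the value $\gO$ of $\bX$ must be propagated forward along the path, and the termination bookkeeping there is the point I expect to require the most care. Here $\bX_i=\gO$ with $v'_i=1$ excludes \ref{it:def:v':4} (which would give $v_i=0$, hence $\bX_i\in\{\gA,\gAB\}$), so $\eps_{i+1}=0$ and $v_i=1$; then the gene rules \eqref{en:gene:1}--\eqref{en:gene:5} applied to $(v_i,\bX_i)=(1,\gO)$ force $\bX_{i+1}=\gO$ (else $\bX_i$ would be $\gB$). Running the argument of case \ref{it:seq:v':d} at $i+1$ (legitimate since $\bX_{i+1}=\gO$) gives $v'_{i+1}\geq 1$; if $v'_{i+1}\geq 2$ we are done with $j=1$, and if $v'_{i+1}=1$ we are back in the configuration $(\bX_{i+1},v'_{i+1})=(\gO,1)$ and iterate. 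An infinite run would force $v'_{j'}=1$ for all $j'\in\cJ'$, hence $\bX$ constant equal to $\gO$, contradicting \eqref{en:abstract:gene:extra}; so after finitely many steps $v'_{i+j}\geq 2$ is reached with $v'_i=\dots=v'_{i+j-1}=1$, which is precisely \ref{it:seq:v':b}. Together the four cases complete the proof.
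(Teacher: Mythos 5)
Your argument is correct and follows essentially the same route as the paper's proof: it runs on the digit tuple $(v_{j'})$ and carry tuple $(\eps_{j'})$ from the proof of Lemma \ref{lem:translation:v:v'}, pins down $\eps_{i+1}$ from the hypothesis on $(\bX_i,v'_i)$ via the relations \ref{it:def:v':1}--\ref{it:def:v':4}, and propagates forward. Your ``carry-unfolding principle'' and the explicit termination arguments (ruling out the all-zero and all-one runs of $v'$ via \eqref{en:abstract:gene:3} and \eqref{en:abstract:gene:extra}) simply make precise the induction that the paper leaves implicit in ``follows by induction'' and ``similar to \ref{it:seq:v':a}''.
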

\begin{proof}
In the notation of the proof of Lemma \ref{lem:translation:v:v'} we have:
\begin{enumerate}[label=(\alph*)]
\item
\label{pf:it:seq:v':a}
If $\bX_i=\gO$ and $v'_i=0$, then $\varepsilon_{i+1}=1$
and $\bX_{i+1}=\gO$ and $v'_{i+1}<1$. 
The claimed result follows now by induction.
\item
\label{pf:it:seq:v':b}
This is similar to \ref{pf:it:seq:v':a}.
\item
\label{pf:it:seq:v':c}
If $\bX_i=\gAB$ and $v_i'=1$ then $v_i=v'_i-1$, $\varepsilon_{i+1}=1$ and $\bX_{i+1}=\gO$. Thus $v_{i+1}'\leq 0$.  
We conclude by \ref{pf:it:seq:v':a}.
\item
\label{pf:it:seq:v':d}
If $\bX_i=\gAB$ and $v_i'=0$, then $v_i=v'_i$, $\varepsilon_{i+1}=0$ and $\bX_{i+1}=\gO$. Thus $v'_{i+1}\geq 1$.
\end{enumerate}
\end{proof}

\subsubsection{Step 2: Types and shapes in the fibers}\label{subsec:loc:equ:gen}
In this section we conclude our analysis on the fibers of \eqref{eq:map:7tuple_to_gene}.
The main result is Proposition \ref{prop:constraints}.

In the following we assume that $\bX\in\{\gA, \gB, \gAB, \gO\}^{\cJ'}$ satisfies conditions \eqref{en:abstract:gene:1}--\eqref{en:abstract:gene:extra}.
The following proposition analyzes the fiber of the composite map
\[
\big((s_{j+1}, s_{\orient,j},\Sigma_j,z_{j+1},\tld{w}_j, \textnormal{type at $j$}),(v'_{f+j},v'_j)\big)_{j\in\cJ}\mapsto (v'_{f+j},v'_j)_{j\in\cJ}\mapsto \bX(v')
\]
where the domain is the set of -tuples attached to triples $(\tld{w},s,\mu)$ as in the end of Section \ref{subsec:gen:tra}.

\begin{prop} \label{prop:constraints}
Let $\bX\in\{\gA, \gB, \gAB, \gO\}^{\cJ'}$ satisfy conditions \eqref{en:abstract:gene:1}--\eqref{en:abstract:gene:extra}.
Assume that $\bX=\bX(v')$ for some $(s_{j+1},s_{\orient,j}, \Sigma_j,z_{j+1},\tld{w}_j, \textnormal{type of $j$}, (v'_{j+f}, v'_j))$ associated to a triple $(\tld{w},s,\mu)$.
\begin{enumerate}
\item
\label{fibre:AO}
If $\binom{\bX_{j+f}}{\bX_{j}}=\binom{\gA}{\gO}$ then $(s_{j+1},s_{\orient,j}, \Sigma_j,z_{j+1},\tld{w}_j, \textnormal{type of $j$}, (v'_{j+f}, v'_j))$ belongs to the set
$$
\left\{\begin{array}{c}  (-,w_0,\Id,-,-,II, (0,<0)), (\Id,w_0,\Id,\Id,t_{w_0(\eta)}, I ,(0,0)),\cr (\Id,\Id,w_0,\Id,t_\eta,0, (0,1)), (-,\Id,w_0,-,-,II,(0,\geq 2))\cr\end{array}\right\}.$$
If $\binom{\bX_{j+f}}{\bX_{j}}=\binom{\gB}{\gO}$ then $(s_{j+1},s_{\orient,j}, \Sigma_j,z_{j+1},\tld{w}_j, \textnormal{type of $j$}, (v'_{j+f}, v'_j))$ belongs to the set
$$
\left\{\begin{array}{c}  (-,w_0,\Id,-,-,II, (1,<0)), (\Id,w_0,\Id,\Id,t_{\eta}, 0 ,(1,0)),\cr (\Id,\Id,w_0,\Id,t_{w_0(\eta)},I, (1,1)), (-,\Id,w_0,-,-,II,(1,\geq 2))\cr\end{array}\right\}.$$
\item
\label{fibre:OAB}
If $\binom{\bX_{j+f}}{\bX_{j}}=\binom{\gO}{\gAB}$ then $(s_{j+1},s_{\orient,j}, \Sigma_j,z_{j+1},\tld{w}_j, \textnormal{type of $j$}, (v'_{j+f}, v'_j))$ belongs to the set
$$
\left\{\begin{array}{c} (\Id,\Id,\Id,w_0,w_0t_{\eta},II, (\geq 2,0)),  (w_0,\Id,\Id,\Id,w_0t_{\eta},II, (\geq 2,1)),\cr
(w_0,w_0,w_0,\Id,w_0t_\eta,II, (<0,0)),(\Id,w_0,w_0,w_0,w_0t_{\eta},II, (<0,1))\cr\end{array}\right\}.$$
\item
\label{fibre:AAB}
If $\binom{\bX_{j+f}}{\bX_{j}}=\binom{\gA}{\gAB}$ then $(s_{j+1},s_{\orient,j}, \Sigma_j,z_{j+1},\tld{w}_j, \textnormal{type of $j$}, (v'_{j+f}, v'_j))$ belongs to the set
$$\left\{\begin{array}{cc}
(w_0,w_0,\Id,w_0,t_\eta,I,(0,0)),&
 (\Id,w_0,\Id,\Id,t_{w_0(\eta)},0,(0,1)),\cr
  (w_0,w_0,w_0,\Id,w_0t_\eta,I,(0,0)),&
   (\Id,w_0,w_0,w_0,w_0t_\eta, 0, (0,1))
   \cr\end{array}\right\}.$$      
  If $\binom{\bX_{j+f}}{\bX_{j}}=\binom{\gB}{\gAB}$,
  then $(s_{j+1},s_{\orient,j}, \Sigma_j,z_{j+1},\tld{w}_j, \textnormal{type of $j$}, (v'_{j+f}, v'_j))$ belongs to the set
$$\left\{\begin{array}{cc}
(w_0,\Id,w_0,w_0,t_\eta,I,(1,1)),&
 (\Id,\Id,w_0,\Id,t_{w_0(\eta)},0,(1,0)),\cr
  (w_0,\Id,\Id,\Id,w_0t_\eta,I,(1,1)),&
   (\Id,\Id,\Id,w_0,w_0t_\eta, 0, (1,0))
   \cr\end{array}\right\}.$$   
\item 
\label{fibre:BAB}
If $\binom{\bX_{j+f}}{\bX_{j}}\in\{\binom{\gA}{\gA},\binom{\gA}{\gB},\binom{\gB}{\gA},\binom{\gB}{\gB} \}$ then
$s_{j+1},s_{\orient,j}, (v'_{j+f},v'_{j})$ and $type$ of $j$ are determined by $\bX$. \\
Moreover if $\binom{\bX_{j+f}}{\bX_{j}}\in\{\binom{\gA}{\gA},\binom{\gB}{\gB}\}$, then $\bX$ determines if either $\tld{w}_{j}=t_{w_0(\eta)}$ or $\tld{w}_{j}\in\{w_0t_\eta,t_\eta\}.$ 
\end{enumerate}
The other cases are deduced by the transformation $\binom{\bX_{j+f}}{\bX_j}\mapsto \binom{\bX_{j}}{\bX_{j+f}}$   (see the caption of Table \ref{TableGen13}).
\end{prop}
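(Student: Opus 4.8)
The strategy is a direct, case-by-case bookkeeping exercise: the desired statement is exactly the computation of the fiber of the composite map \eqref{eq:map:7tuple_to_gene} followed by $(v'_{f+j},v'_j)_{j\in\cJ}\mapsto \bX(v')$, restricted to a single index $j$, under the running assumptions that $\bX$ satisfies \eqref{en:abstract:gene:1}--\eqref{en:abstract:gene:extra}. The two combinatorial inputs we already have are (a) Table \ref{TableGen13}, which records all $7$-tuples $(s_{j+1},s_{\orient,j},\Sigma_j,z_{j+1},\tld{w}_j,\textnormal{type}_j)$ compatible with a prescribed value of $(v'_{j+f},v'_j)$ (phrased for $v'_{j+f}=0$, with the two symmetry operations spelled out in the caption), and (b) Lemmas \ref{lem:translation:v:v'} and \ref{lem:sequence:v'}, which constrain which values of $(v'_{j+f},v'_j)$ can actually occur for a given pair $(\bX_{j+f},\bX_{j+f+1})$ (equivalently, after the symmetry, $(\bX_{j+f},\bX_j)$). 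So the proof reduces to: for each target pair $\binom{\bX_{j+f}}{\bX_j}$ in each of the four groups (1)--(4), first enumerate via Table \ref{TableGen4} (i.e.\ Lemma \ref{lem:translation:v:v'}) the possible $(v'_{j+f},v'_j)$, then feed each such value into Table \ref{TableGen13} to read off the allowed $7$-tuples, and finally intersect with the extra constraints coming from Lemma \ref{lem:sequence:v'} and condition \eqref{en:abstract:gene:extra}.

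\textbf{Carrying it out.} First I would fix the normalization: since Table \ref{TableGen13} is stated for $v'_{j+f}=0$, I apply the symmetry $(v'_{j+f},v'_j)\leftrightarrow(v'_j,v'_{f+j})$ together with $\Sigma_j\mapsto w_0\Sigma_j$ to handle the cases where $\bX_{j+f}\neq\gO$ but $\bX_j$ (or its shift) forces $v'$-values on the other coordinate, and the symmetry $(v'_{j+f},v'_j)\mapsto(1-v'_{j+f},1-v'_j)$ with $(\Sigma_j,s_{\orient,j})\mapsto(w_0\Sigma_j,w_0 s_{\orient,j})$ to pass between $\gA$ and $\gB$ (this is exactly the content of Lemma \ref{lem:eq:rel:gene}, first display). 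This explains why it suffices to verify, say, the $\binom{\gA}{\gO}$ statement and then deduce $\binom{\gB}{\gO}$ by flipping. Then for (1): $\binom{\bX_{j+f}}{\bX_j}=\binom{\gA}{\gO}$. Here $\bX_{j+f}=\gA$ so by Table \ref{TableGen4} we need $\bX_{j+f+1}\neq\gO$ and $v'_j$ (the coordinate read off from $\bX_{j+f}$, after the relevant shift) $=0$; meanwhile $\bX_j=\gO$ forces, via the same table applied at index $j$, $v'_{j+f}\in\{1,0\}$ if $\bX_{j+1}=\gO$ and $v'_{j+f}\in\{\geq 2,<0\}$ if $\bX_{j+1}\neq\gO$ — but wait, I need to be careful: the statement records the four possibilities $(0,<0),(0,0),(0,1),(0,\geq 2)$ for $(v'_{j+f},v'_j)$, so actually the roles are as written in the proposition, and I just enumerate the four $v'$-values. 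For each, Table \ref{TableGen13} gives: $(0,<0)$ lies only in the rows with $s_{\orient,j}=w_0,\Sigma_j=\Id$ and type $II$; $(0,0)$ forces $(\Id,w_0,\Id,\Id,t_{w_0(\eta)},I)$; $(0,1)$ forces $(\Id,\Id,w_0,\Id,t_\eta,0)$; $(0,\geq 2)$ forces $s_{\orient,j}=\Id,\Sigma_j=w_0$, type $II$. This reproduces the displayed set. The cases $\binom{\gO}{\gAB}$, $\binom{\gA}{\gAB}$, $\binom{\gB}{\gAB}$ in (2)--(3) are handled identically, using in addition part \ref{it:seq:v':c} and \ref{it:seq:v':d} of Lemma \ref{lem:sequence:v'} to rule out the values of $(v'_{j+f},v'_j)$ that would force $v'_{j+1}$ or $v'_{j+f+1}$ to be incompatible with $\bX_{j+1},\bX_{j+f+1}$ — and in the $\binom{\gO}{\gAB}$ case, condition \eqref{en:abstract:gene:extra} is what prevents the configuration $(v'_{j+f},v'_j)=(1,0)$ type combinations from collapsing to $\binom{\gO}{\gO}$. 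For (4), when $\binom{\bX_{j+f}}{\bX_j}$ is one of the four "pure $\gA/\gB$" pairs, Table \ref{TableGen4} pins down $(v'_{j+f},v'_j)$ completely to a single pair in $\{0,1\}\times\{0,1\}$, and then Table \ref{TableGen13} shows $s_{j+1},s_{\orient,j}$ and the type are determined; the finer claim for $\binom{\gA}{\gA},\binom{\gB}{\gB}$ — that $\bX$ decides whether $\tld{w}_j=t_{w_0(\eta)}$ or $\tld{w}_j\in\{t_\eta,w_0t_\eta\}$ — follows by looking one step further along the graph using Lemma \ref{lem:sequence:v'}\ref{it:seq:v':a},\ref{it:seq:v':b}, which distinguishes the "type $0$ chain continues" branch (giving $\tld{w}_j=t_\eta$ or $w_0t_\eta$) from the "chain ends here" branch (giving $\tld{w}_j=t_{w_0(\eta)}$), precisely according to whether the run of $\gO$'s adjacent to index $j$ terminates in a $v'\geq 2$ or a $v'<0$.

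\textbf{Expected main obstacle.} The conceptual content is light, but the bookkeeping is delicate for three interlocking reasons. (i) The two symmetry operations in the caption of Table \ref{TableGen13} act differently on $\Sigma_j$ versus $s_{\orient,j}$ versus the $v'$-pair, so one must track very carefully which normalized form of Table \ref{TableGen13} is being invoked in each subcase — a sign error here would silently corrupt half the entries. (ii) The constraints from $\bX_{j+1}$ and $\bX_{j+f+1}$ (the \emph{next} vertices) interact with the current-vertex data through Lemma \ref{lem:sequence:v'}, so the fiber computation is genuinely non-local even though the final answer is stated vertex-by-vertex; the care needed is to verify that each $7$-tuple listed in the proposition is not just \emph{compatible} with the local data but actually \emph{arises} from some global triple $(\tld{w},s,\mu)$, which is where condition \eqref{en:abstract:gene:extra} and the periodicity conventions ($\Sigma_{j+f}=w_0\Sigma_j$, everything else $f$-periodic) must be used to patch the local choices into a consistent global one. (iii) The case $\binom{\gA}{\gA}$ / $\binom{\gB}{\gB}$ with its extra clause about $\tld{w}_j$ requires distinguishing $t_\eta$ from $w_0 t_\eta$ in some subcases and grouping them in others; getting the exact boundary of "what $\bX$ determines" right is the one place where a naive reading of the tables could over- or under-claim. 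I would organize the write-up as a single lemma-per-group proof, each consisting of a short enumeration followed by a table lookup, and cross-check the totals against the requirement that every $(\tld{w}_j,s_j,k_j)$ configuration appears in exactly one line of Table \ref{TableGen13}.
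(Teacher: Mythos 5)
Your overall plan — enumerate the admissible $(v'_{j+f},v'_j)$ via Table \ref{TableGen4}, then consult Table \ref{TableGen13}, then intersect with Lemma \ref{lem:sequence:v'} and \eqref{en:abstract:gene:extra} — is the right skeleton and matches the paper's strategy. But the step where you "carry it out" contains a genuine gap: you assert that each value of $(v'_{j+f},v'_j)$ \emph{forces} a unique $7$-tuple by a local lookup in Table \ref{TableGen13}, and that is false. That table is indexed by $(s_{j+1},s_{\orient,j},\Sigma_j)$, and for a fixed value such as $(v'_{j+f},v'_j)=(0,1)$ it has nonempty entries in four different rows (yielding, among others, both $(\Sigma_j,\tld{w}_j)=(\Id,t_{w_0(\eta)})$ and $(\Sigma_j,\tld{w}_j)=(w_0,t_\eta)$, and within the row $(\Id,\Id,w_0)$ the two options $(\Id,t_\eta,0)$ and $(w_0,w_0t_\eta,0)$). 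Likewise $(0,0)$ is compatible with $s_{j+1}\in\{\Id,w_0\}$ and $\Sigma_j\in\{\Id,w_0\}$. So the claimed uniqueness of the tuples in parts (1)--(3), and the claim in part (4) that "Table \ref{TableGen13} shows $s_{j+1},s_{\orient,j}$ and the type are determined", do not follow from what you wrote.

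The missing idea is the \emph{anchored decreasing induction} that the paper runs. For part (1) one takes the maximal run $j\leq j'\leq j_1$ on which $v'_{j'}=1$ (resp.\ $0$); Lemma \ref{lem:sequence:v'} together with \eqref{en:abstract:gene:extra} guarantees the run terminates at an index $j_1+1$ with $v'_{j_1+1}\geq 2$ (resp.\ $<0$), which is necessarily of type $II$ with $(s_{\orient,j_1+1},\Sigma_{j_1+1})$ pinned down; one then propagates $(s_{\orient,j'},\Sigma_{j'},z_{j'+1})$ backward one index at a time through Table \ref{TableGen13}, using the recursion $\Sigma_{j'}=\Sigma_{j'+1}z_{j'+1}$ to select the unique entry in each row. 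It is only this propagation that singles out, e.g., $(\Id,\Id,w_0,\Id,t_\eta,0)$ for $(0,1)$. The same mechanism, anchored instead at a $\gAB$-vertex via part (3), or — when $\bX\in\{\gA,\gB\}^{\cJ'}$ — at an index with $\bX_{j'}\neq\bX_{j'+1}$ (using \eqref{en:abstract:gene:3}) together with the normalization $s_{\orient,f-1}=\Id$ from Lemma \ref{lem:s_or}, is what proves part (4); your proposed mechanism there (runs of $\gO$'s adjacent to $j$) does not apply, since in case (4) neither $\bX_j$ nor $\bX_{j+f}$ is $\gO$. You correctly flagged non-locality as an "expected obstacle", but the proof you wrote does not resolve it, and resolving it is essentially the entire content of the paper's argument.
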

\begin{proof}
Proof of (\ref{fibre:AO}).  
Assume $\bX_{j}=\gO$ and $v'_{j}=1$.
Let $j_1=\max\{j'\geq j \mbox{ such that } v'_{\ell}=1, j_0\leq \ell\leq j'\}$. 
Then $v'_{j_1+1}\geq 2$ and for all $j\leq j'\leq j_1$ we have $\binom{\bX_{f+j'}}{\bX_{j'}}\in
\left\{\binom{\gA}{\gO},\binom{\gB}{\gO}\right\}$,
 $(s_{j'+1},s_{\orient,j'},\Sigma_{j'},z_{j'+1})=(\Id,\Id,w_0,\Id)$ and
\[
(\tld{w}_{j'},\mbox{type of } j',(v'_{f+j'},v'_{j'}))=\left\{ \begin{array}{ccc}(t_\eta,0,(0,1))&\mbox{ if } \bX_{f+j'}=\gA,\cr
 (t_{w_0(\eta)},I,(1,1))&\mbox{ if }  \bX_{f+j'}=\gB.\cr\end{array}\right.
 \]
Indeed
by \eqref{en:abstract:gene:extra} and Lemma \ref{lem:sequence:v'}, $j_1$ is well defined,
$\bX_{j'}=\gO$, for all  $j\leq j'\leq j_1+1$ and $v'_{j_1+1}\geq 2$.
Hence $\binom{v'_{j_1+1+f}}{v'_{j_1+1}}\in\left\{\binom{0}{\geq 2},\binom{1}{\geq 2}\right\}$.\\ 
By Table \ref{TableGen13} and symmetry, the index $j_1+1$ is of type $II$ and $s_{\orient,j_1+1}=\Id,$ $\Sigma_{j_1+1}=w_0$.\\
By  \eqref{en:abstract:gene:1} and \eqref{en:abstract:gene:extra}, for all $j\leq j'\leq j_1$ we have $\bX_{j'+f}\in\{\gA,\gB\}$,  hence $v'_{f+j'}\in\{0,1\}$ is determined by $\bX_{f+j'}$. That is, for $j\leq j'\leq j_1$,  $\binom{v'_{j'+f}}{v'_{j'}}\in\left\{\binom{0}{1},\binom{1}{1}\right\}$. \\
By decreasing induction and Table \ref{TableGen13}, since $s_{\orient,j_1+1}=\Id$ and $\Sigma_{j_1+1}=w_0$, we have for $j\leq j'\leq j_1$
\begin{itemize}
\item 
if $\binom{v'_{j'+f}}{v'_{j'}}=\binom{0}{1}$,
$(s_{j'+1},s_{\orient,j'}, \Sigma_{j'},z_{j'+1},\tld{w}_{j'}, \textnormal{type of $j'$})=
(\Id,\Id, w_0,\Id,t_\eta,0),$ 
\item 
if $\binom{v'_{j'+f}}{v'_{j'}}=\binom{1}{1}$,$(s_{j'+1},s_{\orient,j'}, \Sigma_{j'},z_{j'+1},\tld{w}_{j'}, \textnormal{type of $j'$})=
(\Id,\Id, w_0,\Id,t_{w_0(\eta)},I).$ 
\end{itemize}
The proof for $v'_{j}=0$ can be deduced directly by symmetry. The cases $v'_{j}<0$ and $v'_{j}\geq 2$ follow from Table \ref{TableGen13}.

Proof of (\ref{fibre:OAB}). 
Assume $\binom{\bX_{j+f}}{\bX_{j}}=\binom{\gO}{\gAB}$. By (\ref{en:abstract:gene:1}) and item \ref{en:abstract:gene:extra} of this Proposition we have $\bX_{j+1+f}\not=\gO$. Hence  $v'_{f+j}\geq 2$ or $<0$. 
 By Table \ref{TableGen13} and (\ref{fibre:AO}) we have $(s_{\orient,j+1},\Sigma_{j+1})\in\{(w_0,\Id), (\Id,w_0)\}$. 

 Then
$$(s_{j+1},s_{\orient,j},\Sigma_j,z_{j+1})\in\{(w_0,\Id,\Id,\Id),(\Id,\Id,\Id,w_0),(\Id,w_0,w_0,w_0),(w_0,w_0,w_0,\Id)\}$$ and $\tld{w}_j=w_0t_\eta$ and the type of $j$ is $II$.

Proof of (\ref{fibre:AAB}). 
If $\binom{\bX_{j+f}}{\bX_{j}}\in\left\{\binom{\gA}{\gAB},\binom{\gB}{\gAB}\right\}$, then $\binom{v'_{j+f}}{v'_{j}}=\left\{\binom{0}{0,1},\binom{1}{0,1}\right\}$, $\bX_{j+1}=\gO$ and
$$v'_{j+1}=\left\{\begin{array}{ccccc} \geq 1&\mbox{if }& v_j'=0,&\mbox{ hence }& (s_{\orient,j+1},\Sigma_{j+1})= (\Id,w_0),\cr
\leq 0&\mbox{if }&v_j'=1,&\mbox{ hence }& (s_{\orient,j+1},\Sigma_{j+1})= (w_0,\Id).\cr\end{array}\right.$$
We conclude from Table \ref{TableGen13}.

Proof of (\ref{fibre:BAB}).
Assume $\bX_{j},\bX_{j+f}\in\{\gA,\gB\}$.
\begin{itemize}
\item Assume that there exists $j_1$ such that
$$\left\{\begin{array}{c} 
\bX_{j'}, \bX_{j'+f}\in\{\gA,\gB\}, j\leq j'\leq j_1-1,\cr
\gAB\in\{\bX_{j_1}, \bX_{j_1+f}\}.\cr   
\end{array}\right.$$
By (\ref{fibre:AAB}) we see that $s_{\orient,j_1}$ is determined by $\binom{\bX_{j_1+f}}{\bX_{j_1}}$.
Since $\bX_{j'},\bX_{j'+f}\in\{\gA,\gB\},$ for $j\leq j'\leq j_1-1$ we deduce by symmetry and decreasing induction from Table \ref{TableGen13} that $s_{j'+1}$, $s_{\orient,j'}$ and the type at $j'$ are determined by $\binom{\bX_{j'+f}}{\bX_{j'}}$ for $j\leq j'\leq j_1-1$.
\item
Assume that such $j_1$ does not exist, i.e.~that $\bX_{j'}\in\{\gA,\gB\}$ for all $j'\in\cJ'$. Then for all $j'\in\cJ'$ either $s_{j'+1}$ or $s_{\orient,j'}$ is determined by $\binom{\bX_{j'+f}}{\bX_j'}$ (cf.~Table \ref{TableGen13}).
If there exists $i\in\cJ$ such that 
$\binom{\bX_{i+f}}{\bX_i}\in\left\{\binom{\gA}{\gB},\binom{\gB}{\gA}\right\}$, there is an unique choice of $s_{\orient,i}$ such that $s_{\orient,f-1}=\Id$. 
If $\binom{\bX_{i+f}}{\bX_i}\in\left\{\binom{\gA}{\gA},\binom{\gB}{\gB}\right\}$ for all $i\in\Z/f\Z$ then the $\cJ$-tuple $(s_{\orient,i})$ is determined by $\bX$. 
By (\ref{en:abstract:gene:3}) there exists $j'\in\cJ'$ with $\bX_{j'}\not=\bX_{j'+1}$. 
By Tables \ref{TableGen13} and symmetry we obtain $s_{j'+1}=w_0$. 
By induction we conclude that $(s_{j+1},s_{\orient,j})_{j\in\cJ}$ and the type of $j$ are determined by $\bX$.
\end{itemize}
Moreover, for $\binom{\bX_{j+f}}{\bX_j}\in\{\binom{\gA}{\gA},\binom{\gB}{\gB}\}$, we deduce from Table \ref{TableGen13} (and symmetry) that the data of $(s_{j+1},s_{\orient,j})$ determines whether $\tld{w}_j=t_{w_0(\eta)}$ or  $\tld{w}_j\in\{w_0t_\eta, t_\eta\}$
\end{proof}

\subsection{Naive equations associated to a gene}
\label{sub:Naive:eq}

Let $\bX\in\{\gA, \gB, \gAB, \gO\}^{\cJ'}$ be a gene satisfying conditions \eqref{en:abstract:gene:1}--\eqref{en:abstract:gene:extra}.
Assume that $\bX=\bX(v')$ for some $(v'_{f+j},v'_j)_{j\in\cJ}$ associated to a triple $(\tld{w},s,\mu)$.\\

\begin{defn}
\label{def:cluster}
Let $\bX=(\bX_{j})_{j\in\cJ'}$ be a gene satisfying \eqref{en:abstract:gene:1}--\eqref{en:abstract:gene:extra}.
A \emph{cluster} for $\bX$ is a sequence $(\bX_{j+f},\bX_{j})_{j_0\leq j\leq j_1}$ such that there exists $\ell\in\{j_0,\ldots,j_1\}$ satisfying
\begin{itemize}
\item
$\bX_j=\gO$ for all $j\in\{\ell,\ldots,j_1\}$,  $\bX_{j_1+1}\not=\gO$  and $\bX_{\ell-1}=\gAB$,
\item 
$\gO\not\in\{\bX_j,\bX_{j+f}\}$, $j_0+1\leq j\leq \ell-1$,
\item
$\gO\in\{\bX_{j_0},\bX_{j_0+f}\}$.
\end{itemize}
We remark that $\bX=(\bX_j)_{j\in\cJ'}$ has either a \emph{unique} decomposition $\bX=\cup_{i=0}^{r}(\bX_{j_i},\bX_{j_{i+f}})_{j_i\leq j\leq j_{i+1}}$ into clusters (with the convention $j_{r+1}=j_0$), 
or does not have \emph{any} cluster (in which case $\bX_{j}\in\{\gA,\gB\}$ for all $j\in\cJ'$).
Note finally that $j_i$ are of type $II$ for all $i=0,\dots,r$ by Table \ref{TableGen4}.
\end{defn}

Let $\cup_{k\in\cK}\cJ_k$ be the fragmentation associated to the triple $(\tld{w},s,\mu)$ (Definition \ref{def:aux:not}\ref{def:fragmentation}). Assume $\bX$ admits a decomposition into clusters $\bX=\cup_{i=0}^{r}(\bX_{j_i},\bX_{j_{i+f}})_{j_i\leq j\leq j_{i+1}}$.
Note that given a cluster $(\bX_{j+f},\bX_{j})_{j_i\leq j\leq j_{i+1}}$ the sequence $(j_i,\dots,j_{i+1})$ is a union of fragments of $\cK$, as both $j_i$ and $j_{i+1}$ are of type $II$.
For $i\in\{0,\ldots,r\}$, we denote
$$R_{{[j_i,j_{i+1}]}}\defeq\cO[Y_{j_i}]\otimes \left(\otimes_{j_i+1\leq j\leq j_{i+1}-1} R_j\right) \otimes \cO[X_{j_{i+1}}]
\mbox{ and }
\cI_{[j_i,j_{i+1}]}\defeq \sum_{k\in \cK \cap [j_i,j_{i+1}]}\cI_{{k}}$$
where
for all $k\in \cK \cap [j_i,j_{i+1}]$, the ideal $\cI_{{k}}\subset R_{[j_i,j_{i+1}]}$ is generated by the entries of the matrix equation \eqref{matrix:equation:explicit} associated to $\cJ_{k}$.

\begin{prop}\label{prop:ring:naive}
Let $\bX=(\bX_j,\bX_{j+f})_{j\in\cJ}$ be a gene satisfying \eqref{en:abstract:gene:1}--\eqref{en:abstract:gene:3}.
\begin{enumerate}
\item
\label{prop:caseO}
Assume that $\bX$ has a decomposition into clusters $\bX=\cup_{i=0}^{r}(\bX_{j_i},\bX_{j_{i+f}})_{j_i\leq j\leq j_{i+1}}$. 
Let $(\tld{w},s,\mu)$ be a triple in the fiber above $\bX$ of the map \eqref{eq:map:triple_to_gene}.
Then
\[
\cZ^{\nv,\tau}(\tld{z})=\Spec\left(\otimes_{i=0}^{r} R_{[j_i,j_{i+1}]}/\cI_{[j_i,j_{i+1}]}\right).
\]
Moreover, the $p$-saturation of $\cZ^{\nv,\tau}(\tld{z})$ depends only on $\bX$.
\item
\label{prop:casenotO}
Assume $\bX$ does not admit a decomposition into fragment (i.e.~$\bX\in\{\gA,\gB\}^{\cJ'}$). 
Then 
$\cZ^{\nv,\tau}(\tld{z})$ depends only on $\bX$.
In particular, the $p$-saturation of $\cZ^{\nv,\tau}(\tld{z})$ depends only on $\bX$.
\end{enumerate}
\end{prop}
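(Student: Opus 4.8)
The goal is to show that $\cZ^{\nv,\tau}(\tld{z})$, or at least its $p$-saturation, depends only on the abstract combinatorial gene $\bX$. By Remark \ref{rmk:simplified model}, $\cZ^{\nv,\tau}(\tld{z})$ is built from the local pieces $R_j$ (depending on the type of $j$ and on $\tld{w}_j$) together with the matrix equations \eqref{matrix:equation:explicit} attached to the fragments $\cJ_k$, and the fragmentation is itself determined by which $j$ are of type $0$. So the strategy is purely bookkeeping: I must show that all the data feeding into this construction is either literally determined by $\bX$, or varies only among presentations that produce isomorphic (resp. $p$-saturation-isomorphic) rings. The tool for this is Proposition \ref{prop:constraints}, which lists, case by case according to the pair $\binom{\bX_{j+f}}{\bX_j}$, exactly which tuples $(s_{j+1},s_{\orient,j},\Sigma_j,z_{j+1},\tld{w}_j,\text{type of }j,(v'_{j+f},v'_j))$ can occur in the fiber over $\bX$.

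\textbf{Step 1: reduce to analyzing clusters versus fragments.} First I would match up the cluster decomposition of $\bX$ from Definition \ref{def:cluster} with the fragmentation $\cJ=\bigcup_{k\in\cK}\cJ_k$ from Definition \ref{def:aux:not}. The key observation (already flagged in the text before Proposition \ref{prop:ring:naive}) is that each cluster index $j_i$ is of type $II$ by Table \ref{TableGen4}, hence the interval $[j_i,j_{i+1}]$ between consecutive cluster boundaries is a union of fragments; and the matrix equations for those fragments only involve the variables on $R_{[j_i,j_{i+1}]}$. This gives the asserted tensor product decomposition $\otimes_{i=0}^r R_{[j_i,j_{i+1}]}/\cI_{[j_i,j_{i+1}]}$ in part \eqref{prop:caseO}, reducing the problem to showing each factor $R_{[j_i,j_{i+1}]}/\cI_{[j_i,j_{i+1}]}$ depends only on the cluster $(\bX_{j+f},\bX_j)_{j_i\le j\le j_{i+1}}$.

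\textbf{Step 2: within a cluster, track the ambiguity.} Inside a cluster the type of each $j$ (which selects $R_j$ via Table \ref{Table:Geometric_genes}), the values $(v'_{j+f},v'_j)$, and the data $s_{j+1},s_{\orient,j}$ are constrained by Proposition \ref{prop:constraints}; the residual ambiguity is (a) the occasional $\tld{w}_j\in\{t_\eta,w_0t_\eta\}$ choice when $\binom{\bX_{j+f}}{\bX_j}\in\{\binom{\gA}{\gO},\binom{\gB}{\gO},\binom{\gA}{\gAB},\ldots\}$, and (b) the $z_{j+1}$ and $\Sigma_j$ data. For (a): inspecting Table \ref{Table:Geometric_genes} shows that $R_j$ for type $II$ is $\cO[X_j,Y_j]$ and for type $I$ is $\cO[X_j,Y_j,Z_j]/((p-Y_j)Y_j-X_jZ_j)$ regardless of whether $\tld{w}_j$ is $t_\eta$ or $w_0t_\eta$, and the initial/final matrices differ only by swapping entries or signs — an invertible relabeling of the variables $X_j,Y_j,Z_j$ — so the ideal generated by the entries of \eqref{matrix:equation:explicit} is unchanged up to isomorphism. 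For (b): the transition matrices $T_\ell$ for type $0$ indices (Table \ref{Table:Geometric_genes}) are, up to the same sign/swap relabeling, conjugates of $\begin{pmatrix}1&0\\-X_\ell&p\end{pmatrix}$; I would verify that composing them along a fragment and sandwiching between $M_{\textnormal{out},o}$ and $M_{\textnormal{in},i}$ produces, after the variable relabelings dictated by Proposition \ref{prop:constraints}, a matrix equation depending only on the pattern of $\binom{\bX_{j+f}}{\bX_j}$ over the fragment. This is where the bulk of the routine computation lives, but it is genuinely routine given the explicit tables. Finally, passing to $p$-saturation is harmless: an isomorphism of the naive rings induces one on their $\cO$-flat parts.

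\textbf{Step 3: the no-cluster case and conclusion.} For part \eqref{prop:casenotO}, when $\bX\in\{\gA,\gB\}^{\cJ'}$ there are no type $0$ indices, so $\cJ$ is its own single fragment (or, if $f=1$, handled by the singleton convention), and Proposition \ref{prop:constraints}\eqref{fibre:BAB} asserts that $s_{j+1},s_{\orient,j},(v'_{j+f},v'_j)$, the type of $j$, and — when $\binom{\bX_{j+f}}{\bX_j}\in\{\binom{\gA}{\gA},\binom{\gB}{\gB}\}$ — whether $\tld{w}_j=t_{w_0(\eta)}$ or $\tld{w}_j\in\{t_\eta,w_0t_\eta\}$ are all determined by $\bX$; the remaining $t_\eta$ versus $w_0t_\eta$ ambiguity is again absorbed by a variable relabeling in Table \ref{Table:Geometric_genes} as in Step 2, so $\cZ^{\nv,\tau}(\tld{z})$ (not just its $p$-saturation) depends only on $\bX$. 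I expect the main obstacle to be Step 2, specifically organizing the compatibility between the sign/swap relabelings forced at successive indices of a fragment: the relabeling at index $j$ is dictated by $s_j$ (via the conjugation $s_jw_j^{-1}X_jw_js_j^{-1}$ in Table \ref{Table:Matrices:avec:bornes}), so one must check these local relabelings glue coherently along the whole fragment. Proposition \ref{prop:constraints} is designed precisely so that the $s_{\orient,j}$ and $\Sigma_j$ data needed to perform this gluing are available, so the verification should go through, but it requires careful tracking of which of the two options in each multi-line entry of Proposition \ref{prop:constraints} parts \eqref{fibre:OAB}–\eqref{fibre:BAB} is realized.
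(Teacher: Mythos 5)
Your overall scaffolding — identify the forced data via Proposition \ref{prop:constraints}, then show the residual ambiguity is harmless — matches the paper's strategy, and Steps 1 and 3 are essentially right. But Step 2 has a real gap, and your closing remark on $p$-saturation gets the logic backward.

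The gap: you characterize the residual ambiguity inside a cluster as (a) the $t_\eta$ vs.\ $w_0t_\eta$ choice and (b) the $z_{j+1},\Sigma_j$ data, and you propose to absorb both by variable relabelings, concluding that the naive rings are isomorphic and only then observing that saturation preserves this. But Proposition~\ref{prop:constraints}\eqref{fibre:AO} already shows that for $\binom{\bX_{j+f}}{\bX_j}=\binom{\gA}{\gO}$ the \emph{type} of $j$ itself ranges over $\{II,I,0\}$ across the fiber over $\bX$ (different choices of $(\tld{w},s,\mu)$ realize different $(v'_{j+f},v'_j)$). This is not a relabeling: it changes $R_j$ from $\cO[X_j,Y_j]$ to $\cO[X_j,Y_j,Z_j]/((p-Y_j)Y_j-X_jZ_j)$ to $\cO[X_j]$ (Remark~\ref{rmk:basic images}), it changes which indices are endpoints versus interior points of fragments, and hence it changes the shape of the matrix equations \eqref{matrix:equation:explicit}. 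So the naive rings $\cZ^{\nv,\tau}(\tld{z})$ for two choices in the fiber are genuinely \emph{not} isomorphic in general; your final sentence ``an isomorphism of the naive rings induces one on their $\cO$-flat parts'' presupposes exactly what fails.

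What actually carries the proof, and what your plan does not supply, is the content of Lemmas~\ref{lem:v'0:gO}, \ref{lem:trans:AB}, \ref{lem:seq:AB}: an induction along the fragments of a cluster in which the ideal $\overline{\cI}_{k_i}$ is described only up to $p$-power torsion — statements of the form $p^{r_{k_i}}\overline{\cI}_{k_i}=p^{r_{k_i}}(f_{k_i},g_{k_i})(p,X_{i_{k_i}})$. The $p$-power factors accumulate precisely because each substitution step, when pushing a type-$I$ endpoint's initial matrix through $\left(\prod T_\ell\right)$, introduces a factor of $p$ (e.g.\ equation \eqref{eq:ind:case3} factors $M_{\textnormal{in}}$ modulo the previous ideal only after multiplying by $p$). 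It is only after $p$-saturation that the resulting ideal is seen to depend on $\bX$ alone. So ``passing to $p$-saturation'' is not a harmless epilogue; it is the load-bearing step, and the statement of Proposition~\ref{prop:ring:naive}\eqref{prop:caseO} is deliberately phrased about the saturation for this reason (contrast with part \eqref{prop:casenotO}, where — as you correctly note — the type is genuinely determined by $\bX$, so the naive ring itself depends only on $\bX$). To repair Step 2, you would need to replace ``the matrix equation depends only on $\bX$'' with ``a fixed $p$-power multiple of the ideal equals a fixed $p$-power multiple of an ideal depending only on $\bX$,'' and prove that by induction along the fragments, tracking the exponents — which is what the paper's three lemmas do.
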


The proof of Proposition \ref{prop:ring:naive} relies on the analysis of each of the ideals $\cI_{[j_i,j_{i+1}]}$. 
This analysis is preformed, for a fixed cluster, in Lemmas \ref{lem:v'0:gO}, \ref{lem:trans:AB} and \ref{lem:seq:AB} below, which deal with the $\gO$, the $\gAB$, and the $(\gA,\gB)$-part of the cluster.\\

Thus, we fix once and for all a cluster $(\bX_j,\bX_{j+f})_{j_0\leq j\leq j_1}$ of $\bX$.   
We let 
$\{k_0,\ldots,k_s\}\defeq \cK\cap [j_0,j_1]$ so that
\begin{itemize}
\item 
for all $j=0,\ldots, s-1,$ $\cJ_{k_j}=(i_{k_j},i_{k_j}-1,\ldots,o_{k_j}+1,o_{k_j})\subset (j_1,j_1-1,\ldots,j_0)$ and $o_{k_j}=i_{k_{j+1}}$
\item
$i_{k_0}=j_1$, $o_{k_s}=j_0$.
\end{itemize}
Let $s'\in[0,s]$ such that $\ell-1\in(o_{k_{s'}},\ldots, i_{k_{s'}}-1)$.\\

Given $j\in[0,s]$,  we define $\overline{\cI}_{k_j}$ as the image of $\cI_{{k_j}}$ in $R_{[j_0,j_1]}/\sum_{j'=0}^{j-1}\cI_{{k_{j'}}}$.
We still denote by $X_i,Y_i,Z_i$ etc.~ the variables of $R_{[j_0,j_1]}$ in the quotient $R_{[j_0,j_1]}/\sum_{j'=0}^{j-1}\cI_{{k_{j'}}}$.

\begin{lemma}[the $\gO$-part of a cluster]
\label{lem:v'0:gO}
For any $0\leq i\leq s'-1$, the ideal $\overline{\cI}_{k_i}$ satisfies the following property: there exists $r_{k_i}\in\N$ such that
 \begin{enumerate}
\item 
\label{it:frag:II:II}
$\overline{\cI}_{k_i}=(Y_{o_{k_i}}) \mbox{ if the type of both $o_{k_i}$ and $i_{k_i}$ is } II,$
\item
\label{it:frag:II:I}
$\overline{\cI}_{k_i}=(f_{k_i}, g_{k_i}) \mbox{ if the type of $o_{k_i}$ is $I$ and the type of $i_{k_i}$ is $II$},$
\item
\label{it:frag:I:I}
$p^{r_{k_i}}\overline{\cI}_{k_i}=p^{r_{k_i}}(f_{k_i}, g_{k_i})(p,X_{i_{k_i}}) \mbox{ if the type of both $o_{k_i}$ and $i_{k_i}$ is } I,$ 
\item
\label{it:frag:I:II}
$ 
p^{r_{k_i}}\overline{\cI}_{k_i}=p^{r_{k_i}}(Y_{o_{k_i}})(p,X_{i_{k_i}})
 \mbox{ if the type of $o_{k_i}$ is } II \mbox{ and the type of $i_{k_i}$ is } I,$\end{enumerate} 
where
$$\left\{\begin{array}{c} f_{k_i}\defeq p-Y_{o_{k_i}}-X_{o_{k_i}+1}X_{o_{k_i}},\cr 
g_{k_i}\defeq Z_{o_{k_i}}-X_{o_{k_i}+1}(p-X_{o_{k_i}+1}X_{o_{k_i}}).\cr
\end{array} \right.$$

\end{lemma}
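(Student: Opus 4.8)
The statement concerns a single cluster $(\bX_j,\bX_{j+f})_{j_0\le j\le j_1}$, and more precisely the $\gO$-part, i.e.\ those fragments $\cJ_{k_i}$ with $0\le i\le s'-1$, which lie entirely inside the stretch where $\bX_j=\gO$. The plan is to proceed by induction on $i$, proving at each stage the explicit description of $\overline{\cI}_{k_i}$ inside $R_{[j_0,j_1]}/\sum_{j'<i}\cI_{k_{j'}}$, using the genetic constraints recorded in Proposition \ref{prop:constraints}. The key input is that when $\bX_j=\gO$, the triples $(s_{j+1},s_{\orient,j},\Sigma_j,z_{j+1},\tld{w}_j,\text{type of }j,(v'_{j+f},v'_j))$ allowed in the fiber are severely constrained; in particular the interior vertices of these fragments are of type $0$ and carry a transition matrix of a rigid shape (Table \ref{Table:Geometric_genes}, type $0$ rows), so the product $\prod_{\ell=o_{k_i}+1}^{i_{k_i}-1}T_\ell$ appearing in \eqref{matrix:equation:explicit} can be computed explicitly modulo the ideals already imposed at earlier (more "downstream" in the orientation) fragments.

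\textbf{Key steps.} First I would fix notation: write out, for the fragment $\cJ_{k_i}=(i_{k_i},\dots,o_{k_i})$, the matrix equation $M_{\textnormal{out},o_{k_i}}\big(\prod_{\ell}T_\ell\big)M_{\textnormal{in},i_{k_i}}=0$ from Definition \ref{defn:naive equations}, with the initial, transition, and final matrices taken from Table \ref{Table:matrices}/\ref{Table:Geometric_genes} according to the types dictated by Proposition \ref{prop:constraints} (cases \eqref{fibre:AO}, \eqref{fibre:OAB}). Second, since $\bX_j=\gO$ for $j$ in the range, I would use the previous fragments' equations (the inductive hypothesis, which by the downstream-to-upstream ordering has already forced relations among the variables at $o_{k_i}$) to simplify $M_{\textnormal{out},o_{k_i}}$: in the type-$II$ endpoint case the final matrix is $\begin{pmatrix}0&1\end{pmatrix}\kappa^{-1}$ or $\begin{pmatrix}Y_{o}&-p\end{pmatrix}\kappa^{-1}$, and one reads off that the matrix product collapses so the generated ideal is principal, $(Y_{o_{k_i}})$, giving case \eqref{it:frag:II:II}. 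Third, the type-$I$/type-$II$ mixed cases \eqref{it:frag:II:I} and \eqref{it:frag:I:I}, \eqref{it:frag:I:II}: here one has to carry along the matrix $\begin{pmatrix}D&-B\\-C&A\end{pmatrix}$ with $A+D=p$, $AD=BC$, substitute $A=p-Y$, $D=Y$ (etc.), and observe that the transition matrices $T_\ell=\begin{pmatrix}1&0\\-X_\ell&p\end{pmatrix}\kappa_\ell^{-1}$ (and variants) multiply up to something of the form $\begin{pmatrix}1&0\\ *&p^{\#}\end{pmatrix}$; extracting entries produces $f_{k_i}=p-Y_{o}-X_{o+1}X_{o}$ and $g_{k_i}=Z_{o}-X_{o+1}(p-X_{o+1}X_{o})$ as in the displayed formulas, with the factor $(p,X_{i_{k_i}})$ arising from the two columns of the initial matrix of a type-$I$ vertex $i_{k_i}$. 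The power $p^{r_{k_i}}$ absorbs the products of the $p$'s in the transition matrices and in the $\kappa_\ell^{-1}$ normalizations; I would just define $r_{k_i}$ to be that exponent. Finally, I would check that the types appearing (whether $o_{k_i},i_{k_i}$ are $I$ or $II$) are exactly the ones permitted by Proposition \ref{prop:constraints}\eqref{fibre:AO}--\eqref{fibre:OAB}, so that no other shape of matrix equation can occur.

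\textbf{Main obstacle.} The bookkeeping of the transition-matrix product $\prod_{\ell=o_{k_i}+1}^{i_{k_i}-1}T_\ell$ and of the accumulated powers of $p$ is the delicate part: one must track, through a possibly long chain of type-$0$ vertices, exactly which off-diagonal entry survives and with which power of $p$, and then verify that after localizing away from $p$ (multiplying by $p^{r_{k_i}}$) the ideal takes the claimed product form $(f_{k_i},g_{k_i})(p,X_{i_{k_i}})$ or $(Y_{o_{k_i}})(p,X_{i_{k_i}})$ rather than something larger. This requires a careful induction where the hypothesis must be strong enough that the already-imposed relations $\overline{\cI}_{k_0},\dots,\overline{\cI}_{k_{i-1}}$ genuinely kill the unwanted terms in $M_{\textnormal{out},o_{k_i}}$; getting the inductive statement in precisely the right form (and handling the interaction at the junction $o_{k_i}=i_{k_{i+1}}$) is where the proof will need the most care. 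The three remaining type combinations are essentially parallel computations once the type-$II$/type-$II$ case is done, so I would write that one in full and indicate the modifications for the others.
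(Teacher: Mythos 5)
Your overall architecture matches the paper's proof: induction over the fragments $k_0,\dots,k_{s'-1}$, explicit computation of $\prod_\ell T_\ell$ (lower triangular with diagonal $(1,p^{i_{k_i}-o_{k_i}-1})$), and a case split on the types of the two endpoints, with the factor $(p,X_{i_{k_i}})$ coming from a rank-one degeneration of the type-$I$ initial matrix. But two concrete misstatements would derail the execution. First, the orientation of the inductive simplification is reversed: since $i_{k_i}=o_{k_{i-1}}$, the relations already imposed by $\overline{\cI}_{k_0},\dots,\overline{\cI}_{k_{i-1}}$ constrain the variables appearing in the \emph{initial} matrix $M_{\textnormal{in},i_{k_i}}$, not in $M_{\textnormal{out},o_{k_i}}$. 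It is $M_{\textnormal{in},i_{k_i}}$ that, modulo $\cI_{k_{i-1}}$, factors as $\binom{1}{-X_{i_{k_i}+1}}\bigl(p-X_{i_{k_i}+1}X_{i_{k_i}},\,-X_{i_{k_i}}\bigr)$, and the row part of this factorization is precisely what produces the factor $(p,X_{i_{k_i}})$. The final matrix $M_{\textnormal{out},o_{k_i}}$ is read off directly from the table; no inductive input applies to it.

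Second, and more seriously, your identification of the source of $p^{r_{k_i}}$ is wrong. The powers of $p$ occurring in $\prod_\ell T_\ell$ are entirely absorbed by the unit-triangular change of variables $X_{o_{k_i}+1}\mapsto \sum_l p^l X_{o_{k_i}+1+l}$ and never create $p$-torsion; if they were the only issue, all four cases would hold on the nose, whereas the lemma asserts exact equalities only in cases \eqref{it:frag:II:II} and \eqref{it:frag:II:I}. The genuine source of $p^{r_{k_i}}$ is the induction itself: in cases \eqref{it:frag:I:I} and \eqref{it:frag:I:II} the hypothesis only controls $p^{r_{k_{i-1}}}\overline{\cI}_{k_{i-1}}$ rather than $\overline{\cI}_{k_{i-1}}$, so the congruence needed to put $M_{\textnormal{in},i_{k_i}}$ into rank-one form is only valid after multiplying through by that power of $p$, and the exponent accumulates by one at each consecutive type-$I$–to–type-$I$ junction (starting from $r=0$ at the first fragment after a type-$II$ input). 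Defining $r_{k_i}$ to be "the exponent coming from the transition matrices" gives the wrong quantity and, more importantly, misses the reason the statement degrades to an equality of $p^{r_{k_i}}$-multiples exactly in the cases where $i_{k_i}$ has type $I$.
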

\begin{proof}
If $s'=0$, there is nothing to prove. We assume now $s'\geq 1$.

By definition of $\ell$, $\bX_j=\gO$ for all $j\in\{\ell,\ldots,j_1\}$ and $\bX_{j+f}\in\{\gA,\gB\}$ for all $j\in\{\ell,\ldots,j_1-1\}$.
Thus by Proposition \ref{prop:constraints}\eqref{fibre:AO} and an induction (using Table \ref{TableGen13} to deal when $j$ is of type $II$) we have for $j\in\{\ell,\ldots,j_1-1\}$:
\begin{equation}
\label{casewise:O}
(\tld{w}_{j},\mbox{type of } j)\in\{(t_\eta,0), (t_{w_0(\eta)},I),
(t_\eta,II),(t_{w_0(\eta)},II)\}.
\end{equation}
Recall that for any $k\in\cK$ and fragment $(i_k,i_k+1,\ldots,o_k-1,o_k)$, we know that $i_k$ and $o_k$ are of type either $I$ or $II$, and $i_k-1,\ldots,o_k+1$ are of type 0. 
Thus for any $i\in \{0,\ldots,s'-1\}$ we get from \eqref{casewise:O} and Table \ref{Table:Geometric_genes}:
$$\prod_{l=o_{k_i}+1}^{i_{k_i}-1}T_l=\prod_{l=o_{k_i}+1}^{i_{k_i}-1}\left(\begin{array}{cc}1 & 0\cr -X_l&p\cr\end{array}\right)=\left(\begin{array}{cc} 1&0\cr \sum_{l=0}^{i_{k_i}-o_{k_i}-2}-p^l X_{o_{k_i}+1+l}& p^{i_{k_i}-o_{k_i}-1}\cr\end{array}\right).$$
To prove the lemma we proceed by induction on $i\in\{0,\ldots,s'-1\}$ analyzing the matrices $M_{\textnormal{out},o_{k_i}}$ and $M_{\textnormal{in},i_{k_i}}$ on the fragments $\cJ_{k_i}$.
We abbreviate $k\defeq k_i$ in what follows.
For $i=0$, only the items \eqref{it:frag:II:II} and \eqref{it:frag:II:I} can happen (as $i_{k_0}$ is of type $II$).

\begin{enumerate}
\item[Proof of item \eqref{it:frag:II:II}.]
Assume $o_k$ and $i_k$ are of type $II$.
Then by \eqref{casewise:O} and Table \ref{Table:Geometric_genes}:
$$M_{\textnormal{out},o_k}
=\left\{\begin{array}{ccc}
(Y_{o_k},-p)&\mbox{ if }&\tld{w}_{o_k}=t_\eta,\cr
(0,1)&\mbox{ if }&\tld{w}_{o_k}=t_{w_0(\eta)},\cr
\end{array}\right. \mbox{ and }
 M_{\textnormal{in},i_k}=\binom{1}{-X_{i_k}}.$$
After the change of variable $X_{o_k+1}\mapsto \sum_{l=0}^{i_k-o_k-1}p^l X_{o_k+1+l}$ in $R_{[j_0,j_1]}$ we have
\begin{equation}
\label{eq:chg:var}
\left(\prod_{l=o_k+1}^{i_k-1}T_l\right)M_{\textnormal{in},i_k}
=\binom{1}{-X_{o_k+1}}.
\end{equation} 
(In particular, note that the ``inner'' variables of $R_{[j_0,j_1]}$ in the outcome of the Lemma are not the same as those used in Table \ref{Table:Geometric_genes}. 
This change of variable can be checked not to be relevant in the gluing of Proposition \ref{prop:ring:naive}, which uses equations from Table \ref{Table:Geometric_genes} involving only the ``outer'' variables of $ R_{[j_0,j_{1}]}$ for each cluster.)
Then
$$\cI_{k}=\left\{\begin{array}{ccc}
(Y_{o_k}-pX_{o_k+1})&\mbox{ if }&\tld{w}_{o_k}=t_\eta,\cr
(X_{o_k+1})&\mbox{ if }&\tld{w}_{o_k}=t_{w_0(\eta)}.\cr
\end{array}\right.$$
Thus, up to the change of variable $Y_{o_k}\mapsto Y_{o_k}-pX_{o_k+1}$ if $\tld{w}_{o_k}=t_\eta$ or $Y_{o_k}\mapsto X_{o_k+1}$, $X_{o_k+1}\mapsto Y_{o_k}$ if $\tld{w}_{o_k}=t_{w_0(\eta)}$, we have
$\cI_{k}=(Y_{o_k})$ and item \eqref{it:frag:II:II} holds.

\item[Proof of item \eqref{it:frag:II:I}.]
Assume $o_k$ is of type $I$ and $i_k$ is of type $II$. As before we have \eqref{eq:chg:var}
after replacing $X_{o_k+1}$ by $\sum_{l=0}^{i_k-o_k-1}p^l X_{o_k+1+l}$ and hence, using again \eqref{casewise:O} and Table \ref{Table:Geometric_genes}
\begin{equation}
\label{eq:simplify:2}M_{\textnormal{out},o_k}\left(\prod_{l=o_k+1}^{i_k-1}T_l\right)M_{\textnormal{in},i_k}=\left(\begin{array}{cc} p-Y_{o_k}&X_{o_k}\cr Z_{o_k}&Y_{o_k}\cr\end{array}\right) \binom{1}{-X_{o_k+1}}= 
\binom{p-Y_{o_k}-X_{o_k+1}X_{o_k}}{Z_{o_k}-Y_{o_k}X_{o_k+1}}
\end{equation}
Thus $\cI_{k}=(p-Y_{o_k}-X_{o_k+1}X_{o_k},Z_{o_k}-X_{o_k+1}(p-X_{o_k+1}X_{o_k}))$ and item \eqref{it:frag:II:I} holds.\\

\item[Proof of item \eqref{it:frag:I:I}.]
Assume $o_k$ and $i_k$ are both of type $I$. Since $j_1$ is of type $II$ and $i>0$, there exist $i'\in[0,i-1]$  such that $o_{k_{l}}=i_{k_{l+1}}$ is of type $I$ for $i'\leq l\leq i-1$ and $i_{k_{i'}}$ is of type $II$.
By item \eqref{it:frag:II:I} applied to $k_{i'}$ and noting that $o_{k_{i'}}=i_{k_{i'+1}}$ we see that $M_{\textnormal{in},i_{k_{i'+1}}}$ equals:
\begin{equation}
\label{eq:ind:case3}
\left(\begin{array}{cc} Y_{i_{k_{i'+1}}}& -X_{i_{k_{i'+1}}}\cr -Z_{i_{k_{i'+1}}}& p-Y_{i_{k_{i'+1}}}\cr\end{array}\right)\equiv
\binom{1}{-X_{i_{k_{i'+1}}+1}}(p-X_{i_{k_{i'+1}}+1}X_{i_{k_{i'+1}}},-X_{i_{k_{i'+1}}})\qquad\text{modulo $\cI_{{k_{i'}}}$}.
\end{equation}
We conclude by \eqref{casewise:O} and Table \ref{Table:Geometric_genes}) that the image $\overline{\cI}_{{k_{i'+1}}}$ of
$\cI_{{k_{i'+1}}}$ in $R_{[j_0,j_1]}/(\sum_{l=0}^{i'}\cI_{{k_l}})$ is generated by the equations
{\footnotesize 
\begin{equation}
\label{eq:ind:case3'}
\hspace{-1.5cm}\left(\begin{array}{cc} p-Y_{o_{k_{i'+1}}}&X_{o_{k_{i'+1}}}\cr Z_{o_{k_{i'+1}}}&Y_{o_{k_{i'+1}}}\cr\end{array}\right)\binom{1}{-(\sum_{l=0}^{i_{k_{i'+1}}-o_{k_{i'+1}}-2}p^lX_{o_{k_{i'+1}}+1+l}-p^{o_{k_{i'}}-o_{k_{i'+1}}-1}X_{i_{k_{i'}+1}})} (p-X_{i_{k_{i'+1}}+1}X_{i_{k_{i'+1}}}, -X_{i_{k_{i'+1}}}) =0.
\end{equation}
}
Hence, after the change of variable $X_{o_{k_{i'+1}}+1}\mapsto \sum_{l=0}^{i_{k_{i'+1}}-o_{k_{i'+1}}-2}p^lX_{o_{k_{i'+1}}+1+l}-p^{o_{k_{i'}}-o_{k_{i'+1}}-1}X_{i_{k_{i'}+1}}$, the image $\overline{\cI}_{k_{i'+1}}$ of
$\cI_{{k_{i'+1}}}$ in $R_{[j_0,j_1]}/(\sum_{i=0}^{i'}\cI_{{k_i}})$ satisfies
$\overline{\cI}_{k_{i'+1}}=(f_{k_{i'+1}},g_{k_{i'+1}})(p, X_{i_{k_{i'+1}}})$ where
$f_{k_{i'+1}}=p-Y_{o_{{k_{i'+1}}}}-X_{o_{k_{i'+1}}}X_{o_{k_{i'+1}+1}}$ and $g_{k_{i'+1}}=Z_{o_{k_{i'+1}}}-X_{o_{k_{i'+1}}+1}(p-X_{o_{k_{i'+1}}}X_{o_{k_{i'+1}+1}})$.\\
We now induct on $l\in\{0,\dots,i-i'-1\}$ the case $l=0$ being covered above.
Indeed, assuming by induction that $p^{l-1}\overline{\cI}_{k_{i'+l-1}}=p^{l-1}(f_{k_{i'+l-1}},g_{k_{i'+l-1}})(p, X_{i_{k_{i'+l-1}}})$ (note that the term $(p, X_{i_{k_{i'+l-1}}})$ only appears for $l>1$) we can repeat the same computations above (multiplying bot side of equations \eqref{eq:ind:case3}, \eqref{eq:ind:case3'} by $p^{l-1}$)
to show that the image $\overline{\cI}_{{k_{i'+l}}}$ of $\cI_{{k_{i'+l}}}$ in $R_{[j_0,j_1]}/\sum_{l'=0}^{l-1}\cI_{{k_{i'+l'}}}$
satisfies $p^{l}\overline{\cI}_{k_{i'+l'}}=p^{l}(f_{k_{i'+l'}},g_{k_{i'+l'}})(p, X_{i_{k_{i'+l'}}})$, where
$f_{k_{i'+l'}}=p-Y_{o_{{k_{i'+l'}}}}-X_{o_{k_{i'+l'}}}X_{o_{k_{i'+l'}+1}}$ and $g_{k_{i'+l'}}=Z_{o_{k_{i'+l'}}}-X_{o_{k_{i'+l'}}+1}(p-X_{o_{k_{i'+l'}}}X_{o_{k_{i'+l'}+1}})$.\\

\item[Proof of item \eqref{it:frag:I:II}.] 
Assume $i_k$ is of type $I$ and $o_k$ is of type $II$.
Then $k_{i-1}$ satifies the hypotheses of \eqref{it:frag:II:I} or \eqref{it:frag:I:I}
Similar computations as in \eqref{it:frag:I:I} show now that 
the image of
$p^{r_k}\cI_{{k}}$ in $R_{[j_0,j_1]}/(\sum_{l=0}^{i-1}\cI_{\cJ_{k_l}})$ is generated by the equations
{\small
$$ p^{r_k}M_{\textnormal{out},o_k}\binom{1}{-(\sum_{l=0}^{i_{k}-o_{k}-2}p^lX_{o_{k_{}}+1+l}-p^{o_{k_{i-1}}-o_{k_{}}-1}X_{i_{k_{}+1}})} (p-X_{i_{k}+1}X_{i_{k}}, -X_{i_{k_{}}})=0$$
}
where 
$$M_{\textnormal{out},o_k}
=\left\{\begin{array}{ccc}
(Y_{o_k},-p)&\mbox{ if }&\tld{w}_{o_k}=t_\eta,\cr
(0,1)&\mbox{ if }&\tld{w}_{o_k}=t_{w_0(\eta)},\cr
\end{array}\right.$$
Then (after exchanging and/or replacing the variables in the same way as we did for \eqref{it:frag:II:II}), we have $r_k\in\N$ sucht that
$p^{r_k}\overline{\cI}_k=p^{r_k}(Y_{o_k})(p, X_{i_k})$.
\end{enumerate}
\end{proof}

\begin{lemma}[the $\gAB$-part of a cluster]
\label{lem:trans:AB}
Let $k\defeq k_{s'}$.
\begin{enumerate}
\item
\label{item:cAB:I}
If the type of $\ell-1$ is $I$ then the image $\overline{\cI}_{k}$ of $\cI_{k}$ in $R_{[j_0,j_1]}/\sum_{i=0}^{s'-1}\cI_{k_i}$ satisfies the following property: there exists $r_k\in\N$ such that 
$$\left\{
\begin{array}{cc} 
\overline{\cI}_{k}= (f_{k},g_{k}) &\mbox{ if $i_k$  is of type $II$,}\cr
 p^{r_{k}}\overline{\cI}_{k}= p^{r_k}(f_{k},g_{k})(p,X_{i_k}) &\mbox{ if $i_k$ is of type $I$,}\cr
\end{array}\right.$$
with $f_{k}\defeq p-Y_{o_k}-X_{o_k+1} X_{o_k}$, $g_{k}\defeq Z_{o_k}-X_{o_k+1}(p-X_{o_k+1}X_{o_k}).$
Moreover there exists $r_{k_{s'+1}}\in\N$ such that 
 $p^{r_{k_{s'+1}}}\overline{\cI}_{k_{s'+1}}$ is generated by the equations
 $$p^{r_{k_{s'+1}}}M_{\textnormal{out},o_k}\left(\prod_{l=o_k+1}^{\ell-2}T_l\right)M_{\ell-1}=0$$
where,
$$M_{\ell-1}=\left\{\begin{array}{cc} 
 	\Sigma_{\ell-1}\binom{-X_{\ell-1}}{p-X_{\ell}X_{\ell-1}}&\mbox{ if } \bX_{\ell-1+f}=\gA,\cr
 	w_0\Sigma_{\ell-1}\binom{-X_{\ell-1}}{p-X_{\ell}X_{\ell-1}}&\mbox{ if } \bX_{\ell-1+f}=\gB.\cr
 \end{array}\right.$$
 
\item 
\label{item:cAB:0}
If the type $\ell-1$ is $0$, then the image $\overline{\cI}_{k}$ of $\cI_{k}$ in $R_{[j_0,j_1]}/\sum_{i=0}^{s'-1}\cI_{k_i}$ satisfies the following property: there exists $r_k\in\N$ such that 
 $p^{r_k}\overline{\cI}_k$ is generated by the equations
 $$p^{r_k}M_{\textnormal{out},o_k}\left(\prod_{l=o_k+1}^{\ell-2}T_l\right)M_{\ell-1}=0$$
where
\[
M_{\ell-1}\defeq
\left\{\begin{array}{cc}(w_0)^{\delta_{\bX_{f+\ell-1}=\gB}}\Sigma_{\ell-1} \binom{-X_{\ell-1}}{p-X_{\ell-1}X_{\ell}}& \mbox{ if $i_k$ is of type } II,\cr
 (w_0)^{\delta_{\bX_{f+\ell-1}=\gB}}\Sigma_{\ell-1} \binom{-X_{\ell-1}}{p-X_{\ell-1}X_{\ell}}(X_{\ell+1},p)& \mbox{ if $i_k$ is of type } I,\cr
  \end{array}\right.
  \]

\item
\label{item:cAB:II}
If the type of $\ell-1$ is $II$, then the image $\overline{\cI}_{k}$ of $\cI_{k}$ in $R_{[j_0,j_1]}/\sum_{i=0}^{s'-1} \cI_{k_i}$ satisfies the following property: there exists $r_{k}\in\N$ such that
$$\left\{\begin{array}{cc} \overline{\cI}_{k}= (h_{k})&\mbox{ if $i_{k}$ is of type $II$,}\cr
 p^{r_{k}}\overline{\cI}_{k}= p^{r_{k}}(h_{k})(p,X_{i_{k}})&\mbox{ if $i_{k}$ is of type $I$},\cr
 \end{array}\right.$$
where $h_{k}=(p+X_{o_{k}+1}Y_{o_{k}}).$
\end{enumerate}
\end{lemma}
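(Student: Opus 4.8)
The plan is to run the same three-step argument as in the proof of Lemma \ref{lem:v'0:gO}: first translate the gene data at the vertices of the fragment $\cJ_{k_{s'}}=(i_k,i_k-1,\dots,o_k+1,o_k)$ into shape and type data, then read off the corresponding initial, final and transition matrices from Table \ref{Table:Geometric_genes}, and finally multiply them out and renormalize by changes of variables involving only inner variables. For the first step, the vertices $\ell,\dots,i_k-1$ all lie in the $\gO$-run $\{\ell,\dots,j_1\}$, so Proposition \ref{prop:constraints}\eqref{fibre:AO} (together with Table \ref{TableGen13} to handle the type $II$ vertices) forces exactly the list \eqref{casewise:O}, with the interior (type $0$) vertices carrying transition matrix $\begin{pmatrix}1&0\\-X_l&p\end{pmatrix}$. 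The $\gAB$-vertex $\ell-1$ has, by \eqref{en:abstract:gene:1}, \eqref{en:abstract:gene:extra} and Proposition \ref{prop:constraints}\eqref{fibre:OAB},\eqref{fibre:AAB}: type $II$ with $\tld{w}_{\ell-1}=w_0t_\eta$ if $\bX_{\ell-1+f}=\gO$, and type $I$ or type $0$ if $\bX_{\ell-1+f}\in\{\gA,\gB\}$, the twist $\Sigma_{\ell-1}$ being then pinned down up to the $w_0$-factor recording the $\gA$ versus $\gB$ alternative. Since type $I$ and type $II$ vertices are fragment endpoints and $\ell-1\in\{o_k,\dots,i_k-1\}$, the first two possibilities force $\ell-1=o_k=i_{k_{s'+1}}$, while in the third $\ell-1$ is an interior vertex of $\cJ_{k_{s'}}$; this gives exactly the trichotomy \eqref{item:cAB:II}, \eqref{item:cAB:I}, \eqref{item:cAB:0}.

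For the last step, two moves from the proof of Lemma \ref{lem:v'0:gO} do all the work. After the substitution $X_{o_k+1}\mapsto\sum_{l\geq 0}p^lX_{o_k+1+l}$ (possibly shifted by a linear term in the earlier variables), the product $\left(\prod_{l=o_k+1}^{i_k-1}T_l\right)M_{\textnormal{in},i_k}$ collapses to the column $\binom{1}{-X_{o_k+1}}$ when $i_k$ is of type $II$; when $i_k$ is of type $I$, reducing modulo $\sum_{i<s'}\cI_{k_i}$ and using the rank-one factorization \eqref{eq:ind:case3} replaces this column by $\binom{1}{-X_{o_k+1}}\,(p-X_{o_k+1}X_{i_k},\,-X_{i_k})$ at the cost of inverting a power of $p$, which accounts both for the factor $(p,X_{i_k})$ and for the exponents $r_k$. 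In case \eqref{item:cAB:II} the final matrix $\begin{pmatrix}-p&Y_{o_k}\end{pmatrix}\kappa_{\ell-1}^{-1}$ at $o_k$ applied to this column gives, up to sign and a renaming of inner variables, the single generator $h_k=p+X_{o_k+1}Y_{o_k}$ (resp.~$p^{r_k}h_k\,(p,X_{i_k})$). In case \eqref{item:cAB:I} the $2\times 2$ final matrix at $o_k$ applied to the column produces $f_k$ and $g_k$ exactly as in \eqref{eq:simplify:2}, \eqref{eq:ind:case3'}; the ``moreover'' clause then follows because $o_k=i_{k_{s'+1}}$ is the starting vertex of $\cJ_{k_{s'+1}}$, whose initial matrix $\begin{pmatrix}Y_{\ell-1}&-X_{\ell-1}\\-Z_{\ell-1}&p-Y_{\ell-1}\end{pmatrix}$ reduces modulo $\overline{\cI}_{k_{s'}}$ to a rank-one product through the column $\binom{-X_{\ell-1}}{p-X_\ell X_{\ell-1}}$ twisted by $(w_0)^{\delta_{\bX_{\ell-1+f}=\gB}}\Sigma_{\ell-1}$ (Table \ref{Table:matrices}), the inverted $p$-power being absorbed into $r_{k_{s'+1}}$. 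In case \eqref{item:cAB:0} the vertex $\ell-1$ is interior of type $0$, and its transition matrix (a $w_0$-twist of $\begin{pmatrix}1&0\\-X_{\ell-1}&p\end{pmatrix}$ according to $\tld{w}_{\ell-1}$ and the $\gA/\gB$ alternative at $\ell-1+f$) combines with the collapsed incoming column into the column $M_{\ell-1}$, the remaining interior vertices assembling $\prod_{l=o_k+1}^{\ell-2}T_l$; this leaves precisely the displayed equation with the stated $M_{\ell-1}$, the extra row factor $(X_{\ell+1},p)$ for $i_k$ of type $I$ coming once more from \eqref{eq:ind:case3}.

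The main difficulty is not conceptual — everything reduces to the first step plus a handful of $2\times 2$ matrix products — but organizational: one must carry out, uniformly across the many sub-cases indexed by $(\tld{w}_j,s_j)$ along the fragment, the compounded change of variables on the inner $X$-variables and track the accumulated negative powers of $p$ so that the exponents $r_k$ and $r_{k_{s'+1}}$ are well defined, while checking that every substitution used affects only inner variables of $R_{[j_i,j_{i+1}]}$ and hence is harmless for the gluing in Proposition \ref{prop:ring:naive}. The most delicate verification is matching the twists $\Sigma_{\ell-1}$ and the $(w_0)^{\delta}$-factors in the three formulas for $M_{\ell-1}$ against the corresponding entries of Table \ref{Table:matrices}.
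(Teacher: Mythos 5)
Your proposal mirrors the paper's proof: identify the type of $\ell-1$ via Proposition \ref{prop:constraints}, reuse the collapsing/change-of-variables technique from the $\gO$-lemma, and read the relevant initial/final/transition matrices from Table \ref{Table:Geometric_genes}. Two small imprecisions are worth flagging. First, your claim that both the type-$I$ and type-$II$ possibilities force $\ell-1=o_k=i_{k_{s'+1}}$ is inexact: for type $II$ one has $\ell-1=j_0=o_{k_s}$ (so $s'=s$) and there is no $k_{s'+1}$ inside the cluster — which is precisely why the lemma's ``moreover'' clause involving $\cI_{k_{s'+1}}$ only appears in case \eqref{item:cAB:I}. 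Second, in case \eqref{item:cAB:0} your description of $T_{\ell-1}$ as a $w_0$-twist of $\begin{pmatrix}1&0\\-X_{\ell-1}&p\end{pmatrix}$ is off: since $\tld{w}_{\ell-1}\in\{w_0t_\eta,t_{w_0(\eta)}\}$ (not $t_\eta$), the correct base matrix is $\begin{pmatrix}p&-X_{\ell-1}\\0&1\end{pmatrix}$, which the paper writes as $T_{\ell-1}=w_{\ell-1}\begin{pmatrix}p&-X_{\ell-1}\\0&1\end{pmatrix}$. Neither issue affects the structure of the argument, and the change-of-variables bookkeeping you defer (swapping $X_{o_k}\leftrightarrow Z_{o_k}$, replacing $Y_{o_k}\mapsto p-Y_{o_k}$ according to $\tld{w}_{o_k}$, substituting $X_{o_k+1}\mapsto-X_{o_k+1}$) is carried out explicitly in the paper and touches only inner variables, exactly as you anticipate.
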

\begin{proof}
If the type of $\ell-1$ is $I$ or 0, then $\ell-1>j_0$ (as $j_0$ is of type $II$) and $(\bX_{\ell-1},\bX_{\ell-1+f})\in\{(\gAB,\gA),(\gAB,\gB)\}$. By Proposition \ref{prop:constraints}\ref{fibre:AAB},
$$(\tld{w}_{\ell-1},\mbox{ type of } \ell-1)\in\{(t_\eta,I), (w_0t_\eta,I), (w_0t_\eta,0), (t_{w_0(\eta)},0)\}.$$
Proof of Item (\ref{item:cAB:I}). 
If $\ell-1$ is of type $I$, then $o_k=\ell-1$. 
Using \ref{lem:v'0:gO} \eqref{it:frag:II:I}-\eqref{it:frag:I:I} to describe $R_{[j_0,j_1]}/\sum_{l=0}^{s'-1}\cI_{{k_{l}}}$ and performing the reasoning of the proof of \emph{loc.~cit}.~(replacing the final matrix at $o_k$ in equations \eqref{eq:simplify:2}, \eqref{eq:ind:case3'} with the final matrix associated to either $(t_\eta,I)$ or $(w_0t_\eta,I)$ according to $\tld{w}_{o_k}$) there exists $r_k\in\N$ such that the image $\overline{\cI}_k$ of $\cI_{k}$ in $R_{[j_0,j_1]}/\sum_{l=0}^{s'-1}\cI_{{k_{l}}}$ satisfies
$$\left\{\begin{array}{cc}
\overline{\cI}_k=(f_k,g_k)&\mbox{ if $i_k$ is of type }II,\cr
p^{r_k}\overline{\cI}_k=p^{r_k}(f_k,g_k)(p, X_{i_k})&\mbox{ if $i_k$ is of type }I,\cr
\end{array}\right.$$
where 
\begin{equation}
\label{eq:chg:var:AAB}
\left\{\begin{array}{c} f_k\defeq p-Y_{o_k}-X_{o_k}X_{o_k+1},\cr
g_k\defeq Z_{o_k}-X_{o_k+1}(p-X_{o_k}X_{o_k+1})\cr
\end{array}\right.
\end{equation}
Again, note that the above equations are obtained by replacing $X_{o_k+1}$ by $-X_{o_k+1}$ and, more importantly in what follows, exchanging $X_{o_k}$ and $Z_{o_k}$ (resp.~replacing $Y_{o_k}$ by $p-Y_{o_k}$) when $\tld{w}_{\ell-1}=t_\eta$ (resp.~$\tld{w}_{\ell-1}=w_0t_\eta$).\\
Letting $r_{k_{s'+1}}$ be $r_{k_{s'}}+1$ if $i_k$ is of type $I$ and be $0$ if $i_k$ is of type $II$, we furthermore claim that
 $p^{r_{k_{s'+1}}}\overline{\cI}_{k_{s'+1}}$ is generated by the equations 
 $$p^{r_{k_{s'+1}}}M_{\textnormal{out},o_{k_{s'+1}}}\left(\prod_{l=o_{k_{s'+1}}+1}^{\ell-2}T_l\right)M_{\ell-1}=0$$
where, 
\[
M_{\ell-1}\defeq (w_0)^{\delta_{(\bX_{f+\ell-1}=\gB)}}\,
 \Sigma_{\ell-1}\binom{-X_{\ell-1}}{p-X_{\ell}X_{\ell-1}}.
 \]
Indeed, keeping in mind the change of variables $X_{o_k}\leftrightarrow Z_{o_k}$ (resp.~$Y_{o_k}\leftrightarrow p-Y_{o_k}$) when $\tld{w}_{\ell-1}=t_\eta$ (resp.~$\tld{w}_{\ell-1}=w_0t_\eta$) we obtain from row $I$ of Table \ref{Table:Geometric_genes} (recall $i_{k_{s'+1}}=o_{k_{s'}}=\ell-1$ is of type $I$) and  \eqref{eq:chg:var:AAB} 
\begin{align*}
p^{r_{k_{s'+1}}}M_{\textnormal{in},i_{k_{s'+1}}}&\equiv p^{r_{k_{s'}}+1}w_{\ell-1}\left(\begin{array}{cc} p-X_{\ell-1}X_{\ell}& -X_{\ell}(p-X_{\ell-1}X_{\ell})\cr-X_{\ell-1}& X_{\ell-1}X_{\ell}\cr\end{array}\right)w_{\ell-1}^{-1}\\
&=p^{r_{k_{s'+1}}}\binom{p-X_{\ell-1}X_{\ell}}{-X_{\ell-1}}(1,-X_{\ell})w_{\ell-1}^{-1}
\end{align*}
in $R_{[j_0,j_1]}/\sum_{l=0}^{s'}\cI_{{k_{l}}}$, where $w_{\ell-1}$ is the permutation part of $\tld{w}_{\ell-1}$.
As $i_{k_{s'+1}}$ is of type $I$ we deduce from Proposition \ref{prop:constraints}\eqref{fibre:AAB} that $w_{\ell-1}= (w_0)^{\delta_{\bX_{f+\ell-1}=\gB}}
 \Sigma_{\ell-1}$ and the claim follows noting that the term $(1,-X_{\ell})w_{\ell-1}^{-1}$ can be ignored when imposing condition \eqref{matrix:equation:explicit}.\\
Proof of item (\ref{item:cAB:0}). 
A similar reasoning as in the proof of Lemma \ref{lem:v'0:gO} shows that there exists $r_k\in\N$ and a change of variable for $X_\ell$ such that we have the following equality in $R_{[j_0,j_1]}/\sum_{i=0}^{s'-1}\cI_{k_i}$:
\begin{equation}
\label{eq:case:2:ell}
p^{r_k}\Big(\prod_{l=\ell}^{i_{k}-1}T_l\Big)_{\textnormal{in},i_k}=
\begin{cases}
p^{r_k}\begin{pmatrix}1\\-X_{\ell}\end{pmatrix}&\textnormal{if $i_k$ is of type $II$}\\
p^{r_k}\begin{pmatrix}1\\X_{\ell}\end{pmatrix}\begin{pmatrix}p-X_{i_k+1}X_{i_k},&-X_{i_k}\end{pmatrix}&\textnormal{if $i_k$ is of type $I$}
\end{cases}
\end{equation}
Moreover if $\ell-1$ of type $0$, then $(\tld{w}_{\ell-1},\mbox{ type of } \ell-1)\in\{((w_0t_\eta,0), (t_{w_0(\eta)},0)\}$ and by Table \ref{Table:Geometric_genes} we have
\begin{equation}
\label{eq:case:2:ell+1}
T_{\ell-1}=w_{\ell-1}\begin{pmatrix}
p&-X_{\ell-1}\\ 0& 1\end{pmatrix}
\end{equation}
 where $w_{\ell-1}$ is the permutation part of $\tld{w}_{\ell-1}$.

As in the previous item, we now claim that there exists $r_{k}\in\N$ such that $p^{r_k}\ovl{\cI}_{k}$ is generated by the equations 
\[
p^{r_k}M_{\textnormal{out},o_k}\left(\prod_{l=o_k+1}^{\ell-2}T_l\right)M_{\ell-1}=0
\]
where
\[
M_{{\ell-1}}=
\left\{\begin{array}{cc} (w_0)^{\delta_{\bX_{f+\ell-1}=\gB}}\Sigma_{\ell-1}\binom{p-X_{\ell-1}X_\ell}{-X_{\ell-1}}&\mbox{ if $i_k$ is of type } II, \cr
(w_0)^{\delta_{\bX_{f+\ell-1}=\gB}} \Sigma_{\ell-1}\binom{p-X_{\ell-1}X_\ell}{-X_{\ell-1}}(p, X_{\ell+1})&\mbox{ if $i_k$ is of type } I.
 \cr
 \end{array}\right.
\]
But this is clear from \eqref{eq:case:2:ell}, \eqref{eq:case:2:ell+1} noting that from Proposition \ref{prop:constraints}\eqref{fibre:AAB}) we have $w_{\ell-1}=(w_0)^{\delta_{\bX_{f+\ell-1}=\gB}}\Sigma_{\ell-1}$ and that, when $i_k$ is of type $I$, we can replace $(p-X_{i_k+1}X_{i_k},\ -X_{i_k})$ by $(p,\ -X_{i_k})$ to obtain a system of equations equivalent to \eqref{matrix:equation:explicit}.
\\
Proof of item (\ref{item:cAB:II}). 
 If $\ell-1$ is of type $II$ then $\ell-1=j_0=o_k$ and 
 $\binom{\bX_{o_k+f}}{\bX_{o_k}}=\binom{\gO}{\gAB}$.
The argument in the proof of Lemma \ref{lem:v'0:gO}\eqref{it:frag:II:II} and \eqref{it:frag:I:II} shows that 
 there exists $r_k\in\N$ such that $p^{r_k}\overline{\cI}_k$ is generated by the equations (after an adequate change of variables)
$$\left\{\begin{array}{cc}
p^{r_k}M_{\textnormal{out},o_k}\binom{1}{-X_{o_k+1}}=0, r_k=0 &\mbox{ if $i_k$ is of type } II,\cr
p^{r_k}M_{\textnormal{out},o_k}\binom{1}{-X_{o_k+1}} (p-X_{i_{k}+1}X_{i_{k}}, -X_{i_{k_{}}})=0&\mbox{ if $i_k$ is of type } I
\end{array}\right.$$
where $M_{\textnormal{out},o_k}=(-p,Y_{o_k})$ (since $\tld{w}_{o_k}=w_0t_\eta$ by Proposition \ref{prop:constraints}(\ref{fibre:OAB})). 
The result follows.
\end{proof}

\begin{lemma}[the $\gA,\gB$-part of a cluster]
\label{lem:seq:AB}
Assume $j_0\not=\ell-1$.
\begin{enumerate} 
\item
\label{item:l1}
There exists $r_{k_{s'}}\in\N$ such that
$$p^{r_{k_{s'}}}\overline{\cI}_{k_{s'}}=\left\{\begin{array}{cc}
p^{r_{k_{s'}}}\overline{\cI}'_{k_{s'}}& \mbox{ if type of $i_{k_{s'}}$ is } II,\cr
p^{r_{k_{s'}}}\overline{\cI}'_{k_{s'}}(p,X_{\ell+1})& \mbox{ if type of $i_{k_{s'}}$ is } I,\cr
 \end{array}\right.$$
where $\overline{\cI}'_{k_{s'}}$ is an ideal of $R_{[j_0,j_1]}/\sum_{i=0}^{s'-1}\cI_{k_{i}}$ generated by elements which do not depend (up to an automorphism induced by an explicit change of variables at $o_{k_{s'}}$) on the choice of $(\tld{w},s,\mu)$ in the fiber above $\bX$ of the map \eqref{eq:map:triple_to_gene}.
\item
\label{item:l2}
For all $s''\in\{s'+1,\ldots,s\}$,
$\overline{\cI}_{k_{s''}}$ is an ideal of $R_{[j_0,j_1]}/\sum_{l=0}^{s''-1}\cI_{k_{l}}$  generated by elements which do not depend on the choice of $(\tld{w},s,\mu)$ in the fiber above $\bX$ of the map \eqref{eq:map:triple_to_gene}.
\end{enumerate}
\end{lemma}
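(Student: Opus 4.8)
The plan is to carry out, over the $\gA,\gB$-part $[j_0,\ell-1]$ of the cluster, the same kind of fragment-by-fragment computation used for Lemmas~\ref{lem:v'0:gO} and~\ref{lem:trans:AB}, the new input being Proposition~\ref{prop:constraints}\eqref{fibre:BAB}. First I would record that for every index $j$ with $j_0<j<\ell-1$ the pair $\binom{\bX_{j+f}}{\bX_j}$ is built from $\{\gA,\gB,\gAB\}$, so Proposition~\ref{prop:constraints}\eqref{fibre:AAB},\eqref{fibre:BAB} (together with the symmetry $\binom{\bX_{j+f}}{\bX_j}\mapsto\binom{\bX_j}{\bX_{j+f}}$) pins down the type of $j$, the elements $s_{j+1}$ and $s_{\orient,j}$, and --- on the pairs $\binom{\gA}{\gA}$, $\binom{\gB}{\gB}$ --- also whether $\tld{w}_j=t_{w_0(\eta)}$ or $\tld{w}_j\in\{t_\eta,w_0t_\eta\}$. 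Combined with Lemmas~\ref{lem:v'0:gO},~\ref{lem:trans:AB} for the fragments already processed, this identifies the ambient ring $R_{[j_0,j_1]}/\sum_{i<s''}\cI_{k_i}$ along the indices $\leq\ell-1$ up to the explicit changes of variable of those lemmas, leaving as the only residual freedom in $(\tld{w},s,\mu)$ in the fiber of~\eqref{eq:map:triple_to_gene} a choice of $\tld{w}_j\in\{t_\eta,w_0t_\eta\}$ at certain indices, equivalently the parities $N_j$ governing the Weyl factors of the $\Sigma_j$.

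Next I would dispose of this residual freedom. Inspecting Table~\ref{Table:Geometric_genes}, within a fixed type the initial matrix $M_{\textnormal{in},j}$ is literally independent of $\tld{w}_j$, whereas passing from $\tld{w}_j=t_\eta$ to $\tld{w}_j=w_0t_\eta$ changes the final matrix $M_{\textnormal{out},j}$ (resp.\ a type-$0$ transition matrix $T_j$) only by multiplication by $w_0$ on one or both sides (up to a sign); and by the transformation rules of Table~\ref{TableGen2:new} ($\Sigma_{j+f}=w_0\Sigma_j$, and $\Sigma_j\mapsto w_0\Sigma_j$ when one flips a single $\tld{w}_i$) these $w_0$-factors are matched exactly by the accompanying change of the $\Sigma_j$. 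Hence for a fragment $\cJ_{k_{s''}}$ with $s''>s'$, both of whose endpoints lie in $[j_0,\ell-1]$, the matrix product $M_{\textnormal{out},o}\big(\prod_{\ell'=o+1}^{i-1}T_{\ell'}\big)M_{\textnormal{in},i}$ defining $\cI_{k_{s''}}$ is, for all $(\tld{w},s,\mu)$ in the fiber, the same up to left multiplication by an invertible matrix --- which leaves the ideal generated by the entries unchanged --- and up to the explicit change of variables at $o_{k_{s''}}$ already anticipated in Lemmas~\ref{lem:v'0:gO},~\ref{lem:trans:AB}; the type-$0$ interior indices are collapsed via $X_{o+1}\mapsto\sum_l p^l X_{o+1+l}$ exactly as in the proof of Lemma~\ref{lem:v'0:gO}. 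This gives~\eqref{item:l2}.

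For~\eqref{item:l1}, the fragment $\cJ_{k_{s'}}$ either ends at $\ell-1$ (when $\ell-1$ is of type $I$), in which case $o_{k_{s'}}=\ell-1$ and the statement is essentially a restatement of the conclusion of Lemma~\ref{lem:trans:AB}\eqref{item:cAB:I}, or it straddles $\ell-1$ (when $\ell-1$ is of type $0$), in which case one splices together the collapsed $\gO$-side piece $M_{\ell-1}$ supplied by Lemma~\ref{lem:trans:AB}\eqref{item:cAB:0}, the $\gA,\gB$-side collapse of $\prod_{\ell'=o_{k_{s'}}+1}^{\ell-2}T_{\ell'}$ handled as in the previous paragraph, and the form of $M_{\textnormal{out},o_{k_{s'}}}$ dictated by Proposition~\ref{prop:constraints}\eqref{fibre:AAB},\eqref{fibre:AO} --- with the $t_\eta$ versus $w_0t_\eta$ ambiguity again absorbed by an invertible left factor and a change of variables at $o_{k_{s'}}$; the factor $(p,X_{\ell+1})$ appears precisely when $i_{k_{s'}}$ is of type $I$, for the same reason as in Lemma~\ref{lem:v'0:gO}\eqref{it:frag:I:I}. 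The one genuine obstacle is organizational: one must verify, running through Tables~\ref{TableGen2:new} and~\ref{TableGen13} case by case, that the residual $\{t_\eta,w_0t_\eta\}$-ambiguity together with the induced $\Sigma$-shifts is absorbed exactly by an invertible left factor and the prescribed changes of variable, so that no genuine dependence on $(\tld{w},s,\mu)$ survives in the generated ideal.
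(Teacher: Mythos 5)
Your proposal follows essentially the same route as the paper: Proposition~\ref{prop:constraints}\eqref{fibre:AAB},\eqref{fibre:BAB} determines everything except the $t_\eta$ versus $w_0t_\eta$ ambiguity, and that ambiguity is absorbed because the $w_0$-factors it introduces in the transition/final matrices are cancelled by the accompanying shift of the $\Sigma_j$ (plus the change of variables $X_{o}\leftrightarrow Z_{o}$, $Y_{o}\leftrightarrow p-Y_{o}$ at the fragment endpoints and an ignorable invertible outer factor). The table check you defer is precisely the paper's displayed identity $\Sigma_j^{-1}T_j\Sigma_{j}z_{j+1}=\pm(w_0)^{\delta_{\bX_{f+j}=\gB}}\left(\begin{smallmatrix}p&-X_j\\0&1\end{smallmatrix}\right)(w_0)^{\delta_{\bX_{f+j}=\gB}}$, established by decreasing induction along the type-$0$ interior indices, so the argument is sound.
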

\begin{proof}
Since $j_0\not=\ell-1$, $\bX_{\ell-1}=\gAB$, $\bX_{\ell-1+f}\in\{\gA,\gB\}$ and $\bX_j,\bX_{j+f}\in\{\gA,\gB\}$ for all $j\in\{j_0+1,\ldots,\ell-2\}$. 
By Proposition \ref{prop:constraints} the triple $(s_{j+1},s_{\orient,j}, \mathrm{type~of~} j)$ are determined by $\bX$, for all $ j\in\{j_0+1,\ldots,\ell-2\}$. 
\\
Proof of item (\ref{item:l1}).
The case $o_{k_{s'}}=\ell-1$ follows from Lemma \ref{lem:trans:AB} \eqref{item:cAB:I}. 

Assume $o_{k_{s'}}<\ell-1$. 
By Lemma \ref{lem:trans:AB} (\ref{item:cAB:0}),
there exists $r_{k_{s'}}\in\N$ such that
 $p^{r_{k_{s'}}}\overline{\cI}_{k_{s'}}$
 is generated by the equations
\begin{equation}\label{equa:sys} p^{r_{k_{s'}}}M_{\textnormal{out},o_{k_{s'}}}\left(\prod_{l=o_k+1}^{\ell-2}T_l\right)M_{\ell-1}=0
\end{equation}
where
\[
M_{\ell-1}\defeq
\left\{\begin{array}{cc}(w_0)^{\delta_{\bX_{f+\ell-1}=\gB}}\Sigma_{\ell-1} \binom{-X_{\ell-1}}{p-X_{\ell-1}X_{\ell}}& \mbox{ if $i_{k_{s'}}$ is of type } II,\cr
(w_0)^{\delta_{\bX_{f+\ell-1}=\gB}} \Sigma_{\ell-1} \binom{-X_{\ell-1}}{p-X_{\ell-1}X_{\ell}}(X_{\ell+1},p)& \mbox{ if $i_{k_{s'}}$ is of type } I.\cr
  \end{array}\right.
  \]
 We first prove, by decreasing induction on $j\in\{o_{k_{s'}}+1,\ldots,\ell-1\}$, that in (\ref{equa:sys}) we can replace $\left(\prod_{l=j}^{\ell-2}T_l\right)M_{\ell-1}$ by
 $$\left\{\begin{array}{cc} \Sigma_jM'_j(p,X_{\ell+1}) &\mbox{ if $i_{k_{s'}}$ is of type } I,\cr
\Sigma_jM'_j&\mbox{ if $i_{k_{s'}}$ is of type } II,\cr\end{array}\right.
$$
where $M'_j$ depends only on $\bX$.

The result is true for $\ell-1$. 
For the inductive step, using the relation $M'_{j}=\Sigma_j^{-1}T_j\Sigma_{j+1}M'_{j+1}$, it is enough to prove that up to sign the matrix $\Sigma_j^{-1}T_j\Sigma_{j+1}=\Sigma_j^{-1}T_j\Sigma_{j}z_{j+1}$ only depends on $\bX$ for $j\in(o_{k_{s'}}+1,\ldots,\ell-2)$.
This is a casewise check using Table \ref{TableGen13} and Table \ref{Table:Geometric_genes}.
Indeed, as $j$ is of type $0$ for all $j\in(o_{k_{s'}}+1,\ldots,\ell-2)$ we have from Table \ref{TableGen13}  that
\[
((w_0)^{\delta_{\bX_{f+j}=\gB}}\Sigma_j,\tld{w}_j,z_{j+1})\in\{(\Id,t_{w_0(\eta)},\Id),(w_0,t_{\eta},w_0),(w_0,t_{\eta},\Id)\}
\]
(The factor $(w_0)^{\delta_{\bX_{f+j}=\gB}}$ is justified by Table \ref{TableGen4}) and an elemetary computation from Table \ref{Table:Geometric_genes} shows that
\[
\Sigma_j^{-1}T_j\Sigma_{j}z_{j+1}=(\pm)(w_0)^{\delta_{\bX_{f+j}=\gB}}\begin{pmatrix}p&-X_j\\0&1\end{pmatrix}(w_0)^{\delta_{\bX_{f+j}=\gB}}.
\]

We conclude that
$p^{r_{k_{s'}}}\overline{\cI}_{k_{s'}}$
 is generated by the equations
\begin{equation}\label{equa:sysf} p^{r_{k_{s'}}}M_{\textnormal{out},o_{k_{s'}}}M_{o_{k_{s'}}}=0
\end{equation}
where
$$M_{o_{k_{s'}}}= \left\{\begin{array}{cc} \Sigma_{o_{k_{s'}+1}}M'_{o_{k_{s'}+1}}(p,X_{\ell+1}) &\mbox{ if $i_{k_{s'}}$ is of type } I,\cr
\Sigma_{o_{k_{s'}+1}}M'_{o_{k_{s'}+1}}&\mbox{ if $i_{k_{s'}}$ is of type } II,\cr\end{array}\right.
$$
and $M'_{o_{k_{s'}+1}}$ depends only on $\bX$. 

We now perform a casewise analysis according to the type of $\jmath\defeq o_{k_{s'}}$.
\begin{enumerate}[start=1,label={$\spadesuit$\arabic*}]
\item
\label{it:cas:I:smiley}
Assume $\jmath$ is of type $I$.
Then by Tables \ref{TableGen4} and \ref{TableGen13} we have $\binom{\bX_{\jmath+f}}{\bX_{\jmath}}\in\{\binom{\gA}{\gA},\binom{\gB}{\gB}\}$.
The system of equations \eqref{equa:sysf} is equivalent to 
\[
p^{r_{k_{s'}}}\Sigma_{\jmath} M_{\textnormal{out},\jmath}\Sigma_{\jmath+1}M'_{\jmath+1}(p,X_{\ell+1})^{\delta_{i_{k_{s'}}=I}}=0.
\]
A direct check on Table \ref{TableGen13} and Table \ref{Table:Geometric_genes} shows that $\Sigma_{\jmath}M_{\textnormal{out},\jmath}\Sigma_{\jmath}z_{\jmath+1}$ only depends on whether $\tld{w}_j=t_{w_0\eta}$ or $\tld{w}_j\in\{t_\eta,w_0t_\eta\}$ up to the change of variables $X_{\jmath}\longleftrightarrow Z_{\jmath}$, $Y_{\jmath}\longleftrightarrow p-Y_{\jmath}$ (change of variables which happens exactly when $\Sigma_{\jmath}=w_0$).
By Proposition \ref{prop:constraints}\eqref{fibre:BAB} we conclude that the system of equation \eqref{equa:sysf} only depends on $\bX$ up to the the change of variables $X_{\jmath}\longleftrightarrow Z_{\jmath}$, $Y_{\jmath}\longleftrightarrow p-Y_{\jmath}$.

\item 
Assume $\jmath=o_{k_{s'}}$ is of type $II$.
By definition of cluster and the fact that $o_{k_{s'}}\leq \ell-1$, we conclude that $\jmath$ is of type $II$.
A direct check on Table \ref{TableGen13} (which provides the possible choices for $(\tld{w}_{\jmath},\Sigma_{\jmath}, z_{\jmath})$) and Table \ref{Table:Geometric_genes} shows that $M_{\textnormal{out},\jmath}\Sigma_{\jmath}z_{\jmath+1}$ only depends on whether $\tld{w}_j=t_{w_0\eta}$ or $\tld{w}_j\in\{t_\eta,w_0t_\eta\}$.
We conclude from Proposition \ref{prop:constraints}\eqref{fibre:BAB} that the system of equation \eqref{equa:sysf} only depends on $\bX$.
\end{enumerate}
Proof of item (\ref{item:l2}). 
In this case we necessarily have $s'<s$.
Starting from case \eqref{it:cas:I:smiley} above, we inductively analyze, for $s\geq s''> s'$, the system of equations
\[
M_{\textnormal{out},o_{k_{s''}}}\left(\prod_{l=o_{k_{s''}}+1}^{i_{k_{s''}}-1}T_l\right)\Sigma_{o_{k_{s''-1}}}M_{\textnormal{in},o_{k_{s''-1}}}\Sigma_{o_{k_{s''-1}}}=0
\]
where the presence of the $\Sigma_{o_{k_{s''-1}}}$-conjugation on the matrix  $M_{\textnormal{in},o_{k_{s''-1}}}$ (defined in Table \ref{Table:Geometric_genes}) is explained by the change of variables $X_{o_{k_{s''-1}}}\longleftrightarrow Z_{o_{k_{s''-1}}}$, $Y_{o_{k_{s''-1}}}\longleftrightarrow p-Y_{o_{k_{s''-1}}}$  when $\Sigma_{o_{k_{s''-1}}}=w_0$.
Since the expression $M_{\textnormal{in},o_{k_{s''-1}}}$ is independent of $\bX$, we can now perform the same argument appearing in the proof of item \eqref{item:l1} (where we replace $M'_{\ell-1}$ by $M_{\textnormal{in},o_{k_{s''-1}}}$ in the initial inductive argument there).
\end{proof}

\begin{proof}[Proof of Proposition \ref{prop:ring:naive}].
Let $(\bX_j,\bX_{j+f})_{j\in\cJ}$ be a gene satisfying \eqref{en:abstract:gene:1}--\eqref{en:abstract:gene:3}.
If there exists $i\in\cJ$ such that $(\bX_i,\bX_{i+f})=(\gO,\gO)$ then the deformation ring is zero and the result is obvious.
In what follows we assume that $(\bX_j,\bX_{j+f})_{j\in\cJ}$ satisfies \eqref{en:abstract:gene:1}-- \eqref{en:abstract:gene:extra}.\\

Assume that there exists $i\in\cJ'$ such that $\bX_i=\gO$ and let $\bX=\cup_{i=0}^{r}(\bX_{j_i},\bX_{j_{i+f}})_{j_i\leq j\leq j_{i+1}}$ be the decomposition of $\bX$ into clusters.\\
Proof if item (\ref{prop:caseO}).
By Lemmas \ref{lem:v'0:gO}-\ref{lem:seq:AB} applied on each fragments of $\bX$ we have
\[
\cZ^{\nv,\tau}(\tld{z})=\Spec\left(\otimes_{i=0}^{r} R_{[j_i,j_{i+1}]}/\cI_{[j_i,j_{i+1}]}\right).
\]
and each $\cI_{[j_i,j_{i+1}]}$ is a sum of the ideals described in

\begin{itemize}
\item Lemma \ref{lem:v'0:gO}, so that after $p$-saturation we can solve the variables $Y_i,Z_i$, while the variable $X_i$ is free, 
\item Lemma \ref{lem:trans:AB} and Lemma \ref{lem:seq:AB} (\ref{item:l1}) so that after $p$-saturation these ideals produce equations which do not depend on the choice of $(\tld{w},s,\mu)$ in the fiber of the map \eqref{eq:map:triple_to_gene} at $\bX$.
\item Lemma \ref{lem:seq:AB} (\ref{item:l2}), and these ideals admits generators which do not depend on the choice of $(\tld{w},s,\mu)$ in the fiber of the map \eqref{eq:map:triple_to_gene} at $\bX$. 
\end{itemize}
We concude that the $p$-saturation of $\cZ^{\nv,\tau}(\tld{z})$ depends only on $\bX$.\\
Proof of item (\ref{prop:casenotO}). 
The proof is similar to the proof of Proposition \ref{prop:constraints}\eqref{fibre:BAB}
If $\bX_{j'}\in\{\gA,\gB\}$ for all $j'\in\cJ'$ then either $s_{j+1}$ or $s_{\orient,i}$ is determined by $\bX_{j'}$, for all $j'\in\cJ$.  
If there exists $j_0\in\cJ$ such that 
$\binom{\bX_{j_0}}{\bX_{j_0+f}}\in\left\{\binom{\gA}{\gB},\binom{\gB}{\gA}\right\}$ then $s_{j_0+1}$ is uniquely determined (cf.~Table \ref{TableGen13}). 
As $s_{\orient,f-1}=\Id$ by Lemma \ref{lem:s_or} we conclude that $(s_{j+1},s_{\orient,j})_{j\in \cJ}$ is uniquely determined by $(\bX_j,\bX_{j+f})_{j\in \cJ}$.

If $\binom{\bX_j}{\bX_{j+f}}\in\left\{\binom{\gA}{\gA},\binom{\gB}{\gB}\right\}$ for all $j\in\cJ$ then $(s_{\orient,j})_{j\in\cJ}$ is determined by $\bX$. 
By \eqref{en:abstract:gene:3} there exists $j'\in\cJ'$ with $\bX_{j'}\not=\bX_{j'+1}$. By Table \ref{TableGen13}, we obtain $s_{j'+1}=(12)$ and by induction $(s_{j+1},s_{\orient,j})_{j\in \cJ}$ are determined.

As $(s_{j+1}, s_{\orient,j}, \text{type of $j$})$ are determined for all $j\in\cJ$,
$\cZ^{\nv,\tau}(\tld{z})$ is determined by $\bX$.
\end{proof}

\begin{thm}\label{thm:indepen}
Assume that $p>8f+3+\max_j\langle\mu_j,\alpha^\vee\rangle$.
Let $\tau$ be a regular tame inertial type of niveau $f$ and $\rhobar: G_K\ra \GL_2(\F)$ be absolutely irreucible and such that $\det (\rhobar)\otimes_{\F} \omega=\det(\tau)\otimes_{\cO}\F$.
Then $R_{\rhobar}^{\eta,\tau}$ depends only on $\bX(\tau,\rhobar|_{I_K})$.
Moreover, there exists an integer $r\geq 0$ and a decomposition $\bX(\tau,\rhobar|_{I_K})=\cup_{i=0}^{r}(\bX_{j_i},\bX_{j_{i+f}})_{j_i\leq j\leq j_{i+1}}$ such that
\[
R_{\rhobar}^{\eta,\tau}\cong \widehat{\otimes}_{i=0}R_{i}
\]
where $R_i$ is a complete local Noetherian $\cO$ algebra depending only on $(\bX_{j_i},\bX_{j_{i+f}})_{j_i\leq j\leq j_{i+1}}$.
\end{thm}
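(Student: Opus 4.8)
The plan is to combine the main local model theorem (Theorem~\ref{thm:main:model} and its Galois deformation counterpart Theorem~\ref{thm:main:Gal:def}) with the combinatorial analysis carried out in Sections~\ref{subsec:gen:tra}--\ref{sub:Naive:eq}. First I would recall that since $\rhobar$ is absolutely irreducible, there is a presentation $\tld{z}=(\tld{w}_js_j^{-1}v^{\mu_j})_{j\in\cJ}$ with $\tld{w}\in\Adm^\vee(\eta)$ such that $R_{\rhobar}^{\eta,\tau}$ is isomorphic to the completion of the $p$-saturation of $\cZ^{\nv,\tau}(\tld{z})$ at the relevant closed point; this is exactly Theorem~\ref{thm:main:Gal:def}(1), valid under the hypothesis $p>8f+3+\max_j\langle\mu_j,\alpha^\vee\rangle$. (One should observe here that the irreducibility of $\rhobar$ corresponds to $\prod_j z_j=w_0$, i.e.~the niveau $f$ case placed in the niveau $2f$ framework, which is precisely the setup of Section~\ref{subsec:gen:tra}; the determinant hypothesis $\det(\rhobar)\otimes\omega=\det(\tau)\otimes\F$ translates into the congruence conditions on $\gamma,\gamma',h$ needed to define the gene $\bX(\tau,\rhobar|_{I_K})$.)

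Next I would identify the combinatorial data. By construction, the triple $(\tld{w},s,\mu)$ attached to the pair $(\tau,\rhobar)$ maps, via \eqref{eq:map:triple_to_gene}, to a $2f$-tuple $(v'_{f+j},v'_j)_{j\in\cJ}$ whose associated gene $\bX(v')$ is exactly $\bX(\tau,\rhobar|_{I_K})$ --- this is the content of the genetic translation set up in Section~\ref{subsec:gen:tra}, using \eqref{eq:translation:first:step}, \eqref{eq:vi} and the comparison with \eqref{eq:gamma}, \eqref{eq:h}. Then I would invoke Proposition~\ref{prop:ring:naive}: in case $\bX$ admits a decomposition into clusters $\bX=\cup_{i=0}^{r}(\bX_{j_i},\bX_{j_{i+f}})_{j_i\le j\le j_{i+1}}$, we get $\cZ^{\nv,\tau}(\tld{z})=\Spec\!\big(\otimes_{i=0}^r R_{[j_i,j_{i+1}]}/\cI_{[j_i,j_{i+1}]}\big)$ and, crucially, the $p$-saturation of $\cZ^{\nv,\tau}(\tld{z})$ depends only on $\bX$; in the remaining case $\bX\in\{\gA,\gB\}^{\cJ'}$, part~(2) of the same proposition gives that $\cZ^{\nv,\tau}(\tld{z})$ itself depends only on $\bX$. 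The case $(\bX_j,\bX_{j+f})=(\gO,\gO)$ for some $j$ gives the zero ring by \cite[Proposition 4.1.3]{CDM2}, consistent with the statement. Setting $R_i$ to be the completion of the $p$-saturation of $\Spec\big(R_{[j_i,j_{i+1}]}/\cI_{[j_i,j_{i+1}]}\big)\otimes_{\Z}\cO$ at the relevant point, one reads off $R_{\rhobar}^{\eta,\tau}\cong\widehat{\otimes}_{i=0}^{r} R_i$ from the tensor product decomposition, noting that $p$-saturation and completion commute with the (finite) tensor product over $\cO$ in this situation.

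The one genuine subtlety --- and the step I expect to require the most care --- is showing that the decomposition $R_{\rhobar}^{\eta,\tau}\cong\widehat{\otimes}_i R_i$ really holds as written, rather than only after adding auxiliary formal power series variables. The point is that $\cZ^{\nv,\tau}(\tld{z})$, and hence its $p$-saturation, are affine schemes of the expected dimension whose completion at the chosen closed point is $R_{\rhobar}^{\eta,\tau}$; but a priori $\tld{\cZ}^{\tmod,\tau}(\tld{z})$ records a $\GL_2^{\cJ}$-torsor worth of extra directions, and one must use the slice computations of Lemma~\ref{lem:action at semisimple point} (the irreducible case, $\prod z_j=(12)$ in that Lemma's notation) to cut down to the correct dimension, followed by the cancellation principle \cite[Theorem 5]{Hamann}: if $R[\![X]\!]\cong S[\![X]\!]$ then $R\cong S$, exactly as in the proof of Theorem~\ref{thm:main:Gal:def}. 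A second, more bookkeeping-level obstacle is checking that the ``change of variable'' identifications appearing throughout Lemmas~\ref{lem:v'0:gO}, \ref{lem:trans:AB}, \ref{lem:seq:AB} are genuinely compatible across adjacent clusters --- i.e.~that the automorphisms $X_j\leftrightarrow Z_j$, $Y_j\leftrightarrow p-Y_j$ performed at the endpoints do not interfere with the gluing equations \eqref{matrix:equation:explicit}, which only involve the ``outer'' variables of each $R_{[j_i,j_{i+1}]}$. Granting these, the theorem follows, and the resulting $R_i$ are quotients of polynomial rings over $\Z[t]$ (specialized at $t=p$), which also yields the ``independence of $p$'' Conjecture~\ref{conj'':geneR:indep:p} claimed in the introduction.
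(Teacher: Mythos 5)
Your proposal is correct and follows essentially the same route as the paper, whose proof of this theorem is literally the combination of Theorem \ref{thm:main:Gal:def} and Proposition \ref{prop:ring:naive}; the subtleties you flag (removing spurious formal variables via \cite[Theorem 5]{Hamann}, and the compatibility of the endpoint changes of variables with the gluing equations \eqref{matrix:equation:explicit}) are already absorbed into the statements and proofs of those two cited results rather than needing separate treatment here.
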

\begin{proof} 
This follows from Theorem \ref{thm:main:Gal:def} and Proposition \ref{prop:ring:naive}.
\end{proof}

\subsection{Examples}
\label{sec:examples}
We collect some examples computing potentially Barsotti--Tate deformation rings using the techniques of this article.

In what follows, given $\ovl{x}\in \F^\times$ we denote by $\mathrm{un}_{\ovl{x}}$ the unramified character of $G_K$ sending $p$ to $\ovl{x}$.
\subsubsection{Examples when f=1}\label{sec:examples f=1}

By Theorem \ref{thm:main:Gal:def}, we see immediately that if we are in a Type $II$ situation of Table \ref{Table:matrices} then $R^{\eta,\tau}_{\rhobar}$ is formally smooth over $\cO$ or $\cO[\![X,Y]\!]/(XY-p)$. We are thus left with the following cases
\begin{enumerate}
\item Case $1$: $\tau=\tau((12),(1,0))$ (thus $\tau=\omega_2\oplus \omega_2^p$) and $\tld{w}\in \{t_\eta,w_0t_\eta\}$, with $\tld{z}\in\{w_0t_{(1,1)},t_{(1,1)}\}$.
\item Case $2$: $\tau=\tau(\Id,(1,0))$ (thus $\tau=1\oplus \omega$) and $\tld{w}=t_{w_0(\eta)}$, $\tld{z}=t_{(1,1)}$.
\end{enumerate}
We wish to compute the $p$-saturation of $\tld{\cZ}^{\nv,\tau}(\tld{z})$ in those cases. 

We start with case $1$. Since the two subcases give isomorphic spaces, we will work with $\tld{w}=w_0t_{\eta}$, so that $\tld{z}=t_{(1,1)}$.
From Table \ref{Table:matrices}, $\tld{\cZ}^{\nv,\tau}(\tld{z})$ is presented as the quotient of $\cO[B,C,D,\alpha,\beta,\gamma,\delta]$ subject to $D(p-D)=BC$, $\alpha\delta-\beta\gamma$ invertible, and the relation
\begin{equation*}
\begin{pmatrix}D&-B\\-C&(p-D)\end{pmatrix}
\begin{pmatrix}\alpha&\beta\\\gamma&\delta\end{pmatrix}
\begin{pmatrix}D&-B\\-C&(p-D)\end{pmatrix}=0
\end{equation*}
Using $D(p-D)=BC$, a simple manipulation shows that the above matrix equation is equivalent to $DF=BF=CF=(p-D)F=0$ where $F=\gamma B+\beta C-\alpha D-\delta (p-D)$.
It follows that the $p$-saturation $\tld{\cZ}^{\tmod,\tau}(\tld{z})$ is given by the equations 
\[D(p-D)=BC, \gamma B+\beta C=\alpha D +\delta(p-D)\]
We check on Macaulay 2 that the ideal of $2\times 2$ minors of the Jacobian matrix of the above relations together with the relations themselves contains $p^2$, and has radical $(p,B,C,D)$.
Thus the non-smooth locus of $\tld{\cZ}^{\tmod,\tau}(\tld{z})/\cO$ is $p=B=C=D=0$, which correspond to $\rhobar\otimes \ovl{\varepsilon}^{-1}$ unramified.
After twisting $\rhobar$, we see that $R^{\eta,\tau}_{\rhobar}$ is isomorphic to the completion of 
\[\cO[B,C,D,\alpha,\beta,\gamma,\delta]/(D(p-D)-BC,\gamma B+\beta C-\alpha D -\delta(p-D))\]
at either $(p,B,C,D,\alpha-1,\delta-1,\beta,\gamma)$, $(p,B,C,D,\alpha-s,\delta-t,\beta,\gamma)$ (with $s\neq t \in \F^\times$) or $(p,B,C,D,\alpha-1,\delta-1,\beta-1,\gamma)$.

We note that the second ideal correspond to the point $\rhobar=\mathrm{un}_{s}\ovl{\varepsilon}\oplus \mathrm{un}_{t}\ovl{\varepsilon}$ with $s\neq t$, and after eliminating $D$ using the fact that $\alpha-\delta$ is unit, and making a change of variable on $B,C$, we see that the completion of $\tld{\cZ}^{\tmod,\tau}(\tld{z})$ at $\rhobar$ is a power series ring over $\cO[\![X,Y]\!]/(XY-p^2)$.

We now move to case $2$.
Reading off from Table \ref{Table:matrices} like in case $1$, $\tld{\cZ}^{\nv,\tau}(\tld{z})$ is presented as the quotient of $\cO[B,C,D,\alpha,\beta,\gamma,\delta]$ subject to $D(p-D)=BC$, $\alpha\delta-\beta\gamma$ invertible, and the relations
\begin{align*}
&BC\alpha+DC\beta+DB\gamma-BC\delta-pC\beta=0,\\
&DC\alpha-C^2\beta-BC\gamma-DC\delta+pD\gamma=0\\
&DB\alpha-BC\beta-B^2\gamma-DB\delta-pB\alpha-pD\beta+pB\delta+p^2\beta=0.
\end{align*}
One can check that $\tld{\cZ}^{\nv,\tau}(\tld{z})$ is already $p$-saturated, hence agrees with $\tld{\cZ}^{\tmod,\tau}(\tld{z})$.
We check on Macaulay 2 that the ideal of $2\times 2$ minors of the Jacobian matrix of the above relations together with the relations themselves contains $p^3$, and has radical $(p,B,C,D)$. Thus the non-smooth locus of $\tld{\cZ}^{\tmod,\tau}(\tld{z})/\cO$ is $p=B=C=D=0$, which correspond to $\rhobar\otimes \ovl{\varepsilon}^{-1}$ unramified.
We again conclude that after twisting $\rhobar$, $R^{\eta,\tau}_{\rhobar}$ is isomorphic to the completion of 
\[
\cO[B,C,D,\alpha,\beta,\gamma,\delta]\left/
\tiny{\left(
\begin{array}{l}D(p-D)-BC,\,
BC\alpha+DC\beta+DB\gamma-BC\delta-pC\beta,\\
DC\alpha-C^2\beta-BC\gamma-DC\delta+pD\gamma,\\
DB\alpha-BC\beta-B^2\gamma-DB\delta-pB\alpha-pD\beta+pB\delta+p^2\beta\end{array}\right)}\right.\]
at either $(p,B,C,D,\alpha-1,\delta-1,\beta,\gamma)$, $(p,B,C,D,\alpha-s,\delta-t,\beta,\gamma)$ (with $s\neq t \in \F^\times$) or $(p,B,C,D,\alpha-1,\delta-1,\beta-1,\gamma)$.

\begin{rmk}(Bounds on $p$ for $f=1$) 
\label{rmk:bd:Qp}
Our explicit computations in this section allows us to slightly relax the requirement on $p$ in Theorem \ref{thm:main:model}. Specifically, the improvement in the proof of Theorem \ref{thm:main:model} comes from two sources
\begin{itemize} 
\item We can get a map out of $\cO[\![X,Y]\!]/(XY-p^k)$ as soon as we have a map modulo $p^{k+1}$, instead of using the general Elkik bound which would have required a map modulo $p^{2k+1}$. This shows that our above models are valid for $p\geq 7$, unless (up to twists) $\rhobar\otimes \ovl{\varepsilon}^{-1}$ is unramified and has scalar semisimplification and $\tau=\omega_2\oplus \omega_2^p$, or $\rhobar\otimes \ovl{\varepsilon}^{-1}$ is unramified and $\tau=1\oplus \omega$.
We remark that these computations justify the claims made in \cite[\S 7.5.13]{EGH}.
\item In the remaining cases, we get a small saving in the bound required to apply Elkik's approximation theorem compared to the general bound of Proposition \ref{prop:elkik bound}.
This in particular shows that our models for these cases are valid for $p>7$.
\end{itemize}
\end{rmk}
\subsubsection{Examples with $f=3$}
Tables \ref{TableCDM1}, \ref{TableCDM2} record several examples of deformation rings corresponding to the examples of the left column (resp.~right column) of \cite[Table 4]{CDM2} (by completion at the ideal generated by the variables $X_0,X_1,X_2,Y_0\dots$).
In particular, contrary to the expectations of \cite[\S 5.3.2]{CDM2}, the deformation rings extracted from the first three rows of Table \ref{TableCDM1} are \emph{not} all of the form ``$XY+p^2$'', despite the fact that these examples share the same stratified Kisin variety and the type $\tau$ is non degenerate in the sense of \cite{CDM2}.
This gives a counterexample to \cite[Conjecture 5.1.5]{CDM2} when the coefficient ring $\cO_E$ of \emph{loc.~cit}.~is absolutely unramified (but not after enlarging it).

\begin{table}[H]
\captionsetup{justification=centering}
\caption[Foo content]{\textbf{Examples from \cite[\S 5.3]{CDM2}  for $f=3$
}
}
\label{TableCDM2}
\centering
\adjustbox{max width=\textwidth}{
\begin{tabular}{| c | c | c | }
\hline
&&\\
$\begin{matrix}\begin{tikzpicture}[xscale=0.8,yscale=0.7]
\draw [, thick] (-0.5,-0.5) rectangle (2.5,1.5);
\draw[thick,->>] (-0.5,-0.5)--(-0.5,1);
\draw[thick,->>] (2.5,1)--(2.5,0);
\node at (0, 1) { $\gO$ };
\node at (1, 1) { $\gB$ };
\node at (2, 1) { $\gB$ };
\node at (0, 0) { $\gA$ };
\node at (1, 0) { $\gA$ };
\node at (2, 0) { $\gAB$ };
\end{tikzpicture}
\end{matrix}$
 &$(II,w_0t_\eta),(0,t_{w_0(\eta)}),(0,t_{w_0(\eta)})$&$\begin{aligned}&R=\cO[X_0,Y_0,X_1,X_2]\\
 &I^{\textnormal{nv}}=I^{p\textnormal{-sat}}=(X_0Y_0+p^3)\end{aligned}$\\
&&\\
\hline
&&\\
$\begin{matrix}\begin{tikzpicture}[xscale=0.8,yscale=0.7]
\draw [, thick] (-0.5,-0.5) rectangle (2.5,1.5);
\draw[thick,->>] (-0.5,-0.5)--(-0.5,1);
\draw[thick,->>] (2.5,1)--(2.5,0);
\node at (0, 1) { $\gO$ };
\node at (1, 1) { $\gA$ };
\node at (2, 1) { $\gB$ };
\node at (0, 0) { $\gB$ };
\node at (1, 0) { $\gA$ };
\node at (2, 0) { $\gAB$ };
\end{tikzpicture}
\end{matrix}$
 &$(II,w_0t_\eta),(I,w_0t_\eta),(0,w_0t_\eta)$&$\begin{aligned}&R=\cO[X_0,Y_0,X_1,Y_1,Z_1,X_2]\\
 &I^{\textnormal{nv}}=\left(\begin{matrix}Y_1(p-Y_1)-X_1Z_1,\qquad pY_1+Y_0Z_1,\qquad 
 pX_1+Y_0(p-Y_1),\\
 Y_1X_0+X_1(p+X_0X_2),\qquad Z_1X_0+(p-Y_1)(p+X_0X_2)\end{matrix}\right)\\
 &I^{p\textnormal{-sat}}=\left(\begin{matrix}
Y_0Z_1+pY_1,\qquad Y_1^2+X_1Z_1-pY_1,\qquad Y_0Y_1-pY_0-pX_1,\\X_0X_1X_2+X_0Y_1+pX_1,\qquad X_0Y_1X_2-X_0Z_1-pX_0X_2+pY_1-p^2
 \end{matrix}\right)
 \end{aligned}$\\
&&\\
\hline
&&\\
$\begin{matrix}\begin{tikzpicture}[xscale=0.8,yscale=0.7]
\draw [, thick] (-0.5,-0.5) rectangle (2.5,1.5);
\draw[thick,->>] (-0.5,-0.5)--(-0.5,1);
\draw[thick,->>] (2.5,1)--(2.5,0);
\node at (0, 1) { $\gA$ };
\node at (1, 1) { $\gA$ };
\node at (2, 1) { $\gB$ };
\node at (0, 0) { $\gA$ };
\node at (1, 0) { $\gB$ };
\node at (2, 0) { $\gB$ };
\end{tikzpicture}
\end{matrix}$
 &$(I,w_0t_\eta),(0,t_{w_0(\eta)}),(I,t_{\eta})$&$\begin{aligned}&R=\cO[X_0,Y_0,Z_0,X_1,X_2,Y_2,Z_2]\\
  &I^{\textnormal{nv}}=I^{p\textnormal{-sat}}=\left(\begin{matrix}(p-Y_2)Y_0+Z_2Z_0,\qquad (p-Y_2)X_0+Z_2(p-Y_0)\\
 X_2Y_0+Z_0Y_2,\qquad X_2X_0+Y_2(p-Y_0),\\
 Y_0(pY_2+X_1Z_2)+X_0Z_2,\qquad Z_0(pY_2+X_1Z_2)+Z_2(p-Y_0),\\
 Y_0(pX_2+X_1(p-Y_2))+X_0(p-Y_2),\qquad  Z_0(pX_2+X_1(p-Y_2))+(p-Y_0)(p-Y_2),\\
Y_2(p-Y_2)-X_2Z_2,\qquad  Y_0(p-Y_0)-X_0Z_0
\end{matrix}\right) \end{aligned}$\\
&&\\
\hline
\hline
\end{tabular}}
\end{table}

\begin{table}[H]
\captionsetup{justification=centering}
\caption[Foo content]{\textbf{Examples from \cite[\S 5.3]{CDM2} for $f=3$
}
}
\label{TableCDM1}
\centering
\adjustbox{max width=\textwidth}{
\begin{tabular}{| c | c | c | }
\hline
\hline
&&\\
Gene&$(\text{type of } j,\,\tld{w}_j)_{j=0,1,2}$&Equations for $\tld{\cZ}^{\textnormal{nv},\tau}(\tld{z})$ and $(\tld{\cZ}^{\textnormal{nv},\tau}(\tld{z}))^{p\text{-sat}}$\\
&&\\
\hline
&&\\
$\begin{matrix}
\begin{tikzpicture}[xscale=0.8,yscale=0.7]
\draw [, thick] (-0.5,-0.5) rectangle (2.5,1.5);
\draw[thick,->>] (-0.5,-0.5)--(-0.5,1);
\draw[thick,->>] (2.5,1)--(2.5,0);
\node at (0, 1) { $\gO$ };
\node at (1, 1) { $\gB$ };
\node at (2, 1) { $\gA$ };
\node at (0, 0) { $\gA$ };
\node at (1, 0) { $\gAB$ };
\node at (2, 0) { $\gO$ };
\end{tikzpicture}
\end{matrix}$
 &$\begin{aligned}(II,t_\eta),(0,w_0t_\eta),(II,t_\eta)\end{aligned}$&$\begin{aligned}&R=\cO[X_0,X_1,X_2,Y_0]\\&I^{\textnormal{nv}}=I^{p\textnormal{-sat}}=(X_2Y_0+p^2)\end{aligned}$\\
&&\\
\hline
&&\\
$\begin{matrix}
\begin{tikzpicture}[xscale=0.8,yscale=0.7]
\draw [, thick] (-0.5,-0.5) rectangle (2.5,1.5);
\draw[thick,->>] (-0.5,-0.5)--(-0.5,1);
\draw[thick,->>] (2.5,1)--(2.5,0);
\node at (0, 1) { $\gO$ };
\node at (1, 1) { $\gB$ };
\node at (2, 1) { $\gB$ };
\node at (0, 0) { $\gA$ };
\node at (1, 0) { $\gAB$ };
\node at (2, 0) { $\gO$ };
\end{tikzpicture}
\end{matrix}$
 &$(II,t_\eta),(0,w_0t_\eta),(II,t_{w_0(\eta)})$&$\begin{aligned}&R=\cO[X_0,X_1,X_2,Y_0]\\&I^{\textnormal{nv}}=I^{p\textnormal{-sat}}=(X_2Y_0+p^2)\end{aligned}$\\
&&\\
\hline
&&\\
$\begin{matrix}\begin{tikzpicture}[xscale=0.8,yscale=0.7]
\draw [, thick] (-0.5,-0.5) rectangle (2.5,1.5);
\draw[thick,->>] (-0.5,-0.5)--(-0.5,1);
\draw[thick,->>] (2.5,1)--(2.5,0);
\node at (0, 1) { $\gO$ };
\node at (1, 1) { $\gB$ };
\node at (2, 1) { $\gB$ };
\node at (0, 0) { $\gB$ };
\node at (1, 0) { $\gA$ };
\node at (2, 0) { $\gAB$ };
\end{tikzpicture}
\end{matrix}$
 &$(II,t_\eta),(0,t_\eta),(0,w_0t_\eta)$&$\begin{aligned}&R=\cO[X_0,X_1,X_2,Y_0]\\
 &I^{\textnormal{nv}}=I^{p\textnormal{-sat}}=(X_0Y_0+p^3)\end{aligned}$\\
&&\\
\hline
&&\\
$\begin{matrix}\begin{tikzpicture}[xscale=0.8,yscale=0.7]
\draw [, thick] (-0.5,-0.5) rectangle (2.5,1.5);
\draw[thick,->>] (-0.5,-0.5)--(-0.5,1);
\draw[thick,->>] (2.5,1)--(2.5,0);
\node at (0, 1) { $\gA$ };
\node at (1, 1) { $\gB$ };
\node at (2, 1) { $\gB$ };
\node at (0, 0) { $\gA$ };
\node at (1, 0) { $\gA$ };
\node at (2, 0) { $\gA$ };
\end{tikzpicture}
\end{matrix}$
 &$(I,w_0t_\eta),(0,t_\eta),(0,w_0t_\eta)$&$
 \begin{aligned}
&R=\cO[X_0,X_1,X_2,Y_0,Z_0]\\
&I^{\textnormal{nv}}=\left(\begin{matrix}
(Y_0(Z_0+p^2Y_0+(p-Y_0)X_1),& Z_0(Z_0+p^2Y_0+(p-Y_0)X_1),&
  X_0(Z_0-p^2X_0+(p-Y_0)X_1),\\ (p-Y_0)(Z_0-p^2X_0+(p-Y_0)X_1),& (p-Y_0)Y_0-X_0Z_0\end{matrix}\right)\\
  &\\
 &I^{p\textnormal{-sat}}=\left(\begin{matrix}
Y_0^2+X_0Z_0-pY_0,\,\quad pX_0Y_0-X_1Y_0-pX_0Z_0-p^2X_0+pX_1+p^2Y_0+Z_0\\
pX_1Y_0+X_0Z_0+Y_0Z_0+p^3X_0-p^2X_1-pZ_0,\\
X_0^2Y_0+X_0Z_0^2+X_1Y_0+2pX_0Z_0+p^2X_0-pX_1-p^2Y_0-Z_0\\
pX_0^3-X_0^2X_1+pY_0Z_0^2+p^2X_0^2-pX_0X_1+(p^4-2p^2)X_0Z_0+(p^4+1)Y_0Z_0+p^3Y_0
\end{matrix}\right)
  \end{aligned}$\\
&&\\
\hline
&&\\
$\begin{matrix}\begin{tikzpicture}[xscale=0.8,yscale=0.7]
\draw [, thick] (-0.5,-0.5) rectangle (2.5,1.5);
\draw[thick,->>] (-0.5,-0.5)--(-0.5,1);
\draw[thick,->>] (2.5,1)--(2.5,0);
\node at (0, 1) { $\gA$ };
\node at (1, 1) { $\gB$ };
\node at (2, 1) { $\gA$ };
\node at (0, 0) { $\gA$ };
\node at (1, 0) { $\gA$ };
\node at (2, 0) { $\gB$ };
\end{tikzpicture}
\end{matrix}$
 &$(II,t_{w_0\eta}),(0,t_{w_0(\eta)}),(0,w_0t_\eta)$&$
 \begin{aligned}
 &R=\cO[X_0,X_1,X_2,Y_0,Z_0]
 \\
&I^{\textnormal{nv}}=\left(\begin{matrix}pZ_0(p-Y_0)+(X_0-X_1(p-Y_0))(-pY_0-Z_0X_2),
\qquad
-p(p-Y_0)^2+(X_0-X_1(p-Y_0))(pX_0+X_2(p-Y_0)),\\
  pZ_0^2+(-Z_0X_1+Y_0)(-pY_0-Z_0X_2),\qquad -pZ_0(p-Y_0)+(-Z_0X_1+Y_0)(pX_0+X_2(p-Y_0)),\\(p-Y_0)Y_0-X_0Z_0\end{matrix}\right)\\
&  \\
  &I^{p\textnormal{-sat}}=\left(\begin{matrix}
Y_0^2+X_0Z_0-pY_0,\qquad X_1X_2Z_0^2+pX_1Y_0Z_0-X_2Y_0Z_0+pX_0Z_0+pZ_0^2-p^2Y_0,\\
X_1X_2Y_0Z_0-pX_0X_1Z_0+X_0X_2Z_0-pX_1X_2Z_0+pX_0Y_0+pY_0Z_0-p^2Z_0,\\
X_0X_1X_2Z_0+pX_0X_1Y_0-X_0X_2Y_0+pX_1X_2Y_0+pX_0^2-p^2X_0X_1+pX_0X_2-p^2X_1X_2+pX_0Z_0+p^2Y_0-p^3\end{matrix}\right)
\end{aligned}$\\
&&\\
\hline
&&\\
$\begin{matrix}\begin{tikzpicture}[xscale=0.8,yscale=0.7]
\draw [, thick] (-0.5,-0.5) rectangle (2.5,1.5);
\draw[thick,->>] (-0.5,-0.5)--(-0.5,1);
\draw[thick,->>] (2.5,1)--(2.5,0);
\node at (0, 1) { $\gO$ };
\node at (1, 1) { $\gB$ };
\node at (2, 1) { $\gB$ };
\node at (0, 0) { $\gA$ };
\node at (1, 0) { $\gB$ };
\node at (2, 0) { $\gAB$ };
\end{tikzpicture}
\end{matrix}$
 &$(II,t_\eta),(I,t_{w_0\eta}),(0,w_0t_\eta)$&$
 \begin{aligned} &R=\cO[X_0,Y_0,X_1,Y_1,Z_1,X_2]
 \\
 &I^{\textnormal{nv}}=\left(\begin{matrix}
Y_0Y_1+pZ_1,\quad Y_0X_1+p(p-Y_1),\quad
  X_1(p+X_0X_2)-X_0(p-Y_1),\\ Y_1(p+X_0X_2)-X_0Z_1,\quad(p-Y_1)Y_1-X_1Z_1\end{matrix}\right)
   \\
   &\\
 &I^{p\textnormal{-sat}}=\left(\begin{matrix}
 Y_1^2+X_1Z_1-pY_1,\qquad Y_0Y_1+pZ_1,\qquad Y_0X_1-pY_1+p^2\\
 X_0Y_0+pX_0X_2+p^2,\qquad X_0Y_1X_2-X_0Z_1+pY_1,\qquad X_0X_1X_2+X_0Y_1-pX_0+pX_1
  \end{matrix}\right)
\end{aligned}$\\
&&\\
\hline
&&\\
$\begin{matrix}\begin{tikzpicture}[xscale=0.8,yscale=0.7]
\draw [, thick] (-0.5,-0.5) rectangle (2.5,1.5);
\draw[thick,->>] (-0.5,-0.5)--(-0.5,1);
\draw[thick,->>] (2.5,1)--(2.5,0);
\node at (0, 1) { $\gO$ };
\node at (1, 1) { $\gA$ };
\node at (2, 1) { $\gB$ };
\node at (0, 0) { $\gAB$ };
\node at (1, 0) { $\gO$ };
\node at (2, 0) { $\gAB$ };
\end{tikzpicture}
\end{matrix}$
 &$(II,w_0t_\eta),(II,t_\eta),(0,t_{w_0(\eta)})$&$\begin{aligned}&R=\cO[X_0,Y_0,X_1,Y_1,X_2]\\
&I^{\textnormal{nv}}=I^{p\textnormal{-sat}}=(p+Y_0X_1, X_0X_2Y_1+p(X_0+Y_1))\end{aligned}$\\
&&\\
\hline
&&\\
$\begin{matrix}\begin{tikzpicture}[xscale=0.8,yscale=0.7]
\draw [, thick] (-0.5,-0.5) rectangle (2.5,1.5);
\draw[thick,->>] (-0.5,-0.5)--(-0.5,1);
\draw[thick,->>] (2.5,1)--(2.5,0);
\node at (0, 1) { $\gO$ };
\node at (1, 1) { $\gA$ };
\node at (2, 1) { $\gB$ };
\node at (0, 0) { $\gA$ };
\node at (1, 0) { $\gB$ };
\node at (2, 0) { $\gAB$ };
\end{tikzpicture}
\end{matrix}$
 &$(II,t_\eta),(0,t_{w_0(\eta)}),(0,w_0t_{\eta})$&$\begin{aligned}&R=\cO[X_0,Y_0,X_1,X_2]\\
 &I^{\textnormal{nv}}=I^{p\textnormal{-sat}}=(pX_0Y_0+(p+X_1Y_0)(p+X_0X_2))\end{aligned}$\\
&&\\
\hline
\hline
\end{tabular}}
\caption*{
In the third column we express $\tld{\cZ}^{\textnormal{nv},\tau}(\tld{z})$ as $\Spec (R/I^{\textnormal{nv}})$, where $R=\otimes_{j=0}^2R_j$ and $I^{\textnormal{nv}}$, $(R_j)_{j=0,1,2}$ are extracted from the data of the second column and Table \ref{Table:Geometric_genes}}
\end{table}

\clearpage{}
\bibliography{Biblio}

\newcommand{\etalchar}[1]{$^{#1}$}
\providecommand{\bysame}{\leavevmode\hbox to3em{\hrulefill}\thinspace}
\providecommand{\MR}{\relax\ifhmode\unskip\space\fi MR }
\providecommand{\MRhref}[2]{%
  \href{http://www.ams.org/mathscinet-getitem?mr=#1}{#2}
}
\providecommand{\href}[2]{#2}
\begin{thebibliography}{LLHLM23}

\bibitem[CDMa]{CDM4}
Xavier Caruso, Agn\`es David, and Ariane M\'ezard, \emph{Can we dream of a
  1-adic langlands correspondance?}, Lecture Notes in Mathematics Festschrift
  in honour of Catriona Byrne, to appear.

\bibitem[CDMb]{CDM1}
\bysame, \emph{Un calcul d'anneaux de d\'eformations potentiellement
  {B}arsotti-{T}ate}, Trans. AMS, 370 (9), 2018, 6041-6096.

\bibitem[CDMc]{CDM2}
\bysame, \emph{Vari\'et\'es de {K}isin stratifi\'ees et d\'eformations
  potentiellement {B}arsotti-{T}ate}, J. Inst. Math. Jussieu, 2018.

\bibitem[CDM23]{CDM3}
Xavier Caruso, Agn{\`e}s David, and Ariane M{\'e}zard, \emph{Combinatorics of
  {Serre} weights in the potentially {Barsotti}-{Tate} setting}, Mosc. J. Comb.
  Number Theory \textbf{12} (2023), no.~1, 1--56 (English).

\bibitem[CEGSa]{CEGSC}
Ana Caraiani, Matthew Emerton, Toby Gee, and David Savitt, \emph{The geometric
  {B}reuil-{M}\'ezard conjecture for two-dimensional potentially
  {B}arsotti-{T}ate {G}alois representations},
  \url{https://arxiv.org/abs/2207.05235}, preprint (2022).

\bibitem[CEGSb]{CEGSA}
\bysame, \emph{Local geometry of moduli stacks of two-dimensional {G}alois
  representations}, Proceedings of the International Colloquium on 'Arithmetic
  Geometry', TIFR Mumbai, Jan.~6--10, 2020, (to appear).

\bibitem[EGH]{EGH}
Matthew Emerton, Toby Gee, and Eugen Hellmann, \emph{An introduction to the
  categorical $p$-adic {L}anglands program},
  \url{https://arxiv.org/pdf/2210.01404.pdf}, preprint (2022).

\bibitem[EGS15]{EGS}
Matthew Emerton, Toby Gee, and David Savitt, \emph{Lattices in the cohomology
  of {S}himura curves}, Invent. Math. \textbf{200} (2015), no.~1, 1--96.
  \MR{3323575}

\bibitem[Elk73]{Elkik}
Ren\'{e}e Elkik, \emph{Solutions d'\'{e}quations \`a coefficients dans un
  anneau hens\'{e}lien}, Ann. Sci. \'{E}cole Norm. Sup. (4) \textbf{6} (1973),
  553--603 (1974). \MR{345966}

\bibitem[Ham75]{Hamann}
Eloise Hamann, \emph{On power-invariance}, Pac. J. Math. \textbf{61} (1975),
  153--159 (English).

\bibitem[HP19]{PH}
Yongquan Hu and Vytautas Pa{\v{s}}k{\=u}nas, \emph{On crystabelline deformation
  rings of {{\(\mathrm{Gal}(\overline{\mathbb {Q}}_p/\mathbb {Q}_p)\)}} (with
  an appendix by {Jack} {Shotton})}, Math. Ann. \textbf{373} (2019), no.~1-2,
  421--487 (English).

\bibitem[Kis09]{kisin-annals}
Mark Kisin, \emph{Moduli of finite flat group schemes, and modularity}, Ann. of
  Math. (2) \textbf{170} (2009), no.~3, 1085--1180. \MR{2600871 (2011g:11107)}

\bibitem[Kov]{Kovacs}
Sandor Kovacs, \emph{Rational singularities},
  \url{https://arxiv.org/pdf/1703.02269.pdf}, preprint (2017).

\bibitem[LLHLM18]{LLLM}
Daniel Le, Bao~Viet Le~Hung, Brandon Levin, and Stefano Morra,
  \emph{Potentially crystalline deformation rings and {S}erre weight
  conjectures: shapes and shadows}, Invent. Math. \textbf{212} (2018), no.~1,
  1--107. \MR{3773788}

\bibitem[LLHLM23]{MLM}
Daniel Le, Bao~V. Le~Hung, Brandon Levin, and Stefano Morra, \emph{Local models
  for {Galois} deformation rings and applications}, Invent. Math. \textbf{231}
  (2023), no.~3, 1277--1488 (English).

\bibitem[LLHM{\etalchar{+}}]{LGC}
Daniel Le, Bao Le~Hung, Stefano Morra, Chol Park, and Zicheng Qian,
  \emph{Moduli of fontaine--laffaille representations and a mod-$p$
  local-global compatibility result}, Memoirs of the American Mathematical
  Society (to appear).

\bibitem[{Sta}22]{stacks-project}
The {Stacks project authors}, \emph{The stacks project},
  \url{https://stacks.math.columbia.edu}, 2022.

\end{thebibliography}
\bibliographystyle{amsalpha}

\end{document}